\theoremstyle{definition}
\newtheorem{theorem}{Theorem}[section]
\newtheorem{corollary}[theorem]{Corollary}
\newtheorem{proposition}[theorem]{Proposition}
\newtheorem{lemma}[theorem]{Lemma}
\newtheorem*{thm*}{Theorem}
\newtheorem*{prp*}{Proposition}
\newtheorem*{crl*}{Corollary}
\newtheorem*{notn*}{Notation}
\newtheorem*{thm1*}{Theorem 1}
\newtheorem*{qst3*}{Question 3}
\theoremstyle{definition}
\newtheorem*{def1*}{Definition 2}
\newtheorem{definition}[theorem]{Definition}
\newtheorem{notation}[theorem]{Notation}
\newtheorem{parttheorem}{Theorem}[part]
\newtheorem{partcorollary}[parttheorem]{Corollary}
\newtheorem{partproposition}[parttheorem]{Proposition}
\theoremstyle{definition}
\newtheorem{remark}[theorem]{Remark}
\DeclareMathOperator{\kerr}{ker}
\DeclareMathOperator{\spann}{span}
\DeclareMathOperator{\dist}{dist}
\DeclarePairedDelimiter{\floor}{\lfloor}{\rfloor}
\newcommand{\R}{\mathbb{R}}
\newcommand{\N}{\mathbb{N}}
\newcommand{\Q}{\mathbb{Q}}
\newcommand{\dt}{\mathscr{T}}
\newcommand{\dtd}{\mathcal{D}}
\newcommand{\tree}{\mathcal{T}}
\DeclareMathOperator{\supp}{supp}
\newcommand{\minsupp}{\textnormal{minsupp}}
\newcommand{\rang}{\textnormal{ran}}
\newcommand{\segs}[1][n]{s_{#1}^{\brn}}
\newcommand{\segss}[2]{s_{#1}^{\brn_{#2}}}
\DeclareMathOperator{\ran}{ran}
\newcommand{\jtp}{JT_{2,p}}
\newcommand{\jt}{JT}
\newcommand{\jtg}{JT_{G}}
\newcommand{\xeh}{\mathcal{X}_{eh}}
\newcommand{\cod}{\gamma}
\newcommand{\e}{\epsilon}
\newcommand{\mc}{\mathcal}
\newcommand{\normm}[1][\cdot]{\lVert #1\rVert}
\renewcommand{\norm}[1]{\lVert #1\rVert}
\newcommand{\normg}[1]{\lVert #1\rVert}
\newcommand{\brn}{\sigma}
\newcommand{\brnt}{\tau}
\newcommand{\brT}{\mathcal{B}(\tree)}
\newcommand{\Pbrn}[1][\brn]{P_{#1}}
\newcommand{\sgm}{s}
\newcommand{\sgmt}{t}
\newcommand{\s}{s}
\newcommand{\schreier}[1][1]{\mathcal{S}_{#1}}
\newcommand{\xns}[1][n]{x_{#1}^{\brn}}
\newcommand{\sxn}{(x_{n})_{n\in\N}}
\newcommand{\syn}{(y_{n})_{n\in\N}}
\newcommand{\fns}[1][n]{\phi_{#1}^{\brn}}
\newcommand{\snbrn}{s_{n}^{\sigma}}
\newcommand{\fnss}[2]{\phi_{#1}^{\brn_{#2}}}
\newcommand{\xn}[1][x]{(#1_{n})_{n\in\mathbb{N}}}
\newcommand{\abss}[1]{\lvert #1\rvert}
\newcommand{\bwn}{\bar{w}_{n}}
\newcommand{\bxnk}{\bar{x}_{n_k}}
\newcommand{\bxnkr}{\bar{x}_{n_{k_{r}}}}
\begin{document}
\title{Variants of  the  James tree  space}
\author[S. A. Argyros]{S. A. Argyros}
\address{National Technical University of Athens, Faculty of Applied Sciences,
	Department of Mathematics, Zografou Campus, 157 80, Athens, Greece}
\email{sargyros@math.ntua.gr}
\author{A. Manoussakis}
\address{Technical  University of Crete, School of Environmental Engineering, Campus, 73100 Chania}
\email{amanousakis@isc.tuc.gr}
\author{P. Motakis}
\address{Department of Mathematics and Statistics, York University, 4700 Keele Street, Toronto, Ontario, M3J 1P3, Canada}
\email{pmotakis@yorku.ca}
\thanks{{\em 2010 Mathematics Subject Classification:} Primary 46B03, 46B06, 46B25, 46B28, 46B45.}

	\begin{abstract}
Recently, W. Cuellar Carrera, N. de Rancourt, and V. Ferenczi introduced the notion of $d_2$-hereditarily indecomposable Banach spaces, i.e., non-Hilbertian spaces that do not contain the direct sum of any two non-Hilbertian subspaces. They posed the question of the existence of such spaces that are $\ell_2$-saturated. Motivated by this question, we define and study two variants $JT_{2,p}$ and $JT_G$ of the James Tree space $JT$. They are meant to be classical analogues of a future space that will affirmatively answer the aforementioned question.
	\end{abstract}

        \maketitle
\setcounter{tocdepth}{1}        
\tableofcontents
\section*{Introduction}
The present work is the first part of two papers which have as a goal
the definition of the  Banach space $\xeh$ with extremely heterogeneous
structure. As it happens in several exotic constructions, there are
classical or semi-classical structures  describing locally the global
properties of the spaces. For example, in the case of Hereditarily
Indecomposable   spaces (i.e., Banach spaces with no unconditional
basic sequence)  the summing basis of $c_{0}$  is a classical analogue
while the B. Maurey and H. P. Rosenthal  weakly  null basic
sequence with no unconditional  subsequence  \cite{MR} is the
semi-classical example which is  closer to the construction of the first
H.I. space by W.T.  Gowers and B. Maurey \cite{GMa}. Another example of
such a relation is the James Tree space \cite{J2}, which is the classical
example reflecting  the structure  of the subspaces of the famous Gowers Tree
space \cite{G}. To some extend    the two
James Tree spaces presented in the paper can be viewed as the classical and semi-classical  examples related to the space $\xeh$. More precisely, the space $\xeh$
is a non-Hilbertian space which satisfies the following two
contradicting properties.

a) It is  $\ell_{2}$-saturated and every  Hilbertian  subspace is a
complemented one.

b)  Every two non-Hilbertian subspaces $Y,Z$ of $\xeh$  satisfy
\[\dist (S_{Y},S_{Z})=\inf\{\norm{y-z}: y\in S_{Y},z\in S_{Z}\}=0.
\]
These two properties show that $\xeh$ is an $\ell_{2}$ saturated
$d_{2}$-H.I. space,
a class of Banach spaces defined recently by
W. Cuellar Carrera, N. de Rancourt and V. Ferenczi in \cite{CFdeR}. Our aim
in the present paper is to present James Tree spaces containing $\ell_{2}$
and satisfying the property that every Hilbertian subspace is
complemented. Thus, the first space $\jtp$ $(2<p<\infty)$
is a non Hilbertian reflexive space with an unconditional basis such
that the following properties hold.
\begin{enumerate}
\item\label{dichotomyproperty}  Every subspace is either  Hilbertian or contains $\ell_{p}$.
 \item\label{complementationproperty} Every Hilbertian subspace is complemented.
\end{enumerate}
The space $\ell_{2}\oplus\ell_{p}$ ($p\ne 2$) also satisfies label{dichotomyproperty} and \eqref{dichotomyproperty} and \eqref{complementationproperty} while, as it
was pointed out to us by W.B. Johnson, the space $L_{p}(0,1)$ ($2<p<\infty$)
satisfies  \eqref{complementationproperty}. The significant point of $\jtp$ is that, as we will
explain later, it reflects the structure of non-Hilbertian subspaces of
$\xeh$.

The second example $\jtg$ is a tree space satisfying the following
properties.
\begin{enumerate}
\item The space $\jtg$ is non reflexive.
\item Every subspace either contains $c_{0}$ or it is Hilbertian.
\item Every Hilbertian subspace   is a complemented one.
\end{enumerate}
In particular, the set  $G$ will impose the whole $\ell_{2}$ structure
in the space $\xeh$. In the last part of the introduction
we discuss in more detail the structures of the spaces $\jtp$ and $\jtg$.

Let us  point out   that the
understanding  of classical structures related to a desired exotic one
is sometimes  a necessary and important step in the process of applying state-of-the-art technology to the construction of spaces with saturated norms. In
some cases, like the present one, this step is not immediate.

The cornerstone  of our study is the classical James Tree space $JT$.
It was presented in
1974 by R. C. James as the first example of a separable space that
does not contain $\ell_1$ and  has non-separable dual (\cite{J2}).
Its
norm is defined on the unit vector basis $(e_n)_{n\in\N}$ of $c_{00}$.
Throughout this paper we will consider two partial orders on $\N$,
denoted by $\prec_\mathcal{D}$ and  $\prec_\mathcal{\tree}$, compatible with the
usual one and such that $(\N,\prec_\mathcal{D})$ is a dyadic tree   and
$(\N,\prec_\mathcal{\tree})$ is an infinitely  branching  tree.  As usual,
by $\brn$ we will denote the brances of the tree (i.e. the maximal
linear ordered subsets of the tree) and by $\s$ the segments of the tree.
To every segment $s$ of $\mathcal{D}$ we associate the functional $s:c_{00}\to\R$ given by $s(x) = \sum_{i\in s}x(i)$. The James Tree space is the completion of $c_{00}$ with the norm
\[
 \|x\| = \sup\Big\{\Big(\sum_{k=1}^n(s_k(x))^2\Big)^{1/2}:n\in\N\text{ and
  }s_1,\ldots,s_n\text{ are disjoint segments of }\mathcal{D}\Big\}.
\]

We list the basic properties of $JT$.

\begin{enumerate}[label={\textbullet}]

\item The unit vector basis of $c_{00}$ is a boundedly complete basis of $JT$.

\item The space $JT$ is $\ell_2$-saturated.

\item The dual $JT^*$ of $JT$ is non-separable.

\item The second dual $JT^{**} = JT\oplus \ell_2(\mathcal{B})$.

\end{enumerate}
Here, $\mathcal{B}$ denotes the set of branches of $\mathcal{D}$ and it has the cardinality of the continuum.

A great number of authors have studied the space $JT$. In
1981 I. Amemiya and T. Ito proved that every normalized weakly null
sequence in $JT$ has an $\ell_2$ subsequence (\cite{AI}). The proof of
this result plays a particularly important role in our work. In 1977
J. Hagler used techniques developed for $JT$ to study spaces of
continuous functions on certain topological spaces (\cite{H}). In 1984
G. A. Edgar and R. F. Wheeler (\cite{EW}) and in 1987 W. Schachermayer
(\cite{S}) studied topological properties of $JT$. In 1988
R. E. Brackebusch (\cite{brack}) studied JT on general trees
$\mathcal{T}$. In 1989 S. F. Bellenot  R. G. Haydon, and E. Odell
introduced and studied variants of $JT$ that are somewhat similar to
those studied herein. In 1985 N. Ghoussoub and B. Maurey (\cite{GhM})
studied the space $JT^\infty$, i.e., a James Tree space defined on the
infinitely branching countable tree of infinite height. They showed in
particular that the natural preduals of $JT$ and $JT^\infty$ are not
isomorphic.
There is an extreme structure related to James tree space and this is
the famous Gowers tree space  $X_{GT}$ \cite{G},  which answers in the negative direction the classical problem
whether every Banach space contains either a reflexive subspace or a
subspace with  an unconditional basis. The  Gowers tree space is a separable  Banach space not containing $\ell_{1}$ such that  every infinite dimensional  subspace has non-separable dual.
An extensive study of Gowers  Tree type spaces was conducted by the first author, A. Arvanitakis, and A. Tolias in \cite{AArvT}. Among other examples, they construct such a space $X$ with the property $X^{**} = X\oplus \ell_{2}(\R)$.

One of the major appeals of $JT$ is that it is the archetypal specimen for spaces that do not contain $\ell_1$, are separable, and have non-separable dual. Let us list some additional properties of the basis $(e_n)_n$ of $X = JT$ that are particularly relevant to this feature.
\begin{enumerate}[label=(\roman*)]

\item For every $\prec_D$-incomparable infinite subset $M$ of $\N$, the sequence $(e_n)_{n\in M}$ is weakly null.\label{ADK1}

\item For every $\prec_D$-branch $\sigma$ the sequence $(e_n)_{n\in\sigma}$ is non-trivial $w$-Cauchy, i.e., there exists $x_\sigma^{**}\in X^{**}\setminus X$ so that $w^*$-$\lim_{n\in\sigma }e_n = x^{**}_\sigma$.\label{ADK2}

\item The family $(x_\sigma^{**})_{\sigma\in\mathcal{B}}$ is unconditional.\label{ADK3}

\end{enumerate}
For $X  = JT$, in \ref{ADK1} the sequence $(e_n)_{n\in M}$ is equivalent
to the unit vector basis of $\ell_2$ and in \ref{ADK3} the collection
$(x_\sigma^{**})_{\sigma\in\mathcal{B}}$ is equivalent to the unit vector basis of
the non-separable Hilbert space $\ell_2(\mathcal{B})$. It follows, e.g.,
from a Baire Category argument that any space containing a sequence
that satisfies \ref{ADK1}, \ref{ADK2}, and \ref{ADK3} must have
non-separabe dual.
The intrinsic connection between the basis of $JT$ and the generic Banach space that contains no copy of $\ell_1$ and has non-separable dual is  made especially clear by a result of the first author, P. Dodos and V. Kanellopoulos from 2008 (\cite{ADK}).
\begin{thm1*} [\cite{ADK}]
For a separable Banach space $X$ that does not contain $\ell_1$ the following assertions are equivalent.
\begin{enumerate}[label=\textbullet]

\item The space $X$ has non-separable dual.

\item The space $X$ contains a sequence $(e_n)_{n\in\N}$ that satisfies \ref{ADK1}, \ref{ADK2}, and \ref{ADK3}.

\end{enumerate}
\end{thm1*}
The above theorem yields  that every subspace of the Gowers Tree space
contains a tree structure similar to the basis of the James tree space.

 A different example of a separable space not containing $\ell_1$ and
 having non-separable dual was later discovered by J. Lindenstrauss and
 C. Stegall in \cite{LS}.
 The authors modestly gave their example the name James function space
and  denoted  it by $JF$.  A detailed study of the
 subspaces of $JT$ and $JF$ can be found in the papers \cite{AAK1},\cite{AAK2} of
 D. Apatsidis, the first  author, and V. Kanellopoulos and in
 the paper \cite{AMPe}  of the first two authors and  M. Petrakis where the
 James  function space is defined and studied with domain subsets of
$\R^{n}$.

In 2002  W.T. Gowers, based on his famous dichotomy \cite{G2},\cite{G3}, proposed
a classification program  concerning the structure of the subspaces of
a Banach space. Important contributions to the  program have been  made by V. Ferenczi and C. Rosendal \cite{FR0},\cite{FR1}, \cite{FR2}.
Recently, in \cite{CFdeR}, W. Cuellar Carrera, N. de Rancourt  and
V. Ferenczi initiated a theory of classifying Banach spaces with
respect to the properties of their subspaces restricted within a
predetermined class.  The class of non-Hilbertian spaces is of particular
interest. The following definition of
W. Cuellar Carrera, N. de Rancourt  and
V. Ferenczi  is the foremost motivating concept behind this work.
\begin{def1*}[\cite{CFdeR}]
A Banach space is called $d_2$-hereditarily indecomposable (or $d_2$-HI) if it is non-Hilbertian and it does not contain the direct sum of two non-Hilbertian spaces.
\end{def1*}

One of the restricted dichotomy theorems from \cite{CFdeR} states that a non-Hilbertian Banach space contains a non-Hilbertian subspace that either has an unconditional finite dimensional decomposition or is $d_2$-HI. The list of currently known $d_2$-HI spaces is brief. It contains $X$ and $X\oplus\ell_2$, for $X$ HI, and an HI-sum of Hilbert spaces defined by the first author and Th. Raikoftsalis in \cite{ARa}. Noteworthily, all these spaces have HI subspaces. The following question serves as the launch pad of our investigation.

\begin{qst3*}[\cite{CFdeR}]
Does there exist an $\ell_2$-saturated $d_2$-HI Banach space?
\end{qst3*}
Our final purpose  is to provide an example of such a space. 
In particular there exists a space $\xeh$ satisfying the following properties:
\begin{enumerate}[label=(\alph*)]

\item It is reflexive and non-Hilbertian.\label{nonhilbertian}

\item It is $\ell_2$-saturated.

\item It does not contain the direct sum of two non-Hilbertian spaces.

\item Every unconditional sequence in the space is equivalent to the
  unit vector basis of $\ell_2$.

\item Every Hilbertian subspace is complemented in the space.\label{alwayscomplemented}

\item Every infinite dimensional subspace $Y$ is quasi-prime, i.e., it
  admits a unique decomposition $Y\oplus\ell_2$.

\end{enumerate}
More precisely, $\mathcal{X}_{eh}$ is a reflexive  $d_2$-HI space
that is $\ell_2$-saturated.

We now turn our attention to the first variant of the James Tree space
that is studied here, i.e., the space $JT_{2,p}$, for $2<p<\infty$. This is
a space with an unconditional basis that satisfies \ref{nonhilbertian}
and \ref{alwayscomplemented}.
It is defined using a compatible partial order $\prec_\mathcal{T}$ on $\N$ which induces an infinitely branching tree $\mathcal{T} = (\N,\prec_\mathcal{T})$. The space $JT_{2,p}$ is the completion of $c_{00}$ with the norm
\[\|x\| = \sup\Big\{\Big(\sum_{k=1}^n\|x|_{s_k}\|_{\ell_2}^p\Big)^{1/p}:n\in\N\text{ and }s_1,\ldots,s_n\text{ are disjoint segments of }\mathcal{T}\Big\}.\]
The fundamental properties of the basis $(e_n)_{n\in\N}$ of $JT_{2,p}$ are the following.
\begin{enumerate}[label=(\Roman*)]

\item For every $\prec_\mathcal{T}$-branch $\sigma$ of $\mathcal{T}$, $(e_n)_{n\in\sigma}$ is equivalent to the unit vector basis of $\ell_2$.\label{tbranchell2}

\item For every $\prec_\mathcal{T}$-incomparable infinite subset $M$ of $\N$, the sequence $(e_n)_{n\in M}$ is equivalent to the unit vector basis of $\ell_p$.
\end{enumerate}
This further mirrors the properties of the space $\mathcal{X}_{eh}$. In any non-Hilbertian subspace of $\mathcal{X}_{eh}$ there exists a sequence $(e_n)_{n\in\N}$ that satisfies \ref{tbranchell2} as well as
\begin{enumerate}[label=($\widetilde{\text{II}}$)]

\item For every $\prec_\mathcal{T}$-incomparable infinite subset $M$ of $\N$, the sequence $(e_n)_{n\in M}$ is not unconditional.\label{nonunconditionalantichain}

\end{enumerate}
This means that every subspace of $\mathcal{X}_{eh}$ is either small, i.e., isomorphic to $\ell_2$ or it contains a tree basis satisfying \ref{tbranchell2} and \ref{nonunconditionalantichain}.

At the heart of the proof of the properties of $JT_{2,p}$ lies a lemma
in the style of Amemiya and Ito for the space $JT$ (\cite{AI}) and the
concept of  $\ell_{p}$-vectors associated to certain weakly null sequences. The
behavior of a normalized block sequence $(x_n)_{n\in\N}$ in the space
$JT_{2,p}$ is characterized by the behavior of $(x_{n|\sigma})_{n\in\N}$, for
all branches $\sigma$ or $\mathcal{T}$. If, for all $\sigma$, $\|x_{n|\sigma}\|\to 0$
then $(x_n)_{n\in\N}$ has a subsequence that is equivalent to the unit
vector basis of $\ell_p$. Otherwise, it has a subsequence that is
equivalent to the unit vector basis of $\ell_2$. Consequently, every
normalized block sequence has a subsequence that is equivalent to the
unit vector basis of $\ell_p$ or of $\ell_2$. Thus, $JT_{2,p}$ is
reflexive.
For every subspace $Y$ of $JT_{2,p}$ that is isomorphic to $\ell_2$ there exist branches $\sigma_1,\ldots,\sigma_k$ of $\mathcal{T}$ so that if we denote by $P$ the canonical basis projection onto the Hilbertian space $Z$ spanned by $(e_n)_{n\in\sigma_1\cup\cdots\cup\sigma_k}$, then $P|_Y$ is an isomorphism.

After studying the space $JT_{2,p}$ we concentrate on the second variant of James Tree space.
For the definition of the space $\mathcal{X}_{eh}$ a norming set
$W_{eh}$ is used. Its construction requires the attractors method from
\cite{AArvT} (see also \cite{AMP}). A necessary ingredient is a ground
set $G$. This ground set defines a James Tree variant that we denote
by $JT_G$. Intuitively, in every subspace of $\mathcal{X}_{eh}$ we
find a sequence that ``attracts'' its properties from a block sequence
of the basis of $JT_G$ (this is the main function of the attractors
methods). Therefore, the space $JT_G$ is an important component that
is used directly in the construction of $\mathcal{X}_{eh}$. The space
$JT_G$ falls under a general definition for James Tree spaces that we
introduce here. Such general schemes have been considered in the past,
e.g., by S.F. Bellenot, R.G. Haydon, and E. Odell in \cite{BHO} as well as by F. Albiac and N. J. Kalton in \cite{AK}. Our version is a combination of the two aforementioned ones. It is based on an unconditional norm $\|\cdot\|_*$ as well as a collection of compatible  seminorms $(\|\cdot\|_\sigma)_\sigma$, where $\sigma$ varies over the branches of the tree $\mathcal{T}$. This means that for any two branches $\sigma_1$, $\sigma_2$, any segment $s\subset \sigma_1\cap\sigma_2$, and any $x\in c_{00}$ we have $\|x|_s\|_{\sigma_1} = \|x|_s\|_{\sigma_2}$. We denote this quantity by $\|x\|_s$. The general $JT$ space associated to $\|\cdot\|_*$ and $(\|\cdot\|_\sigma)_\sigma$ is the completion of $c_{00}$ under the norm
\begin{align*}
\|x\| = \max\big\{\norm{x}_{\infty},\sup\Big\{\Big\|\sum_{k=1}^n\|x\|_{s_k}&e_{\min(s_k)}\Big\|_*\!\!\!: n\in\N\\
&\text{ and }s_1,\ldots,s_n\text{ are disjoint segments of }\mathcal{T}\Big\}.
\end{align*}
By taking $\|\cdot\|_{*} = \|\cdot\|_2$ and $\|\cdot\|_\sigma$ to be the 
summing norm we retrieve the norm of $JT$ (technically, of $JT^\infty$). Similarly, for $\|\cdot\|_* = \|\cdot\|_p$ and $\|\cdot\|_\sigma = \|\cdot\|_2$ we retrieve $JT_{2,p}$.

Let us discuss how the norm of $JT_G$ is given by this scheme. For each branch $\sigma$ we define a sequence of functionals $(\phi_n^\sigma)_n$ on $c_{00}$ with successive supports so that $\cup_n\mathrm{supp}(\phi^\sigma_n) = \sigma$. This collection is compatible, in the sense that for any two branches $\sigma_1$, $\sigma_2$, segment $s\subset \sigma_1\cap\sigma_2$, and $n\in\N$ we have $\phi^{\sigma_1}_n|_s = \phi^{\sigma_2}_n|_s$. Then for each $\sigma$ we let
\[W(\sigma) = \Big\{\Big(\sum_{k=1}^n\lambda_i\phi^\sigma_i\Big)\Big|_E:n\in\N,(\lambda_i)_{i=1}^n\in B_{\ell_2^n}\cap\Q^n,\text{ and }E\text{ is an interval of }\N\Big\}\]
and $\|x\|_\sigma =\sup\{f(x):f\in W(\sigma)\}$. We also let $ \|\cdot\|_{S^{(2)}} $ denote the norm of the 2-convexified Schreier space, i.e.,
$\|x\|_{S^{(2)}} =  \sup_{F\in\mathcal{S}_1}\|x|_F\|_2$. Here, $\mathcal{S}_1 = \{F\subset \N:\#F\leq \min F\}$ is the Schreier family of order one. The space $JT_G$ is the general James Tree space associated to $\|\cdot\|_{S^{(2)}}$ and $(\|\cdot\|_\sigma)_\sigma$.

The specific choice of the functionals $\phi_n^\sigma$ is based on the notion of $\ell_2$-repeated averages from \cite{AMT}. For each $n\in\N$ and maximal $\mathcal{S}_n$-set $F$, the $\ell_2$-repeated average $x(n,F)$ is a specific $\ell_2$-convex combination of $(e_i)_{i\in F}$. For each $\sigma$ we appropriately choose increasing sequences of natural numbers $(m_{n,\sigma})_n$, $(k_{n,\sigma})_n$. We partition $\sigma$ into successive sets $(F_{n,\sigma})_n$ so that each $F_{n,\sigma}$ is a maximal $\mathcal{S}_{k_{n,\sigma}}$ set. We define $\phi_n^\sigma = m^{-2}_{n,\sigma} x(k_{n,\sigma},F_{n,\sigma})$ and $x_n^\sigma = m_{n,\sigma}^2x(k_{n,\sigma},F_{n,\sigma})$. Then, $(x_n^\sigma)_{n\in\N}$ is equivalent to the unit vector basis of $\ell_2$ and the map $P_\sigma(x) = \sum_{n=1}^\infty \phi_n^\sigma(x)x_n^\sigma$ defines a norm-one projection.

The space $JT_G$ satisfies similar properties to the ones satisfied by
the space $JT_{2,p}$ and the techniques used to prove them are
related.
As we have mentioned before,  the space $\jtg$ satisfies the following
main properties.
\begin{enumerate}[label=\textbullet]

\item The space $JT_G$ is non-reflexive, and thus non-Hilbertian.
\item Every Hilbertian subspace of $JT_G$ is complemented.
\item Every non-Hilbertian subspace of $JT_G$ contains a copy of $c_0$.
\end{enumerate}
The paper is divided into two parts. The first one is devoted to the study of
the space $\jtp$ and the second one
to the study of   $\jtg$. At the beginning of each part we have
included
a short section  where we  discuss  in further detail the properties of
each one of the  two spaces.

\part{The  $\jtp$-space}

This part concerns the study of the space $\jtp$ and it is organized in
four sections.  The first section is devoted to the definition of the
space $\jtp$ and  some basic  inequalities of the
norm of  some vectors,  that we use throughout  this part.

The second  section  concerns the study of  the weakly null
sequences $\sxn$ such that for every  branch $\brn$ of the tree
$\lim_{n}\norm{x_{n|\brn}}=0$.
The main result of that section is the following theorem.
\begin{parttheorem}
Let $(x_{n})_{n\in\N}$ be  a normalized block sequence in $\jtp$  such
that  for  every branch $\sigma$ of $\tree$  $\lim_{n}\norm{x_{n|\sigma}}=0$.  Then
there exists a subsequence $(x_{n})_{n\in N}$ of $(x_{n})_{n}$ that is
equivalent to the   unit vector basis of $\ell_{p}$.

In  particular, for every $\e>0$ there exists a subsequence of
$(x_{n})_{n}$ which is  $(\sqrt[q]{2}+\e)$-equivalent to the unit
vector basis  of $\ell_{p}$.
\end{parttheorem}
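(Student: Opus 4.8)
The plan is to establish two-sided $\ell_p$ estimates for a subsequence: a lower one with constant tending to $1$ and an upper one with constant tending to $\sqrt[q]{2}$, where $q=p/(p-1)$ is the conjugate exponent, so that their product yields the asserted $(\sqrt[q]{2}+\e)$-equivalence. Throughout I normalize $\sum_n|a_n|^p=1$ and, by a routine reduction, work with finitely many nonzero $a_n$ and finite families of disjoint segments. The lower estimate uses no hypothesis on branches: for each $n$ I choose disjoint segments contained in $\supp(x_n)$ that almost norm $x_n$; since the supports are successive, their union over $n$ is again a disjoint family, and restricting $\sum_na_nx_n$ to it isolates each $a_nx_n$, giving $\norm{\sum_na_nx_n}\ge(1-\e)(\sum_n|a_n|^p)^{1/p}$. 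The whole burden is therefore the upper estimate.

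For the upper estimate I fix disjoint segments $s_1,\dots,s_m$ and set $\beta_{n,k}=\norm{x_n|_{s_k}}_{\ell_2}$. Since the $x_n$ have disjoint supports, $\norm{(\sum_na_nx_n)|_{s_k}}_{\ell_2}^2=\sum_na_n^2\beta_{n,k}^2$, so I must bound $\Sigma:=\sum_k\big(\sum_na_n^2\beta_{n,k}^2\big)^{p/2}$. The one structural fact I use repeatedly is a \emph{row estimate}: for fixed $n$ the sets $s_k\cap\supp(x_n)$ are disjoint segments, whence $\sum_k\beta_{n,k}^p\le\norm{x_n}^p=1$. I charge each $s_k$ to the first block it meets, $n(k)=\min\{n:s_k\cap\supp(x_n)\ne\emptyset\}$, and split $\sum_na_n^2\beta_{n,k}^2=a_{n(k)}^2\beta_{n(k),k}^2+R_k$ with $R_k=\sum_{n>n(k)}a_n^2\beta_{n,k}^2$. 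Using $\sqrt{u^2+v}\le u+\sqrt v$ followed by $(\alpha+\gamma)^p\le 2^{p-1}(\alpha^p+\gamma^p)$ gives $\Sigma\le 2^{p-1}\big(\sum_k(a_{n(k)}\beta_{n(k),k})^p+\sum_kR_k^{p/2}\big)$. The diagonal sum is controlled by grouping over the first block and invoking the row estimate: $\sum_k(a_{n(k)}\beta_{n(k),k})^p=\sum_na_n^p\sum_{n(k)=n}\beta_{n,k}^p\le\sum_na_n^p=1$. Thus $\Sigma\le 2^{p-1}(1+\mathrm{Off})$ with $\mathrm{Off}=\sum_kR_k^{p/2}$, and the factor $2^{p-1}=(\sqrt[q]{2}\,)^p$ is exactly the source of the constant; if the subsequence can be arranged so that $\mathrm{Off}\le\e$ for \emph{every} admissible segment family, then $\norm{\sum_na_nx_n}\le(\sqrt[q]{2}+\e')(\sum|a_n|^p)^{1/p}$.

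Everything thus reduces to an Amemiya--Ito-style lemma: passing to a subsequence that forces the off-diagonal mass $\mathrm{Off}=\sum_k\big(\sum_{n>n(k)}a_n^2\beta_{n,k}^2\big)^{p/2}$ to be uniformly small. Here the branch hypothesis enters. For fixed $k$ the segment $s_k$ lies on a branch $\sigma$, and for $n>n(k)$ one has $\beta_{n,k}\le\norm{x_n|_\sigma}\to 0$; so along any single branch the tail is negligible. Moreover, because $s_k$ is a chain, its node in block $n>n(k)$ is $\prec_{\tree}$-comparable to its node in the earlier block $n(k)$, so a later block can feed $s_k$ only along a chain issuing from the finite earlier support $\bigcup_{i<n}\supp(x_i)$: the off-diagonal sees only mass that travels up single chains from earlier blocks, each of which is governed by the pointwise limit $\norm{x_n|_\sigma}\to 0$. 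I therefore plan to build the subsequence recursively, using the finiteness of $\bigcup_{i<j}\supp(x_{n_i})$ and the pointwise limits along the chains issuing from it to pick $n_j$ so large that the mass $x_{n_j}$ can transmit backward along any such chain is at most $\e2^{-j}$; the disjointness of the $s_k$ together with the normalization of $(a_n)$ then sums these per-chain smallnesses into the global bound $\mathrm{Off}\le\e$.

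The step I expect to be the main obstacle is precisely this upgrade from the \emph{pointwise} branch condition to \emph{uniform} tail control over all segment families at once. One cannot hope to make $\sup_\sigma\norm{x_n|_\sigma}\to 0$ along a subsequence: a block sequence whose terms each carry a fixed spike $e_{a_n}$ along a $\prec_{\tree}$-antichain $(a_n)$ satisfies the hypothesis yet keeps $\sup_\sigma\norm{x_n|_\sigma}$ bounded away from $0$. What saves the argument is that such spikes, being incomparable to earlier supports, contribute nothing to $\mathrm{Off}$ — only chain-reachable mass matters — but quantifying this uniformly over the uncountably many branches, and legitimizing the recursive choice of $n_j$ simultaneously for all germs issuing from the growing finite support, is the delicate technical heart of the proof and the point demanding the most care.
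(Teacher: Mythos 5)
Your skeleton is sound up to its last step: the lower estimate is correct, and so is the reduction of the upper estimate, via the row estimate $\sum_k\beta_{n,k}^p\le\norm{x_n}^p=1$, to the bound $\Sigma\le 2^{p-1}(1+\mathrm{Off})$. The genuine gap is the deferred claim that one can pass to a subsequence along which $\mathrm{Off}\le\e$ for \emph{every} disjoint family of segments and every $\ell_p$-normalized coefficient vector. That claim is false, and the heuristic you offer for it is backwards. Take $x_1=e_t$ and $x_n=e_{u_n}$ for $n\ge 2$, where the nodes $u_n$ lie above $t$ along pairwise diverging branches, so that the $u_n$ are pairwise incomparable; this is a normalized block sequence, and since every branch is an initial chain containing at most $t$ and one $u_n$, it satisfies $\lim_n\norm{x_{n|\sigma}}=0$ for every branch $\sigma$. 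Yet for the single segment $s=[t,u_N]$ one has $n(s)=1$ and $\beta_{N,s}=1$, so $\mathrm{Off}\ge |a_N|^p=\tfrac12$ for the coefficients $a_1=a_N=2^{-1/p}$. The offending spike $e_{u_N}$ is \emph{comparable} to the earlier support: the hypothesis only forbids mass from accumulating along a single branch, and places no obstruction to every later block sending order-one mass up a \emph{fresh} branch emanating from a node of an earlier support; such mass is exactly what $\mathrm{Off}$ sees. Consequently your recursion deadlocks: in this example, once the anchor $x_1$ is selected, \emph{no} choice of $n_j$ has backward-transmittable mass at most $\e 2^{-j}$, and your scheme has no mechanism for detecting and discarding anchors (which, in iterated versions of this configuration, occur throughout the sequence). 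A further warning sign that the reduction demands too much: if $\mathrm{Off}\le\e$ were achievable, then Minkowski's inequality in $\ell_p$ (in place of your cruder $(\alpha+\gamma)^p\le2^{p-1}(\alpha^p+\gamma^p)$) would give a subsequence $(1+\e^{1/p})$-equivalent to the $\ell_p$-basis, a conclusion strictly stronger than the theorem and one the paper conspicuously does not assert.

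What is actually achievable is weaker, and it is the content of the paper's Ramsey-type Lemma~\ref{jtramsey} together with the gliding-hump Lemma~\ref{l13}: after passing to a subsequence, for every $k$ and every segment $s$ with $\min s\le q_{n_k}$ there is \emph{at most one} later block $i$ with $\norm{x_{i|s}}\ge\e_k$, with tolerances $\e_k$ summable against the weights $q_{n_k}$. That one large later block per segment cannot be pushed into an error term; in the paper's Lemma~\ref{lpequiv} it is recorded as a second index $i_{j,2}$ attached to the segment $s_j$ alongside $i_{j,1}$, and the resulting double counting of each segment (inequality \eqref{eq:21}) is precisely the source of the constant $\sqrt[q]{2}$; only the residual terms beyond $i_{j,1},i_{j,2}$ are absorbed by the summable tolerances. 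Your framework is repairable along the same lines: split $R_k$ into the single large term $a_{n^*(k)}^2\beta_{n^*(k),k}^2$ plus a genuinely small remainder, apply your row estimate to both the $n(k)$- and the $n^*(k)$-groupings, and control the remainder by the gliding hump; this closes the argument and in fact yields the slightly better constant $\sqrt{2}+\e$, since $(u+v)^{p/2}\le 2^{p/2-1}(u^{p/2}+v^{p/2})$. But without the at-most-one dichotomy, which is the real heart of the Amemiya--Ito argument and of the paper's proof, the proposal as written has a genuine gap at exactly the point carrying all the difficulty.
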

In order to prove this theorem we provide a refinement of the  Amemiya-Ito
lemma that enables us to use a gliding hump type argument.

In the third section  we study the weakly null sequences  that do
not satisfy  the assumption  of the above theorem. This
leads us to define the notion of $\ell_{p}$-vectors that can be traced
back  to 
\cite{LS}.  We prove the
following.
\begin{partproposition}
Let  $\sxn$ be a normalized weakly null sequence in $\jtp$. Then
there exists a subsequence $(x_{n_{k}})_{k}$ of $\sxn$    which is either isomorphic to  the unit
vector basis of $\ell_{p}$ or it  accepts a 
non-zero $\ell_{p}$-vector.

That means that 
there
exists  an  at most countable 
set of branches  $\brn_{i}$, $i\in N$, such that
$\lim_{k}\norm{x_{n_{k}|\brn_{i}}}=c_{i}>0$ for every $i\in N$,
and
$\lim_{k}\norm{x_{n_{k}|\tau}}=0$ for every branch $\tau\ne\brn_{i}$,
$i\in N$.
\end{partproposition}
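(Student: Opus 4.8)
The plan is to reduce to the branch-by-branch behaviour of a block sequence and then run a greedy extraction. \emph{First} I would record the single structural fact that drives everything: since any segment contained in a branch is an interval of that chain and $p\ge 2$, for a vector $x$ supported on a branch $\sigma$ the $\jtp$-norm is maximised by the single segment equal to the whole support, so
\[
\norm{x_{|\sigma}}=\norm{x_{|\sigma}}_{\ell_2}\le\norm{x}.
\]
Consequently the branch-restriction maps $x\mapsto\norm{x_{|\sigma}}$ are $1$-Lipschitz for $\norm{\cdot}$, uniformly in $\sigma$. Using Bessaga--Pe\l czy\'nski I would pass to a subsequence that is a small perturbation of a normalised block sequence; by the displayed estimate this changes every quantity $\norm{x_{n|\sigma}}$ by an amount tending to $0$ uniformly in $\sigma$, so it affects neither alternative. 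Hence \emph{I may assume $\sxn$ is a normalised block sequence}, in particular $\min\supp(x_n)\to\infty$.

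Next set $c(\sigma)=\limsup_n\norm{x_{n|\sigma}}$ for each branch $\sigma$, so $0\le c(\sigma)\le 1$. If $\sup_\sigma c(\sigma)=0$ then $\lim_n\norm{x_{n|\sigma}}=0$ for every branch and the preceding theorem produces a subsequence equivalent to the $\ell_p$-basis; this is the first alternative. Otherwise I extract branches greedily: having chosen distinct $\sigma_1,\dots,\sigma_{i-1}$ and a current subsequence, put $c_i=\sup\{\limsup_k\norm{x_{n_k|\sigma}}:\sigma\notin\{\sigma_1,\dots,\sigma_{i-1}\}\}$, pick $\sigma_i$ attaining it up to $\e_i\downarrow 0$ with $\e_i<c_i$, and pass to a further subsequence along which $\norm{x_{n_k|\sigma_i}}$ converges to a limit $c^{(i)}\ge c_i-\e_i$. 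A diagonal subsequence, still denoted $(x_{n_k})_k$, then satisfies $\lim_k\norm{x_{n_k|\sigma_i}}=c^{(i)}$ simultaneously for all $i$.

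The \textbf{crux} is to bound how many branches carry persistent mass, and the only obstacle is \emph{simultaneity}: a priori the $\sigma_i$ could be heavy along disjoint index sets, so the per-vector disjointness bound would not apply. The greedy$+$diagonal extraction is designed precisely to remove this, since along $(x_{n_k})_k$ all selected masses sit near their limits at once. To exploit it, fix $m$, let $w_i$ be the deepest node of $\sigma_i$ that also lies on some $\sigma_j$ with $j\le m$, $j\ne i$, and let $u_i$ be the child of $w_i$ on $\sigma_i$. The cones $C_i=\{t:t\succeq u_i\}$ are pairwise disjoint (distinct branches split into distinct children at their meet), and the finite initial set $V_m=\bigcup_{i\le m}\{t\in\sigma_i:t\preceq w_i\}$ is eventually missed by $\supp(x_{n_k})$. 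Thus for large $k$ one has $\norm{x_{n_k|\sigma_i}}=\norm{x_{n_k|s_i}}$ for pairwise disjoint segments $s_i\subseteq\sigma_i$, whence
\[
\sum_{i=1}^m\norm{x_{n_k|\sigma_i}}^p=\sum_{i=1}^m\norm{x_{n_k|s_i}}^p\le\norm{x_{n_k}}^p=1.
\]
Letting $k\to\infty$ gives $\sum_{i=1}^m(c^{(i)})^p\le 1$ for every $m$, so $\sum_i(c^{(i)})^p\le 1$ and in particular $c^{(i)}\to 0$.

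Finally I would check maximality: if some branch $\tau\notin\{\sigma_i\}$ had $\limsup_k\norm{x_{n_k|\tau}}=d>0$, then $\tau$ would remain an admissible candidate at every stage, forcing $c_i\ge d$ and hence $c^{(i)}\ge d-\e_i$ for all $i$, contradicting $c^{(i)}\to 0$. Thus $\lim_k\norm{x_{n_k|\tau}}=0$ for every branch outside the selected family. Since the choice $\e_i<c_i$ guarantees $c^{(i)}>0$, the finitely or countably many selected branches $\sigma_i$ satisfy $\lim_k\norm{x_{n_k|\sigma_i}}=c^{(i)}>0$ while $\lim_k\norm{x_{n_k|\tau}}=0$ for every other branch $\tau$. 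This is exactly a non-zero $\ell_p$-vector, the second alternative, completing the dichotomy.
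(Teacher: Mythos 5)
Your proposal is correct and follows essentially the same route as the paper's proof, which goes through a Lindenstrauss--Stegall-type lemma: reduce to a normalized block sequence, extract the branches carrying persistent mass by nested subsequences plus a diagonal argument, obtain the first alternative from the $\ell_p$-theorem when no branch carries mass, and control the selected branches by the fact that distinct branches have eventually disjoint final segments, which yields $\sum_{i\le m}\norm{x_{n|\sigma_i}}^p\le\norm{x_n}^p$ for large $n$. The only (cosmetic) difference is bookkeeping: the paper collects, for thresholds $\epsilon_j\searrow 0$, all of the at most $\epsilon_j^{-p}$ branches with limit mass at least $\epsilon_j$ at each level, whereas you select branches one at a time by near-maximal $\limsup$ and then recover both $(c^{(i)})_i\in B_{\ell_p}$ and the vanishing along every unselected branch a posteriori from the same disjoint-final-segment inequality.
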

This yields  that  in the case where the $\ell_{p}$-vector  of the sequence
$\sxn$  exists and it is not zero then there exists  an at most countable 
set  of branches determining the limit behavior of the norm of the
elements of the sequence.
 Using  this
as well as some stability results of  $\ell_{p}$-vectors,  which are  proved in this section,
we get one of  the main results of this section, which is the
following.
\begin{partproposition}
 Let $\sxn$  be a normalized weakly null sequence  in $\jtp$. Then
 there exists a subsequence of $\sxn$   that admits an upper
 $\ell_{2}$-estimate with constant  $4$.
\end{partproposition}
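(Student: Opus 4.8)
The plan is to reduce the problem to block sequences, where the segment definition of the norm makes an upper $\ell_2$-estimate essentially automatic, and then to transfer the estimate back to the original weakly null sequence by a routine perturbation. First I would record that the basis $(e_n)_{n\in\N}$ of $\jtp$ is $1$-unconditional: the norm sees a vector only through the quantities $\norm{x|_{s}}_{\ell_2}$, which are invariant under sign changes of the coordinates, so every coordinate projection has norm at most $1$. Hence $(e_n)_n$ is a $1$-unconditional (in particular Schauder) basis of the reflexive space $\jtp$, and by the Bessaga--Pełczyński selection principle the normalized weakly null sequence $\sxn$ admits a subsequence $\sxnk$ together with a normalized block sequence $(y_k)_k$ with $\sum_k\norm{x_{n_k}-y_k}$ as small as we wish. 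It therefore suffices to obtain an upper $\ell_2$-estimate for block sequences with a constant comfortably below $4$ and then to absorb the perturbation.

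The core step is the estimate for a normalized block sequence $(y_k)_k$. Fix scalars $(a_k)$ and disjoint segments $s_1,\dots,s_m$ of $\tree$, and set $d_{k,j}=\norm{y_k|_{s_j}}_{\ell_2}$. Since the supports of the $y_k$ are successive, for each fixed $j$ the vectors $y_k|_{s_j}$ have pairwise disjoint supports, so by the Pythagorean identity
\[
\Big\|\Big(\sum_k a_ky_k\Big)\Big|_{s_j}\Big\|_{\ell_2}^2=\sum_k a_k^2 d_{k,j}^2.
\]
Writing $r=p/2>1$ and applying Minkowski's inequality in $\ell_r$ over the index $j$ to the nonnegative vectors $(d_{k,j}^2)_j$ weighted by $a_k^2$ gives
\[
\Big(\sum_{j}\big(\sum_k a_k^2 d_{k,j}^2\big)^{p/2}\Big)^{2/p}\le \sum_k a_k^2\big(\sum_j d_{k,j}^{p}\big)^{2/p}.
\]
For each fixed $k$ the family $(s_j)_j$ is an admissible choice of disjoint segments for $y_k$, so by the definition of the norm $\sum_j d_{k,j}^p\le\norm{y_k}^p=1$. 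Thus the right-hand side is at most $\sum_k a_k^2$, and taking the supremum over all disjoint segment families yields $\norm{\sum_k a_k y_k}\le(\sum_k a_k^2)^{1/2}$; that is, normalized block sequences satisfy an upper $\ell_2$-estimate with constant $1$.

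Finally I would transfer this back: from $\norm{\sum_k a_k x_{n_k}}\le\norm{\sum_k a_k y_k}+\sum_k|a_k|\,\norm{x_{n_k}-y_k}$ and Cauchy--Schwarz on the error term, the perturbation contributes at most $(\sum_k\norm{x_{n_k}-y_k}^2)^{1/2}(\sum_k a_k^2)^{1/2}$, which is negligible once the block approximation is chosen accurately enough (and after renormalizing the truncated block vectors). This gives constant $1+\e$, hence in particular the claimed constant $4$. The only points needing care — none of them serious — are the use of $1$-unconditionality to control coordinate projections, the renormalization of the truncated blocks, and keeping the approximation errors summably small; the genuine content is the Minkowski/power-mean inequality above, where the exponent $p>2$ enters decisively. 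Alternatively, one can stay within the framework of the preceding proposition and split $\sxnk$ into its part along the countably many branches $\sigma_i$ carrying the $\ell_p$-vector — an $\ell_2$-part by the branch property (I) — and an off-branch residual with vanishing branch limits, to which the Part~I theorem assigns an upper $\ell_p$, hence upper $\ell_2$, estimate; the triangle inequality then recombines the two pieces, and it is this splitting, rather than the intrinsic size of the norm, that forces the constant $4$.
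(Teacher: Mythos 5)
Your proposal is correct, and its core step is genuinely different from---and sharper than---the paper's argument. The Minkowski computation is valid: for any \emph{fixed} finite family of disjoint segments $s_1,\dots,s_m$, the norming expression is an $\ell_p$-sum ($p\ge 2$) of $\ell_2$-norms of restrictions, so for disjointly supported $y_k$ the triangle inequality in $\ell_{p/2}$ together with Remark~\ref{remnormjt}(i) gives
\[
\sum_{j=1}^m\Big\|\Big(\sum_k a_ky_k\Big)\Big|_{s_j}\Big\|_{\ell_2}^p\le\Big(\sum_k a_k^2\|y_k\|^2\Big)^{p/2},
\]
and since this bound is uniform over all segment families it survives the supremum defining the norm. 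In other words, the norm of $\jtp$ satisfies an upper $2$-estimate with constant $1$ on disjointly supported vectors (a $2$-convexity phenomenon), so \emph{every} block sequence with $\|y_k\|\le 1$ is $1$-dominated by the unit vector basis of $\ell_2$, with no subsequence and no case analysis; the Bessaga--Pe\l czy\'nski reduction plus Cauchy--Schwarz on the error term then yields constant $1+\e$ for a subsequence of the weakly null sequence, well below $4$. The paper proceeds quite differently: it splits into cases according to the $\ell_p$-vector of the sequence; when all branch norms vanish it invokes Theorem~\ref{jtunique} (the Ramsey/Amemiya--Ito refinement) to get an upper $\ell_p$-, hence $\ell_2$-, estimate, and otherwise it uses Lemmas~\ref{lpsplit} and~\ref{stabilization} to write $x_{n_k}=\bar{x}_{n_k}+y_{n_k}$ with $\bar{x}_{n_k}$ supported on finitely many branches, estimating each piece separately with constant $2$ (via Lemma~\ref{suminbranches} and Remark~\ref{remnormjt}(iv)), which is exactly what produces the constant $4$---your closing ``alternative'' sketch is essentially this proof. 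What the paper's longer route buys is not this proposition per se but the $\ell_p$-vector and stabilization machinery itself, which is reused for the $\ell_2$/$\ell_p$ dichotomy and the complementation theorem (Theorem~\ref{complemented2}); what your route buys is brevity, an elementary self-contained argument, a better constant, and the stronger conclusion that the estimate holds for the full block sequence rather than a subsequence. The only points requiring the care you already flag are using $\|y_k\|\le 1+\delta$ (or renormalizing) for the approximating blocks and keeping the perturbation errors square-summably small.
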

The  above result yields that every  normalized weakly null sequence
in $\jtp$ with a non-zero $\ell_{p}$-vector has a subsequence that is equivalent to
the unit vector basis of $\ell_{2}$.

Combining the above we get the following.
\begin{partcorollary}
  Let $\sxn$ be a normalized weakly null sequence  in $\jtp$. Then
  $\sxn$ has a subsequence which is  either equivalent to the unit vector basis of $\ell_{p}$  or equivalent to the unit vector basis of $\ell_{2}$.
\end{partcorollary}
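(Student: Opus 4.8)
The plan is to combine the dichotomy of the first Proposition with the upper $\ell_2$-estimate of the second, the only genuinely missing ingredient being a matching lower $\ell_2$-estimate in the case that a branch survives. First I would invoke the first Proposition to pass to a subsequence $\sxnk$ of $\sxn$ that either is equivalent to the unit vector basis of $\ell_p$ --- in which case there is nothing left to prove --- or accepts a non-zero $\ell_p$-vector. So it remains to treat the latter case and produce a subsequence equivalent to the unit vector basis of $\ell_2$.

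In that case, by the description of the $\ell_p$-vector there is a branch $\brn$ of $\tree$ with $\lim_k\norm{x_{n_k|\brn}}=c>0$. After a further passage to a subsequence I may assume that $\sxnk$ is a block sequence (by the usual gliding-hump perturbation, which does not alter equivalence classes) and that $\norm{x_{n_k|\brn}}\ge c/2$ for all $k$. The key elementary observation is that for any vector $z$ supported on a single branch the $\jtp$-norm coincides with the $\ell_2$-norm: the lower bound is obtained by choosing the segment that spans the support of $z$, while for the upper bound any family of disjoint segments $s_1,\ldots,s_n$ satisfies $(\sum_k\norm{z|_{s_k}}_{\ell_2}^p)^{1/p}\le(\sum_k\norm{z|_{s_k}}_{\ell_2}^2)^{1/2}\le\norm{z}_{\ell_2}$, using $p>2$ together with the disjointness of the segments.

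Then I would establish the lower estimate as follows. Fix scalars $(a_k)$. Since the basis is $1$-unconditional (restriction to a coordinate subset does not increase the norm), we have $\norm{\sum_k a_k x_{n_k}}\ge\norm{\sum_k a_k x_{n_k|\brn}}$. The vectors $x_{n_k|\brn}$ are disjointly and successively supported inside the single branch $\brn$, so by the observation above and orthogonality in $\ell_2$,
\[
\Big\|\sum_k a_k x_{n_k|\brn}\Big\|=\Big(\sum_k|a_k|^2\,\norm{x_{n_k|\brn}}_{\ell_2}^2\Big)^{1/2}\ge\frac{c}{2}\Big(\sum_k|a_k|^2\Big)^{1/2}.
\]
This is a lower $\ell_2$-estimate. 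Combining it with the upper $\ell_2$-estimate with constant $4$ furnished by the second Proposition shows that $\sxnk$ is equivalent to the unit vector basis of $\ell_2$, which completes the dichotomy.

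The substantive content sits entirely in the two Propositions already proved, so I expect the corollary itself to be routine; the only new step is the lower $\ell_2$-estimate. The mild obstacle there is ensuring that the branch $\brn$ retains a definite proportion of the mass along the subsequence, which is exactly what the non-vanishing coordinate $c>0$ of the $\ell_p$-vector guarantees after the passage to a subsequence described above.
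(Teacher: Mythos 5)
Your proposal is correct and follows essentially the paper's own route: the paper likewise reduces to finding a branch $\brn$ along which the restrictions retain mass (it invokes Theorem~\ref{jtunique} in contrapositive form, while you invoke Proposition~\ref{lpdecomp}, which is just a repackaging of that theorem together with the Lindenstrauss--Stegall lemma), then obtains the lower $\ell_2$-estimate from the identity $\norm{\sum_k a_k x_{n_k|\brn}} = \big(\sum_k a_k^2\norm{x_{n_k|\brn}}^2\big)^{1/2}$ exactly as you do, and finishes with the upper $\ell_2$-estimate of Proposition~\ref{l2upperestimate}. Your explicit treatment of the block-sequence perturbation and of the norm-equals-$\ell_2$-norm observation on a branch only spells out details the paper leaves implicit.
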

The above also yields the following.
\begin{partcorollary}
The space $\jtp$ is reflexive.  
\end{partcorollary}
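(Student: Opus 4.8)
The plan is to invoke the classical theorem of R. C. James: a Banach space with a basis is reflexive if and only if that basis is both shrinking and boundedly complete; and since the basis $(e_{n})_{n\in\N}$ of $\jtp$ is unconditional, this is equivalent to the assertion that no normalized block sequence of $(e_{n})_{n}$ is equivalent to the unit vector basis of $c_{0}$ or of $\ell_{1}$. It therefore suffices to exclude these two possibilities, and the preceding Corollary (every normalized weakly null sequence has an $\ell_{p}$- or $\ell_{2}$-subsequence) is the main tool.

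The exclusion of $c_{0}$ is essentially immediate. If a normalized block sequence $\sxn$ were equivalent to the unit vector basis of $c_{0}$, then it would be weakly null, since the $c_{0}$ basis is weakly null and the equivalence transfers weak nullity between the spans (for $f\in \jtp^{*}$ one has $f(x_{n})=((T^{-1})^{*}f)(u_{n})\to 0$, where $T$ realizes the equivalence and $(u_{n})$ is the $c_{0}$ basis). By the preceding Corollary it would then have a subsequence equivalent to the unit vector basis of $\ell_{p}$ or of $\ell_{2}$; since $2<p<\infty$, no subsequence of the $c_{0}$ basis is equivalent to either, a contradiction. Hence $(e_{n})_{n}$ is boundedly complete.

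For shrinkingness I would prove that every normalized block sequence $\sxn$ is weakly null, which is the equivalent formulation. Passing to a subsequence, I may assume that $\lim_{n}\norm{x_{n|\brn}}$ exists for each of the countably many branches $\brn$ meeting the supports of infinitely many $x_{n}$. If all these limits vanish, the Part Theorem supplies a subsequence equivalent to the unit vector basis of $\ell_{p}$, which is weakly null. In the remaining case some branch $\brn$ satisfies $\lim_{n}\norm{x_{n|\brn}}=c>0$; here the restrictions $(x_{n|\brn})_{n}$ form a seminormalized successive block sequence inside $\cspn(e_{i})_{i\in\brn}$, which is isomorphic to $\ell_{2}$ and hence reflexive, so $(x_{n|\brn})_{n}$ is weakly null. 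Combined with the upper $\ell_{2}$-estimate furnished by the last Part Proposition, this should force $\sxn$ itself to be weakly null after a further subsequence. Applying the standard subsequence criterion — that a sequence is weakly null once every subsequence admits a weakly null further subsequence — then yields that $\sxn$ is weakly null. In particular no normalized block sequence obeys a lower $\ell_{1}$-estimate, so none is equivalent to the unit vector basis of $\ell_{1}$, and $(e_{n})_{n}$ is shrinking; James's theorem then gives reflexivity of $\jtp$.

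The hard part will be the non-vanishing case of the shrinking argument. The refined Amemiya--Ito analysis and the $\ell_{p}$-vector machinery underlying the Part Propositions are phrased for weakly null sequences, whereas certifying shrinkingness requires controlling block sequences \emph{not} assumed weakly null a priori. The crux is thus to promote the upper $\ell_{2}$-estimate and the weak nullity of the branch-restrictions to weak nullity of the full block sequence $\sxn$; once this bridge is in place, both the $c_{0}$ and the $\ell_{1}$ alternatives are ruled out and reflexivity follows. (Alternatively, bounded completeness may be read off directly from the supremum structure of the norm, exactly as for $JT$, in which case only the shrinking half genuinely requires the dichotomy.)
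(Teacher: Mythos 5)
Your skeleton (James's theorem plus $1$-unconditionality, reducing reflexivity to the non-existence of normalized block sequences equivalent to the unit vector bases of $c_{0}$ or $\ell_{1}$) is the right one, and your $c_{0}$ exclusion is complete. However, the shrinking half contains a genuine gap, which you yourself flag as ``the crux'': in the case where some branch $\brn$ has $\lim_{n}\norm{x_{n|\brn}}=c>0$, you only assert that the upper $\ell_{2}$-estimate ``should force'' weak nullity, while simultaneously noting that you have licensed that estimate only for sequences already known to be weakly null. As written the argument is circular. Moreover, the weak nullity of the branch restrictions $(x_{n|\brn})_{n}$, which you offer as the other ingredient, does not bridge anything: by Remark~\ref{remnormjt}(iv) it yields only a \emph{lower} $\ell_{2}$-estimate along the subsequence, and a lower $\ell_{2}$-estimate is perfectly consistent with $(x_{n})_{n}$ being equivalent to the unit vector basis of $\ell_{1}$.

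The gap closes once you observe two things. First, the machinery you need is in fact proved in the paper for \emph{arbitrary} normalized block sequences, with no weak nullness hypothesis: Lemma~\ref{lslemma} (extraction of the $\ell_{p}$-vector), Lemma~\ref{stabilization}, and Proposition~\ref{l2upperestimate} (upper $\ell_{2}$-estimate with constant $4$) use only the quantities $\norm{x_{n|\brn}}$; the ``weakly null'' phrasing occurs only in the Part I summary, not in the Section 3 statements. Second, an upper $\ell_{2}$-estimate by itself finishes the proof: if $\norm{\sum_{i}a_{i}x_{n_{i}}}\leq 4\left(\sum_{i}a_{i}^{2}\right)^{1/2}$ for all scalars, then for every norm-one functional $f$, choosing signs gives $\sum_{i}a_{i}\abs{f(x_{n_{i}})}\leq 4\left(\sum_{i}a_{i}^{2}\right)^{1/2}$ for all $a_{i}\geq 0$, so $(f(x_{n_{i}}))_{i}\in\ell_{2}$ and in particular $f(x_{n_{i}})\to 0$; this makes the subsequence weakly null, and your subsequence criterion then yields shrinkingness. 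Alternatively, bypass weak nullity entirely: a normalized block sequence equivalent to the $\ell_{1}$ basis would satisfy $cn\leq\norm{\sum_{i=1}^{n}x_{n_{i}}}\leq 4\sqrt{n}$ along the subsequence provided by Proposition~\ref{l2upperestimate}, a contradiction, and for an unconditional basis ``no $\ell_{1}$ block sequence'' is exactly shrinkingness. With either repair your outline becomes a complete proof, and it is essentially the argument the paper intends (the paper states the corollary without proof, immediately after the $\ell_{p}$/$\ell_{2}$ dichotomy).
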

The main result of the fourth  section  concerns the complemented
subspaces of  $\jtp$. We prove the following.
\begin{parttheorem}
    Let $Y$ be a  subspace of $\jtp$  not containing
    $\ell_{p}$.
  Then $Y$ is   a  complemented subspace of $\jtp$ isomorphic to $\ell_{2}$.
\end{parttheorem}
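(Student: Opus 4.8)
The plan is to reduce the entire statement to a single structural fact: that there exist finitely many branches $\sigma_1,\dots,\sigma_k$ of $\tree$ such that the canonical coordinate projection $P$ onto $Z:=\overline{\spn}\{e_j:j\in\sigma_1\cup\cdots\cup\sigma_k\}$ restricts to an isomorphism on $Y$. Granting this, the conclusion is purely functional-analytic. Since the basis of $\jtp$ is unconditional, $P$ is bounded; and because every segment of $\tree$ is a chain, hence lies inside a single branch, a vector supported on $\sigma_1\cup\cdots\cup\sigma_k$ has norm comparable to $(\sum_{i=1}^k\|x|_{\sigma_i}\|_2^2)^{1/2}$, so $Z$ is isomorphic to $\ell_2$ (along each branch the basis is $\ell_2$, and only $k$ branches occur). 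Thus $W:=P(Y)$ is a closed infinite-dimensional subspace of the Hilbertian space $Z$, whence $W\cong\ell_2$ and $Y\cong W\cong\ell_2$; moreover $W$ is complemented in $Z$ by a bounded projection $Q_0$. Setting $\Pi=(P|_Y)^{-1}\circ Q_0\circ P$ produces a bounded operator with range $Y$ and $\Pi|_Y=\mathrm{id}_Y$, i.e.\ a bounded projection of $\jtp$ onto $Y$. This yields both assertions of the theorem at once.

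It remains to produce the finitely many branches, and this is where the work lies. I would argue by contradiction: suppose that for every finite set of branches $\mathcal F$ the projection $P_{\mathcal F}$ onto $\overline{\spn}\{e_j:j\in\bigcup\mathcal F\}$ fails to be bounded below on $Y$. The goal is then to build inside $Y$ a normalized block sequence $(y_n)_n$ with $\lim_n\|y_{n|\sigma}\|=0$ for \emph{every} branch $\sigma$; the first Theorem of this part would then furnish a subsequence equivalent to the unit vector basis of $\ell_p$, contradicting the hypothesis on $Y$. Since $\jtp$ is reflexive, such block sequences are available as small perturbations of weakly null sequences in $Y$: at stage $n$ one records the finite set $\mathcal F_n$ of branches through the maximal chains of $\bigcup_{i\le n}\supp(y_i)$ and chooses $y_{n+1}\in S_Y$, essentially supported after $\max\supp(y_n)$, with $\|P_{\mathcal F_n}y_{n+1}\|$ arbitrarily small.

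The main obstacle is that this naive scheme does not force $\|y_{n|\sigma}\|\to0$ for every $\sigma$: a ``persistent'' branch may retain a definite share of the mass of each $y_n$ while escaping all the sets $\mathcal F_n$, since a branch can share arbitrarily long initial chains with the recorded branches and diverge only afterwards. Such a persistent branch produces, via the segment functional running along it, a lower $\ell_2$-estimate for $(y_n)_n$ instead of the desired $\ell_p$-behaviour. To overcome this I would invoke the first Proposition of the preceding section together with the stability results for $\ell_p$-vectors proved there: passing to a subsequence, $(y_n)_n$ either is equivalent to the basis of $\ell_p$ (and we are done) or admits a nonzero $\ell_p$-vector whose supporting branches are exactly the persistent ones, with weights in $\ell_p$. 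One then feeds each persistent branch back into the recorded finite family and repeats. The crux is to show that this refinement terminates: were infinitely many pairwise-independent persistent branches required, a diagonalization across them would yield normalized vectors of $Y$ concentrated on incomparable regions of $\tree$, and the incomparability forces an $\ell_p$-combination, again contradicting $Y\not\supseteq\ell_p$. Hence only finitely many branches persist, and these are the required $\sigma_1,\dots,\sigma_k$.

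Throughout, the quantitative inputs keep the estimates uniform: the constant $4$ from the upper $\ell_2$-estimate Proposition bounds $(y_n)_n$ from above, while the $\sqrt[q]{2}$-control in the first Theorem governs the $\ell_p$-direction. I expect the genuinely delicate point to be the termination argument for persistent branches -- equivalently, the finiteness of the essential support of the $\ell_p$-vector across all weakly null sequences of $Y$ -- for which the stability of $\ell_p$-vectors is indispensable; the remainder is routine bookkeeping together with the soft complementation argument of the first paragraph.
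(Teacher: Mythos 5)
Your global architecture is the same as the paper's: reduce the theorem to producing finitely many branches on which the coordinate restriction is an isomorphism (the paper proves this on a finite codimensional subspace $Y_0$ of $Y$ and then upgrades to $Y$ via Lemma~\ref{standardlemma}); prove that reduction by contradiction, extracting weakly null sequences with small projection onto recorded branches and exploiting $\ell_p$-vectors; and finish with the soft argument that $Z\cong\ell_2$, that $P(Y)$ is complemented there, and that $\Pi=(P|_Y)^{-1}\circ Q_0\circ P$ is a bounded projection onto $Y$, which is exactly the paper's conclusion. (A minor repair: your weaker negation, failure of boundedness below on all of $Y$ rather than on every finite codimensional subspace, produces witnesses essentially supported far out only if at each stage you also record branches covering the whole initial interval $[1,\max\supp y_n]$, not merely branches through the previous supports.) The genuine gap is in the step you yourself single out as the crux, the termination of the feeding-back process.

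You assert that if the process never terminates, a diagonalization yields normalized vectors of $Y$ ``concentrated on incomparable regions of $\tree$'', so that incomparability forces an $\ell_p$-estimate. This cannot be arranged. The branch families $\mathcal{B}_1,\mathcal{B}_2,\dots$ produced by the iteration are pairwise disjoint as sets of branches, but their regions in the tree need not be incomparable: a branch of a later family may agree with a branch of an earlier family far beyond the truncation level already fixed for the earlier diagonal vector, so the later vector's support contains descendants of nodes in the earlier vector's support. Your hypothesis only makes the \emph{mass} of later vectors on earlier \emph{branches} small; it gives no control over where later branches split off from earlier supports, and recording more branches cannot help, since forcing genuine incomparability would require annihilating the mass on the entire subtree of descendants of the earlier supports, which is not a union of finitely many branches. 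Hence Lemma~\ref{suminbranches} does not apply to the diagonal sequence, and no $\ell_p$-estimate comes from incomparability of supports. The paper's substitute (the Claim inside Proposition~\ref{lemmalp}) is to show that every fixed branch $\sigma$ satisfies $\lim_j\norm{z_{j|\sigma}}=0$ for the diagonal vectors $z_j$, using the index-wise incomparability of final segments supplied by Lemma~\ref{incomparable2} together with the uniform decay of the normalized $\ell_p$-vectors, and then to invoke Theorem~\ref{jtunique} --- precisely the vanishing-along-every-branch criterion you set up at the beginning of your own sketch but abandoned at this point. Note also that for this to work (in particular, to perturb the diagonal vectors back into $Y$ via Lemma~\ref{concentration0}) the $\ell_p$-vectors must first be normalized to lie in $S_{\ell_p}$; this is the role of the $\ell_2$-averaging results, Lemma~\ref{lemmap1} and Proposition~\ref{lpvectornorm1}, which your appeal to ``stability results'' should be understood to include.
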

To prove this theorem we employ  some  properties  of the  $\ell_{p}$-vector  of a normalized
weakly null sequence
to achieve the following.
\begin{partproposition}
For  every  subspace $Y$ of $\jtp$  not containing $\ell_{p}$ there
 exist a finite set of branches $T=\{\brn_{1},\dots,\brn_{d}\}$  and a finite codimensional
subspace  $Y_{0}$ of $Y$ such that  the  restriction  to $Y_{0}$  of the natural projection
 on the subspace $Z_{T}=\overline{\langle\{e_{n}:n\in \cup_{i=1}^{d}\brn_{i}\}\rangle}$
is an  isomorphism.
\end{partproposition}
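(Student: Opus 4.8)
The plan is to deduce the conclusion from the standard criterion for a bounded operator on a reflexive space to be an isomorphism on a finite-codimensional subspace, once the correct finite set of branches $T$ has been isolated.

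First, observe that since the basis $(e_n)_{n\in\N}$ of $\jtp$ is $1$-unconditional, for any finite set of branches $T$ the natural projection $P=P_{Z_T}$ onto $Z_T$ is bounded. Because $\jtp$ is reflexive (the last corollary of the third section), I would invoke the following routine dichotomy: $P|_Y$ is bounded below on a finite-codimensional subspace of $Y$ if and only if there is no normalized weakly null sequence $(y_k)_{k\in\N}$ in $Y$ with $\norm{P y_k}\to 0$. If such a sequence exists, small perturbations place almost-unit vectors with arbitrarily small $P$-image inside every finite-codimensional subspace; conversely, the failure of boundedness below on every finite-codimensional subspace produces, by reflexivity and a gliding hump, exactly such a weakly null sequence. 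Thus the statement reduces to exhibiting a finite $T$ for which no normalized weakly null sequence of $Y$ is annihilated in the limit by $P_{Z_T}$.

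Second, I would feed in the $\ell_p$-vector machinery of the third section. Since $Y$ does not contain $\ell_p$, the dichotomy corollary forbids any normalized weakly null sequence of $Y$ from having an $\ell_p$ subsequence; hence every such sequence has, after passing to a subsequence, a non-zero $\ell_p$-vector, i.e. an at most countable set of branches $(\brn_i)_i$ with $\lim_k\norm{x_{n_k|\brn_i}}=c_i>0$, $\norm{(c_i)_i}_{\ell_p}\le 1$, and vanishing limit norm on every other branch. The lower $\ell_2$-estimate of the sequence is governed by this vector, while the upper $\ell_2$-estimate with constant $4$ is supplied by the second proposition of the third section; together they re-confirm that the sequence is equivalent to the $\ell_2$ basis.

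Third, and this is the heart of the matter, I would prove that the $\ell_2$-content of $Y$ localizes on finitely many branches: the goal is a finite $T=\{\brn_1,\dots,\brn_d\}$ and a $\delta>0$ such that every normalized weakly null sequence of $Y$ retains mass at least $\delta$ on $Z_T$, whence $\norm{P_{Z_T}y_k}\not\to 0$. The argument I would run is by contradiction, with two ingredients. The first is an $\ell_p$-incompatibility: only finitely many branches can carry, across all of $Y$, mass bounded below by a fixed threshold, for otherwise a gliding-hump selection of blocks supported, up to small error, on pairwise $\prec_{\mathcal T}$-incomparable segments would, since incomparable families generate $\ell_p$, produce a copy of $\ell_p$ inside $Y$. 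The second is that mass spread thinly over the remaining branches is $\ell_p$-negligible: after removing the finitely many heavy branches via $I-P_{Z_T}$, a sequence on which $P_{Z_T}y_k\to 0$ would have vanishing limit norm on every branch, and the refined Amemiya--Ito theorem of the second section would then return a subsequence $(\sqrt[q]{2}+\e)$-equivalent to the $\ell_p$ basis, again contradicting $Y\not\supseteq\ell_p$. Combining the two ingredients pins down the finite $T$ and the uniform $\delta$, and Step~1 then promotes $P_{Z_T}|_{Y_0}$ to an isomorphism on a finite-codimensional $Y_0\subseteq Y$.

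The step I expect to be the genuine obstacle is the third, and within it the passage from ``no single branch is heavy'' to ``the residual has vanishing branch norms''. The difficulty is that the $\ell_2$-structure of a weakly null sequence may be carried not by one dominant branch but by an $\ell_p$-spread of many small masses; ruling out that this spread survives the removal of finitely many branches requires the stability results for $\ell_p$-vectors together with the quantitative, gliding-hump form of the Amemiya--Ito lemma, so that the averaging used to shrink the per-branch masses can be carried out while keeping the vectors normalized and their supports successive.
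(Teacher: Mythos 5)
Your Steps 1 and 2 are correct and agree with the paper's own reduction: under the negation, for every finite set of branches $T$ one produces a normalized weakly null sequence $(x_k)_k$ in $Y$ with $\norm{x_{k|T}}\to 0$ (the paper takes normalized $x_k\in Y\cap\bigcap_{i\leq k}\ker e_i^*$ with $\norm{x_{k|T}}<k^{-1}$), and since $Y$ contains no $\ell_p$, Theorem~\ref{jtunique} together with Proposition~\ref{lpdecomp} forces a subsequence to have a non-zero $\ell_p$-vector. The genuine gap is Step 3, and both of its ingredients fail as statements, not merely as proofs. Ingredient (a) is false: fix a node $t_0$, branches $\tau,\sigma_1,\sigma_2,\dots$ passing through $t_0$ and diverging immediately after it, normalized blocks $u_{(i,k)}$ supported successively along $\tau$ beyond $t_0$, normalized blocks $v^i_k$ supported successively along $\sigma_i$ beyond $t_0$, and set $w^i_k=a\,u_{(i,k)}+b\,v^i_k$ with $a^p+b^p=1$, $b=\e$ small. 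By Lemma~\ref{suminbranches}, $\norm{w^i_k}=1$ and the closed span $Y$ of $\{w^i_k:i,k\in\N\}$ satisfies $a\norm{\lambda}_{\ell_2}\leq\norm{\sum_{i,k}\lambda_{i,k}w^i_k}\leq\norm{\lambda}_{\ell_2}$, so $Y$ is isomorphic to $\ell_2$ and contains no $\ell_p$; yet for every $i$ the normalized weakly null (block) sequence $(w^i_k)_k$ has $\norm{w^i_{k|\sigma_i}}=\e$ for all $k$, so infinitely many branches carry mass $\e$ across $Y$. Your gliding-hump justification breaks down exactly here: the witnessing vectors have only $\e$ of their mass on the incomparable segments and the bulk of it on the common branch $\tau$, so they span $\ell_2$, not $\ell_p$. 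Ingredient (b) is also false: the branches on which weakly null sequences of $Y$ can retain mass form in general a countably infinite set with masses accumulating at $0$ (Lemma~\ref{lslemma} gives only $\ell_p$-summability per sequence), so whatever finite threshold set $T$ you remove, a sequence with $P_{Z_T}y_k\to 0$ may still have a non-zero $\ell_p$-vector spread over infinitely many branches of individually small mass; such a sequence has an $\ell_2$-subsequence, Theorem~\ref{jtunique} does not apply to it, and no contradiction with $Y\not\supseteq\ell_p$ arises.

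What actually closes the argument in the paper is not a thresholding over branches but an induction with nested avoidance, Proposition~\ref{lemmalp}. Under the negation one chooses, at stage $k$, a weakly null sequence avoiding the finite union of the branch families fixed at earlier stages; its non-zero $\ell_p$-vector is first renormalized to have $\ell_p$-norm one by passing to $\ell_2$-averages of increasing length (Lemma~\ref{lemmap1} and Proposition~\ref{lpvectornorm1}) --- the averaging annihilates precisely the part of the mass that vanishes on every branch, since that part is $\ell_p$-dominated and $p>2$ --- and then Lemma~\ref{concentration0} makes the resulting vectors, up to small error, supported on a finite initial family $\mc{B}_{I_k}$ of their own associated branches. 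Only because of this concentration, combined with Lemma~\ref{incomparable2} which arranges the families $\mc{B}_{I_k}$ to have pairwise incomparable final segments, does the diagonal sequence vanish in norm on every branch, so that Theorem~\ref{jtunique} produces a copy of $\ell_p$ inside $Y$ and the desired contradiction. Your closing paragraph points at the right tools (averaging, stability of $\ell_p$-vectors), but they cannot be wired into your two-ingredient scheme: the passage from ``finitely many heavy branches'' to ``the residual vanishes on every branch'' is irreparable, and the iteration over growing finite branch sets is what must replace it.
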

 Since the subspace $Z_{T}$ is
 Hilbertian  it follows  that $Y_{0}$ is
 Hilbertian as well. This yields
that  the  subspace $Y$ is
also a  Hilbertian and  moreover complemented in $\jtp$.

 As a last result  of  Section 4 we prove that the space  $\jtp$ is
 isomorphic to the space $\jtp^{\dtd}$,  where $\jtp^{\dtd}$  is the space whose  norm 
 is defined as the norm of 
 $\jtp$
 but using the dyadic  tree $\dtd$ instead of the infinitely
 brancing 
 tree $\tree$.  The reflexivity of the spaces
$\jtp^{\dtd}$  and 
 $\jtp$ implies that the natural preduals of those spaces are
 isomorphic, in contrast  to the result of  N. Ghoussoub and  B. Maurey 
 who proved that the  preduals  of $JT$ and $JT^{\infty}$ are not
 isomorphic \cite{GhM}.

        \section{The definition of the norm of the space $\jtp$}
In this section we define the norm of the space $\jtp$ and we present
some of its basic properties. We start with the following notation.
\begin{notation}    We denote by $\N=\{1,2,\ldots\}$  the set of all positive integers.
Let  $\preceq$ be a partial order on $\N$, compatible with the standard order (i.e., $n \leq m$ whenever $n\preceq m$), defining an infinitely branching tree $\tree_{0}$ without a root.  Let $\tree$ be the tree we obtain by subjoining  to $\tree_{0}$ the empty set  as a root.

A segment is  any  set of the form  $[m,n]=\{k: m\preceq k\preceq n\}$.
We shall denote the segments by the letters  $\sgm,\sgmt$.
An initial segment   is a segment having as its  first element the root.
If $\s$ is a segment and $I$  an interval of   $\N$, the    restriction of $\s$ to $I$
is the segment  $\s_{|I}=\s\cap I$.

A branch is a maximal segment.   We shall denote the branches by the letters $\sigma,\tau$.
If $\sigma$ is a branch and $n\in\N$ we denote by  $\sigma_{> n}$
the set  $\sigma_{> n}=\{k\in \sigma: k>n\}$.
For a branch $\sigma$ and $n\in\N$ we shall call the set $\sigma_{> n}$ a final segment.

Also by $\brn_{< n}$ we shall denote the initial segment 
$\sigma_{<n}=\{k\in \sigma: k<n\}$.
In a similar manner, we define the initial segment   $\brn_{\leq n}$ and the final segment $\brn_{\geq n}$.

Two final   segments
$\sigma_{1,>n_{1}}$,$\sigma_{2,>n_{2}}$
are said to be incomparable  if their first elements are incomparable.
Let us note that

a)  if $\sigma_{1},\dots,\sigma_{l}$ are distinct branches then there exists
$k\in\N$  such that  the final segments $\sigma_{1,> k},\dots, \sigma_{l,>k }$ are  incomparable.

b) if $(\sgm_{n})_{n\in\N}$ is a sequence  of initial segments there exist a subsequence $(\sgm_{n})_{n\in L}$ and an  initial segment or branch $\sgm$ such that
$\sgm_{n}\xrightarrow[n\in L]{} \sgm$ in the pointwise topology.

If $M$ is an infinite subset of $\N$ we denote by $[M]$ the set of the
infinite subsets of $M$.

 For   $x\in c_{00}$  and a segment $\sgm$  of the tree we define
        $x_{|\s}$ be the restriction of $x$ to $\s$, i.e.,  $x_{|\sgm}=\sum_{n\in\sgm}x(n)e_{n}$.

        Given a Banach space $X$,
by $B_{X}$ we denote the unit ball of $X$ and  by $S_{X}$ its the unit
sphere.
If     $X$  has   a Schauder basis $(e_n)_n$, then for
every $x\in X$ with $x=\sum_na_ne_n$ we denote by $\supp(x)$  the
support of $x$, i.e.  $\supp(x)=\{n\in\N:a_n\neq0\}$.

For a given sequence $(a_{n})_{n}\in\ell_{p}$,   its  conjugate sequence
is the sequence $(b_{n})_{n}\in S_{\ell_{q}}$ such  that $\sum_{n}a_{n}b_{n}=\norm{(a_{n})_{n}}_{\ell_{p}}$.

In general, we follow \cite{LT} for standard notation and terminology concerning Banach space theory.
    \end{notation}
\subsection{The norm of the space $\jtp$}
 For $x\in c_{00}$ and a segment $\sgm$    we define 
\[\norm{x}_{\sgm}=\norm{x_{|\sgm}}_{\ell_{2}}=\left(\sum_{i\in \sgm}x(i)^{2}\right)^{1/2}.\]
		
		\begin{definition}Let  $p\in (2,\infty)$. On $c_{00}(\N)$ we define the following norm
                  \begin{equation}
\begin{split}\|x\|&=\sup\bigg\{
\left(\sum_{k=1}^{n}\norm{x}_{\sgm_{k}}^{p}\right)^{\frac{1}{p}}
  : \sgm_1,\ldots,\sgm_n\,\text{are pairwise disjoint segments}, n\in\N
  \bigg\}\notag
  \\
&=\sup\bigg\{
\left(\sum_{k=1}^{n}(\sum_{i\in \sgm_{k}}x(i)^{2})^{\frac{p}{2}}\right)^{\frac{1}{p}}
  : \sgm_1,\ldots,\sgm_n\,\text{are pairwise disjoint segments}, n\in\N
  \bigg\}.
\end{split}
\end{equation}
The completion of $c_{00}(\N)$ with respect to the above norm is
denoted by $JT_{2,p}$.
\end{definition}
Let us observe, for later  use, that
    $(e_{n})$ is a 1-unconditional and boundendly
                      complete basis of $\jtp$.	
Also it is easy to see that for every segment  $s$ and $x\in c_{00}(\N)$ 
it  holds 
$\norm{x}_{s}=\norm{x_{|s}}$.

\subsection{An alternative definition of the norm}
For every branch  $\sigma$ of  $\tree$ we set 
\[
B^{*}_{\sigma}=\{\sum_{i\in \sgm}a_{i}e^{*}_{i}:\s\,\,\text{is a segment of }\,\sigma\,\text{and}\, (a_{i})_{i\in \s}\in B_{\ell_{2}} \}.
\]
For $\phi\in B_{\sigma}^{*}$ we say that $\phi$ is supported   by the segment $\sgm$ if
$\phi=\sum_{i\in \sgm}a_{i}e^{*}_{i}$.
Let  $\mc{K}=\cup\{B_{\sigma}^{*}:\sigma\,\, \text{branch of}\,\,\tree \}$.

We set
\begin{align*}
    W=\Bigg\{\sum_{i=1}^{n}b_{i}\phi_{i}: \sum_{i=1}^{n}b_{i}^{q}\le1,&\;
                                                             \phi_{i}\in\mc{K}
                                                             \,\textnormal{
                                                             and
                                                             are supported by disjoint segments},n\in\N \Bigg\}.
\end{align*}
It follows easily that
\begin{enumerate}
\item  $W$  is symmetric and closed under projections on intervals.
\item  $W$ is a norming set  of $\jtp$ i.e.
  \[
\normm[x]=\sup\{ \phi(x): \phi\in W\}.
\]
\end{enumerate}
\begin{remark}\label{remnormjt}
  i)  For  $x\in\jtp$ and disjoint segments $\s_{1},\dots, \s_{n}$
  \[
\norm{x}^{p}\geq \sum_{i=1}^{n}\norm{x}_{\s_{i}}^{p}.
  \]
ii) Let $\varepsilon>0$, $x\in \jtp$ with $\|x\|=1$ and
$\s_i, i\in I$, be  pairwise disjoint segments,  with the property that
$\norm{x}_{\s_{i}}\ge\varepsilon$ for every $i\in I$.
Then i) yields that $\#I\le{1}/{\varepsilon^{p}}$.

iii) For every $n\in\N$, $\phi_{1},\dots,\phi_{n}\in \mc{K}$ that are supported by pairwise disjoint segments, $b_1,\ldots,b_n\in \R$ and  $x\in JT_{2,p}$ it holds that
			\[
			\abs{\sum_{i=1}^{n}b_{i}\phi_{i}(x)}\le \left(\sum_{i=1}^{n}b_{i}^{q}\right)^{1/q}\normm[x].
                      \]
                      iv)   If $x_{1},\dots, x_{n}$ is a finite block 
                      sequence and $\brn$ is a branch  then
\[                   \normm[x_{1|\brn}+\dots+x_{n|\brn}]=\left(\sum_{i=1}^{n}\norm{x_{i|\brn}}^{2}\right)^{1/2}.
                      \]
 \end{remark}
\begin{lemma}\label{suminbranches}
  Let $\brn_{1},\dots, \brn_{d}$ be distinct branches of $\tree$ and $\rho\in\N$ such that  the final segments $\brn_{i,>\rho}$ are incomparable. 
Then for
  a  finitely supported vector $x\in\jtp$ such that $\rho< \supp (x)\subset\cup\{\supp\brn_{i}:i\leq d\}$   it holds that
  \begin{equation*}
  \norm{x}=\norm{x_{|\brn_{1}}+\dots+x_{|\brn_{d}}}=
   \left(\sum_{i=1}^{d}\norm{x_{|\brn_{i}}}^{p}\right)^{1/p}.
  \end{equation*}
\end{lemma}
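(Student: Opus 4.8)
The first equality is immediate. Since $\rho<\supp(x)\subseteq\bigcup_{i\le d}\brn_i$, every coordinate of $x$ lies above $\rho$, so in fact $\supp(x)\subseteq\bigcup_{i\le d}\brn_{i,>\rho}$ and $x=\sum_{i=1}^d x_{|\brn_i}$ with $x_{|\brn_i}=x_{|\brn_{i,>\rho}}$. Hence only the second equality requires an argument, and the plan is to prove matching lower and upper bounds after recording two structural observations.

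First I would pin down the geometry. Because the first elements of the final segments $\brn_{1,>\rho},\dots,\brn_{d,>\rho}$ are pairwise incomparable, these segments are in fact pairwise \emph{totally} incomparable: no element of $\brn_{i,>\rho}$ is $\preceq$-comparable to an element of $\brn_{j,>\rho}$ when $i\ne j$. Indeed, comparable elements $c\in\brn_{i,>\rho}$ and $c'\in\brn_{j,>\rho}$, say $c\preceq c'$, would force the (incomparable) first elements $a_i\preceq c$ and $a_j\preceq c'$ to be common $\preceq$-ancestors of $c'$, hence comparable, a contradiction. The consequence I need is that any segment, being a $\preceq$-chain, can meet $\supp(x)$ inside at most one of the branches $\brn_i$. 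Second, for a vector $y$ supported on a single branch one has $\norm{y}=\norm{y}_{\ell_2}$; this is exactly Remark \ref{remnormjt}(iv) (equivalently, splitting the $\ell_2$ mass of $y$ among several disjoint sub-segments can only shrink the value, since $\sum_k b_k^{p/2}\le(\sum_k b_k)^{p/2}$ for $b_k\ge 0$ when $p/2>1$). In particular $\norm{x_{|\brn_i}}=\norm{x_{|\brn_i}}_{\ell_2}$, so the right-hand side of the asserted identity equals $(\sum_{i=1}^d\norm{x}_{\brn_{i,>\rho}}^p)^{1/p}$.

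The lower bound is then cheap: the segments $\brn_{1,>\rho},\dots,\brn_{d,>\rho}$ are pairwise disjoint, so Remark \ref{remnormjt}(i) gives $\norm{x}^p\ge\sum_{i=1}^d\norm{x}_{\brn_{i,>\rho}}^p=\sum_{i=1}^d\norm{x_{|\brn_i}}^p$.

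The upper bound is the substance of the lemma. Fix arbitrary pairwise disjoint segments $s_1,\dots,s_n$; after discarding those that miss $\supp(x)$, the geometric observation assigns to each remaining $s_k$ a unique index $i(k)$ with $s_k\cap\supp(x)\subseteq\brn_{i(k),>\rho}$. Grouping the segments according to $i(k)$ and setting $b_{i,k}=\sum_{j\in s_k\cap\brn_i}x(j)^2$, the disjointness of the $s_k$ yields $\sum_{k:\,i(k)=i}b_{i,k}\le\norm{x_{|\brn_i}}_{\ell_2}^2$ for each $i$, whence $\sum_{k:\,i(k)=i}\norm{x}_{s_k}^p=\sum_{k:\,i(k)=i}b_{i,k}^{p/2}\le\big(\sum_{k:\,i(k)=i}b_{i,k}\big)^{p/2}\le\norm{x_{|\brn_i}}_{\ell_2}^p$, once more using $p/2>1$. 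Summing over $i$ and taking the supremum over all choices of disjoint segments gives $\norm{x}\le(\sum_{i=1}^d\norm{x_{|\brn_i}}^p)^{1/p}$, and together with the lower bound this proves the identity. The one delicate point is the passage from ``incomparable first elements'' to ``no segment straddles two branches on $\supp(x)$'': this is precisely what decouples the global $\ell_p$-supremum into independent per-branch contributions, and without it the upper bound would fail.
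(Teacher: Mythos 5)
Your proof is correct and follows essentially the same route as the paper's: the lower bound comes from the pairwise disjointness of the incomparable final segments, and the upper bound from the key observation that any segment lying above $\rho$ can meet at most one $\brn_{i}$, followed by grouping the disjoint segments per branch and using superadditivity of $t\mapsto t^{p/2}$ (the content of Remark~\ref{remnormjt}(i)). The only difference is presentational: you prove the ``segment meets at most one branch'' claim explicitly via comparability of ancestors, where the paper simply asserts it.
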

\begin{proof}
The incomparability   of the  final segments  $\brn_{i,\geq\min\supp x}$, $i\leq d$,  yields
  \[
 \left(\sum_{i=1}^{d}\norm{x_{|\brn_{i}}}^{p}\right)^{1/p}
\leq \norm{x_{|\brn_{1}}+\dots+x_{|\brn_{d}}}
\]
For the  reverse inequality,
let us observe that for any segment $\s$ with $\rho<\min\s$ there exists
at most one $i\leq d$ such that $\s\cap\brn_{i}\ne\emptyset$. Let  $\s_{1},\dots, \s_{k}$ be pairwise
disjoint  segments.
Since $\min\supp (x)>\rho$  we  may assume that $\s_{i}>\rho$  for every $i$.
For $i\leq d$ set  $J_{i}=\{l\leq k: x_{|\brn_{i}\cap \s_{l}}\ne 0\}$.  Then from the above observation,
  \begin{align*}
  \sum_{l=1}^{k}\norm{x_{|\s_{l}}}^{p}=    \sum_{l=1}^{k}\norm{(\sum_{i=1}^{d}x_{|\brn_{i}})_{|\s_{l}}}^{p}
    =\sum_{i=1}^{d}\sum_{l\in J_{i}}\norm{(x_{|\brn_{i}})_{|\s_{l}}}^{p}
   \leq\sum_{i=1}^{d}\norm{x_{|\brn_{i}}}^{p}.
  \end{align*}
  The above inequality implies
$ \norm{x_{|\brn_{1}}+\dots+x_{|\brn_{d}}}\leq\left(\sum_{i=1}^{d}\norm{x_{|\brn_{i}}}^{p}\right)^{1/p}$.
\end{proof}                      
\section{$\ell_{p}$-sequences in $\jtp$.}
It follows easily from the definition of the norm  that for any
subsequence  $(e_{k_{n}})_{n}$ of the basis such that $(k_{n})_{n}$
are incomparable nodes of the tree, the subspace $[e_{k_{n}}:n\in\N]$ is
isometric to  $\ell_{p}$ and in particular $(e_{k_{n}})_{n}$ is
$1$-equivalent to the unit vector basis of $\ell_{p}$. In this section we provide a
complete  characterization of the normalized block sequences which are
equivalent  to the unit vector basis of $\ell_{p}$. More precisely  we shall prove the
following theorem.

\begin{theorem}\label{jtunique}
Let $(x_{n})_{n\in\N}$ be  normalized block sequence in $\jtp$  such
that  $\lim_{n}\norm{x_{n|\sigma}}=0$ for every branch $\sigma$ of $\tree$.  Then,
there exists a subsequence $(x_{n})_{n\in N}$ of $(x_{n})_{n}$ that is
equivalent to the   unit vector basis of $\ell_{p}$. In  particular for every $\e>0$ there exists a subsequence of
$(x_{n})_{n}$ which is  $(\sqrt[q]{2}+\e)$-equivalent to the unit vector basis of $\ell_{p}$.
\end{theorem}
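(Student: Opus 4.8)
The plan is to establish two-sided $\ell_p$-estimates along a subsequence, arranging the lower estimate to have constant essentially $1$ and concentrating all difficulty in the upper estimate, whose constant will be $\sqrt[q]{2}+\e$ (here $q$ is the conjugate exponent of $p$).

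\medskip

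First I would dispose of the lower estimate, which uses no hypothesis on the branches. For each $n$ choose pairwise disjoint segments $s^n_1,\dots,s^n_{r_n}$ realizing $\norm{x_n}=1$ up to a factor $(1-\delta)$, and replace each by its restriction to the interval $[\min\supp x_n,\max\supp x_n]$; by the Notation this is again a segment and its action on $x_n$ is unchanged. Since $\sxn$ is a block sequence these intervals are pairwise disjoint, so the whole family $\{s^n_j\}_{n,j}$ consists of pairwise disjoint segments. Testing $\sum_n a_n x_n$ against this family and using $\norm{\sum_m a_m x_m}_{s^n_j}=\abss{a_n}\norm{x_n}_{s^n_j}$ gives $\normm[\sum_n a_n x_n]^p\ge (1-\delta)\sum_n\abss{a_n}^p$, i.e.\ the lower $\ell_p$-estimate with constant $(1-\delta)^{1/p}\to 1$.

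\medskip

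For the upper estimate I would pass to the dual description $\normm[x]=\sup\{\phi(x):\phi\in W\}$ and, by the $\ell_p$--$\ell_q$ duality, reduce the whole theorem to the single inequality $\sum_n\abss{\phi(x_n)}^q\le 2+\e'$ for every $\phi\in W$, after passing to a subsequence. Writing $\phi=\sum_i b_i\phi_i$ with $\sum_i b_i^q\le 1$ and each $\phi_i=\sum_{j\in s_i}a^{(i)}_j e^*_j\in\mc{K}$ supported on a segment $s_i\subset\brn_i$ with $\sum_j (a^{(i)}_j)^2\le 1$, two elementary facts are available: for each fixed $n$ the disjointness of the $s_i$ gives $\sum_i\norm{x_n}_{s_i}^p\le\normm[x_n]^p=1$ by part (i) of Remark~\ref{remnormjt}; and for each fixed $i$ one has $\abss{\phi_i(x_n)}\le\sqrt{\beta_{i,n}}\,\norm{x_{n|\brn_i}}$, where $\beta_{i,n}=\sum_{j\in s_i\cap\supp x_n}(a^{(i)}_j)^2$ and, the sets $s_i\cap\supp x_n$ being disjoint, $\sum_n\beta_{i,n}\le 1$.

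\medskip

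The heart of the matter is the choice of subsequence, where the refinement of the Amemiya--Ito lemma and the gliding-hump argument enter. Fix $\epsilon_0>0$. For each $n$ the pairwise disjoint segments on which $x_n$ has $\ell_2$-mass $\ge\epsilon_0$ number at most $\epsilon_0^{-p}$ by part (ii) of Remark~\ref{remnormjt}, and I collect their top nodes into a finite set $V_n$. The key observation is that an increasing chain $v_{n_1}\prec v_{n_2}\prec\cdots$ of such heavy top nodes would lie on one branch $\brn$ and force $\norm{x_{n_k|\brn}}\ge\epsilon_0$ for all $k$, contradicting $\lim_n\norm{x_{n|\brn}}=0$; hence no infinite chain threads the $V_n$, and an infinite Ramsey argument yields a subsequence along which the heavy top nodes of distinct blocks are pairwise incomparable. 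Consequently every branch is $\epsilon_0$-heavy for at most one block, so every segment $s_i$ satisfies $\#\{n:\norm{x_n}_{s_i}\ge\epsilon_0\}\le 1$. A further, sufficiently rapid (gliding-hump) thinning then controls the residual light mass uniformly over the branches, namely $\sup_{\brn}\sum_n\norm{x_{n|\brn}}^r\le 1+\eta$ for the critical exponent $r=2p/(p-2)=2q/(2-q)$. This is exactly the exponent that makes a one-branch functional saturate, since maximizing $\sum_n\phi(x_n)^q$ over $\phi$ supported on a single branch $\brn$ yields precisely $\big(\sum_n\norm{x_{n|\brn}}^r\big)^{(2-q)/2}$.

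\medskip

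Finally I would combine these ingredients by splitting, for each block $n$, $\phi(x_n)=P_n+Q_n$ into a heavy part $P_n=\sum_{i\in H(n)}b_i\phi_i(x_n)$ with $H(n)=\{i:\norm{x_n}_{s_i}\ge\epsilon_0\}$ and the complementary light part $Q_n$. For the heavy part, H\"older in $i$ with $\sum_i\norm{x_n}_{s_i}^p\le 1$ gives $\abss{P_n}^q\le\sum_{i\in H(n)}b_i^q$, and summing in $n$ while using that each $s_i$ is heavy for at most one block yields $\sum_n\abss{P_n}^q\le\sum_i b_i^q\le 1$. For the light part one feeds $\abss{\phi_i(x_n)}\le\sqrt{\beta_{i,n}}\,\norm{x_{n|\brn_i}}$ into the budget constraints $\sum_n\beta_{i,n}\le 1$, $\sum_i b_i^q\le 1$ and the uniform bound $\sup_{\brn}\sum_n\norm{x_{n|\brn}}^r\le 1+\eta$ to obtain $\sum_n\abss{Q_n}^q\le 1+\eta$; a careful, thresholded combination keeping the heavy and light contributions additive at the level of $q$-th powers then gives $\sum_n\abss{\phi(x_n)}^q\le 2+\e'$, hence the upper $\ell_p$-estimate with constant $\sqrt[q]{2}+\e$. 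I expect the genuinely hard step to be the uniform control of the light mass in the gliding-hump thinning: because $\tree$ is infinitely branching, its space of branches is not compact (branches may escape to infinity), so the pointwise smallness $\lim_n\norm{x_{n|\brn}}=0$ must be upgraded to a uniform $r$-summability through the Ramsey/tree analysis of heavy nodes rather than by a naive compactness argument, and pinning the constant to exactly $2$ (rather than a larger power of $2$) forces the heavy/light split to be additive in $q$-th powers.
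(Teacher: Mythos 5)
Your plan has the same skeleton as the paper's proof (lower estimate with constant essentially $1$; upper estimate through the norming set $W$ via a heavy/light split organized by an Amemiya--Ito type extraction plus a gliding hump), and both your lower estimate and the duality reduction to proving $\sum_n\abs{\phi(x_n)}^q\le 2+\e'$ for all $\phi\in W$ are correct. However, the two central steps contain genuine gaps.

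First, the ``key observation'' on which your Ramsey extraction rests is false. If $v_{n_1}\prec v_{n_2}\prec\cdots$ are top nodes of heavy segments of $x_{n_1},x_{n_2},\dots$, the branch containing this chain of top nodes need not contain the heavy segments themselves: each heavy segment may leave that branch immediately below its top node. Concretely, fix nodes $u_1\prec u_2\prec\cdots$ on one branch of $\tree$ and set $x_n=\delta_ne_{u_{j_n}}+z_n$, where $\delta_n\to0$, $\norm{z_n}$ is close to $1$, and $z_n$ is supported on a chain hanging below a child of $u_{j_n}$ off that branch, the subtrees carrying different $z_n$ being pairwise disjoint. Then $\lim_n\norm{x_{n|\brn}}=0$ for every branch $\brn$ (a branch can meet the support of at most one $z_n$), yet for every $n$ the segment joining $u_{j_n}$ to the support of $z_n$ is heavy and has top node $u_{j_n}$, so the heavy top nodes of every subsequence contain an infinite chain. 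Thus the implication ``infinite chain of heavy top nodes $\Rightarrow$ some branch $\brn$ has $\norm{x_{n_k|\brn}}\ge\epsilon_0$ for all $k$'' fails, and with it the Ramsey step that was to produce pairwise incomparable heavy top nodes. What you actually need is the weaker statement that no single segment is $\epsilon_0$-heavy for two blocks of the subsequence; proving that forces one to track the common heavy segments themselves -- via the initial segments they determine, the pigeonhole bound on their traces (Claim 1 in the proof of Lemma~\ref{jtramsey}), pointwise limits, and the infinite branching of $\tree$ (Claim 2) -- exactly as the paper does. Comparability of top nodes is too coarse an invariant to run this argument.

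Second, even granting your two uniform claims, your split cannot reach the stated constant. From $\sum_n\abs{P_n}^q\le1$ and $\sum_n\abs{Q_n}^q\le1+\eta$ the only general recombination is Minkowski's inequality in $\ell_q$, giving $\big(\sum_n\abs{\phi(x_n)}^q\big)^{1/q}\le 1+(1+\eta)^{1/q}$, i.e. the constant $2+\e$, not $(2+\eta)^{1/q}$. Additivity at the level of $q$-th powers would require $P_n$ and $Q_n$ to be nonzero on disjoint sets of indices $n$, which is not the case; and your light part genuinely has size one rather than being a perturbation (a single segment can carry mass just below $\epsilon_0$ from about $\epsilon_0^{-r}$ many blocks, saturating the bound $\sum_n\norm{x_{n|\brn}}^r\le1+\eta$ with light terms alone). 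Moreover the bound $\sum_n\abs{Q_n}^q\le1+\eta$ itself is not established by your sketch: the single-branch computation with the exponent $r$ does not combine across the many branches carrying the $\phi_i$ when all one knows is $\sum_ib_i^q\le1$. The paper's resolution is structural: the thresholds decay relative to the positions $q_{n_k}$ (condition (2) of Lemma~\ref{l13}), so the light part becomes an $\e\max_k\abs{a_k}$ perturbation (inequality \eqref{eq:22}), while the heavy part is allowed to contain \emph{two} blocks per segment -- the block straddling $\min s_j$ and at most one genuinely heavy later block -- and the factor $2$ enters inside H\"older via $\sum_i\sum_{j\in J_i}\abs{b_j}^q\le2$ (inequality \eqref{eq:21}); that is where $\sqrt[q]{2}$ comes from. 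Repaired along these lines your route gives at best $2+\e$, which proves equivalence to the unit vector basis of $\ell_p$ but not the ``in particular'' clause of the theorem.
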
	
The proof of the theorem relies on a refinement
of the well known result due to
I. Amemiya and T. Ito \cite{AI}, that every normalized weakly null
sequence in $JT$ contains a subsequence which is $2$-equivalent to the
usual basis of $\ell_2$,  which  appeared in \cite{AGLM}.
We break up the proof of the theorem into several
lemmas and we start with the following Ramsey-type
                result.
\begin{lemma}\label{jtramsey}
                  Let $(x_n)_{n\in\N}$ be a  normalized block sequence  in
                $\jtp$ such that $\lim_{n}\normm[x_{n|\sigma}]=0$ for every
                branch $\sigma$.
Then, for every $\varepsilon>0$ and $n_{0}\in\N$, there exists an $L\in[\N]$ such that for every  segment $\s$  with $\min \s\le n_{0}$ there exists at most one $n\in L$ with the property that $\normm[x_{n|\s}]\ge\varepsilon$.
		\end{lemma}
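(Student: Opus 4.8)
The plan is to recast the conclusion as a statement about branches and then, via Ramsey's theorem, reduce it to excluding a single bad configuration. First I would pass to a tail of $(x_{n})_{n}$ so that $\min\supp(x_{n})>n_{0}$ for all $n$; since $(x_{n})_{n}$ is a block sequence this costs only finitely many terms, and it suffices to find the desired set inside such a tail. For a segment $\s$ with $\min\s\le n_{0}$ and such an $n$, only the part of $\s$ lying above $n_{0}$ meets $\supp(x_{n})$, and that part is a segment emanating from a node whose predecessor lies in $[1,n_{0}]$. Hence $\norm{x_{n|\s}}\ge\varepsilon$ forces the branch through $\s$ to meet $[1,n_{0}]$ and to satisfy $\norm{x_{n|\brn}}\ge\varepsilon$, while conversely any branch $\brn$ meeting $[1,n_{0}]$ with $\norm{x_{n|\brn}}\ge\varepsilon$ yields, by truncating below $n_{0}$, a segment with minimum $\le n_{0}$ capturing $x_{n}$. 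Setting $A_{n}=\{\brn\in\brT:\brn\cap[1,n_{0}]\ne\emptyset,\ \norm{x_{n|\brn}}\ge\varepsilon\}$, the conclusion becomes: find $L\in[\N]$ so that $(A_{n})_{n\in L}$ are pairwise disjoint, because two indices are captured by one common segment with minimum $\le n_{0}$ exactly when $A_{m}\cap A_{n}\ne\emptyset$.

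Two facts drive the argument. From the hypothesis $\lim_{n}\norm{x_{n|\brn}}=0$ for every branch $\brn$, each fixed branch belongs to only finitely many $A_{n}$. From \cref{remnormjt}(ii), for each fixed $n$ any family of pairwise disjoint segments on which $x_{n}$ has norm $\ge\varepsilon$ has at most $\lfloor\varepsilon^{-p}\rfloor$ members, so $x_{n}$ has at most $\lfloor\varepsilon^{-p}\rfloor$ pairwise incomparable ``big traces'' $\brn\cap\supp(x_{n})$. Now color each two-element subset $\{m,n\}$ of $\N$ by whether $A_{m}\cap A_{n}=\emptyset$ or not, and apply Ramsey's theorem. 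A homogeneous set of the first color is precisely the $L$ we want, so it remains to exclude an infinite homogeneous set $C$ of the second color, i.e.\ a set on which every pair of indices shares a branch carrying $\varepsilon$-mass of both.

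To contradict the existence of such a $C$ I would build a single branch $\brn^{*}$ and an infinite $N\subseteq C$ with $\norm{x_{n|\brn^{*}}}\ge\varepsilon$ for all $n\in N$; since $\min\supp(x_{n})\to\infty$, this violates $\lim_{n}\norm{x_{n|\brn^{*}}}=0$. The branch is produced level by level. At each stage a finite node $w$ has been fixed together with an infinite $C'\subseteq C$ all of whose members carry $\varepsilon$-mass on a branch through $w$; writing $n^{*}=\min C'$, one uses that $x_{n^{*}}$ has at most $\lfloor\varepsilon^{-p}\rfloor$ big traces, together with a further Ramsey coloring recording which of these traces realizes the conflict of $n^{*}$ with the larger indices, to pin one big trace $T$ of $x_{n^{*}}$ through which infinitely many of those conflicts pass. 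One then appends $T$ to the branch under construction, records $\norm{x_{n^{*}|T}}\ge\varepsilon$, advances $w$ to $\max T$, and passes to the infinite set of indices still conflicting through $w$; the pointwise compactness of branches recalled in the Notation guarantees that the increasing union of the committed traces extends to a genuine branch $\brn^{*}$.

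The step I expect to be the real obstacle is exactly this passage from pairwise conflicts to one persistent branch. The difficulty is twofold: a vector may spread its $\varepsilon$-mass over as many as $\lfloor\varepsilon^{-p}\rfloor$ mutually incomparable directions, so a conflict between two indices need not run through the direction already committed to $\brn^{*}$; and the convergence of branches is only pointwise at the bottom, whereas the supports $\supp(x_{n})$ escape to infinity, so a naive limit of witnessing branches may carry no mass at all. This is precisely where the finite bound $\lfloor\varepsilon^{-p}\rfloor$ of \cref{remnormjt}(ii) is essential: it makes the tree of big traces finitely branching, so that the Ramsey colorings over directions, iterated along the levels, force the conflicts to concentrate on a single branch rather than to disperse.
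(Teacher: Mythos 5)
Your reduction to the pairwise disjointness of the sets $A_{n}$ and the Ramsey coloring of pairs are correct, and up to that point you run parallel to the paper. The gap is in the branch-building induction, exactly at the point you flag in your closing paragraph, and your proposed fix does not close it. Suppose stage one is done: $n_{1}=\min C$, and by pigeonhole there are an infinite $C_{1}\subseteq C$ and a big trace $T_{1}$ of $x_{n_{1}}$ such that the conflict branch $\sigma_{n}\in A_{n_{1}}\cap A_{n}$ satisfies $\sigma_{n}\cap\supp(x_{n_{1}})=T_{1}$ for all $n\in C_{1}$. Your invariant holds with $w=\max T_{1}$: every $n\in C_{1}$ carries $\e$-mass on a branch through $w$, namely $\sigma_{n}$. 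But at stage two, with $n_{2}=\min C_{1}$, the objects you pigeonhole on are conflicts of $n_{2}$ with larger indices, i.e.\ branches $\tau_{n}\in A_{n_{2}}\cap A_{n}$ --- and nothing forces these $\tau_{n}$ to pass through $w$: homogeneity of $C$ supplies \emph{some} conflict branch for each pair, not one through the committed node, while the branches $\sigma_{n}$ that do pass through $w$ carry mass of $x_{n}$ but need not lie in $A_{n_{2}}$, so they carry no usable mass of $x_{n_{2}}$. Hence the trace $T_{2}$ you pin may be incomparable with $T_{1}$, the committed traces need not form a chain, and their union need not be contained in any branch. The bound $\lfloor\e^{-p}\rfloor$ cannot repair this (note also that it bounds only families of pairwise \emph{disjoint} $\e$-heavy traces; the number of distinct big traces is merely finite because supports are finite), so the claim that iterated colorings ``force the conflicts to concentrate on a single branch'' is precisely what remains unproved.

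The paper closes this hole with a uniformization device that your scheme lacks: it considers the conflicts of each $n$ with the \emph{smaller} indices $m<n$ and truncates each witnessing segment below $p_{n}=\min\supp(x_{n})$. Since $\supp(x_{m})\subset[1,p_{n})$, the truncated initial segment still captures the full $\e$-mass of every such $x_{m}$, and by Remark~\ref{remnormjt}(ii) there are at most $\lfloor\e^{-p}\rfloor$ distinct truncations for each $n$, because distinct truncations have disjoint continuations inside $\ran(x_{n})$, each $\e$-heavy. Thus for fixed $n$ a \emph{single} segment $s_{n,i}$ witnesses simultaneously for all smaller indices in its class; taking pointwise limits of the classes over $n$ yields one infinite class $L_{i_{0}}$, and then pointwise compactness of the segments $(s_{n,i_{0}})_{n}$ produces a limit object which must be a branch (a limit that were a proper initial segment would give, for one fixed $x_{m}$, infinitely many pairwise disjoint $\e$-heavy segments, again violating Remark~\ref{remnormjt}(ii)), contradicting the hypothesis $\lim_{n}\norm{x_{n|\brn}}=0$. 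If you wish to salvage a forward construction, your invariant must have this form --- one common segment per stage carrying the $\e$-mass of \emph{all} surviving indices --- which pigeonholing on traces of the minimal element does not deliver.
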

\begin{proof}
  Set $[p_{n,}q_{n}]=\ran(x_{n})$, $n\in\N$. If the conclusion is false,
  using Ramsey's theorem  we may assume that there exists $L\in[\N]^\infty$
  such that, for every pair $m<n$ in $L$, there exists  a  segment
  $\s_{m,n}$ such that $\min \s_{m,n}\leq n_{0}$,
  $\normm[x_{m|\s_{m,n}}]\ge\varepsilon$ and $\normm[x_{n|\s_{m,n}}]\ge\varepsilon$.
For every segment $\s_{m,n}$ we denote by $\bar{\s}_{m,n}$ the unique
initial segment determined by $\s_{m,n}$.
\\\par

{\em\underline{Claim 1}} : For every $n\in L$ it holds  $\#\{ \bar{\s}_{m,n}|_{[1,p_{n})}:m\in L,m<n\}\le[\e^{-p}]$.
\begin{proof}[Proof of Claim  1]\renewcommand{\qedsymbol}{}
If $\# \{ \bar{\s}_{m,n}|_{[1,p_{n})}:m\in L, m<n \}>[\e^{-p}]$ for some $n\in L$,   it follows
                        that there exist 
 at least $[\e^{-p}]+1$ pairwise disjoint  segments
 $\bar{\s}_{m,n}|_{[p_{n},q_n]}$ such that   $\normm[x_{n|\bar{\s}_{m,n|[p_{n},q_{n}]}}]\ge\varepsilon$. This contradicts item (ii) of Remark \ref{remnormjt}.
		\end{proof}
For every $n\in L$, let $\{\bar{\s}_{m,n}|_{[1,p_{n})}:m\in
  L,m<n\}=\{s_{n,1},\ldots,s_{n,\mu(n)} \}$ with $\mu(n)\le[\e^{-p}]=\mu$. Set
\[L_{i}^{n}=\{m\in L:m<n\;\text{and } \normm[x_{m|s_{n,i}}]\geq\e \},\,\,\,
  1\le i\le \mu(n)\]
 and $L^{n}_{i}=\emptyset$, for $\mu(n)<i\le \mu$. Notice that
  $\{m\in L:m<n\}=\cup_{i=1}^{\mu}L_{i}^{n}$ for all $n\in L$. Passing to a
  further subsequence we may assume that, for every $1\le i\le \mu$,
  $(L_{i}^{n})_{n\in {M}}$ converges point-wise  and we denote that limit
  by $L_{i}$. Then it is easy to see that $L=\cup_{i=1}^\mu L_{i}$ and
  hence some $L_{i_0}$ is an infinite subset of $L$. Moreover  for
  every $n\in M$ and for all $m<n$,  $m\in L_{i_0}$,
there exists a  segment $s_{n,i_{0}}$ 
such that 
$\normm[x_{m|s_{n,i_{0}}}]\ge\varepsilon$.
Then
there exists $P\in [M]^{\infty}$
such  that $(s_{n,i_{0}})_{n\in P}$ converges point-wise to a branch or an
initial segment.
\\

{\em\underline{Claim 2}:}  $(\s_{n,i_{0}})_{n\in P}$  converges to a
branch.
\begin{proof}[Proof of Claim 2.]\renewcommand{\qedsymbol}{}
 On the contrary,  assume that there exists an initial segment $\s_{0}$
  such that $\s_{n,i_{0}}\xrightarrow[P\ni n]{} \s_{0}$.   Since
  $\tree$ is an infinitely branching  tree we get  $P_{0}\in[P]^{\infty}$ such
  that the segments  $\s_{n,i_{0}|(\max\s_{0},+\infty)}, n\in P_{0}$ are
   incomparable.
  Then for every   $m\in L_{i_{0}}$ with $\min\supp(x_{m})>\max
  (\s_{0})$  it follows that  $\norm{x_{m|(\s_{n,i_{0}}-\s_{0}})}\geq\e$
  for infinitely many $n\in P_{0}$.
 This contradicts item (ii) of Remark \ref{remnormjt}.
\end{proof}
Let 
$\brn$  be the branch such that  $\s_{n,i_{0}}\xrightarrow[P\ni
n]{}\brn$.   It follows 
that   $\normm[x_{n|\sigma}]\ge\varepsilon$ for  infinitely many $n$. This yields a
contradiction and completes the proof of the  lemma.
 \end{proof}
\begin{lemma}\label{l13}
  Let $\varepsilon>0$  and  $(x_n)_n$ be a normalized block sequence,
  such that
$\lim_{n}\normm[x_{n|\sigma}]=0$ for every branch $\sigma$. Let $q_{n}=\max\supp(x_{n})$, $n\in\N$.
Then there exist an increasing sequence $(n_k)_k$ in $\N$ and a decreasing sequence $(\varepsilon_k)_k$ of positive reals such that
	\begin{enumerate}[resume]
		\item[(1)] For every $k\in\N$ and every segment $\s$  with $\min \s\leq q_{n_{k}}$ there exists at most one $k'>k$ such that $\normm[x_{n_{k^{\prime}}|\s}]\ge\varepsilon_{k}$.
		\item[(2)] $\sum_{k=1}^{\infty} q_{n_k}\sum_{i=k}^\infty (i+1)\varepsilon_{i}<\varepsilon$.
	\end{enumerate}
\end{lemma}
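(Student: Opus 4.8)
The plan is to build the two sequences by a diagonal argument driven entirely by Lemma \ref{jtramsey}, after first rewriting the summability condition (2) in a more tractable form. Writing $Q_i = q_{n_1}+\cdots+q_{n_i}$ and using Tonelli's theorem to interchange the order of summation in the (non\-negative) double sum, condition (2) becomes
\[
\sum_{k=1}^\infty q_{n_k}\sum_{i=k}^\infty (i+1)\varepsilon_i = \sum_{i=1}^\infty (i+1)\varepsilon_i\, Q_i.
\]
Thus it suffices to arrange that $(i+1)\varepsilon_i Q_i < \varepsilon\, 2^{-i}$ for every $i$, since then the total is bounded by $\varepsilon\sum_i 2^{-i}=\varepsilon$. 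The point of this reformulation is that $Q_i$ depends only on the indices $n_1,\dots,n_i$ already chosen, so once $n_i$ is fixed we are free to pick $\varepsilon_i$ as small as we please to meet this per-step requirement.

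I would construct nested infinite sets $\N = M_0\supseteq M_1\supseteq\cdots$ together with $n_k$ and $\varepsilon_k$ recursively, with the conventions $\varepsilon_0 = 1$ and $Q_0 = 0$. At step $k\geq 1$, having chosen $M_{k-1}$, let $n_k=\min M_{k-1}$ and set $Q_k = Q_{k-1}+q_{n_k}$; since each $M_k$ will be taken inside $(n_k,\infty)$, this forces $n_k > n_{k-1}$, so $(n_k)_k$ is increasing. Next choose any $\varepsilon_k$ with $0<\varepsilon_k<\varepsilon_{k-1}$ and $(k+1)\varepsilon_k Q_k < \varepsilon\, 2^{-k}$; this is possible because $1\le Q_k<\infty$, and it guarantees both that $(\varepsilon_k)_k$ is decreasing and, by the display above, that condition (2) holds. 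Finally, apply Lemma \ref{jtramsey} to the sequence $(x_n)_{n\in M_{k-1}}$ --- which is again a normalized block sequence inheriting the hypothesis $\lim_n\normm[x_{n|\sigma}]=0$ for every branch $\sigma$ --- with parameters $\varepsilon_k$ and $q_{n_k}$ in the roles of $\varepsilon$ and $n_0$. This yields $M_k\in[M_{k-1}]$, which we shrink so that $M_k\subseteq(n_k,\infty)$, such that for every segment $s$ with $\min s\le q_{n_k}$ there is at most one $n\in M_k$ with $\normm[x_{n|s}]\ge\varepsilon_k$.

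To verify condition (1), observe that by construction $n_{k'}\in M_{k'-1}\subseteq M_k$ for every $k'>k$, so all the future selected indices $\{n_{k'}:k'>k\}$ lie in $M_k$. Given a segment $s$ with $\min s\le q_{n_k}$, the choice of $M_k$ via Lemma \ref{jtramsey} allows at most one element $n\in M_k$ with $\normm[x_{n|s}]\ge\varepsilon_k$; consequently there is at most one $k'>k$ with $\normm[x_{n_{k'}|s}]\ge\varepsilon_k$, which is precisely (1).

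The construction itself is routine bookkeeping once the order of the recursion is pinned down, and that order is the only genuinely delicate point: each $\varepsilon_k$ must be fixed \emph{after} $n_k$ (so that $Q_k$ is already known) but \emph{before} the application of Lemma \ref{jtramsey} that produces $M_k$ and hence all subsequent indices. The Tonelli reindexing is what decouples condition (2) from the as-yet-unknown future values of $q_{n_i}$ and reduces it to the single per-step inequality $(k+1)\varepsilon_k Q_k<\varepsilon\,2^{-k}$, which can always be satisfied by taking $\varepsilon_k$ small enough.
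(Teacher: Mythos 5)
Your proposal is correct and follows essentially the same route as the paper: an inductive construction that applies Lemma~\ref{jtramsey} at each step to nested infinite subsets, choosing $n_k$ before $\varepsilon_k$ so that the smallness condition can always be met, and then diagonalizing to get condition (1). The only difference is in the bookkeeping for condition (2): the paper maintains running budgets $q_{n_m}\sum_{i=m}^{k}(i+1)\varepsilon_i<\delta_m$ (with $\sum_m\delta_m<\varepsilon$) for all previously chosen indices $m$ at every step, whereas your Tonelli reindexing $\sum_{k}q_{n_k}\sum_{i\geq k}(i+1)\varepsilon_i=\sum_i (i+1)\varepsilon_i Q_i$ collapses this to a single per-step inequality depending only on data already chosen --- a cleaner organization of the same argument.
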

\begin{proof}
  Let $(\delta_n)_n$ be a sequence of positive reals such that $\sum_{n=1}^\infty\delta_n<\varepsilon$. We will construct $(n_k)_k$ and $(\varepsilon_k)_k$ by induction  as follows. We set $n_1=1$ and $L_1=\N$ and choose $\varepsilon_1$ such that $q_{1}2\varepsilon_1<\delta_1$.
  Suppose that $n_1<\dots <n_k$, $\varepsilon_1>\dots >\varepsilon_k$
and $L_{1}\supset \dots\supset L_{k}$
have been chosen  such that for every $j\leq k$,
\begin{enumerate}
\item[(i)]
  For every segment $\s$ with $\min\s\leq q_{n_j}$ there exists at most one $n\in L_{j+1}$ such that $\norm{x_{n|\s}}\geq \e_{j}$,
\item[(ii)]$q_{n_{j}}(j+1)\varepsilon_{j}<\delta_{j}$
\item[(iii)] $q_{n_m}\sum_{i=m}^{j}(i+1)\varepsilon_{i}<\delta_m$ for every $m\le j$.
\end{enumerate}
Choose $n_{k+1}\in L_k$ with $n_{k+1}>n_k$ and $\varepsilon_{k+1}<\varepsilon_k$ such that
\[
q_{n_{k+1}}(k+2)\varepsilon_{k+1}<\delta_{k+1}\quad\text{and}\quad q_{n_m}\sum_{i=m}^{k+1}(i+1)\varepsilon_{i}<\delta_m \,\,\text{for all}\, m\leq k.
\]
Using  Lemma \ref{jtramsey} we get  $L_{k+1}\in [L_{k}]^\infty$ such that for every segment $\s$ with $\min \s\leq q_{k+1}$ there exist at most one $n\in L_{k+1}$ with $\normm[x_{n|\s}]\ge\varepsilon_{k+1}$.

It is easy to see that $(n_k)_k$ and $(\varepsilon_k)_k$ are as desired.
\end{proof}

\begin{lemma}\label{lpequiv}
	Let $\varepsilon>0$, $(\varepsilon_n)_n$ be a decreasing sequence of positive
        reals and 
 $(x_n)_n$ be a normalized block sequence. For every $n\in\N$ let
 $q_{n}=\max\supp x_{n}$.
Assume also that
\begin{enumerate}[resume]
		\item[i)]for every $n\in\N$ and every  segment $\s$ with $\min\s\leqq q_{n}$ there exists at most one $m>n$ such that $\normm[x_{m|\s}]\ge\varepsilon_{n}$  and
\item[ii)] $\sum_{n=1}^\infty q_{n}\sum_{i=n}^\infty (i+1)\varepsilon_{i}<\varepsilon$.
	\end{enumerate}
	Then for every  $n\in\N$ and every choice of scalars $a_1,\ldots,a_n$ we have that
	\[
\Big(\sum_{i=1}^{n}\abs{a_{i}}^p\Big)^{\frac{1}{p}}\le\Big\|\sum_{i=1}^{n}a_{i}x_{i}\Big\| \le (\sqrt[q]{2}+\varepsilon)\Big(\sum_{i=1}^{n}\abs{a_{i}}^{p}\Big)^{\frac{1}{p}}.
	\]
\end{lemma}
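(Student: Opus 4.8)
The plan is to prove the two inequalities separately: the left-hand (lower $\ell_p$) estimate is routine, while the right-hand estimate is where the constant $\sqrt[q]{2}$ is produced and where hypotheses (i)--(ii) do the real work. For the lower estimate I would exploit that $(x_i)$ is a block sequence, so the supports are successive intervals. For each $i$, since $\norm{x_i}=1$, I can choose finitely many pairwise disjoint segments contained in $\ran(x_i)$ that witness $\norm{x_i}$ up to an arbitrarily small error (restricting any near-optimal segment to the interval $\ran(x_i)$ is again a segment and does not decrease $\norm{x_i}_{\sgm}$). Concatenating these families over $i$ produces pairwise disjoint segments, each meeting a single $x_i$, so on such a segment $\sgm$ one has $\norm{\sum_j a_j x_j}_{\sgm}=|a_i|\,\norm{x_i}_{\sgm}$. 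Plugging into the definition of the norm and letting the errors tend to $0$ gives $\big(\sum_i|a_i|^p\big)^{1/p}\le\norm{\sum_i a_i x_i}$.

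For the upper estimate I would test against the norming set $W$. Fix $\phi=\sum_l b_l\phi_l\in W$ with $\sum_l b_l^q\le 1$ and $\phi_l\in\mc{K}$ supported on pairwise disjoint segments $\sgm_l$, write $x=\sum_i a_i x_i$, and set $S=\big(\sum_i|a_i|^p\big)^{1/p}$. Since $\phi_l$ is an $\ell_2$-vector supported on $\sgm_l$, we have $\phi_l(x)\le\sum_i|a_i|\,\norm{x_i}_{\sgm_l}$. The decisive structural step is to isolate, for each $l$, the (at most) two blocks that can contribute substantially. Let $n_l$ be the least index with $\sgm_l\cap\supp x_{n_l}\neq\emptyset$; then $\min\sgm_l\le q_{n_l}$, so hypothesis (i) applied at level $n_l$ yields at most one index $m_l>n_l$ with $\norm{x_{m_l}}_{\sgm_l}\ge\varepsilon_{n_l}$. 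Put $c_l=|a_{n_l}|\norm{x_{n_l}}_{\sgm_l}+|a_{m_l}|\norm{x_{m_l}}_{\sgm_l}$ (with the second term absent if no such $m_l$ exists) and $d_l=\sum_{i>n_l,\,i\neq m_l}|a_i|\norm{x_i}_{\sgm_l}$, so that $\phi_l(x)\le c_l+d_l$ and $\phi(x)\le\sum_l b_l c_l+\sum_l b_l d_l$.

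The main term is where the constant appears. By H\"older on two summands, $c_l\le 2^{1/q}\big((|a_{n_l}|\norm{x_{n_l}}_{\sgm_l})^p+(|a_{m_l}|\norm{x_{m_l}}_{\sgm_l})^p\big)^{1/p}$ (since $1-1/p=1/q$), so $\sum_l c_l^p\le 2^{p/q}(U+V)$, where $U=\sum_l(|a_{n_l}|\norm{x_{n_l}}_{\sgm_l})^p$ and $V=\sum_l(|a_{m_l}|\norm{x_{m_l}}_{\sgm_l})^p$. Grouping by the value of the marked index and using the \emph{key observation} that, for a fixed block $i$, all the segments $\sgm_l$ for which $i$ is marked (either as $n_l$ or as $m_l$) are pairwise disjoint, Remark \ref{remnormjt}(i) gives $U+V\le\sum_i|a_i|^p\norm{x_i}^p=S^p$. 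Hence $\sum_l b_l c_l\le\big(\sum_l b_l^q\big)^{1/q}\big(\sum_l c_l^p\big)^{1/p}\le 2^{1/q}S$, which is exactly the claimed $\sqrt[q]{2}$.

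It remains to absorb the error into $\varepsilon S$; this is the delicate, bookkeeping-heavy part, and I expect it to be the main obstacle. By H\"older $\sum_l b_l d_l\le\sum_l d_l$, so it suffices to bound $\sum_l d_l$. For a fixed $l$ with $n_l=n$, for each level $\nu\ge n$ I would invoke hypothesis (i) (legitimate since $\min\sgm_l\le q_n\le q_\nu$) to see that at most one block $>\nu$ has $\sgm_l$-norm $\ge\varepsilon_\nu$. Assigning to each small block $i$ the least level $\nu_0(i)$ with $\varepsilon_{\nu_0(i)}\le\norm{x_i}_{\sgm_l}$, one checks that $\nu_0$ is injective into $\{n+1,n+2,\dots\}$ and that $\norm{x_i}_{\sgm_l}<\varepsilon_{\nu_0(i)-1}$, whence $d_l\le S\sum_{\nu\ge n}\varepsilon_\nu$. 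Since at most $q_n$ of the disjoint segments can have $n_l=n$ (each meets $\supp x_n$, a set of size at most $q_n$), summing over $n$ yields $\sum_l d_l\le S\sum_n q_n\sum_{\nu\ge n}\varepsilon_\nu\le S\sum_n q_n\sum_{i\ge n}(i+1)\varepsilon_i<\varepsilon S$ by hypothesis (ii). Combining, $\phi(x)\le(2^{1/q}+\varepsilon)S$, and taking the supremum over $\phi\in W$ gives $\norm{x}\le(\sqrt[q]{2}+\varepsilon)S$.
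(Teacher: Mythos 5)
Your overall strategy coincides with the paper's: for a norming functional $\sum_l b_l\phi_l\in W$ you mark, for each segment $s_l$, the first block it meets and the single additional ``large'' block supplied by hypothesis (i); the constant $\sqrt[q]{2}$ then comes from H\"older together with the two observations that each segment carries at most two marks and that, for fixed $i$, the segments marking $i$ are pairwise disjoint (so Remark \ref{remnormjt}(i) applies). The paper organizes this same computation by blocks, via the sets $G_i$ and $J_i$, rather than by segments, and proves the lower estimate dually (conjugate coefficients against norming functionals) rather than with near-optimal disjoint segments; these are cosmetic differences, and your main term and lower estimate are correct.

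The gap is in the error term: the claim that $\nu_0$ is injective is false. Hypothesis (i) at level $\nu$ only restricts blocks with index \emph{strictly greater} than $\nu$; blocks with index in $(n_l,\nu]$ are uncontrolled. Concretely, with $n=n_l$, both $x_{n+1}$ and $x_{n+2}$ may satisfy $\varepsilon_{n+1}\le\norm{x_{i|s_l}}<\varepsilon_{n}$ without violating (i) (at level $n$ both norms are $<\varepsilon_n$; at level $n+1$ only $x_{n+2}$ has index $>n+1$), and then $\nu_0(n+1)=\nu_0(n+2)=n+1$. Consequently your per-segment bound $d_l\le S\sum_{\nu\ge n}\varepsilon_\nu$ --- which, tellingly, never uses the weight $(i+1)$ built into hypothesis (ii) --- is not justified. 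The repair is exactly the paper's item (iv): for each $\nu>n$, the set of small blocks with $\varepsilon_{\nu}\le\norm{x_{i|s_l}}<\varepsilon_{\nu-1}$ has at most one element of index $>\nu$ and at most $\nu-n$ elements in $(n,\nu]$, hence at most $\nu$ elements; therefore $d_l\le \max_k|a_k|\sum_{\nu> n}\nu\,\varepsilon_{\nu-1}=\max_k|a_k|\sum_{\nu\ge n}(\nu+1)\varepsilon_{\nu}$. Summing over the at most $q_n$ segments with $n_l=n$ and invoking (ii) still yields $\sum_l b_l d_l<\varepsilon\max_k|a_k|\le\varepsilon\,S$, so the lemma's conclusion survives; but the multiplicity count, not injectivity, is the correct bookkeeping, and it is precisely why (ii) is stated with the factor $(i+1)$.
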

\begin{proof} Let us first observe that for every $n\in\N$ and every
  segment $\s$ with
  $q_{n-1}< \min\s \le q_{n}$,  the following hold due to i).
  \begin{enumerate}
	\item[iii)] $\#\{i>n:\norm{x_{i|\s}}\ge\varepsilon_{n} \}\le 1$.
		\item[iv)] $\#\{i>n:\varepsilon_{k}\leq \normm[x_{i|\s}]<\varepsilon_{k-1} \}\le k$ for every $k>n$.
                \end{enumerate}
               	Now for each $1\le i\le n$, there exists
        $\phi_{i}=\sum_{j=1}\beta_{j}\phi_{i,j}\in W$ supported by  $ \ran(x_{i})$ such that
        $\phi_{i}(x_{i})=\norm{x_{i}}$.
If  $(b_{i})_{i=1}^{n}$ is the conjugate      sequence of $(a_{i})_{i=1}^{n}$ w.r the $\ell_{p}$-norm,   it follows
	\[
          \Big\|\sum_{i=1}^{n}a_{i}x_{i}\Big\|\ge
          \sum_{i=1}^{n}b_{i}\phi_{i}(\sum_{i=1}^{n}a_{i}x_{i})
\geq\sum_{i=1}^{n}b_{i}a_{i}\norm{x_{i}}=
     \Big(\sum_{i=1}^{n}\abs{a_{i}}^{p}\Big)^{\frac{1}{p}}.
	\]
        For the reverse inequality, let  $\phi_{1},\dots,\phi_{m}\in\mc{K}$ supported by  pairwise disjoint segments  $s_{1},\dots,s_{m}$,
  and $b_1,\ldots,b_m$ reals with $\sum_{j=1}^{m}b_{j}^{q}\le1$. For given $1\le j\le m$, we will
        denote by
\begin{enumerate}
\item[v)]       $i_{j,1}$ the unique $1\le i\le n$ such that
  $q_{i-1}< \min s_{j}\leqq q_{i}$,
\item[vi)]   $i_{j,2}$ the unique, if there exists, $i_{j,1}<i\le n$ such that $\norm{x_{i|s_{j}}}\ge\varepsilon_{i_{j,1}}$.
\end{enumerate}
We  set $G_{i}=\{ j\leq m: i_{j,1}=i\}$ and we have that $\{1,\ldots,m \}=\cup_{i=1}^{n}G_{i}$.
 We also set $J_{i}=G_{i}\cup\{j\leq m :\;i_{j,2}=i\}$, for
  $1\le i \le n$.  It follows,
  \begin{align}
     \Big|(\sum_{j=1}^{m}b_{j}\phi_{j})&\Big(\sum_{i=1}^{n}a_{i}x_{i}\Big)\Big|=
   \Big|\sum_{j=1}^{m}b_{j}\phi_{j}\Big(\sum_{i\geq
                                   i_{j,1}}^{n}a_{i}x_{i}\Big)\Big|
                                   \notag
   \\
&\leq  \sum_{j=1}^{m}\abs{b_{j}}\left(\abs{a_{i_{j,1}}}\norm{x_{i_{j},1|s_{j}}}
        + \abs{a_{i_{j,2}}}\norm{x_{i_{j},2|s_{j}}}+\sum_{i_{j,1}<i
    \neq i_{j,2}}\abs{a_{i}}\norm{x_{i|s_{j}}}\right)
        \notag\\
    &=\sum_{i=1}^{n}\left(\abs{a_{i}}\sum_{j\in J_{i}}\abs{b_{j}}\norm{x_{i|s_{j}}}+
      \sum_{j\in G_{i}}\abs{b_{j}}\sum_{i<l
    \neq i_{j,2}}\abs{a_{l}}\norm{x_{l|s_{j}}}\right)\label{secondterm}
        \end{align}
 Note that by iii), each $j$ appears in  at most  two $J_{i}$,
 so
 \begin{equation}
   \label{eq:21}
   \sum_{i=1}^{n}\sum_{j\in
     J_{i}}\abs{b_{j}}^{q}\le2\sum_{j=1}^{m}\abs{b_{j}}^{q}.
 \end{equation}
   It follows
 \begin{align}
\sum_{i=1}^{n}\abs{a_{i}}\sum_{j\in J_{i}}\abs{b_{j}}\norm{x_{i|s_{j}}}
&\le\Big(\sum_{i=1}^{n}\abs{a_{i}}^{p} \Big)^{\frac{1}{p}}
\Big(\sum_{i=1}^{n}\big(\sum_{j\in J_{i}}\abs{b_{j}}\norm{x_{i|s_{j}}}\big)^{q} \Big)^{\frac{1}{q}}
\notag           \\
          &\le\Big(\sum_{i=1}^{n}\abs{a_{i}}^{p}
            \Big)^{\frac{1}{p}}\Big(\sum_{i=1}^{n}\sum_{j\in
            J_{i}}\abs{b_{j}}^{q}\Big)^{\frac{1}{q}}
\notag            
   \\
&\le\sqrt[q]{2}\Big(\sum_{i=1}^{n}\abs{a_{i}}^{p} \Big)^{\frac{1}{p}
        }\,\,\,\text{by}\,\, \eqref{eq:21}.
        \label{firstest}
 \end{align}	
 To get un upper bound for the remaining term of \eqref{secondterm}
 notice  that    iv) implies
 \begin{equation*}
 \sum_{i<l\ne
   i_{j,2}}\abs{a_{l}}\norm{x_{l|s_{j}}}<\max_{k}\abs{a_{k}}\sum_{l=i}^\infty
 (l+1)\varepsilon_{l}\,\,\
 \text{for any $j\in G_{i}$}.  
 \end{equation*}
Using that     $\#G_{i}\leqq q_{i}$ and 
ii) we get,
\begin{equation}
  \label{eq:22}
  \sum_{i=1}^{n}\sum_{j\in G_{i}}\abs{b_{j}}\sum_{l>i,l\ne i_{j,2}}\abs{a_{l}}\norm{x_{l|s_{j}}}
<\max_{k}\abs{a_{k}}\sum_{i=1}^{n}q_{i}\sum_{l\geq i}(l+1)\e_{l}
\leq
\epsilon\max_{k}\abs{a_{k}}.
\end{equation}
Combining  \eqref{firstest} and \eqref{eq:22}  we get the upper  $\ell_{p}$-estimation.
\end{proof}
\begin{proof}[Proof of Theorem \ref{jtunique}]
  Since $\norm{x_{n|\brn}}\xrightarrow[n]{}0$ for every branch $\brn$,
  passing to a subsequence we may assume that  (1) and (2) of Lemma~\ref{l13}
holds. Lemma~\ref{lpequiv} yields that this subsequence
  is equivalent to the unit vector basis of $\ell_{p}$.
\end{proof}

\begin{remark}
Let us note   that the converse of Theorem~\ref{jtunique}  holds, i.e.,
if  $(x_{n})_{n}$ is a normalized block sequence  equivalent to the unit vector basis of $\ell_{p}$ then
$\lim_{n}\norm{x_{n|\brn}}=0$ for every branch.  Indeed, if
$\norm{x_{n_{k|\brn}}}\geq\delta$ for infinitely many $k$, then
\[
  \norm{\sum_{k=1}^{m}a_{k}x_{n_{k}}}\geq
  \norm{\sum_{k=1}^{m}a_{k}x_{n_{k}|\brn}}=
  \left(\sum_{k=1}^{m}a_{k}^{2}\norm{x_{n_{k}|\brn}}^{2}\right)^{1/2}\geq\delta  \left(\sum_{k=1}^{m}a_{k}^{2}\right)^{1/2}.
\]
Since $2<p$ it follows that $(x_{n})_{n}$ cannot be equivalent to the
unit vector basis of $\ell_{p}$, a contradiction
\end{remark}

\section{The $\ell_{p}$
  vector of a block sequence}
In this section we introduce the  $\ell_{p}$   vector of  a
block sequence. This is a
critical concept that will be used  repeatedly in the main results of
this part.
 \begin{definition}
  Let  $(x_{n})_{n\in\N}$ be a normalized block sequence in $\jtp$.
   A decreasing sequence  $(c_{i})_{i\in N}$, where  $N$ is either an
   initial finite interval of $\N$  or the set of the natural numbers,
is said to be the
   $\ell_{p}$-vector of  $(x_{n})_{n\in\N}$ with associated set of branches
 the set   $\mc{B}=(\brn_{i})_{i\in N}$,
   if $(c_{i})_{i\in N}\in B_{\ell_{p}}$,  
   \begin{equation*}
     \lim_{n}\norm{x_{n|\brn_{i}}}=c_{i}>0 \,\,\text{for every $i\in N$ and }
  \lim_{n}\norm{x_{n|\brn}}=0 \,\,\text{for every $\brn\ne \brn_{i}$}, i\in N.
\end{equation*}
If $\lim_{n}\norm{x_{n|\brn}}=0$ for every branch $\brn,$ then the
$\ell_{p}$-vector of  $(x_{n})_{n}$ is the zero one and the associated
set of branches  $\mc{B}$ is the empty one.
 \end{definition}
\begin{proposition}\label{lpdecomp}
 Let $(x_{n})_{n\in\N}$ be a normalized block sequence   in $\jtp$. Then
 $(x_{n})_{n\in\N}$  either has a subsequence equivalent to the unit vector basis of
 $\ell_{p}$ or has  a subsequence accepting an $\ell_{p}$-vector.
\end{proposition}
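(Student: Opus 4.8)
The plan is to extract, by a diagonal argument, a subsequence along which the branch-norms $\norm{x_{n|\brn}}$ stabilize for a countable family of relevant branches, and then to split into the two stated cases depending on whether any of these limits is positive. First I would set up the bookkeeping. For each $n$ let $\ran(x_n)=[p_n,q_n]$. For a fixed threshold $\delta>0$, Remark \ref{remnormjt}(ii) guarantees that for each $n$ the number of pairwise disjoint segments $s$ with $\norm{x_{n|s}}\ge\delta$ is at most $1/\delta^p$; in particular, at most finitely many branches can carry a ``large'' mass of $x_n$. The key is to control how these large-mass branches behave as $n$ varies, and the natural tool is the convergence of initial segments: by the observation in the Notation section, any sequence of initial segments has a subsequence converging pointwise to an initial segment or a branch.

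Next I would run the main combinatorial extraction. Fix a null sequence $\delta_j\downarrow 0$. For each $j$, and along the current subsequence, I consider the (finitely many, by Remark \ref{remnormjt}(ii)) segments on which $x_n$ has norm $\ge\delta_j$, look at the initial segments they determine, and pass to a further subsequence on which these stabilize and the associated initial segments converge pointwise. As in the proof of Lemma \ref{jtramsey} (Claim 2), the point is that a limit which is a proper initial segment rather than a full branch leads to a contradiction: if the approximating segments eventually become incomparable beyond the limit's maximum, then a single later vector $x_m$ would have large norm on infinitely many pairwise disjoint segments, violating Remark \ref{remnormjt}(ii). Hence each surviving limit is a genuine branch. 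Diagonalizing over $j$ produces a subsequence, which I again call $(x_{n_k})_k$, and an at most countable collection of branches $\{\brn_i\}_{i\in N}$ such that $\lim_k\norm{x_{n_k|\brn_i}}=c_i$ exists, the $c_i$ are arranged in decreasing order, and $\lim_k\norm{x_{n_k|\tau}}=0$ for every branch $\tau$ not among the $\brn_i$.

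It then remains to check the membership $(c_i)_{i\in N}\in B_{\ell_p}$, so that this data genuinely constitutes an $\ell_p$-vector. This follows from Remark \ref{remnormjt}(i): for any finite set $i_1,\dots,i_d$ of indices, choosing $\rho$ so that the final segments $\brn_{i_1,>\rho},\dots,\brn_{i_d,>\rho}$ are pairwise incomparable and restricting to $k$ large enough that $\min\supp(x_{n_k})>\rho$, the segments $\brn_{i_l}\cap\ran(x_{n_k})$ are disjoint, whence $\sum_{l=1}^d\norm{x_{n_k|\brn_{i_l}}}^p\le\norm{x_{n_k}}^p=1$. Letting $k\to\infty$ gives $\sum_{l=1}^d c_{i_l}^p\le 1$, and since this holds for every finite subset, $(c_i)_{i\in N}\in B_{\ell_p}$.

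Finally I would dispose of the dichotomy. If all the limits vanish, i.e.\ $N=\emptyset$ and the $\ell_p$-vector is the zero one, then $\lim_k\norm{x_{n_k|\brn}}=0$ for every branch $\brn$, so by Theorem \ref{jtunique} a further subsequence is equivalent to the unit vector basis of $\ell_p$. Otherwise some $c_i>0$, and the subsequence accepts a non-zero $\ell_p$-vector with associated set of branches $\{\brn_i\}_{i\in N}$. In either case the conclusion holds. The main obstacle is the extraction step itself: one must argue that the limiting segments are always full branches (never proper initial segments) and that a single diagonal subsequence simultaneously fixes the limits along \emph{all} the relevant branches, including verifying that no ``mass'' escapes to a branch outside the chosen countable family; this is exactly where the finiteness bound of Remark \ref{remnormjt}(ii) and the Claim 2 argument from Lemma \ref{jtramsey} do the heavy lifting.
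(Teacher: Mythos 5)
Your overall plan coincides with the paper's (its Lemma~\ref{lslemma}, followed by the dichotomy via Theorem~\ref{jtunique}), and your last two paragraphs — the membership $(c_i)_{i\in N}\in B_{\ell_p}$ via incomparable final segments, and the final case split — are correct and essentially identical to the paper's. The gap is in your extraction step. The assertion that every pointwise limit of the heavy initial segments must be a full branch is false, and the Claim~2 argument of Lemma~\ref{jtramsey} that you invoke does not transfer to this situation. In Lemma~\ref{jtramsey}, the Ramsey stabilization guarantees that the segment $s_{n,i_0}$ is heavy \emph{simultaneously} for all $x_m$ with $m\in L_{i_0}$, $m<n$; that is exactly what produces a single vector $x_m$ having norm $\ge\e$ on infinitely many pairwise disjoint segments once the tails $s_{n,i_0}\setminus s_0$ are made incomparable. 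In your extraction, a heavy segment of $x_n$ is heavy only for $x_n$ itself, so no single vector ever accumulates infinitely many disjoint heavy segments, and no contradiction can be reached. Concretely, let $v$ be any node and let $(t_n)_n$ enumerate increasingly its (infinitely many) immediate successors; the normalized block sequence $x_n=e_{t_n}$ has all its heavy initial segments equal to $\{t:t\preceq t_n\}$, and these converge pointwise to the finite initial segment $\{t:t\preceq v\}$ — never to a branch. Nothing is wrong with this sequence: every branch meets at most one $t_n$, so all branch norms tend to $0$ and the sequence is isometric to the unit vector basis of $\ell_p$.

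Once you (correctly) discard, rather than contradict, the limits that are proper initial segments, the burden shifts to the statement you yourself identify as the crux: that no mass escapes, i.e., that every branch $\tau$ outside your countable family satisfies $\lim_k\norm{x_{n_k|\tau}}=0$; as written, your proposal offers no valid mechanism for this. The paper's Lemma~\ref{lslemma} avoids heavy segments altogether and works with branches and limsups directly: for a fixed threshold $\e$, one repeatedly selects a branch with $\limsup_n\norm{x_{n|\brn}}\ge\e$ and passes to a subsequence along which this limit exists; the inequality $\limsup_n\sum_{i=1}^{k}\norm{x_{n|\brn_i}}^p\le 1$, valid for any finitely many distinct branches (this is your own step 3, used along the way rather than only at the end), caps this exhaustion at $\e^{-p}$ branches, and when it terminates every remaining branch has $\limsup<\e$ \emph{by construction}, not by a convergence-of-segments argument. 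Diagonalizing over $\e_j\searrow 0$ then yields the countable family, the vanishing of all other branch norms, and (after reordering) the decreasing sequence $(c_i)_{i\in N}$. If you replace your heavy-segment extraction by this exhaustion, the rest of your write-up goes through unchanged.
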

The proof relies on the following lemma due to Lindenstrauss-Stegall \cite{LS}, whose proof is presented for sake of completeness.
                \begin{lemma}[\cite{LS}]\label{lslemma}
Let $(x_{n})_{n\in\N}$  be a normalized block sequence in $\jtp$. Then there exist  a
subsequence $(x_{n})_{n\in L}$  of  $(x_{n})_{n\in\N}$  and an at most
countable set of distinct branches $(\sigma_{i})_{i\in N}$ such that
\[\lim_{n}\norm{x_{n|\sigma_{i}}}=c_{i}>0\,\,\text{for every }\,i\in N \,\,\text{and}\,\,
  \lim_{n}\norm{x_{n|\sigma}}=0\,\,\text{for every }\,  \sigma\ne \sigma_{i}, i\in N.
\]
Moreover $(c_{i})_{i\in N}\in B_{\ell_{p}}$  and  the sequence $(c_{i})_{i\in N}$ may chosen to be decreasing.
\end{lemma}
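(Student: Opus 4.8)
The plan is to build the branches and the subsequence simultaneously by an exhaustion argument, peeling off at each stage the branch that carries the largest remaining limiting mass. First I would set $d_1 = \sup_\sigma \limsup_n\|x_{n|\sigma}\|$, the supremum ranging over all branches $\sigma$ of $\tree$. If $d_1 = 0$ then $\lim_n\|x_{n|\sigma}\| = 0$ for every branch and the associated set is empty, so assume $d_1>0$, pick a branch $\sigma_1$ with $\limsup_n\|x_{n|\sigma_1}\| > d_1 - 1/2$, and pass to $L_1\in[\N]$ along which $\|x_{n|\sigma_1}\|$ converges to its $\limsup$, which I call $c_1>0$. Inductively, given distinct branches $\sigma_1,\dots,\sigma_k$ and $L_1\supseteq\cdots\supseteq L_k$ with $\lim_{n\in L_k}\|x_{n|\sigma_j}\| = c_j>0$ for $j\le k$, I set $d_{k+1} = \sup\{\limsup_{n\in L_k}\|x_{n|\sigma}\| : \sigma\notin\{\sigma_1,\dots,\sigma_k\}\}$; if $d_{k+1}=0$ the process stops, otherwise I choose $\sigma_{k+1}$ with $\limsup_{n\in L_k}\|x_{n|\sigma_{k+1}}\| > d_{k+1} - 2^{-(k+1)}$ and pass to $L_{k+1}\subseteq L_k$ along which $\|x_{n|\sigma_{k+1}}\|$ converges to $c_{k+1} := \limsup_{n\in L_k}\|x_{n|\sigma_{k+1}}\| > 0$.

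The decisive estimate is $\sum_{j=1}^k c_j^p \le 1$ for every $k$. Indeed, by the remark in the Notation section the distinct branches $\sigma_1,\dots,\sigma_k$ admit $\rho\in\N$ for which the final segments $\sigma_{j,>\rho}$ are pairwise incomparable; since $(x_n)_n$ is a block sequence, $\min\supp(x_n)>\rho$ for all large $n\in L_k$, and then the segments $\sigma_j\cap\supp(x_n)$ are pairwise disjoint, so Remark~\ref{remnormjt}(i) gives $\sum_{j=1}^k \|x_{n|\sigma_j}\|^p \le \|x_n\|^p = 1$. Letting $n\to\infty$ along $L_k$ (only finitely many branches are involved, so the limits $c_j$ are genuinely attained) yields $\sum_{j=1}^k c_j^p \le 1$. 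Hence $(c_j)_j\in B_{\ell_p}$, so $c_j\to 0$, and consequently $d_{k+1} < c_{k+1} + 2^{-(k+1)}\to 0$ as well.

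I would then diagonalize the nested sequence $(L_k)_k$ into a single $L\in[\N]$ eventually contained in each $L_k$, so that $\lim_{n\in L}\|x_{n|\sigma_j}\| = c_j$ for all $j$. It remains to verify that every branch $\tau\notin\{\sigma_j\}_j$ satisfies $\lim_{n\in L}\|x_{n|\tau}\| = 0$: for each $k$ the branch $\tau$ is eligible at stage $k+1$, whence $\limsup_{n\in L}\|x_{n|\tau}\| \le \limsup_{n\in L_k}\|x_{n|\tau}\| \le d_{k+1}$, and since $d_{k+1}\to 0$ this $\limsup$ is $0$. Finally, because $(c_j)_j\in\ell_p$ the positive numbers $c_j$ tend to $0$ and can be rearranged into a decreasing sequence; relabelling the branches accordingly produces the decreasing $\ell_p$-vector $(c_i)_{i\in N}$ with the stated properties, the set $N$ being finite if the process terminated and equal to $\N$ otherwise.

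The main obstacle, and the reason the branches and the subsequence must be constructed together rather than by bounding $\limsup_n\|x_{n|\sigma}\|$ over all $\sigma$ at once, is the lack of simultaneity: a normalized block sequence can have $\limsup_n\|x_{n|\sigma}\|$ large on many branches at the same time (for instance with mass oscillating between two branches) even though no single subsequence keeps all of them large. The exhaustion scheme circumvents this by extracting one branch at a time and invoking the disjoint-segments inequality of Remark~\ref{remnormjt}(i) only along the current subsequence $L_k$, where the relevant limits actually exist.
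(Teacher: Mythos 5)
Your proof is correct, and it rests on the same two pillars as the paper's argument: distinct branches have pairwise incomparable, hence disjoint, final segments beyond some level $\rho$, so that for block vectors supported beyond $\rho$ Remark~\ref{remnormjt}(i) gives $\sum_{j\le k}\norm{x_{n|\sigma_j}}^p\le\norm{x_n}^p\le 1$; and diagonalization over nested subsequences. The organization, however, is genuinely different. The paper fixes thresholds $\epsilon_j\searrow 0$ and at stage $j$ extracts in one batch all (at most $\epsilon_j^{-p}$ many) branches whose mass persists above $\epsilon_j$ along the current subsequence, arranging that every other branch has $\limsup<\epsilon_j$; the membership $(c_i)_{i\in N}\in B_{\ell_p}$ is then verified at the very end, by contradiction. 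You instead extract one branch at a time, greedily taking a branch of nearly maximal remaining limsup $d_{k+1}$, prove the bound $\sum_{j\le k}c_j^p\le 1$ directly (same incomparable-tails estimate), and use it to force $c_j\to 0$, hence $d_k\to 0$ by near-maximality; this single fact is what annihilates all remaining branches along the diagonal. Your scheme is arguably more self-contained: the paper's batch-extraction claim, as literally stated, itself requires a finite greedy sub-argument of exactly your kind (one cannot bound the number of branches with large $\limsup$ along a fixed subsequence without first passing to a further subsequence, as your closing remark about oscillating mass correctly points out), and the paper leaves that step implicit, whereas your $d_k\to 0$ mechanism replaces the threshold bookkeeping entirely; what the paper's version buys is the explicit cardinality bound $k_j\le\epsilon_j^{-p}$ at each threshold, in the spirit of Remark~\ref{remnormjt}(ii). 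One small repair: when $d_{k+1}\le 2^{-(k+1)}$, the condition $\limsup_{n\in L_k}\norm{x_{n|\sigma_{k+1}}}>d_{k+1}-2^{-(k+1)}$ does not by itself force $c_{k+1}>0$, so you should require $\limsup_{n\in L_k}\norm{x_{n|\sigma_{k+1}}}>\max\{d_{k+1}-2^{-(k+1)},0\}$ (possible since $d_{k+1}>0$), or simply discard any indices with $c_j=0$ at the end; either fix is immediate and does not affect the rest of the argument.
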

\begin{proof}
  Let  $\e>0$.
From the definition of the norm it follows that for every choice of distinct branches $\sigma_{1},\dots, \sigma_{k}$  it holds  that
\begin{equation}
  \label{eq:1a} 
\limsup_{n}\sum_{i=1}^{k}\norm{x_{n|\sigma_{i}}}^{p}\leq \sup_{n}\normm[x_{n}]^{p}\leq 1.
\end{equation}
Combining  \eqref{eq:1a} and  Remark~\ref{remnormjt} (ii) we get  that for every subsequence $(x_{n})_{n\in  L}$ of $(x_{n})_{n}$ there exist at most $k \in [0, \e^{-p}]$
distinct branches  $\sigma_{1},\dots,  \sigma_{k}$ such that
$\lim_{n\in L}\norm{x_{n|\sigma_{i}}}=c_{i}\geq\e$  and for every
branch $\sigma \ne \sigma_{j}$ it holds $\limsup_{n\in L}\norm{x_{n|\sigma}}<\e$.

Let $\e_{j}\searrow 0$. Applying the above for  $\e_{1},\e_{2},\dots$ we get
$L_{1}\supset L_{2}\supset\dots \supset L_{j}\supset L_{j+1}\supset\dots$    and
$\sigma_{j,1},\dots, \sigma_{j,k_{j}}$ distinct branches, $k_{j}\in [0,\e_{j}^{-p}]$, $j=1,2\dots$,  
such that
\begin{equation*}
  \begin{split}
 \lim_{n\in L_{j}}\norm{x_{n|\sigma_{j,k}}}=c_{j,k}\geq\e_{j}\,\text{for
   all}\, k\leq k_{j}\,\,\text{and}\\
 \,\, \limsup_{n\in
   L_{j}}\norm{x_{n|\sigma}}<\e_{j}\,\,\forall \sigma\ne \sigma_{i,k}, k=1,\dots,k_{i}, i=1,\dots,j.
 \end{split}
\end{equation*}
Reordering $(c_{j,k})_{k}$  we get  $c_{j-1,k_{j-1}}>c_{j,1}\geq\dots \geq c_{j,k_{j}}$.
The inductive choice stops in the case we cannot find $L\in [L_{j}]$ and
a branch $\brn$ such  that $\lim_{n\in L}\norm{x_{n|\brn}}>0$.
Enumerate the $c_{j,k}\ne0 $ and the associated  $\brn_{j,k,}$
as
$(c_{i})_{i\in N}$ and $(\brn_{i})_{i\in N}$ respectively, following the
natural order 
of $c_{j,k}, k\leq k_{j}, j=1,2,\dots$.
Let $\mc{B}_{0}=\{\sigma_{i}: i \in N\}$  and
$(x_{n})_{n\in L}$
be a  diagonal subsequence of $(x_{n})_{n\in L_{j}}$.     
Once we show that $(c_{i})_{i\in N}\in B_{\ell_{p}}$  we have the result for
$(x_{n})_{n\in L}$ and  $\mc{B}_{0}$.
Assume that $\sum_{i}\abs{c_{i}}^{p}>1$.
Then  $\sum_{i\in F}\abs{c_{i}}^{p}>1$ for some finite set  $F$.
Since the  branches $(\sigma_{i})_{i}$ are distinct
there exists $\rho\in\N$ such that the final  segments  $\sigma_{i,>\rho}, i\in F$ are
incomparable. Then,
\[
\lim_{n\in L}\norm{x_{n}}^{p}\geq \lim_{n\in L}\sum_{i\in F}\norm{x_{n|\sigma_{i,>\rho}}}^{p}\geq
\sum_{i\in F}c_{i}^{p}>
  1\quad\text{a contradiction}.
\]
\end{proof}
\begin{proof}[Proof of Proposition~\ref{lpdecomp}]
 If   $\lim_{n}\norm{x_{n|\brn}}=0$ for every branch $\brn$ then the
$\ell_{p}$-vector of  $(x_{n})_{n}$ is the zero vector.
Moreover from Theorem~\ref{jtunique} it follows that   $(x_{n})_{n\in\N}$ has a
subsequence equivalent to the unit vector basis of $\ell_{p}$.
Assume that   it does not hold that $\lim_{n}\norm{x_{n|\brn}}=0$ for every branch $\brn$ .
  From the previous lemma we get a subsequence  $(x_{n})_{n\in L}$ of
  $(x_{n})$, an at most  countable  of distinct branches
  $(\sigma_{i})_{i\in N}$ and a decreasing  sequence $(c_{i})_{i\in N}\in B_{\ell_{p}}$ such that
  \begin{equation*}
 \lim_{n}\norm{x_{n|\sigma_{i}}}= c_{i}>0\,\,\text{for every}\, i\,\,\text{and}\,\,
  \lim_{n}\norm{x_{n|\sigma}}=0\,\,\text{for every }\, \sigma\ne  \sigma_{i}, i\in N.
\end{equation*}
It follows that $(x_{n})_{n\in L}$ has $(c_{i})_{i\in N}$ as
$\ell_{p}$-vector with  associated  set of branches  the set  $(\sigma_{i})_{i\in N}$.
\end{proof}
\begin{remark}\label{remI3.4}
Let $(x_{n})_{n\in N}$ be a sequence in $JT_{2,p}$ having  $(c_{i})_{i\in N}$ as
an $\ell_{p}$-vector  and let $(\brn_{i})_{i\in N}$  be the associated set of brances .

i) For every $\lambda\ne 0$  the sequence  $(\lambda x_{n})_{n}$ has
$(|\lambda| c_{i})_{i\in N}$ as an 
$\ell_{p}$-vector   and associated set of brances the branches
$(\brn_{i})_{i\in N}$.

ii)      It is easy to check that the $\ell_{p}$-norm of $(c_{i})_{i\in N}$
satisfies $\norm{(c_{i})_{i\in N}}_{\ell_{p}}\leq\liminf\norm{x_{n}}$.

iii)   Setting
$\text{T}_{0}=\cup\{\brn_{i}:i\in N\}$  and  
$y_{n}=x_{n}-x_{n|\text{T}_{0}}$ it follows that
$\lim_{n}\norm{y_{n|\sigma}}=0$ for every branch $\sigma$.  In the case where 
$\limsup_{n}\norm{y_{n}}>0$ passing to a subsequence we may assume
that   $0<c\leq\norm{y_{n}}\leq 1$ for every $n$. From
Theorem~\ref{jtunique} we get a subsequence  $(y_{n})_{n\in  N}$ which
is  equivalent to the unit vector basis of $\ell_{p}$. 
\end{remark}
For the rest of  Part I, we will assume that a non-zero
$\ell_{p}$-vector  is an infinitely supported vector of $\ell_{p}$.  The
cases that  the $\ell_{p}$-vector  is  finitely supported  are treated in
similar manner and in most cases  the proofs are simpler.

\begin{lemma}\label{block}
Let $(x_{n})_{n}$ be a normalized weakly  null sequence with
a non-zero $\ell_{p}$-vector  $(c_{i})_{i\in \N}$ and let $\mc{B}$  be the
associated set of branches. Then, if $(y_{n})_{n}$
is a  block sequence  such that $\supp (y_{n})\subset\supp(x_{n})$ for every
$n$ and $\lim_{n}\norm{x_{n}-y_{n}}=0$, the  sequence $(y_{n})_{n}$ also has
  $(c_{i})_{i\in \N}$ as an $\ell_{p}$-vector with the same set of
associated branches.
  \end{lemma}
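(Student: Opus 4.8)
The plan is to reduce the entire statement to one uniform estimate comparing the branch–restriction norms of $x_n$ and $y_n$, and then simply pass to limits. First I would record the two elementary facts about restricting to a branch. Since a branch $\sigma$ is (upon intersecting with the finite support) a segment, the observation following the definition of the norm gives, for any $z\in c_{00}$, that the $JT_{2,p}$-norm of $z_{|\sigma}$ agrees with its $\ell_2$-norm, i.e. $\norm{z_{|\sigma}}=\norm{z_{|\sigma}}_{\ell_2}$. Moreover, Remark~\ref{remnormjt}(i) applied to the single segment $\sigma$ yields $\norm{z_{|\sigma}}_{\ell_2}=\norm{z}_{\sigma}\le \norm{z}$. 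Combining these, $\norm{z_{|\sigma}}\le\norm{z}$ for every branch $\sigma$ and every $z$, with a bound that does \emph{not} depend on $\sigma$.

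Next, using $y_{n|\sigma}-x_{n|\sigma}=(y_n-x_n)_{|\sigma}$ together with the reverse triangle inequality for the $\ell_2$-norm and the facts above, I would obtain for every branch $\sigma$ and every $n$
\[
\big|\norm{y_{n|\sigma}}-\norm{x_{n|\sigma}}\big|=\big|\norm{y_{n|\sigma}}_{\ell_2}-\norm{x_{n|\sigma}}_{\ell_2}\big|\le\norm{(y_n-x_n)_{|\sigma}}_{\ell_2}=\norm{(y_n-x_n)_{|\sigma}}\le\norm{y_n-x_n}.
\]
The decisive point is that the right-hand side is independent of $\sigma$. Thus the hypothesis $\lim_n\norm{x_n-y_n}=0$ controls the discrepancy between $\norm{y_{n|\sigma}}$ and $\norm{x_{n|\sigma}}$ simultaneously over \emph{all} (uncountably many) branches, so no diagonal or compactness argument is needed — this uniformity is the only place where care is required, and it is exactly what the norm domination supplies.

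Finally I would pass to limits. For each associated branch $\sigma_i$, since $\norm{x_{n|\sigma_i}}\to c_i$ and the discrepancy tends to $0$, also $\norm{y_{n|\sigma_i}}\to c_i>0$; and for any branch $\sigma\notin\{\sigma_i:i\in N\}$, since $\norm{x_{n|\sigma}}\to 0$ we get $\norm{y_{n|\sigma}}\to 0$. The data $(c_i)_{i\in N}$ and $(\sigma_i)_{i\in N}$ are literally unchanged, so membership $(c_i)_{i\in N}\in B_{\ell_p}$ and monotonicity of $(c_i)$ are inherited for free, while $\norm{x_n}=1$ with $\norm{x_n-y_n}\to0$ forces $\norm{y_n}\to1$. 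Hence all the conditions defining an $\ell_p$-vector hold for $(y_n)_n$ with the same vector and the same associated set of branches, completing the proof.
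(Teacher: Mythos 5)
Your proof is correct and is essentially the paper's own argument: the paper disposes of this lemma in one line, citing exactly the (reverse) triangle inequality together with the fact that $\norm{z_{|s}}\leq\norm{z}$ for every segment $s$, which is precisely the uniform estimate $\bigl|\norm{y_{n|\sigma}}-\norm{x_{n|\sigma}}\bigr|\leq\norm{y_n-x_n}$ you derive before passing to limits branch by branch. Your write-up merely makes the one-line proof explicit, so there is nothing to add.
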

  \begin{proof}
  The proof is an immediate consequence    of the triangle inequality
  and the fact that $\norm{x_{|s}}\leq\norm{x}$  for every  segment $s$.
\end{proof}
\begin{lemma}\label{stabilization}
  Let  $(x_{n})_{n}$ be a normalized block sequence with a non-zero
  $\ell_{p}$-vector  $(c_{i})_{i\in \N}$
  and let $(\brn_{i})_{i\in \N}$ be the associated set of branches.
  Let $(I_{k})_{k}$ be an increasing sequence of  initial intervals of $\N$.
  Then, there exist  $n_{k}\nearrow+\infty$
  and $(l_{k})_{k}\subset\N$
 such that
 \begin{enumerate}
 \item[(i)]  $    (1-\e_{k})c_{i}\leq  \norm{x_{n|\brn_{i}}}\leq (1+\e_{k})c_{i}$ for every
   $i\in I_{k}$ and $n\geq n_{k}$,
 \item[(ii)]   the final  segments $\brn_{i,>l_{k}}, i\in I_{k}$ are
    incomparable and $\minsupp x_{n_{k}}>l_{k}$.  
 \end{enumerate}
 In particular, setting    $\mc{B}_{I_k}=\cup\{\brn_{i}:i\in I_{k}\}$  the
 following  holds:
 \begin{equation*}
   \lim_{k}\norm{x_{n_{k}|\mc{B}_{I_{k}}}}=\norm{(c_{i})_{i\in \N}}_{\ell_{p}}\,\,\text{and}\,\,
  \lim_{k}\norm{(x_{n_{k}}-x_{n_{k}|\mc{B}_{I_k}})_{|\brn}}=0\,\, \text{for every branch}\, \brn.
\end{equation*}
Moreover the   sequence $(x_{n_{k}}-x_{n_{k}|\mc{B}_{I_k}})_{k}$ 
admits upper $\ell_{2}$-estimate with constant $2$.
In particular if it does not converges to zero in norm
it is equivalent to the unit vector basis  of $\ell_{p}$.
\end{lemma}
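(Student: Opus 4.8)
The plan is to prove \cref{stabilization} in stages, first securing the two numbered items by a direct diagonal extraction and then deriving the ``In particular'' and ``Moreover'' assertions as consequences. First I would obtain (i): since $(c_i)_{i\in\N}$ is the $\ell_p$-vector of $(x_n)_n$, for each fixed $i$ we have $\norm{x_{n|\brn_i}}\to c_i>0$, so for each $k$ there is an index beyond which $\norm{x_{n|\brn_i}}$ is within a factor $(1\pm\varepsilon_k)$ of $c_i$ \emph{simultaneously} for the finitely many $i\in I_k$; choosing $\varepsilon_k\searrow 0$ and extracting $n_k\nearrow\infty$ accordingly gives (i). For (ii), the branches $(\brn_i)_{i\in I_k}$ are finitely many and distinct, so by remark (a) in the \textbf{Notation} there exists $l_k$ making the final segments $\brn_{i,>l_k}$, $i\in I_k$, pairwise incomparable; after possibly increasing $n_k$ we also arrange $\minsupp x_{n_k}>l_k$. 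These are routine and compatible, so the diagonalization closes without obstruction.

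Next I would prove the first displayed limit. Because $\minsupp x_{n_k}>l_k$ and the final segments $\brn_{i,>l_k}$ ($i\in I_k$) are incomparable, \cref{suminbranches} applies to the vector $x_{n_k|\mc{B}_{I_k}}$, giving
\[
\norm{x_{n_k|\mc{B}_{I_k}}}=\Big(\sum_{i\in I_k}\norm{x_{n_k|\brn_i}}^p\Big)^{1/p}.
\]
By (i) the inner quantities converge to $c_i$ as $k\to\infty$, and since $I_k\nearrow\N$ this sum tends to $\norm{(c_i)_{i\in\N}}_{\ell_p}$; a short estimate using $(c_i)\in B_{\ell_p}$ controls the tail uniformly. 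For the second displayed limit, write $z_{n_k}=x_{n_k}-x_{n_k|\mc{B}_{I_k}}$ and fix any branch $\brn$. If $\brn=\brn_j$ for some $j$, then for large $k$ we have $j\in I_k$, so $z_{n_k|\brn_j}=0$ on the relevant final segment; if $\brn\ne\brn_i$ for all $i$, then $\norm{x_{n_k|\brn}}\to 0$ directly from the $\ell_p$-vector definition. In both cases $\norm{z_{n_k|\brn}}\to 0$.

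The main obstacle is the ``Moreover'' clause asserting that $(z_{n_k})_k$ admits an upper $\ell_2$-estimate with constant $2$. The natural route is to apply the Amemiya--Ito style machinery developed in \cref{lpequiv}, but that lemma yields an upper $\ell_p$-estimate, not an $\ell_2$-estimate, so I cannot invoke it verbatim. Instead I expect to use the structure $\lim_k\norm{z_{n_k|\brn}}=0$ for every branch together with the defining norm: for disjoint segments $s_1,\dots,s_m$ the quantity $\big(\sum_l\norm{z_{n_k|s_l}}^p\big)^{1/p}$ must be bounded, and since each $z_{n_k}$ spreads negligibly along any single branch, one can group the segments by which $z_{n_k}$ they charge and apply a Cauchy--Schwarz / interpolation argument to convert the branchwise control into the $\ell_2$ upper bound. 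The constant $2$ suggests splitting each norming functional into a ``diagonal'' part (the segment meeting the support range of a single $z_{n_k}$) and an ``off-diagonal'' part, bounding the former by $\big(\sum\abs{a_i}^2\big)^{1/2}$ and the latter by a vanishing tail, exactly as the factor-of-two bookkeeping in \cref{lpequiv} does; carefully tracking that the off-diagonal contribution is absorbed is where the real work lies. Finally, the last sentence follows from the dichotomy already available: if $(z_{n_k})_k$ does not converge to zero in norm, then after normalizing it is a normalized block sequence with $\norm{z_{n_k|\brn}}\to 0$ for every $\brn$, so \cref{jtunique} gives a subsequence equivalent to the $\ell_p$ basis.
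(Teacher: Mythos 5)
Your treatment of (i), (ii) and of the two displayed limits is correct and coincides with the paper's own argument: inductive choice of $n_k$ and $l_k$ from the definition of the $\ell_p$-vector, Lemma~\ref{suminbranches} for the norm of $x_{n_k|\mc{B}_{I_k}}$, and the case distinction $\brn=\brn_j$ for some $j$ versus $\brn\ne\brn_i$ for all $i$ for the second limit. The genuine gap is the ``Moreover'' clause. You assert that Theorem~\ref{jtunique} and Lemma~\ref{lpequiv} cannot be invoked because they yield an upper $\ell_p$-estimate rather than an upper $\ell_2$-estimate, and you substitute a sketched gliding-hump argument (the ``diagonal/off-diagonal'' splitting, with the admission that this is ``where the real work lies'') that is never carried out; as written, the constant-$2$ upper $\ell_2$-estimate is simply not proved. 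Moreover, the obstruction you cite is illusory: since $p>2$, for all scalars one has $\big(\sum_k\abss{a_k}^p\big)^{1/p}\le\big(\sum_k\abss{a_k}^2\big)^{1/2}$, so an upper $\ell_p$-estimate with constant $C$ is automatically an upper $\ell_2$-estimate with the same constant $C$. This is exactly how the paper concludes. Set $y_{n_k}=x_{n_k}-x_{n_k|\mc{B}_{I_k}}$ and note $\norm{y_{n_k}}\le\norm{x_{n_k}}=1$ by $1$-unconditionality. If $\limsup_k\norm{y_{n_k}}>0$, pass to a subsequence bounded below, normalize, and apply Theorem~\ref{jtunique}: a further subsequence is $(\sqrt[q]{2}+\e)$-equivalent to the unit vector basis of $\ell_p$, and un-normalizing (using $\norm{y_{n_k}}\le 1$) together with the coordinatewise inequality gives an upper $\ell_2$-estimate with constant $\sqrt[q]{2}+\e\le 2$. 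If instead $\lim_k\norm{y_{n_k}}=0$, pass to a subsequence with $\sum_k\norm{y_{n_k}}^2\le 4$; then Cauchy--Schwarz gives $\norm{\sum_k a_k y_{n_k}}\le 2\big(\sum_k a_k^2\big)^{1/2}$ directly.

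In short, the single observation you are missing --- coordinatewise, the $\ell_2$-norm dominates the $\ell_p$-norm when $p>2$ --- lets the very application of Theorem~\ref{jtunique} that you already make for the last sentence of the lemma also settle the ``Moreover'' clause, so no new Amemiya--Ito machinery is needed. Note also that the vanishing case $\norm{y_{n_k}}\to 0$, which your sketch does not address, cannot be reached by Theorem~\ref{jtunique} (there is nothing to normalize) and must be handled by the summability argument above; and, as in the paper, the upper estimate is obtained only after passing to a further subsequence, which is how the statement of the lemma is to be read.
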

\begin{proof}
Using the definition of the $\ell_{p}$-vector,  we choose inductively $n_{k}\nearrow+\infty$ and
$(l_{k})_{k}\subset\N$
 such that  (i) and (ii) holds.  Set  $\mc{B}_{I_{k}}=\cup\{\brn_{i}:i\in
 I_{k}\}$ and $\bar{x}_{n_{k}}=x_{n_{k}|\mc{B}_{I_{k}}}$.
From (ii)  and  Lemma~\ref{suminbranches} it
 follows that
 \begin{equation*}
 \norm{\bar{x}_{n_{k}}}=\left(\sum_{i\in
       I_{k}}\norm{x_{n_{k}|\brn_{i}}}^{p}\right)^{1/p}.
\end{equation*}
Using  (i) and that $\# I_{k}\nearrow+\infty$ it follows that
\begin{equation*}
(1-\e_{k})\left(\sum_{i\in I_{k}}c_{i}^{p}\right)^{1/p}
\leq\norm{\bar{x}_{n_k}}\leq (1+\e_{k})\left(\sum_{i\in
  I_{k}}c_{i}^{p}\right)^{1/p}
\Rightarrow\norm{\bar{x}_{n_k}}\xrightarrow[k]{} a.
\end{equation*}
Note also that $(\bar{x}_{n_{k}})_{k}$ has as $\ell_{p}$-vector the
$(c_{i})_{i\in \N}$ with the same set of associated brances.

Indeed it readily follows that for every
$\brn_{l}$,  $\lim_{k}\norm{\bar{x}_{n_{k}|\brn_{l}}}=c_{l}$.
If  $\brn\ne\brn_{l}$ for every $l$, then from the definition of the
$\ell_{p}$-vector it follows,
\[
\norm{\bar{x}_{n_{k}|\brn}}\leq\norm{x_{n_{k}|\brn}}\xrightarrow[k]{}0.
\]
Let 
  $y_{n_{k}}=x_{n_{k}}-\bar{x}_{n_{k}}$.
  
 \textit{Claim}.  $\norm{y_{n_{k}|\brn}}\to 0$ for every branch $\brn$.

 Indeed  if $\brn=\brn_{l}$ for some  $l$, then  $y_{n_{k}|\brn_{l}}=0$
 for every $k>l$.  If $\brn\ne\brn_{l}$ for every $l$  then
 \[
\norm{y_{n_{k}|\brn}}=\norm{(x_{n_{k}}-\bar{x}_{n_{k}})_{|\brn}}\leq\norm{x_{n_{k}|\brn}}\xrightarrow[k]{} 0
\]
from the properties of the $\ell_{p}$-vector.

In the case  where $\limsup \norm{y_{n_{k}}}>0$, from Theorem~\ref{jtunique}
passing to  a subsequence  we may assume that $(y_{n_{k}})_{k\in\N}$  equivalent to the
unit vector basis of $\ell_{p}$.  If $\lim \norm{y_{n_{k}}}=0$ passing to
a subsequence  we get that $(y_{n_{k}})_{k\in\N}$  admits upper
$\ell_{2}$-estimate with constant $2$.

From  the above we get  that the  subsequence
$(x_{n_{k}})_{k}$ satisfies the conclusion of the Lemma.
\end{proof}
\begin{lemma}\label{l2averages}
  Let  $(x_{n})_{n}$ be a normalized block sequence
having a non-zero   $\ell_{p}$-vector    $(c_{i})_{i\in \N}$  and let 
$(\brn_{i})_{i\in \N}$ be the associated set of branches.
  Let  $(y_{n})_{n\in\N}$ be a block sequence of $(x_{n})_{n}$ such that
  $y_{n}=\sum_{k\in F_{n}}a_{k}x_{k}$
and  $(a_{k})_{k\in F_{n}}\in S_{\ell_{2}}$. Then,   $(y_{n})_{n}$
has $(c_{i})_{i\in \N}$
as an $\ell_{p}$-vector  with  the same set of associated branches.
\end{lemma}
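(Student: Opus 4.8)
The plan is to compute $\norm{y_{n|\brn}}$ exactly for each branch $\brn$ and then take the limit separately on the associated branches and on all other branches. The crucial tool is Remark~\ref{remnormjt}(iv), which reduces the restriction of a block sum to a branch to an $\ell_2$-computation.

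First I would fix a branch $\brn$ and observe that $(a_k x_k)_{k\in F_n}$ is a finite block sequence, so Remark~\ref{remnormjt}(iv) yields
\[
\norm{y_{n|\brn}} = \Big\|\sum_{k\in F_n} a_k x_{k|\brn}\Big\| = \Big(\sum_{k\in F_n} a_k^2 \norm{x_{k|\brn}}^2\Big)^{1/2}.
\]
Since $(a_k)_{k\in F_n}\in S_{\ell_2}$, the weights $a_k^2$ sum to one, so this exhibits $\norm{y_{n|\brn}}$ as a weighted quadratic average of the numbers $\norm{x_{k|\brn}}$, $k\in F_n$. Everything then follows from controlling these numbers for $k$ large, using that $\min F_n\to\infty$ because $(y_n)_n$ is a block sequence of $(x_n)_n$.

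For an associated branch $\brn_i$, given $\e>0$ I would pick $K$ with $\bigl|\norm{x_{k|\brn_i}}-c_i\bigr|<\e$ for $k\geq K$; once $\min F_n\geq K$ the average above is squeezed between $(c_i-\e)^2$ and $(c_i+\e)^2$, giving $\lim_n\norm{y_{n|\brn_i}}=c_i$. For any branch $\brn\neq\brn_i$ for all $i$, the same identity gives $\norm{y_{n|\brn}}\leq\max_{k\in F_n}\norm{x_{k|\brn}}$, which tends to $0$ since $\norm{x_{k|\brn}}\to 0$ and $\min F_n\to\infty$. This establishes both defining limits of the $\ell_p$-vector; the vector $(c_i)_{i\in\N}$ and its associated branches are literally unchanged, so it remains decreasing and in $B_{\ell_p}$.

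The computation is short, and the only step needing a little care is the uniformity: ensuring that the approximation $\norm{x_{k|\brn_i}}\approx c_i$ (respectively $\norm{x_{k|\brn}}\approx 0$) is valid for all $k\in F_n$ simultaneously once $n$ is large. This is exactly what $\min F_n\to\infty$ provides, so I do not anticipate a genuine obstacle.
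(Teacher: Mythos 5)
Your proof is correct and follows essentially the same route as the paper's: both rest on Remark~\ref{remnormjt}(iv) to express $\norm{y_{n|\brn}}$ as the $\ell_2$-average $\bigl(\sum_{k\in F_n}a_k^2\norm{x_{k|\brn}}^2\bigr)^{1/2}$, and then squeeze this average using the defining limits of the $\ell_p$-vector, with $\min F_n\to\infty$ supplying the needed uniformity (a point the paper uses implicitly and you make explicit).
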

\begin{proof}
Let $\e>0$ and $\brn\ne\brn_{i}$ for every $i$.  From the definition of the
$\ell_{p}$-vector it follows that  there exists $n_{0}\in\N$ such that
\begin{equation*}
  \norm{x_{n|\brn}}<\e\,\,\text{for all}
  \,\, n\geq n_{0}.
\end{equation*}
From  the above inequality and Remark~\ref{remnormjt} (iv) it follows, using that   $(a_{k})_{k\in F_{n}}\in S_{\ell_{2}}$,
\[
\norm{y_{n|\brn}}=\norm{\sum_{k\in F_{n}}a_{k}x_{k|\brn}}=\left(\sum_{k\in
    F_{n}}a_{k}^{2}\norm{x_{k|\brn}}^{2}\right)^{1/2}\leq\e \,\,\text{for all}
  \,\, n\geq n_{0}.
\]
This yields that $\lim_{n}\norm{y_{n|\brn}}=0$. 
We show  now  that  for every $i\in\N$ it holds
$\lim_{n}\norm{y_{n|\brn_{i}}}=c_{i}$.  For  $i\in \N$ there exists  $n(i)\in\N$  such that
\begin{equation}
  \label{eq:23}
c_{i}-\e\leq \norm{x_{n|\brn_{i}}}\leq c_{i}+\e\,\,\text{for all}\,\, n\geq n(i).
\end{equation}
Using once more Remark~\ref{remnormjt} (iv)  and
\eqref{eq:23} it follows,
\[
c_{i}-\e\leq \norm{y_{n|\brn_{i}}}=\left(\sum_{k\in
    F_{n}}a_{k}^{2}\norm{x_{k|\brn_{i}}}^{2}\right)^{1/2}\leq c_{i}+\e\,\,
\text{for all}\,\, n\geq n(i).
\]
From the above it follows that  $(y_{n})_{n\in\N}$
has $(c_{i})_{i \in \N}$
as $\ell_{p}$-vector  with  the same set of associated branches.
\end{proof}
We show now that we can choose a subsequence
with the   additional property the norm of the $\ell_{2}$-averages 
converges to the $\ell_{p}$-norm of the  $\ell_{p}$-vector.
For the proof we shall use  the following  elementary lemma concerning elements of $\ell_{p}$.
\begin{lemma}\label{lpsplit}
For every   $(c_{n})_{n\in\N}\in B_{\ell_{p}}$ there exists  a strictly
  increasing sequence  $(I_{k})_{k\in\N}$ of initial segments of $\N$
  such that setting $d_{k}=(c_{n})_{n\in I_{k}\setminus I_{k-1}}$, $I_{0}=\emptyset$,
  $k\in\N$, the following holds:

  For every $\e \in (0,1)$ there exists $k_{0}\in\N$  such that
  \[\sum_{k=k_{0}+1}^{\infty}\norm{d_{k}}_{\ell_{p}}\leq \e\norm{(c_{n})_{n}}_{\ell_{p}}.
 \]
\end{lemma}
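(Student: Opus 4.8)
The plan is to recognize that the desired conclusion is exactly the assertion that the series $\sum_k \|d_k\|_{\ell_p}$ converges: if it does, then its tails tend to $0$, and for each $\e\in(0,1)$ a suitable $k_0$ exists (using, in the non-trivial case, that $\|(c_n)_n\|_{\ell_p}>0$). Thus the whole problem reduces to choosing the initial segments $(I_k)_k$ so that the block norms $\|d_k\|_{\ell_p}$ form a summable sequence. The one point worth keeping in mind is that the $p$-th powers $\|d_k\|_{\ell_p}^p$ are automatically summable, since they add up to $\|(c_n)_n\|_{\ell_p}^p$; but extracting $p$-th roots can destroy summability. The remedy is to force the block masses to decay geometrically, which is possible precisely because $(c_n)_n\in\ell_p$ has vanishing tails.

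Concretely, I would first dispose of the trivial case $(c_n)_n=0$, where any strictly increasing choice of the $I_k$ works, and assume $C:=\|(c_n)_n\|_{\ell_p}>0$. Since $(c_n)_n\in B_{\ell_p}$, the tail sums $\sum_{n>N}|c_n|^p$ tend to $0$ as $N\to\infty$. I would then select $0=N_0<N_1<N_2<\cdots$ inductively so that $\sum_{n>N_k}|c_n|^p\le 2^{-pk}$ for every $k\ge 1$ (such an $N_k>N_{k-1}$ exists by the vanishing of the tails), and set $I_k=\{1,\dots,N_k\}$, which is strictly increasing as required.

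With this choice, the mass of the $k$-th block satisfies $\|d_k\|_{\ell_p}^p=\sum_{N_{k-1}<n\le N_k}|c_n|^p\le\sum_{n>N_{k-1}}|c_n|^p\le 2^{-p(k-1)}$, hence $\|d_k\|_{\ell_p}\le 2^{-(k-1)}=2\cdot 2^{-k}$. Summing the tail then gives $\sum_{k>k_0}\|d_k\|_{\ell_p}\le\sum_{k>k_0}2^{1-k}=2^{1-k_0}$, which tends to $0$ as $k_0\to\infty$. Therefore, given $\e\in(0,1)$, it suffices to pick $k_0$ with $2^{1-k_0}\le\e C$; for this $k_0$ we obtain $\sum_{k>k_0}\|d_k\|_{\ell_p}\le\e\,\|(c_n)_n\|_{\ell_p}$, which is the claim.

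There is no genuine obstacle beyond this; the sole \emph{idea} is the geometric-decay selection of the $N_k$, which is exactly what converts the a priori merely $p$-summable block masses into a genuinely summable sequence of block norms. I would expect the only care needed in writing it up to be the bookkeeping that the $I_k$ can be kept strictly increasing (achieved by taking $N_k=\max(N_{k-1}+1,M_k)$, where $M_k$ is large enough to control the $k$-th tail).
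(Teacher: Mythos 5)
Your proof is correct and follows essentially the same approach as the paper: both arguments inductively choose the initial segments $I_k$ so that the tail masses $\sum_{n\notin I_k}|c_n|^p$ decay summably fast, making the block norms $\norm{d_k}_{\ell_p}$ summable with controllable tails. The only cosmetic difference is that the paper scales its decay condition by $\norm{(c_n)_n}_{\ell_p}$ from the start (using a generic summable sequence $\e_k$), whereas you use absolute geometric decay $2^{-pk}$ and absorb the factor $\norm{(c_n)_n}_{\ell_p}$ when selecting $k_0$, handling the zero-vector case separately.
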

\begin{proof}
  Let   $A=\norm{(c_{n})_{n}}_{\ell_{p}}$  and  $\e_{n}\searrow 0$, $\e_{0}=1$,
  such that $\sum_{n\geq1}\e_{n}<1$.  Inductively choose  a strictly
  increasing sequence $(I_{k})_{k}$ of initial segments  of $\N$ such that
  \[
\sum_{n\in\N\setminus I_{k}}\abs{c_{n}}^{p}<\e^{p}_{k}A^{p}.
\]
Let $\e>0$.  Choose  $k_{0}\in\N$ such that   $\sum_{k=k_{0}}^{\infty}\e_{k}<\e$.  Then

\[\norm{d_{k}}_{\ell_{p}}\le\e_{k-1}A\Rightarrow
\sum_{k=k_{0}+1}^{\infty}\norm{d_{k}}_{\ell_{p}}\leq \sum_{k=k_{0}}^{\infty}\e_{k}A\leq \e A.
\]
\end{proof}
\begin{lemma}\label{lemmap1}
 Let $(x_{n})_{n}$ be a normalized block sequence  with an
 $\ell_{p}$-vector $(c_{i})_{i\in \N}$  such that
 $\norm{(c_{i})_{i\in \N}}_{\ell_{p}}=a\in ( 0,1)$. Then, there exists a subsequence
 $(x_{n_{i}})_{i\in\N}$  of $(x_{n})_{n\in\N}$  such that setting
 $w_{m}=\frac{1}{\sqrt{F_{m}}}\sum_{i\in F_{m}}x_{n_{i}}$, with $F_{1}<F_{2}<\dots$ and $\# F_{m}\nearrow+\infty$, we have that
 $\norm{w_{m}}\xrightarrow[m]{}a$ and $(w_{m})_{m}$
 admits $(c_{i})_{i\in \N}$   as an  $\ell_{p}$-vector  with the same set of
 associated branches as $(x_{n})_{n}$.
\end{lemma}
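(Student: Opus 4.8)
The plan is to separate the two assertions. That $(w_m)_m$ again admits $(c_i)_{i\in\N}$ as an $\ell_p$-vector with the same associated branches is immediate from Lemma~\ref{l2averages}, since each $w_m$ is by definition an $\ell_2$-average of the $x_{n_i}$'s. Thus the whole content of the lemma is the norm convergence $\norm{w_m}\to a$. For the lower estimate I would invoke Remark~\ref{remI3.4}(ii) applied to $(w_m)_m$: having $\ell_p$-vector $(c_i)_i$, it satisfies $a=\norm{(c_i)_i}_{\ell_p}\le\liminf_m\norm{w_m}$. Hence everything reduces to producing, after passing to a suitable subsequence and choosing the blocks $F_m$ appropriately, the matching upper estimate $\limsup_m\norm{w_m}\le a$.

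To organize the upper estimate I would first fix the combinatorial scaffolding. Using Lemma~\ref{lpsplit} I choose initial intervals $(I_k)_k$ so that the branch-mass beyond some $I_{k_0}$ is $\le\e a$; this lets me treat all but finitely many associated branches as negligible. Feeding $(I_k)_k$ into Lemma~\ref{stabilization} I pass to a subsequence (relabelled $(x_n)_n$) splitting each vector as $x_n=\bar x_n+y_n$, where $\bar x_n=x_{n|\mc B_{I_n}}$ lives on the first $I_n$ branches with $\norm{\bar x_{n|\brn_l}}\in[(1-\e_n)c_l,(1+\e_n)c_l]$ and incomparable final segments, while the remainder $(y_n)_n$ has zero $\ell_p$-vector and, by the last assertion of Lemma~\ref{stabilization}, either tends to $0$ in norm or is equivalent to the $\ell_p$-basis. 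I then take the blocks $F_m$ successive with $\#F_m\to\infty$ and $\min F_m\to\infty$, and write $w_m=\bar w_m+\tilde w_m$ with $\bar w_m=(\#F_m)^{-1/2}\sum_{i\in F_m}\bar x_{n_i}$ and $\tilde w_m=(\#F_m)^{-1/2}\sum_{i\in F_m}y_{n_i}$.

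The off-branch term disappears in the limit. If $\norm{y_n}\to0$ this is trivial; otherwise $(y_{n_i})_i$ is equivalent to the $\ell_p$-basis, so the upper $\ell_p$-estimate gives $\norm{\tilde w_m}\le C(\#F_m)^{1/p}/(\#F_m)^{1/2}=C(\#F_m)^{1/p-1/2}\to0$, because $p>2$. For the on-branch term I would compute branchwise: since the $\bar x_{n_i}$ are successive blocks, Remark~\ref{remnormjt}(iv) yields $\norm{\bar w_{m|\brn_l}}=(\#F_m)^{-1/2}\big(\sum_{i\in F_m}\norm{\bar x_{n_i|\brn_l}}^2\big)^{1/2}$, which by the stabilization estimates tends to $c_l$ for each fixed $l$ and stays $\le(1+\e)c_l$. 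The key structural point is that, the final segments being incomparable past the separation levels and the supports lying beyond them, every segment $s$ meets the mass of at most one dominant branch; combined with the inequality $t^{p/2}+u^{p/2}\le(t+u)^{p/2}$ (valid as $p/2>1$) and Lemma~\ref{suminbranches}, this shows that disjoint segments cannot beat whole distinct branches, whence $\norm{\bar w_m}\le(1+\e)\big(\sum_l\norm{\bar w_{m|\brn_l}}^p\big)^{1/p}\to(1+\e)a$, while the lower estimate $\norm{\bar w_m}\ge(1-\e)a$ comes from the finitely many dominant branches supplied by Lemma~\ref{lpsplit}.

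Combining $\norm{\tilde w_m}\to0$ with $\norm{\bar w_m}\to a$ gives $\norm{w_m}\to a$, letting $\e\to0$ along the construction. The step I expect to be the true obstacle is the upper bound on $\norm{\bar w_m}$: controlling arbitrary disjoint segments of the $\ell_2$-average, in particular ruling out that a ``transversal'' segment, or a segment drifting through the infinitely many tail branches, accumulates more than the branchwise mass. This is exactly where reducing to finitely many dominant branches via Lemma~\ref{lpsplit} and the incomparability together with the $p/2$-superadditivity bookkeeping are essential; the off-branch vanishing, by contrast, is soft once the $\ell_p$-equivalence dichotomy of Lemma~\ref{stabilization} is in hand.
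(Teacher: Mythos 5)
Your reduction (via Lemma~\ref{l2averages} and Remark~\ref{remI3.4}(ii)) to the upper estimate $\limsup_m\norm{w_m}\le a$, the splitting $w_m=\bar w_m+\tilde w_m$ obtained from Lemmas~\ref{lpsplit} and \ref{stabilization}, and the disposal of the off-branch term using $p>2$ all coincide with the paper's proof and are correct. The gap is in your estimate of $\norm{\bar w_m}$. Even granting the structural inequality $\norm{\bar w_m}\le\big(\sum_l\norm{\bar w_{m|\brn_l}}^p\big)^{1/p}$ (which does hold, since every disjoint segment's mass inside a finite union of branches lies on a single branch), the quantitative claim that the right-hand side tends to $(1+\e)a$ is false. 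The branchwise bound $\norm{\bar w_{m|\brn_l}}\le(1+\e)c_l$ holds for each \emph{fixed} $l$ only once $m$ is large enough that $l\in I_{k_{\min F_m}}$; for a fixed window $F_m=\{k_1<\dots<k_d\}$ it fails for the branches indexed in the blocks $I_{k_j}\setminus I_{k_{j-1}}$ opened \emph{inside} the window. Lemma~\ref{stabilization} separates the branches of $I_k$ only below $\minsupp (x_{n_k})$; it says nothing about where a branch $\brn_l$ with $l\in I_{k_j}\setminus I_{k_{j-1}}$ splits off from the earlier branches. Such a $\brn_l$ may run together with $\brn_1$, say, throughout $\supp (x_{n_{k_r}})$ for all $r<j$, in which case $\norm{\bar x_{n_{k_r}|\brn_l}}=\norm{x_{n_{k_r}|\brn_1}}\approx c_1$, whence $\norm{\bar w_{m|\brn_l}}^2\gtrsim (j-1)c_1^2/\#F_m$ rather than $\lesssim c_l^2$. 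Since these blocks may contain arbitrarily many such branches, the sum $\sum_l\norm{\bar w_{m|\brn_l}}^p$ over-counts, once per passing branch, the mass that the early vectors of the window carry on common initial segments, and it need not stay anywhere near $a^p$. Your ``one segment meets one dominant branch'' observation gives the structural inequality, not these branchwise bounds, so it does not repair this.

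This is precisely the point where the paper argues differently, and some version of its detour is unavoidable. Instead of estimating branch restrictions of the averaged vector, it decomposes $\sum_{r=1}^d\bar x_{n_{k_r}}$ coordinatewise into pieces $\sum_{r\ge j}x_{n_{k_r}|\brn_i}$; this decomposition is exact and non-overlapping because over $\supp (x_{n_{k_r}})$ the branches of $I_{k_r}$ are pairwise disjoint. It then groups the pieces by the Lemma~\ref{lpsplit} blocks $I_l\setminus I_{l-1}$, applies Lemma~\ref{suminbranches} \emph{inside each block} (legitimately: only the vectors $x_{n_{k_r}}$ with $r\ge j$, whose supports lie beyond the block's separation level $l_{k_j}$, appear there), and finally pays a plain triangle inequality \emph{across} blocks. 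That cross-block cost is an $\ell_1$-sum of the block masses $\norm{d_l}_{\ell_p}$, not an $\ell_p$-sum, and this is exactly what Lemma~\ref{lpsplit} was designed to control: for $\min F_m$ large the tail blocks contribute at most $(\e/4)a$. Reorganizing your bookkeeping along these lines turns your outline into the paper's proof.
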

\begin{proof}
  From Lemma~\ref{l2averages}  and  Remark~\ref{remI3.4}(ii)  we have that
  $a\leq\liminf\norm{w_{n}}$  for any sequence $(w_{n})_{n}$ of
  $\ell_{2}$-averages of $\sxn$.
So   it is enough to find a subsequence of
  $(x_{n})_{n}$ such that for   any sequence $(w_{n})_{n}$ of successive $\ell_{2}$-averages
  of increasing length of the subsequence,  it holds
  $\limsup\norm{w_{n}}\leq a$.
Let $(I_{k})_{k\in\N}$ be the initial segments of  $\N$ we get from Lemma~\ref{lpsplit}.
  Let also $(x_{n_{k}})_{k}$ be a subsequence of $(x_{n})_{n\in\N}$
  satisfying the conclusion of Lemma~\ref{stabilization} for the
  intervals $I_{k}$.
  Set $\bar{x}_{n_{k}}=x_{n_{k}| \mc{B}_{k}} $ and
  $y_{n_{k}}=x_{n_{k}}-\bar{x}_{n_{k}}$
  where  $\mc{B}_{I_{k}}=\cup\{\brn_{i}:i\in I_{k}\}$.
Let $F_{1}<F_{2}<\dots$ be successive subsets  of $\N$  such that  $\#
F_{m}\nearrow+\infty$ and $w_{m}=  \frac{1}{\sqrt{F_{m}}}\sum_{i\in F_{m}}x_{n_{i}}$.
Define
\[z_{m}=
  \frac{1}{\sqrt{F_{m}}}\sum_{i\in F_{m}}y_{n_{i}}\,\,\text{and}\,\, u_{m}=\frac{1}{\sqrt{F_{m}}}\sum_{i\in F_{m}}\bar{x}_{n_{i}}.
\]
Since  $(y_{n_{i}})_{i}$ is dominated  by the unit vector basis  of $\ell_{p}$ it
follows that  $\lim_{m}\norm{z_{m}}=0$.
We show that  $\limsup_{m}\norm{u_{m}}\leq a$.
Let  $\e>0$  and choose $m_{0}\in\N$  such
that  $\e_{m_{0}}<\e/4$   and  $\sum_{k=m_{0}+1}^{\infty}\norm{d_{k}}_{\ell_{p}}<(\e/4)\cdot a$.

Let $F_{m}=\{k_{1}<k_{2}<\dots <k_{d}\}$ with  $\min F_{m}>m_{0}$.  
It follows that $m_{0}< k_{i}$ and hence
$1+\e_{k_{i}}<1+\sfrac{\e}{4}$  for every $i\leq d$.
Using that  $I_{k_{r}}=\cup_{j=1}^{r}I_{k_{j}}\setminus I_{k_{j-1}}
$,($I_{k_{0}}=\emptyset$),
and
that the final  segments $\brn_{i,>l_{k}}$, $i\in I_{k}$ are incomparable,
we  get that
\[
\bar{x}_{n_{k_{1}}}=\sum_{i\in I_{k_{1}}}x_{n_{k_{r}}|\brn_{i}}\,\,\text{and}\,\,
\bar{x}_{n_{k_{r}}}=\sum_{i\in I_{k_{1}}}x_{n_{k_{r}}|\brn_{i}}+
\sum_{j=2}^{r}\sum_{l\in I_{k_{j}}\setminus
    I_{k_{j-1}}}
  \sum_{i\in I_{l}\setminus I_{l-1}}x_{n_{k_{r}}|\brn_{i}},\, r\geq 2.
  \]
  The above analysis yields
  \begin{align}
    \label{eq:901}
    \sum_{r=1}^{d}\bar{x}_{n_{k_{r}}}&=
 \sum_{r=1}^{d}\sum_{i\in I_{k_{1}}}x_{n_{k_{r}}|\brn_{i}}+
 \sum_{r=2}^{d}\sum_{j=2}^{r}\sum_{l\in I_{k_{j}}\setminus I_{k_{j-1}}}
  \sum_{i\in I_{l}\setminus I_{l-1}}x_{n_{k_{r}}|\brn_{i}}
    \\
                                  &=
\sum_{i\in I_{k_{1}}}\sum_{r=1}^{d}x_{n_{k_{r}}|\brn_{i}}+
                                   \sum_{j=2}^{d}\sum_{l\in I_{k_{j}}\setminus
    I_{k_{j-1}}}\sum_{i\in I_{l}\setminus I_{l-1}}\sum_{r=j}^{d}x_{n_{k_{r}}|\brn_{i}}.
\notag
  \end{align}
Since  $(x_{n_{k}})_{k}$  satisfies properties  (i) and (ii) of
Lemma~\ref{stabilization},
for every $i\in I_{\ell}\setminus I_{\ell-1}$, $\ell \in I_{k_{j}}\setminus I_{k_{j-1}}$
  we get from  Remark~\ref{remnormjt}  (iv) that
  \[
\norm{\sum_{r=j}^{d}x_{n_{k_{r}}|\brn_{i}}}=
\left(\sum_{r=j}^{d}\norm{x_{n_{k_{r}}|\brn_{i}}}^{2}\right)^{1/2}
    \leq c_{i}(1+\e_{k_{j}})\sqrt{\#F_{m}}.
\]
Note that for every $\ell\in I_{k_{j}}\setminus I_{k_{j-1}}$ the final segments
$\brn_{i,>\ell_{k_{j}}}$, $i\in I_{\ell}\setminus I_{\ell-1}$ are incomparable and
$l_{k_{j}}<\minsupp x_{n_{k_r}}$ for every $r\geq j$. 
Hence 
using the above inequality, Lemma~\ref{suminbranches} yields
\begin{equation}
\label{eq:902}
\begin{aligned}
\norm{\sum_{i\in I_{l}\setminus I_{l-1}}\sum_{r=j}^{d}x_{n_{k_{r}}|\brn_{i}}}
  = &\left(\sum_{i\in I_{l}\setminus
      I_{l-1}}\norm{\sum_{r=j}^{d}x_{n_{k_{r}}|\brn_{i}}}^{p}\right)^{1/p}
  \\
    \leq & (1+\e_{k_{j}})\sqrt{\#F_m}  \left(\sum_{i\in I_{l}\setminus
      I_{l-1}}c^{p}_{i}\right)^{1/p}.
  \end{aligned}
\end{equation}
From \eqref{eq:901} and \eqref{eq:902} we get
\begin{equation*}
  \begin{aligned}
    \norm{\frac{1}{\sqrt{\# F_{m}}}
\sum_{r=1}^{d}\bar{x}_{n_{k_{r}}}}
&\leq
\frac{1}{\sqrt{\# F_{m}}}\left(
\norm{\sum_{i\in I_{k_{1}}}\sum_{r=1}^{d}x_{n_{k_{r}}|\brn_{i}}}+
  \sum_{j=2}^{d}\sum_{l\in I_{k_{j}}\setminus
    I_{k_{j-1}}}\norm{\sum_{i\in I_{l}\setminus
      I_{l-1}}\sum_{r=j}^{d}x_{n_{k_{r}}|\brn_{i}}}\right)
  \\
  &\leq
(1+\e_{k_{1}}) \left(\sum_{i\in I_{k_{1}}}c^{p}_{i}\right)^{1/p}+
  \sum_{j=2}^{d}\sum_{l\in I_{k_{j}}\setminus
    I_{k_{j-1}}}(1+\e_{k_{j}}) \left(\sum_{i\in I_{l}\setminus
      I_{l-1}}c^{p}_{i}\right)^{1/p}
  \\
  &\leq
(1+\e_{k_{1}}) \left(\sum_{i\in
    I_{k_{1}}}c^{p}_{i}\right)^{1/p}
+(1+\e_{k_{1}})\sum_{l>k_{1}}\left(\sum_{i\in I_{l}\setminus
    I_{l-1}}c^{p}_{i}\right)^{1/p}
\\
&\leq (1+\frac{\e}{4})a+ (1+\frac{\e}{4})\frac{\e}{4}a\leq (1+\e)a.
  \end{aligned}
\end{equation*}
From the above we get that   $\limsup_{m}\norm{u_{m}}\leq a$.

We have that   $ w_{m}=\frac{1}{\sqrt{F_{m}}}\sum_{k_{i}\in F_{m}}x_{n_{k_{i}}}=z_{m}+u_{m}$.
Using that  $\lim_{m}\norm{z_{m}}=0$ and $\limsup_{m}\norm{u_{m}}\leq a$
it follows
that  $\limsup_{m}\norm{w_{m}}\leq a$ and hence  $\lim_{m}\norm{w_{m}}=a$.

This complete  the proof 
that  $(x_{n_{i}})_{i\in\N}$ is  the desired subsequence.
\end{proof}
\begin{lemma}\label{concentration0}
  Let $(x_{n})_{n\in\N}$ be a normalized block sequence having
$(c_{i})_{i\in \N}\in S_{\ell_{p}}$ as  an $\ell_{p}$-vector  and
  $(\brn_{i})_{i\in \N}$ be the associated  branches.
 Then for every
 $\e>0$ there exist  an initial segment $I_{0}$ of $\N$  and
 $n_{0}\in\N$ such that setting $B_{I_{0}}=\{\brn_{i}:i\in I_{0}\}$ it holds
  $\norm{x_{n|\mc{B}_{I_{0}}^{c}}}<\e$ for every $n\geq n_{0}$.
\end{lemma}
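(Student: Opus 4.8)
The plan is to prove the statement directly: choose the initial segment $I_0$ so large that the first $\#I_0$ associated branches already capture almost all of the total $p$-mass of the limiting vector, and then show by a near $p$-superadditivity of the norm that whatever mass of $x_n$ survives outside $\mc{B}_{I_0}$ must be correspondingly small. Since $(c_i)_{i\in\N}\in S_{\ell_p}$ we have $\sum_{i\in\N}c_i^p=1$, so given $\e>0$ I would first fix $m\in\N$ with $1-\sum_{i\le m}c_i^p<\e^p$ and set $I_0=\{1,\dots,m\}$, $\mc{B}_{I_0}=\cup\{\brn_i:i\le m\}$. Because the branches $\brn_1,\dots,\brn_m$ are distinct, there is a $\rho$ for which the final segments $\brn_{i,>\rho}$, $i\le m$, are pairwise incomparable; for $n$ large enough that $\minsupp x_n>\rho$ the segments $s_i:=\brn_{i,>\rho}$ are then pairwise disjoint with $\norm{x_{n|s_i}}=\norm{x_{n|\brn_i}}$, and by the definition of the $\ell_p$-vector $\norm{x_{n|\brn_i}}\to c_i$.

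The heart of the argument is the observation that, by the tree structure, an optimal family of norming segments for $x_{n|\mc{B}_{I_0}^c}$ can be taken to lie entirely inside $\mc{B}_{I_0}^c$ and hence to avoid $\brn_1,\dots,\brn_m$ altogether. Indeed, for any branch $\rho$ and each $i\le m$ the intersection $\rho\cap\brn_i$ is an initial segment of $\rho$, so $\rho\cap\mc{B}_{I_0}=\cup_{i\le m}(\rho\cap\brn_i)$ is a union of nested initial segments of $\rho$, i.e. a single initial segment; consequently $\rho\cap\mc{B}_{I_0}^c$ is a final segment of $\rho$. Thus, given disjoint segments $t_1,\dots,t_r$ for which $\big(\sum_k\norm{(x_{n|\mc{B}_{I_0}^c})_{|t_k}}^p\big)^{1/p}$ is close to $\norm{x_{n|\mc{B}_{I_0}^c}}$, I would replace each $t_k$ by the single segment $t_k\cap\mc{B}_{I_0}^c$; since $x_{n|\mc{B}_{I_0}^c}$ vanishes off $\mc{B}_{I_0}^c$ this changes nothing, and now each $t_k\subset\mc{B}_{I_0}^c$.

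The segments $s_1,\dots,s_m\subset\mc{B}_{I_0}$ and $t_1,\dots,t_r\subset\mc{B}_{I_0}^c$ then form a single pairwise disjoint family, so item (i) of Remark~\ref{remnormjt} yields
\[
1=\norm{x_n}^p\ge\sum_{i\le m}\norm{x_{n|s_i}}^p+\sum_{k}\norm{x_{n|t_k}}^p=\sum_{i\le m}\norm{x_{n|\brn_i}}^p+\sum_k\norm{(x_{n|\mc{B}_{I_0}^c})_{|t_k}}^p .
\]
Taking the supremum over such $t$-families gives $\norm{x_{n|\mc{B}_{I_0}^c}}^p\le 1-\sum_{i\le m}\norm{x_{n|\brn_i}}^p$, whence $\limsup_n\norm{x_{n|\mc{B}_{I_0}^c}}^p\le 1-\sum_{i\le m}c_i^p<\e^p$. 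Choosing $n_0$ beyond both the incomparability threshold and this $\limsup$ estimate produces $\norm{x_{n|\mc{B}_{I_0}^c}}<\e$ for all $n\ge n_0$, as required.

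I expect the main obstacle to be exactly the geometric step in the second paragraph: a priori an optimal segment for $x_{n|\mc{B}_{I_0}^c}$ could thread through the branches $\brn_1,\dots,\brn_m$, and if its surviving mass were split into several gaps, breaking the segment to avoid $\mc{B}_{I_0}$ would replace the $\ell_2$-norm over the whole segment by the smaller $\ell_p$-sum over the pieces and thereby break the clean superadditivity. The tree property (any two branches agree on an initial segment and then separate) is what rules this out, forcing $\rho\cap\mc{B}_{I_0}^c$ to be a single final segment and making the $p$-superadditive estimate exact; this, together with the fact that $\norm{(c_i)_{i\in\N}}_{\ell_p}=1=\lim_n\norm{x_n}$ so that the branches in $\mc{B}_{I_0}$ exhaust the mass as $m\to\infty$, is the crux.
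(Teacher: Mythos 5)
Your proof is correct and takes essentially the same route as the paper's: choose an initial interval $I_{0}$ capturing all but (a power of) $\e$ of the $\ell_{p}$-mass of $(c_{i})_{i}$, pass to incomparable final segments $\brn_{i,>\rho}$, and use $p$-superadditivity of $\norm{\cdot}^{p}$ over disjoint segments to force $\norm{x_{n|\mc{B}_{I_{0}}^{c}}}$ to be small. The only real difference is that you supply a detailed justification of the key inequality $\norm{x_{n}}^{p}\geq \norm{x_{n|\mc{B}_{I_{0}}^{c}}}^{p}+\sum_{i\in I_{0}}\norm{x_{n|\brn_{i}}}^{p}$ (by truncating any norming family for $x_{n|\mc{B}_{I_{0}}^{c}}$ so that its segments lie entirely in $\mc{B}_{I_{0}}^{c}$, using that each branch meets $\mc{B}_{I_{0}}$ in an initial segment), a step the paper asserts without proof, and you close with a $\limsup$ argument where the paper quantifies with an explicit $\delta$.
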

\begin{proof}
Let $I_{0}\in\N$ be an initial segment of $\N$     such that
$\sum_{i\in I_{0}}c_{i}^{p}>1-\e/2$.
Let $\delta>0$ such that $1-\e\leq  (1-\delta)^{p}(1-\e/2)$. Chose $n_{0}\in\N$  such that
\begin{equation*}
c_{i}(1-\delta)\leq \norm{x_{n|\brn_{i}}}\leq c_{i}(1+\delta)\,\,\text{for every $i\in I_0$  and every $n\geq n_{0}$}
\end{equation*}
and the   final segments
$\brn_{i,\geq n_0}$,$i\in I_{0}$ be   incomparable. 
Then for every $n\geq  n_{0}$
\begin{align*} \norm{x_{n}}^{p}\geq&\norm{x_{n|\mc{B}_{I_{0}}^{c}}}^{p}+
\sum_{i\in I_{0}}\norm{x_{n|\brn_{i}}}^{p}
\\
& \geq \norm{x_{n|\mc{B}_{I_0}^{c}}}^{p}+\sum_{i\in I_{0}}c_{i}^{p}(1-\delta)^{p}
\\
 & \geq \norm{x_{n|\mc{B}_{I_0}^{c}}}^{p}+(1-\delta)^{p}(1-\e/2)
\\
&\Rightarrow \norm{x_{n|\mc{B}_{I_0}^{c}}}^{p}\leq1-(1-\delta)^{p}(1-\e/2)\leq\e.
\end{align*}
\end{proof}

\begin{proposition}\label{l2upperestimate}
  Let  $(x_{n})_{n}$ be a normalized block sequence in $\jtp$.
Then there exists a subsequence of  $(x_{n})_{n}$ that admits an upper
$\ell_{2}$ estimate with constant  $4$.
 \end{proposition}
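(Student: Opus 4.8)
The plan is to prove the estimate directly from the definition of the norm, obtaining in fact the stronger conclusion that the \emph{whole} normalized block sequence $(x_n)_n$ satisfies an upper $\ell_2$-estimate with constant $1$, with no passage to a subsequence. The point is that in the norm of $\jtp$ the only genuinely nonlinear operation is the outer $\ell_p$-sum over disjoint segments, while each inner quantity $\norm{x}_{s}$ is \emph{by definition} an $\ell_2$-seminorm; an application of Minkowski's inequality in $\ell_{p/2}$ then separates the two and no $\ell_p$-vector analysis is needed.

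Concretely, fix scalars $(a_n)_n$ with $\sum_n a_n^2=1$ and set $z=\sum_n a_n x_n$. For any pairwise disjoint segments $s_1,\ldots,s_m$, since $(x_n)_n$ is a block sequence its supports are pairwise disjoint, so for each $j$ every coordinate of $z_{|s_j}$ comes from a single $x_n$ and hence $\norm{z}_{s_j}^2=\sum_{i\in s_j}z(i)^2=\sum_n a_n^2\norm{x_n}_{s_j}^2$. Consequently
\[
\sum_{j=1}^m\norm{z}_{s_j}^p=\sum_{j=1}^m\Big(\sum_n a_n^2\norm{x_n}_{s_j}^2\Big)^{p/2}.
\]
Viewing the right-hand side as the $\ell_{p/2}$-norm (to the power $p/2$) of the sum over $n$ of the vectors $(a_n^2\norm{x_n}_{s_j}^2)_{j=1}^m$, Minkowski's inequality in $\ell_{p/2}$ (valid since $p/2>1$) together with $\big(\sum_j\norm{x_n}_{s_j}^p\big)^{1/p}\le\norm{x_n}=1$ gives
\[
\Big(\sum_{j=1}^m\norm{z}_{s_j}^p\Big)^{2/p}\le\sum_n a_n^2\Big(\sum_{j=1}^m\norm{x_n}_{s_j}^p\Big)^{2/p}\le\sum_n a_n^2=1.
\]
Taking the supremum over all finite families of pairwise disjoint segments yields $\norm{z}\le 1=\big(\sum_n a_n^2\big)^{1/2}$, which is the desired estimate (indeed with constant $1\le 4$).

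The argument is short enough that there is no serious obstacle; the one step I would double-check is the Minkowski application, making sure the exponents $2/p$ and $p/2$ are paired correctly and that the elementary bound $\norm{b}_{\ell_p}\le\norm{b}_{\ell_2}$ for $p\ge2$ is used in the right direction when passing from $\big(\sum_j\norm{x_n}_{s_j}^p\big)^{1/p}\le\norm{x_n}$ to the final inequality. If one prefers to stay within the framework developed above, the same estimate can be reached through Proposition~\ref{lpdecomp}: a subsequence is either equivalent to the $\ell_p$-basis, where $\norm{b}_{\ell_p}\le\norm{b}_{\ell_2}$ gives the bound at once, or it carries a non-zero $\ell_p$-vector, where Lemma~\ref{stabilization} writes each $x_{n_k}$ as a part supported on the union of the associated branches --- which obeys an upper $\ell_2$-estimate with constant $1$ by the computation above --- plus a remainder obeying one with constant $2$, the triangle inequality keeping the total comfortably below $4$.
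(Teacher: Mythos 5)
Your direct argument is correct, and it actually proves more than the statement: for any disjointly supported (in particular, block) sequence in $\jtp$ you get $\norm{\sum_n a_n x_n}\le\big(\sum_n a_n^2\norm{x_n}^2\big)^{1/2}$, i.e., an upper $\ell_2$-estimate with constant $1$ and no passage to a subsequence. The two steps check out: the identity $\norm{z}_{s}^2=\sum_n a_n^2\norm{x_n}_{s}^2$ holds because the inner quantities in the norm of $\jtp$ are genuine $\ell_2$-norms and the supports are disjoint, and the Minkowski step is sound, since with $w_n=(\norm{x_n}_{s_j}^2)_{j=1}^m$ and $p/2>1$ one has $\big\lVert\sum_n a_n^2 w_n\big\rVert_{\ell_{p/2}}\le\sum_n a_n^2\norm{w_n}_{\ell_{p/2}}\le\sum_n a_n^2$, where the last inequality is exactly Remark~\ref{remnormjt}(i) (note it is the definition of the norm as a supremum that is used here, not the inclusion $\ell_2\subset\ell_p$). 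This is a genuinely different route from the paper's. The paper goes through the $\ell_p$-vector dichotomy: if a subsequence has zero $\ell_p$-vector, Theorem~\ref{jtunique} (the refined Amemiya--Ito argument) yields a subsequence with an upper $\ell_p$-estimate, hence an upper $\ell_2$-estimate; if the $\ell_p$-vector is non-zero, Lemma~\ref{stabilization} splits $x_{n_k}=\bar{x}_{n_k}+y_{n_k}$ into a part supported on the associated branches and a remainder, each handled with constant $2$, which is where the constant $4$ and the need for a subsequence come from. What your approach buys is sharpness and economy: the upper estimate is revealed as a $2$-convexity-type property of the norm itself (inner $\ell_2$ squares add across disjoint supports, and the outer $\ell_p$ aggregation is subadditive after squaring because $p\geq 2$), requiring none of the Ramsey, stabilization, or $\ell_p$-vector machinery; it would also tighten the constants quoted downstream (e.g., in the $\ell_2$/$\ell_p$ dichotomy for weakly null sequences). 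What the paper's route buys is that its machinery is needed anyway for the lower estimates and the complementation results, so the weaker upper estimate comes as a by-product there. Your closing fallback paragraph is essentially the paper's proof and is redundant given the direct argument.
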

\begin{proof} 
Let us observe that in the case  that there exists a subsequence
$(x_{n_{k}})_{k}$ such that $\lim\norm{x_{n_{k}|\brn}}=0$ for every
branch $\brn$ then it follows from Theorem~\ref{jtunique}  that  for every
$\e>0$ there exists
a subsequence that admits upper $\ell_{p}$-estimate with constant
$\sqrt[q]{2}+\e$ and hence admits upper $\ell_{2}$-estimate with constant
$4$. Hence, it suffices to prove the proposition
in the case  $(x_{n})_{n\in\N}$ has a
  non-zero $\ell_{p}$-vector $(c_{i})_{i\in \N}$.  Let  $(\brn_{i})_{i\in
    \N}$ be  the associated branches.  Let  $\e_{n}\searrow 0$ such that  $\e_{1}<1/3$.
Let also $(I_{k})_{k\in\N}$ be the  sequence  of  the strictly
increasing initial segments  of
$\N$ we get from  Lemma~\ref{lpsplit}
for $(c_{i})_{i\in \N}$ satisfying  also $\sum_{k>1}\norm{d_{k}}_{\ell_{p}}<\norm{(c_{i})_{i}}_{\ell_{p}}/2$.
From  Lemma~\ref{stabilization} for the sequence of the intervals
$I_{k}$, we get $n_{k}\nearrow+\infty$ and $(l_{k})_{k}\subset\N $ such that
\begin{enumerate}
\item[(i)] $ (1-\e_{k})c_{l}\leq \norm{x_{n_{k}|\brn_{l}}}\leq(1+\e_{k})c_{l}$   for every
  $l\in I_{k}$
\item[(ii)] the final  segments $\brn_{l,> l_{k}}$, $l\in I_{k}$
  are  incomparable and $\minsupp (x_{n_{k}})>l_{k}$, $k\in\N$.
\end{enumerate}
Let   $\mc{B}_{I_{k}}=\cup\{\brn_{l}:l\in I_{k}\}$,
$\bar{x}_{n_{k}}=x_{n_{k}|\mc{B}_{I_{k}}}$
and  $y_{n_{k}}=x_{n_{k}}-\bar{x}_{n_{k}}$.
From Lemma~\ref{stabilization}  we have that
$(y_{n_{k}})_{k\in N}$ admits upper $\ell_{2}$-estimate with constant
$2$.

We show that the subsequence  $(x_{n_{k}})_{k}$  admits  upper
$\ell_{2}$-estimate with constant $4$.
Since $x_{n_{k}}=\bar{x}_{n_{k}}+y_{n_{k}}$
it is enough to  show  that  $(\bar{x}_{n_{k}})_{k}$ also admits upper $\ell_{2}$-estimate with constant $2$.
Let $b_{1},\dots, b_{m}$ real numbers. We show that
\begin{equation*}
\norm{\sum_{k=1}^{m}b_{k}\bar{x}_{n_{k}}}\leq4\left(\sum_{k=1}^{m}b_{k}^{2}\right)^{1/2}\cdot\norm{(c_{i})_{i}}_{\ell_{p}}.
\end{equation*}
Let  $k\leq m$ and $l\in I_{k}$.  Remark~\ref{remnormjt}  (iv) yields that,
\[\norm{\sum_{i=k}^{m}b_{i}\bar{x}_{n_{i}|\brn_{l}}}=
      \left(\sum_{i=k}^{m}b_{i}^{2}
        \norm{\bar{x}_{n_{i}|\brn_{l}}}^{2}\right)^{1/2}.
    \]
Combining the above equality  with    (i)  we get that for
every $l\leq i_{j}$,
\begin{equation}
  \label{eq:25}
  \left(\sum_{i=k}^{m}b^{2}_{i}\right)^{1/2}c_{l}(1-\e_{k})\leq
  \norm{\sum_{i=k}^{m}b_{i}\bar{x}_{n_{i}|\brn_{l}}}     \leq
      c_{l}(1+\e_{k})\left(\sum_{i=k}^{m}b_{i}^{2}\right)^{1/2}.
\end{equation}

By the choice of $\bar{x}_{n_{k}}$  we have that
$z=\sum_{k=1}^{m}b_{k}\bar{x}_{n_{k}}$ is supported  by the branches
$\brn_{i}, i\in I_{m}$. Moreover  (ii) yields that
the final  segments $\brn_{l,\geq \minsupp (\bar{x}_{n_{j}})}$,
$l\in I_{j}$ are  incomparable.
Hence using that  $I_{k}=\cup_{j=1}^{k}I_{j}\setminus I_{j-1}$,$I_{0}=\emptyset$, we get
\[
\bar{x}_{n_{1}}=\sum_{i\in I_{1}}x_{n_{1}|\brn_{i}}\,\,\text{and}\,\,
\bar{x}_{n_{k}}=
  \sum_{l\in I_{1}}x_{n_{k}|\brn_{l}}+
  \sum_{j=2}^{k}\sum_{l\in I_{j}\setminus I_{j-1}}x_{n_{k}|\brn_{l}},\,  k\geq 2.
\]
It follows that
\begin{equation}\label{eqoo}
  \sum_{k=1}^{m}b_{k}\bar{x}_{n_{k}}=
  \sum_{l\in I_{1}}\sum_{k=1}^{m}b_{k}x_{n_{k}|\brn_{l}}+
  \sum_{j=2}^{m}  \sum_{l\in I_{j}\setminus
    I_{j-1}}\sum_{k=j}^{m}b_{k}x_{n_{k}|\brn_{l}}.
  \end{equation}
For any $j$ and any  vector  $\sum_{l\in I_{j}\setminus  I_{j-1}}\sum_{k=j}^{m}b_{k}x_{n_{k}|\brn_{l}}$ from (ii) and
Lemma~\ref{suminbranches} using    \eqref{eq:25}
we get ,
\begin{align}
  \norm{
\sum_{l\in I_{j}\setminus I_{j-1}}\sum_{k=j}^{m}b_{k}x_{n_{k}|\brn_{l}}
  }
  &\leq \left(
    \sum_{l\in  I_{j}\setminus I_{j-1} }
    \norm{\sum_{k=j}^{m}b_{k}x_{n_{k}|\brn_{l}}}^{p}
  \right)^{1/p}   \notag \\
  &\leq \left(
    \sum_{l\in I_{j}\setminus I_{j-1}}
    (\sum_{k=j}^{m}b_{k}^{2})^{p/2}c_{l}^{p}(1+\e_{j})^{p} \right)^{1/p}.
  \label{eqo1}
\end{align}
 Combining \eqref{eqoo} and \eqref{eqo1} we get
\begin{align*}
  \norm{\sum_{k=1}^{m}b_{k}\bar{x}_{n_{k}}}&
                                          \leq
                                          \norm{\sum_{l\in I_{1}}\sum_{k=1}^{m}b_{k}\bar{x}_{n_{k}|\brn_{l}}}+                                          
                                          \sum_{j=2}^{m}  \norm{\sum_{i\in
                                          I_{j}\setminus I_{j-1}}\sum_{k=j}^{m}b_{k}\bar{x}_{n_{k}|\brn_{l}}}
  \\  &\leq(1+\e_{1})\left(\sum_{k=1}^{m}b_{k}^{2}\right)^{1/2}\left(\sum_{l\in I_{1}}c_{l}^{p}\right)^{1/p}+
        \sum_{j=2}^{m}\left(
    \sum_{l=I_{j}\setminus I_{j-1}}
    (\sum_{k=j}^{m}b_{k}^{2})^{p/2}c_{l}^{p}(1+\e_{j})^{p} \right)^{1/p} 
  \\
       &\leq  (\sum_{k=1}^{m}b_{k}^{2})^{1/2} (1+\e_{1})(1+\frac{1}{2}) \norm{(c_{i})_{i\in\N}}_{\ell_{p}}
  \\
&\leq 2(\sum_{k=1}^{m}b_{k}^{2})^{1/2} \norm{(c_{i})_{i\in\N}}_{\ell_{p}}.
  \end{align*}
\end{proof}

\begin{corollary}
  Let $(x_{n})_{n\in\N}$ be a normalized weakly null sequence in $\jtp$. Then $\sxn$
  has a subsequence  which is   either equivalent  to the unit vector basis of $\ell_{p}$  or equivalent to the unit  vector basis of $\ell_{2}$.
\end{corollary}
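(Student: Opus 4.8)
The plan is to first reduce to a block sequence, then invoke the dichotomy furnished by Proposition~\ref{lpdecomp}, and finally supply the missing lower $\ell_2$-estimate in the remaining case by restricting to a single branch.

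First I would reduce to a block sequence. Since $(e_n)_n$ is a Schauder basis and $(x_n)_n$ is normalized and weakly null, a standard small-perturbation argument (Bessaga--Pe\l czy\'nski) produces a subsequence $(x_{n_k})_k$ and a normalized block sequence $(u_k)_k$ with $\sum_k\norm{x_{n_k}-u_k}$ as small as we wish, so that $(x_{n_k})_k$ is equivalent to $(u_k)_k$. As equivalence to the unit vector basis of $\ell_p$ or of $\ell_2$ is stable under such perturbations, it suffices to prove the statement for $(u_k)_k$; I rename it $(x_n)_n$. Now by Proposition~\ref{lpdecomp}, after passing to a subsequence, either $(x_n)_n$ is equivalent to the unit vector basis of $\ell_p$, in which case we are done, or $(x_n)_n$ admits an $\ell_p$-vector $(c_i)_{i\in N}$ with associated branches $(\brn_i)_{i\in N}$. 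If this $\ell_p$-vector is the zero vector, then $\lim_n\norm{x_{n|\brn}}=0$ for every branch $\brn$, and Theorem~\ref{jtunique} yields a further subsequence equivalent to the unit vector basis of $\ell_p$. So the only genuinely new case is that the $\ell_p$-vector is non-zero, say $c_1>0$ with associated branch $\brn_1$.

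In this last case I would combine two estimates to produce $\ell_2$. Proposition~\ref{l2upperestimate} provides, after passing to a subsequence, an upper $\ell_2$-estimate with constant $4$. For the matching lower estimate I exploit the branch $\brn_1$: passing to a further subsequence I may assume $\norm{x_{n|\brn_1}}\ge c_1/2$ for all $n$. For any scalars $a_1,\dots,a_m$, writing $z=\sum_k a_k x_{n_k}$, the restriction $z_{|\brn_1}=\sum_k a_k x_{n_k|\brn_1}$ is again a finite block sequence, so Remark~\ref{remnormjt}(iv) gives
\[\norm{z_{|\brn_1}}=\Big(\sum_k a_k^2\norm{x_{n_k|\brn_1}}^2\Big)^{1/2}\ge \frac{c_1}{2}\Big(\sum_k a_k^2\Big)^{1/2}.\]
Moreover, restricting a vector to a branch never increases its norm: any family of disjoint segments optimal for $z_{|\brn_1}$ may be intersected with $\brn_1$ to give disjoint segments inside $\brn_1$ on which $z$ and $z_{|\brn_1}$ agree, whence $\norm{z}\ge\norm{z_{|\brn_1}}\ge (c_1/2)(\sum_k a_k^2)^{1/2}$. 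Together with the upper $\ell_2$-estimate of constant $4$, this shows that $(x_{n_k})_k$ is equivalent to the unit vector basis of $\ell_2$, completing the dichotomy.

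The main obstacle is the lower $\ell_2$-estimate, and this is precisely where the non-vanishing of the $\ell_p$-vector is essential: a single branch on which the restricted norms remain bounded below forces $\ell_2$-behaviour from below through Remark~\ref{remnormjt}(iv), while Proposition~\ref{l2upperestimate} controls the sequence from above. The remaining work is routine but requires care in the bookkeeping of the successive subsequences, so that the block structure, the upper $\ell_2$-estimate with constant $4$, and the branch lower bound $\norm{x_{n|\brn_1}}\ge c_1/2$ all survive simultaneously.
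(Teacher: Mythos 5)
Your proof is correct and follows essentially the same route as the paper: the dichotomy from Theorem~\ref{jtunique}/Proposition~\ref{lpdecomp} (vanishing branch restrictions give $\ell_p$), and in the remaining case a lower $\ell_2$-estimate from a single branch via Remark~\ref{remnormjt}(iv) combined with the upper $\ell_2$-estimate of Proposition~\ref{l2upperestimate}. The paper's own argument is just a compressed version of this; your added care with the Bessaga--Pe\l czy\'nski reduction to a block sequence and the survival of estimates under passing to subsequences fills in details the paper leaves implicit.
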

Indeed if $(x_{n})_{n}$ does not contain a subsequence equivalent to
the unit vector basis of $\ell_{p}$  then  from Theorem~\ref{jtunique}
we get that there exists a branch $\brn$ such that $\limsup_{n}\norm{x_{n|\brn}}>0$.
Therefore there exists  a subsequence that dominates
the unit vector basis of $\ell_{2}$ and
from  Proposition~\ref{l2upperestimate}  we may assume also that
 admits an upper $\ell_{2}$-estimate.   
\begin{corollary}
 The space $\jtp$ is reflexive. 
\end{corollary}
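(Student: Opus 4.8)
The plan is to deduce reflexivity from James's characterisation of reflexivity for spaces with an unconditional basis, using the heavy analytic work already done in Proposition~\ref{l2upperestimate}. Since $(e_n)_n$ is a $1$-unconditional basis of $\jtp$, James's theorem (see \cite{LT}) tells us that $\jtp$ is reflexive if and only if it contains no subspace isomorphic to $c_0$ and no subspace isomorphic to $\ell_1$. Moreover, for an unconditional basis any such copy may be taken to be spanned by a normalized block sequence. Hence it suffices to rule out normalized block sequences equivalent to the unit vector bases of $c_0$ and of $\ell_1$, equivalently to show that $(e_n)_n$ is both boundedly complete and shrinking.

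Bounded completeness is already recorded for $(e_n)_n$, and it excludes $c_0$: every normalized block sequence of a boundedly complete basis is again a boundedly complete basic sequence, while the unit vector basis of $c_0$ is not boundedly complete, so no normalized block sequence can be equivalent to it. Thus the only thing left is to verify that $(e_n)_n$ is shrinking, i.e.\ that $\ell_1$ does not embed via a block sequence.

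To exclude $\ell_1$ I would argue by contradiction and invoke Proposition~\ref{l2upperestimate}. Suppose some normalized block sequence $(x_n)_n$ were equivalent to the unit vector basis of $\ell_1$; then every subsequence of $(x_n)_n$ would share this property, so in particular $\norm{\sum_{k=1}^{N}x_{n_k}}\ge cN$ for some $c>0$ and all $N$, along any subsequence $(x_{n_k})_k$. On the other hand, Proposition~\ref{l2upperestimate} lets us pass to a subsequence $(x_{n_k})_k$ admitting an upper $\ell_2$-estimate with constant $4$, whence $\norm{\sum_{k=1}^{N}x_{n_k}}\le 4\sqrt{N}$ for all $N$. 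The inequalities $cN\le 4\sqrt{N}$ fail for large $N$, a contradiction. Therefore no normalized block sequence is equivalent to the $\ell_1$-basis, and $(e_n)_n$ is shrinking.

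Combining the two exclusions, $(e_n)_n$ is both shrinking and boundedly complete, so James's theorem yields that $\jtp$ is reflexive. I expect no genuine obstacle here: the entire analytic difficulty is concentrated in the uniform upper $\ell_2$-estimate of Proposition~\ref{l2upperestimate}, and once that estimate is in hand the remaining reduction is by soft, standard arguments about unconditional bases.
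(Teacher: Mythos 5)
Your proof is correct and takes essentially the route the paper intends: the corollary is stated without a separate proof precisely because, as you do, one invokes James's characterization of reflexivity for spaces with an unconditional basis and rules out $c_0$ (via bounded completeness) and $\ell_1$ (via the upper $\ell_2$-estimate) using the block-sequence results already established. A point in your favor is that you correctly base the $\ell_1$-exclusion on Proposition~\ref{l2upperestimate}, which applies to arbitrary normalized block sequences, rather than on the preceding dichotomy corollary for weakly null sequences, which could not be applied directly to a (non-weakly-null) block sequence equivalent to the $\ell_1$ basis.
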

\section{Hilbertian subspaces of $\jtp$}
In this section we provide a characterization of the subspaces  of
$\jtp$ which are  isomorphic to $\ell_{2}$ and we show that  each such
subspace is complemented in $\jtp$.

We start with the following lemma.
\begin{lemma}\label{incomparable}
Let $(\brn^{n})_{n\in\N}$ be sequence of distinct  branches. There exist
$M\in[\N]$ and $(l_{n})_{n\in M}\subset \N$ such that the final segments
$\brn^{n}_{>l_{n}}, n\in M$, are  incomparable.
\end{lemma}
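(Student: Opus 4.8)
The plan is to use the pointwise-compactness observation (b) from the Notation to collapse the whole sequence to a single limit, and then to choose the cut-offs $l_n$ so that the first elements of the resulting final segments form an antichain. First I would note that each branch $\brn^{n}$, being a maximal $\preceq$-chain, contains the root and is therefore an (infinite) initial segment, so observation (b) applies: there are $L\in[\N]$ and an initial segment or branch $\sigma$ with $\brn^{n}\to\sigma$ pointwise along $L$. Discarding the at most one index with $\brn^{n}=\sigma$, I may assume $\brn^{n}\neq\sigma$ for all $n\in L$. Two chains through the root meet in their longest common prefix (in a tree, two paths that separate never meet again), so $\brn^{n}\cap\sigma$ is an initial segment of $\brn^{n}$; let $d_{n}$ be the $\preceq$-least node of $\brn^{n}$ lying outside $\sigma$, and let $p_{n}$ be its immediate $\preceq$-predecessor in $\brn^{n}$. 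Taking $l_{n}=p_{n}$ and using that $\brn^{n}$ is $\preceq$-linear with $\preceq$ compatible with $\le$, every element of $\brn^{n}$ is either $\preceq p_{n}$ (hence $\le p_{n}$) or $\succeq d_{n}$ (hence $\ge d_{n}$), so $\min\brn^{n}_{>l_{n}}=d_{n}$. Thus $d_{n}$ is exactly the first element of $\brn^{n}_{>l_{n}}$, and by the definition of incomparability it suffices to pass to a further infinite subset of $L$ on which the nodes $d_{n}$ are pairwise $\preceq$-incomparable.

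\emph{Case $\sigma$ a branch.} Writing $\sigma=\{s_{0}\prec s_{1}\prec\cdots\}$, pointwise convergence forces, for each fixed depth $D$, that eventually $s_{0},\dots,s_{D}\in\brn^{n}$; hence the depth $m_{n}$ of $p_{n}$ (the length of the common prefix) tends to $+\infty$, and I pass to a subsequence on which $(m_{n})_{n}$ is strictly increasing. For $n<n'$ the node $d_{n}$ sits at depth $m_{n}+1$ while $d_{n'}$ sits at depth $m_{n'}+1>m_{n}+1$; since $a\prec b$ entails that $a$ is a proper ancestor of $b$ and so has strictly smaller depth, $d_{n'}\not\prec d_{n}$. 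If instead $d_{n}\prec d_{n'}$, then $d_{n}\in\brn^{n'}$, but the unique node of $\brn^{n'}$ at depth $m_{n}+1$ is $s_{m_{n}+1}\in\sigma$, whereas $d_{n}\notin\sigma$, a contradiction. Therefore the $d_{n}$ are pairwise incomparable.

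\emph{Case $\sigma$ a finite initial segment.} Here eventually $\sigma\subseteq\brn^{n}$, so each $d_{n}$ is an immediate successor (child) of $\max\sigma$. For any fixed child $c$ of $\max\sigma$ one has $c\notin\sigma$, so pointwise convergence gives $c\notin\brn^{n}$ for all large $n$; thus the map $n\mapsto d_{n}$ takes each value only finitely often, and I pass to a subsequence on which the $d_{n}$ are distinct. Distinct children of a common node are $\preceq$-incomparable, which again yields the desired antichain.

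In both cases the chosen subsequence $M$ together with $(l_{n})_{n\in M}=(p_{n})_{n\in M}$ makes the final segments $\brn^{n}_{>l_{n}}$ pairwise incomparable, completing the proof. I expect the only delicate point to be the order-theoretic bookkeeping in the branch case — specifically verifying that $d_{n}\notin\brn^{n'}$ when $m_{n}<m_{n'}$ — which is exactly where one must use that a branch contains a unique node at each depth and agrees with $\sigma$ up to depth $m_{n'}$.
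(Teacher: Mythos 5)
Your proof is correct and takes essentially the same route as the paper's: extract a pointwise-convergent subsequence (observation (b) of the Notation), split into the cases where the limit is a branch or a finite initial segment, and then stagger the points where each $\sigma^{n}$ departs from the limit — the paper does this by inductively interleaving nodes $d_{1}<l_{1}<d_{2}<l_{2}<\dots$ along the limit branch, while you do it by making the departure depths $m_{n}$ strictly increasing, which is the same mechanism. Your treatment of the finite-initial-segment case (each child of $\max\sigma$ is attained only finitely often, so the nodes $d_{n}$ can be made distinct, hence incomparable) likewise matches the paper's argument, and is in fact written a bit more carefully.
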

\begin{proof}
  Passing to subsequence we get $M\in[\N]$  such that $(\brn^{n})_{n\in
    M}$  converges to  $\brnt$
where $\brnt$ is either a branch or an initial finite segment. 

  Assume first that $\brnt$ is an  branch.   Inductively, using that  $(\brn^{n})_{n\in
    M}$  converges to  $\brnt$, we choose
  $d_{1}<l_{1}<d_{2}<l_{2}<\dots$  such that $d_{k},l_{k}\in\brnt$
  and  $(n_{k})_{k\in \N}\subset M, n_{k}\nearrow+\infty$  satisfying
  \begin{enumerate}
  \item[(i)]    $\brn^{n}_{\leq d_{k}}=\brnt_{\leq d_{k}}$ for every $n\geq
    n_{k}$
    \item[(ii)]  the final segments $\brn^{n_{k}}_{>l_{k}},
      \brnt_{>l_{k}}$ are  incomparable.
  \end{enumerate}
It is easy to check that the final segments $\brn^{n_{k}}_{>l_{k}}$,
$k\in\N$ are  incomparable.

Assume now that $\brnt$ is a finite initial segment and
$t_{0}=\max\brnt$. Let   $\{t_{l}:l\in\N\}$ be the immediate successors
of $t_{0}$.  Since $(\brn^{n})_{n\in
    M}$  converges to  $\brnt$ it follows that  for every $l$ there 
exist  at most finite $n$ such that  $t_{l}\in\brn^{n}$. Using this we
choose inductively   $(n_{k})_{k}\subset M$ such  that
$t_{l}\notin\brn^{n_{k}}$ for every $l<k$, $k\in\N$.  It follows that the
final segments
$\brn^{n_{k}}_{>t_{0}}, k\in\N$ are   incomparable.
\end{proof}
\begin{lemma}\label{incomparable2}
  Let $F_{n}=\{\brn^{n}_{1},\brn^{n}_{2},\dots,\brn^{n}_{k_{n}}\}$ be
  a  sequence of disjoint finite families of branches each family
  consisting of distinct branches and $k_{n}\nearrow+\infty$. Then, there exist
  $M\in[\N]$  and $(l_{m})_{m\in M}\subset\N$
  such that the following holds:

  for every   $m<m_{1}<m_{2}\in M$     and  $k\leq  k_{m}$,
the  final segments $\brn_{k, >l_{m_{1}}}^{m_{1}},\brn_{k,
    >l_{m_{2}}}^{m_{2}}$
  are incomparable.
  \end{lemma}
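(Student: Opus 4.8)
The plan is to reduce the statement to the one–family version already proved in Lemma~\ref{incomparable}, by running that lemma separately on each ``column'' of branches $(\brn^{n}_{k})_{n}$ with $k$ fixed, and then diagonalizing. Before that I would isolate an elementary monotonicity fact about the tree $\tree$: if $\brn_{>l}$ and $\brnt_{>l'}$ are incomparable final segments and $l_{1}\ge l$, $l_{1}'\ge l'$, then $\brn_{>l_{1}}$ and $\brnt_{>l_{1}'}$ are again incomparable. Indeed, raising a level replaces the first node of a final segment by one of its $\preceq$-successors on the same branch, and in a tree any two nodes lying $\preceq$-below two incomparable nodes are themselves incomparable; hence incomparability of final segments can only be preserved when the truncation levels are increased. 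This is the device that will let me later collapse several $k$-dependent levels into a single level per family.

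Next, fix $k\in\N$ and consider the branches $(\brn^{n}_{k})_{n:\,k_{n}\ge k}$. Since the families $F_{n}$ are pairwise disjoint, these are pairwise distinct branches, so Lemma~\ref{incomparable} applies. Proceeding inductively on $k$ I would build a decreasing chain $\N\supseteq M_{1}\supseteq M_{2}\supseteq\cdots$ of infinite sets together with levels $(l^{(k)}_{n})_{n\in M_{k}}$ so that the final segments $\brn^{n}_{k,>l^{(k)}_{n}}$, $n\in M_{k}$, are pairwise incomparable; at step $k$ one simply applies Lemma~\ref{incomparable} to $(\brn^{n}_{k})_{n\in M_{k-1},\,k_{n}\ge k}$.

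Finally I would extract a diagonal set and merge the levels. Choose $n_{1}<n_{2}<\cdots$ with $n_{j}\in M_{k_{n_{j-1}}}$ (with the convention $k_{n_{0}}=1$); this is possible because every $M_{i}$ is infinite. For $b<c$ we then have $n_{c}\in M_{k_{n_{c-1}}}\subseteq M_{k_{n_{b-1}}}$ and $n_{b}\in M_{k_{n_{b-1}}}$, so both lie in $M_{k}$ for every $k\le k_{n_{b-1}}$, and hence $\brn^{n_{b}}_{k,>l^{(k)}_{n_{b}}}$ and $\brn^{n_{c}}_{k,>l^{(k)}_{n_{c}}}$ are incomparable. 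Put $M=\{n_{1}<n_{2}<\cdots\}$ and $l_{n_{j}}=\max\{l^{(k)}_{n_{j}}:k\le k_{n_{j-1}}\}$, a finite maximum. Given a triple $n_{a}<n_{b}<n_{c}$ in $M$ and an index $k\le k_{n_{a}}$, monotonicity of $n\mapsto k_{n}$ gives $k\le k_{n_{a}}\le k_{n_{b-1}}$, whence $l_{n_{b}}\ge l^{(k)}_{n_{b}}$ and $l_{n_{c}}\ge l^{(k)}_{n_{c}}$; the monotonicity fact of the first paragraph then upgrades the incomparability of $\brn^{n_{b}}_{k,>l^{(k)}_{n_{b}}}$ and $\brn^{n_{c}}_{k,>l^{(k)}_{n_{c}}}$ to that of $\brn^{n_{b}}_{k,>l_{n_{b}}}$ and $\brn^{n_{c}}_{k,>l_{n_{c}}}$, which is exactly the required conclusion.

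The main obstacle is precisely this last reconciliation: the levels handed out by Lemma~\ref{incomparable} are $k$-dependent, so one must guarantee both that every index $k$ needed for a given pair has already been treated and that a single family level dominates all of them. The bound $k\le k_{m}$ in the statement is what makes this feasible, since for a pair $(n_{b},n_{c})$ only indices $k\le k_{n_{b-1}}$ ever occur; the sparse diagonal choice $n_{j}\in M_{k_{n_{j-1}}}$ ensures these indices were all controlled by earlier applications, and the monotonicity fact lets the single level $l_{n_{j}}$ absorb the $k$-dependence.
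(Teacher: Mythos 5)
Your proof is correct and follows essentially the same route as the paper's: both reduce to Lemma~\ref{incomparable} applied separately to each fixed-$k$ column of branches (distinct by disjointness of the families), nest the resulting infinite sets, extract a diagonal subsequence, and define each final level as a finite maximum of the column levels, relying on the fact that incomparability of final segments persists when the truncation levels are raised. If anything, your choice $l_{n_{j}}=\max\{l^{(k)}_{n_{j}}:k\le k_{n_{j-1}}\}$, which dominates the levels for \emph{all} indices $k$ relevant to later pairs, is slightly more careful bookkeeping than the paper's window-by-window choice of $l_{m_{j}}$.
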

  \begin{proof}
    Let $m_{0}=1$, $M_{0}=\N\setminus\{1\}$. Inductively, using  Lemma~\ref{incomparable}  we choose
    $M_{0}\supset M_{1}\supset\dots\supset M_{j}\supset \dots$    such setting $m_{j}=\min M_{j}$,   $m_{j-1}<m_{j}$ for every $j\geq1$, the following holds:

    for  every $k\in\{k_{m_{j-2}}+1,\dots, k_{m_{j-1}}\}$,
    $k_{m_{-1}}=0$, there exists $(l_{k}^{m})_{m\in M_{j}}\subset \N$ such that  
\begin{equation}
    \text{the   final segments}\,\,\,\brn^{n}_{k,>l_{k}^{n}},\,\, n\in
    M_{j} \,\text{are incomparable}.
   \label{eq:926}
 \end{equation}
 For every $j$ choose  $l_{m_{j}}>\max\{l_{m_{j-1}},l_{k}^{m_{j}}: k\in \{k_{m_{j-2}}+1,\dots,k_{m_{j-1}}\}\}$.

We show that   $(m_{j})_{j\in\N}$ and $(l_{m_{j}})_{j\in\N}$ satisfy the
conclusion. Let $m_{j}<m_{j_{1}}<m_{j_{2}}$ and  $k\leq  k_{m_{j}}$, $k\in
\{k_{m_{i-1}}+1,\dots, k_{m_{i}}\}$ for some $i\leq j$.
By \eqref{eq:926}
we have that  the final segments  $\brn_{k,>l_{k}^{n}}^{n}, n\in M_{i+1}$
are  incomparable.
Since $i\leq j<j_{1}<j_{2}$ it follows that $M_{i+1}\supset M_{j_{1}}\supset M_{j_{2}}$
and  for $p=1,2$,
$l_{m_{j_{p}}}\geq l_{m_{i+1}}\geq
l_{k}^{m_{i+1}}$  for every  $k\in \{k_{m_{i-1}}+1,\dots,k_{m_{i}}\}$.
This implies that  the final segments  $\brn_{k,>l_{m_{j_{1}}}}^{m_{j_{1}}},
\brn_{k,>l_{m_{j_{2}}}}^{m_{j_{2}}}$ are incomparable. 
  \end{proof}
  \begin{remark}
    In the general case, for an arbitrary  sequence $(F_{n})_{n\in\N}$ of
    finite families of branches there exists  a subsequence $(F_{n_{k}})_{k}$
such    that  either $\# F_{n_{k}}\nearrow +\infty$    or $\# F_{n_k}=\rho$ for every
$k$. In the later case the previous lemma is still valid and the proof
is simpler.
  \end{remark}
\begin{proposition}\label{lemmalp}
  Let $Y$ be a subspace of $\jtp$ with the property that for every finite
  set of branches $\tau_{1},\dots,\tau_{d}$  there exists a normalized
  weakly null  sequence $(y_{n})_{n}$ of $Y$ with associated
  $\ell_{p}$-vector $(c_{i})_{i\in \N}\in S_{\ell_{p}}$  such that
  $\lim_{n}\norm{y_{n|\cup_{i=1}^{d}\tau_{i}}}=0$.
Then $\ell_{p}$  embeds in $Y$.
\end{proposition}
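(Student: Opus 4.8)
The plan is to use the hypothesis repeatedly to manufacture, inside $Y$, a normalized block sequence $(z_j)_j$ each of whose vectors is, up to a small error, concentrated on a finite family of branches $T_j$, with the families $T_1,T_2,\dots$ pairwise disjoint, and then to recognize that the resulting sequence falls under Theorem~\ref{jtunique}. I would build the ingredients by induction. Having chosen finite families $T_1,\dots,T_{j-1}$ (for $j=1$ the union below is empty), I apply the hypothesis to the finite set of branches $\bigcup_{m<j}T_m$ to obtain a normalized weakly null sequence $(y_n^j)_n$ in $Y$ with $\ell_p$-vector $(c_i^j)_i\in S_{\ell_p}$ satisfying $\lim_n\norm{y^j_{n|\cup_{m<j}T_m}}=0$. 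Passing to a subsequence I may assume $(y_n^j)_n$ is a small perturbation of a block sequence, so its supports tend to infinity, which by Lemma~\ref{block} preserves the $\ell_p$-vector. Because the $\ell_p$-vector lies on the sphere, the branches associated to $(y_n^j)_n$ are automatically disjoint from $\bigcup_{m<j}T_m$: a branch $\brn$ in that union carrying positive limiting mass would satisfy $\lim_n\norm{y^j_{n|\brn}}>0$, contradicting $\norm{y^j_{n|\brn}}\le\norm{y^j_{n|\cup_{m<j}T_m}}\to0$. Lemma~\ref{concentration0} then produces a \emph{finite} family $T_j$ of dominant branches and a threshold $n_0^j$ with $\norm{y^j_{n|\mc{B}_{T_j}^c}}<\e_j$ for all $n\ge n_0^j$, where $\sum_j\e_j$ is kept small.

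Next I would pin down the tree geometry before committing to representatives. Applying Lemma~\ref{incomparable2} to the pairwise disjoint families $(T_j)_j$ (passing to a subsequence, and using the remark following that lemma to absorb the case of constant cardinalities) yields truncation levels $(l_j)_j$ above which the relevant final segments of branches from distinct families are incomparable; enlarging each $l_j$ I may also arrange that the distinct branches inside the single family $T_j$ have incomparable final segments above $l_j$. Only now do I select $z_j=y^j_{n_j}$, choosing $n_j\ge n_0^j$ large enough that $\minsupp z_j>l_j$ and that $(z_j)_j$ is a block sequence. Writing $\bar z_j=z_{j|\mc{B}_{T_j}}$, Lemma~\ref{concentration0} gives $\norm{z_j-\bar z_j}<\e_j$, and since $\supp z_j$ lies above the level where the branches of $T_j$ separate, Lemma~\ref{suminbranches} gives $\norm{\bar z_j}=\big(\sum_{\brn\in T_j}\norm{z_{j|\brn}}^p\big)^{1/p}$, which is close to $1$.

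The crux, which I expect to be the main obstacle, is to verify that $\lim_j\norm{\bar z_{j|\sigma}}=0$ for every fixed branch $\sigma$, so that the normalized block sequence $(\bar z_j/\norm{\bar z_j})_j$ satisfies the hypothesis of Theorem~\ref{jtunique}. As the $T_j$ are pairwise disjoint, the quantity $\norm{\bar z_{j|\sigma}}$ can stay bounded away from $0$ for infinitely many $j$ only if a single $\sigma$ agrees with a dominant branch of $T_j$ throughout $\ran(z_j)$ for infinitely many $j$; but then those dominant branches would have comparable final segments above their support levels, contradicting the incomparability arranged through Lemma~\ref{incomparable2}. Hence $\lim_j\norm{\bar z_{j|\sigma}}=0$ for every $\sigma$, and Theorem~\ref{jtunique} furnishes a subsequence of $(\bar z_j)_j$ equivalent to the unit vector basis of $\ell_p$. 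Since $\norm{z_j-\bar z_j}<\e_j$ with $\sum_j\e_j$ small, a standard small-perturbation argument transfers this equivalence to the corresponding subsequence of $(z_j)_j\subset Y$, whose closed linear span is then an isomorphic copy of $\ell_p$ in $Y$. Alternatively, once the incomparability is in place one can bypass Theorem~\ref{jtunique} and obtain the two estimates by hand: the lower $\ell_p$-estimate from Remark~\ref{remnormjt}(i) applied to the pairwise disjoint segments that the branches cut out, and the upper one from Lemma~\ref{suminbranches}.
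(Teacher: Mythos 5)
Your construction follows the paper's own proof almost step for step (inductive application of the hypothesis to $\bigcup_{m<j}T_m$, concentration on a finite dominant family via Lemma~\ref{concentration0}, disjointness of the families, Lemma~\ref{incomparable2}, a diagonal choice of block vectors, the claim that $\lim_j\norm{\bar z_{j|\sigma}}=0$ for every branch $\sigma$, then Theorem~\ref{jtunique} and a perturbation back into $Y$), but your proof of the crucial claim has a genuine gap. Lemma~\ref{incomparable2} does \emph{not} assert that every branch of $T_{j_1}$ is eventually incomparable with every branch of $T_{j_2}$: its conclusion concerns only pairs of branches carrying the \emph{same index} $k$ in the two families (and only for $k\le \#T_m$ with $m<j_1<j_2$ in the selected subsequence). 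Nothing you have arranged excludes the following configuration: the first branch $\alpha$ of $T_{j_1}$ and the second branch $\beta$ of $T_{j_2}$ share an initial segment extending above $\ran(z_{j_1})$. Then, taking $\sigma=\beta$, one has $\bar z_{j_1|\sigma}=\bar z_{j_1|\alpha}$ (since $\sigma$ and $\alpha$ agree throughout $\ran(z_{j_1})$) and $\bar z_{j_2|\sigma}=\bar z_{j_2|\beta}$, so both restrictions can be bounded away from zero, while no same-index pair of final segments is comparable. Repeating this pattern along infinitely many $j$, with the offending index varying from family to family, is consistent with everything in your construction; hence the one-line contradiction with ``the incomparability arranged through Lemma~\ref{incomparable2}'' does not follow as stated.

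The missing idea, which is exactly how the paper closes the argument, is a uniform bound on the index of the branch of $T_j$ on which a fixed $\sigma$ can pick up mass. When choosing $n_j$, also arrange (possible, since $T_j$ is finite and by the definition of the $\ell_p$-vector) that $\norm{z_{j|\brn^j_i}}\le(1+\e_j)c^j_i\le 2c^j_i$ for every index $i\le\#T_j$. Since each $(c^j_i)_i$ is decreasing and lies in $B_{\ell_p}$, one has $c^j_i\le i^{-1/p}$ \emph{uniformly in} $j$; therefore $\norm{\bar z_{j|\sigma}}\ge\e$ forces $\sigma$ to meet the branch of $T_j$ of some index $i(j)\le(2/\e)^p$ (it meets exactly one, by the within-family incomparability above $\minsupp z_j$ that you did arrange). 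By pigeonhole some index $l$ recurs for infinitely many $j$; choosing $j_0<j_1<j_2$ among these with $l\le\#T_{j_0}$ (automatic if the cardinalities are constant, and true eventually if they increase), Lemma~\ref{incomparable2} makes the final segments $\brn^{j_1}_{l,>l_{j_1}}$ and $\brn^{j_2}_{l,>l_{j_2}}$ incomparable, and a single branch cannot meet two incomparable final segments because the predecessors of any node are linearly ordered --- the desired contradiction. With this step inserted your proof is complete. Note also that your closing ``by hand'' alternative, which bypasses Theorem~\ref{jtunique}, fails for the same reason: a single segment may join mass of $\bar z_{j_1}$ and $\bar z_{j_2}$ lying on cross-indexed comparable branches, which is precisely the situation that the gliding-hump machinery behind Theorem~\ref{jtunique} is designed to control; that remark should be dropped.
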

\begin{proof}
Let $(\e_{k})_{k}$ be a sequence of positive numbers such that
$\sum_{k\in\N}\e_{k}<1$. Using the assumption of the lemma,
we get a  normalized weakly null sequence $(y_{n}^{1})_{n\in\N} $ in  $Y$ 
with $\ell_{p}$-vector $(c_{i}^{1})_{i\in \N}\in S_{\ell_{p}}$  and
associated set of branches  $(\brn_{i}^{1})_{n\in \N}$.
From   Lemma~\ref{block}  we get 
a block sequence  $(x_{n}^{1})_{n\in\N}$ having  the same $\ell_{p}$-vector
and associated set of branches with 
$(y_{n}^{1})_{n\in\N}$.
From Lemma~\ref{concentration0}  we get  
 an initial interval $I_{1}$  of $\N$  and $n(1)\in\N$ such that setting $B_{I_{1}}=\{\brn_{i}^{1}:i\in I_{1}\}$  it holds,
\[\norm{(c_{i}^{1})_{i\notin I_{1}}}_{\ell_{p}}<\e_{1}\,\,\text{and}\,\,
\norm{x_{n}^{1}-x_{n|\mc{B}_{I_{1}}}^{1}}
    <\e_{n}\,\,\text{ for every}\, n\geq n(1).\]
Moreover  from the definition of the  $\ell_{p}$-vector we may assume
  \begin{equation*}
   c_{i}^{1}(1-\e_{1})\leq \norm{x_{n|\brn_{i}^{1}}^{1}}\leq
   (1+\e_{1})c_{i}^{1} \,\,\,\,\text{for every}\, n\geq n(1)\,
\text{and   every
  $\brn_{i}^{1}$, $i\in I_{1}$}.
\end{equation*}
Set $T_{1}=\mc{B}_{I_{1}}$.
Continuing,  inductively  we choose an
array  of  normalized weakly null sequences $(y_{n}^{k})_{n\in\N}, k\in\N$ of $Y$
with $\ell_{p}$-vector $(c_{i}^{k})_{i\in \N}\in S_{\ell_{p}}$  and
associated set of branches  $(\brn_{i}^{k})_{n\in \N}$ and 
an array of block sequences  $(x_{n}^{k})_{n\in\N}$, $k\in\N$,  such that
 \begin{equation}
   \label{eq:4}
  \norm{y_{n}^{k}-x_{n}^{k}}<\e_{n}     \,\,\,\text{for every $n\in\N$
    and every $k$},
\end{equation}
satisfying the following:

For every $k$, there exist  $n(k)\in\N$ and an 
initial segment
$I_{k}(\supsetneq I_{k-1})$ of $\N$  such that  setting
$\mc{B}_{I_{k}}=\{\brn_{i}^{k}: i\in I_{k}\}$,
the following holds
\begin{enumerate}
\item[(i)]      $\lim_{n} \norm{y^{k}_{n|Z_k}}=0$ where
  $Z_{k}=\cup_{i\leq k-1}\mc{B}_{I_{k}}$
  \,\,  $(Z_{1}=\emptyset)$ . 
\item[(ii)]  $\norm{(c_{i}^{k})_{i\notin I_{k}}}_{\ell_{p}}<\e_{k}$.
\item[(iii)] 
For every $k$  and  every
  $\brn_{i}^{k}\in\mc{B}_{I_{k}}$ it holds
  \begin{equation}
    \label{eq:8a}
   c_{i}^{k}(1-\e_{k})\leq \norm{x_{n|\brn_{i}^{k}}^{k}}\leq
 (1+\e_{k})c_{i}^{k} \,\,\,\,\text{for every}\,\, n\geq n(k).
\end{equation}
\item[(iv)]  $\norm{x_{n}^{k}-x_{n|\mc{B}_{I_{k}}}^{k}}
    <\e_{n}$ for every   $n\geq n(k)$.
\end{enumerate}
Let us observe that  from  (i),(iv) and \eqref{eq:4} it follows that
the sets  of branches  $\mc{B}_{I_k}$, $k\in\N$,
are pairwise disjoint. Applying Lemma~\ref{incomparable2} we get $K\in[\N]$  and
$(l_{k})_{k}$ such that
\begin{align}
  \label{eq:12a}
 \forall   k<k_{1}<k_{2}\in K \,\,\text{and every}\,\, i\in I_{k}\\
 \text{the final segments}\,\,\ \brn_{i,>l_{k_{1}}}^{k_{1}},
 \brn_{i,>l_{k_{2}}}^{k_{2}} \text{are incomparable}. \notag
\end{align}
For every $k_{j}\in K$ choose   $x_{n_{k_{j}}}^{k_{j}}\in
(x_{n}^{k_{j}})_{n\in \N}$, $n_{k_{j}}>\max\{n_{k_{j-1}}, n(k_{j})\}$, such
that
\begin{enumerate}
\item[(v)]  $p_{j}=\min\supp x_{n_{k_{j}}}^{k_{j}}>l_{k_{j}}$ and 
\item[(vi)]   the final segments $\brn_{i,\geq p_{j}}^{k_{j}}$,\,\,
$i\in I_{k_{j}}$
are incomparable.
\end{enumerate}
Let us observe, using that the  $\ell_{p}$-vectors are decreasing sequences,
that
\begin{equation}
  \label{eq:10}
\forall \e>0\,\text{
 there exists  $i(\e)$ such that  $c^{k}_{i}<\e$ for every $i\geq
i(\e)$ and every $k$}.  
\end{equation}
Set  $z_{j}=x^{k_j}_{n_{k_j}|\mc{B}_{I_{k_j}}}$, $k_j\in K$.
\\

\textit{Claim:}
For every branch $\brn$ it holds that $\lim_{j}\norm{z_{j|\brn}}=0$.

\noindent \textit{Proof of Claim}.
On the contrary assume that there exist $\e>0$ and $J\in [\N]$ 
such that    $\norm{z_{j|\brn}}>\e$  for  every $j\in
J$.
Choose   $j_{0}\in J$   such that  $i(\e/2)<\max I_{k_{j_{0}}}$, where
$i(\e/2)$ is given by \eqref{eq:10}.

From (iv) we get that there exists a unique $i(k_{j})\in I_{k_{j}}$ such
that $\brn\cap \brn^{k_{j}}_{i(k_{j}),\geq p_{j}}\ne\emptyset$. Since $\supp(z_{j})\subset \cup\{\brn_{i,\geq p_{j}}: i\in I_{k_{j}}\}$ we get
\begin{equation}
  \label{eq:16} \e<\norm{z_{j|\brn}}=\norm{x^{k_{j}}_{n_{k_{j}}|}{}_{\brn\cap\brn_{i(k_{j})}^{k_{j}}}}\,\,\text{for a
unique}\,\, i(k_{j})\in I_{k_{j}}.
\end{equation}

Since $1+\e_{k}<2$ for every $k$,   (iii)
yields that 
\[
\e<\norm{x^{k_{j}}_{n_{k_{j}}|}{}_{\brn\cap\brn_{i(k_{j})}^{k_{j}}}}\leq 
(1+\e_{k_{j}})c_{i(k_{j})}\leq 2c_{i_{k(j)}}\Rightarrow i(k_{j})\leq i(\e/2)
\,\,\,\text{by \eqref{eq:10}}.
\]
Passing to a further
subsequence  we may assume that  $i(k_{j})=l\leq i(\e/2)$ for every $j$.
For  $j_{0}<j_{1}<j_{2}$  \eqref{eq:16} yields
$\brn\cap\brn_{l,\geq p_{j_{i}}}^{k_{j_{i}}}\ne
\emptyset$ and this leads to a contradiction since
$ p_{j_{i}}>l_{k_{j_{i}}}$ and
by  \eqref{eq:12a}
the final segments  $\brn_{l,>l_{k_{j_{i}}}}^{k_{j_{i}}}$, $i=1,2$  are 
incomparable $\qedhere$.

By Theorem~\eqref{jtunique} we get that $(z_{j})_{j}$ has a
subsequence  $(z_{j})_{j\in J_{0}}$  equivalent to the unit vector basis of 
$\ell_{p}$.
We show now that the sequence $(y_{n_{k_j}}^{k_{j}})_{j\in J_{0}}\subset Y$ has a
subsequence equivalent to  the unit vector basis of $\ell_{p}$.  By 
\eqref{eq:4}  and (iv) we get that
\[
  \norm{y_{n_{k_{j}}}^{m}-x_{n_{k_{j}}}^{k_{j}}}<\e_{n_{k_{j}}}\,\,\text{and}\,\,\,
  \norm{x_{n_{k_{j}}}^{k_{j}}-z_{j}}<\e_{n_{k_{j}}}\,\,\,\text{for every}\, j\in J_{0}.
\]
Passing to a further subsequence if it is necessary and using that
$(n_{k_{j}})_{j\in J_{0}}$ is strictly increasing sequence, we get that
$(y^{k_{j}}_{n_{k_{j}}})_{j\in J_{0}}$  is equivalent to $(z_{j})_{j\in J_{0}}$  and hence to the unit 
vector basis of $\ell_{p}$.
\end{proof}
A more careful inspection  of the previous proof shows that  $\ell_{p}$
embeds  in $Y$ once we have  an array of normalized block sequences
with certain properties whose the $\ell_{p}$-vector is not necessarily
normalized.
\begin{proposition}
  Let $(x_{n}^{k})_{n\in\N}, k\in\N$ be an array of seminormalized block
  sequences having   non-zero $\ell_{p}$-vectors $(c_{i}^{k})_{i\in \N}$
with  associated set of branches $(\brn_{i}^{k})_{i\in\N}$,
and $\e_{n}\searrow 0$  such that
there exists a strictly increasing sequence   $(I_{k})_{k\in\N}$ of
initial intervals of $\N$
satisfying the following
  \begin{enumerate}
  \item The sets $\mc{B}_{I_{k}}=\{\brn_{i}^{k}:i\in I_{k}\}$, $k\in\N$, 
 are  disjoint.
    \item For every $k,n$,  it holds that
      \[
        \norm{x_{n|\mc{B}_{I_{k}}}^{k}}>\frac{1}{2}\,\,\,\text{and}\,\,\,
        \norm{x_{n}^{k}-x_{n|\mc{B}_{I_{k}}}^{k}}<\e_{n}.\]
     \end{enumerate}
  Then, there exists a sequence
  $(x_{n_{j}}^{k_{j}})_{j}\subset\{x_{n}^{k}:n,k\in\N\}$    which is
  equivalent to the unit vector basis of $\ell_{p}$.
\end{proposition}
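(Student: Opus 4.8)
The plan is to re-run the argument of Proposition~\ref{lemmalp} almost verbatim, the point being that hypotheses (1) and (2) already hand us exactly the structural features that were manufactured there through Lemmas~\ref{concentration0} and~\ref{stabilization}. Concretely, I would first record that the array is uniformly bounded, say $\norm{x_{n}^{k}}\le C$ for all $n,k$, and that the families $\mc{B}_{I_{k}}$ are pairwise disjoint with $\#\mc{B}_{I_{k}}=\#I_{k}\nearrow+\infty$; applying Lemma~\ref{incomparable2} then produces an infinite $K\subset\N$ and reals $(l_{k})_{k\in K}$ so that same-indexed branches from two different levels of $K$ have incomparable final segments past $l_{k_{1}}$ and $l_{k_{2}}$. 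Enumerating $K=\{k_{1}<k_{2}<\cdots\}$, I would select one vector $x_{n_{k_{j}}}^{k_{j}}$ per level, with $n_{k_{j}}$ large enough that its minimal support $p_{j}$ exceeds $l_{k_{j}}$, that the finitely many final segments $\sigma_{i,\ge p_{j}}^{k_{j}}$ ($i\in I_{k_{j}}$) are mutually incomparable, and that $\norm{x_{n_{k_{j}}|\sigma_{i}^{k_{j}}}^{k_{j}}}\le 2c_{i}^{k_{j}}$ for $i\in I_{k_{j}}$; this last estimate is available straight from the definition of the $\ell_{p}$-vector. The candidate $\ell_{p}$-sequence is $z_{j}=x_{n_{k_{j}}|\mc{B}_{I_{k_{j}}}}^{k_{j}}$, which by (2) satisfies $1/2<\norm{z_{j}}\le C$.

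The one place in the proof of Proposition~\ref{lemmalp} that genuinely used the normalization $(c_{i}^{k})_{i}\in S_{\ell_{p}}$ is the uniform tail estimate: for every $\e>0$ there is $i(\e)$ with $c_{i}^{k}<\e$ for all $i\ge i(\e)$ and all $k$. The crucial observation---and essentially the only real content of the generalization---is that this survives without normalization. Indeed, since each $(c_{i}^{k})_{i}$ is decreasing and, by Remark~\ref{remI3.4}(ii) together with $\norm{x_{n}^{k}}\le C$, lies in $C\cdot B_{\ell_{p}}$, one gets $i\,(c_{i}^{k})^{p}\le\sum_{l\le i}(c_{l}^{k})^{p}\le C^{p}$, i.e. $c_{i}^{k}\le C\,i^{-1/p}$ uniformly in $k$; thus $i(\e)=\lceil(C/\e)^{p}\rceil$ works and every later step goes through unchanged.

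With this in hand, the core step is the branch-decay claim $\lim_{j}\norm{z_{j|\sigma}}=0$ for every branch $\sigma$, which I would prove exactly as the Claim in Proposition~\ref{lemmalp}: incomparability of the $\sigma_{i,\ge p_{j}}^{k_{j}}$ forces $\sigma$ to meet at most one of them, the bound $\norm{z_{j|\sigma}}\le 2c_{i(k_{j})}^{k_{j}}$ together with the uniform tail estimate pins the relevant index $i(k_{j})$ into a fixed finite range, and after stabilising it to a constant $l$ the incomparability furnished by Lemma~\ref{incomparable2} across two levels of $K$ produces a contradiction. Then Theorem~\ref{jtunique}, applied to $(z_{j}/\norm{z_{j}})_{j}$, yields a subsequence $(z_{j})_{j\in J_{0}}$ equivalent to the $\ell_{p}$-basis, and finally $\norm{x_{n_{k_{j}}}^{k_{j}}-z_{j}}<\e_{n_{k_{j}}}$ from (2), combined with the strict growth of $(n_{k_{j}})_{j}$ and $\e_{n}\searrow0$, lets a routine small-perturbation (Hölder) argument transfer the equivalence to $(x_{n_{k_{j}}}^{k_{j}})_{j\in J_{0}}$. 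The main obstacle here is bookkeeping rather than new ideas: one must arrange the single index $n_{k_{j}}$ to secure simultaneously (a) the upper $\ell_{p}$-vector estimate, (b) both incomparability requirements, and (c) summable smallness of the perturbing tails $\e_{n_{k_{j}}}$, the only conceptual point being the uniform decay of the un-normalized $\ell_{p}$-vectors isolated above.
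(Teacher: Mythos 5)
Your proof is correct and takes essentially the same route as the paper's own (sketched) proof: both reduce the statement to a re-run of the argument of Proposition~\ref{lemmalp}, using hypotheses (1) and (2) in place of the constructions performed there, then invoking Lemma~\ref{incomparable2}, the branch-decay claim, Theorem~\ref{jtunique}, and a final perturbation. Your explicit verification that the uniform tail estimate survives without normalization, via $c_{i}^{k}\leq C\,i^{-1/p}$ from monotonicity and Remark~\ref{remI3.4}(ii), is exactly the point the paper's sketch leaves implicit, and you handle it correctly.
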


\begin{proof}[Sketch of the proof] The proof follows the arguments of
  the proof of Proposition~\ref{lemmalp}
  For every $k$ passing to a subsequence we may assume that
  \eqref{eq:8a} holds. Also by the assumptions  we have that  (iv) of 
   the proof of Proposition~\ref{lemmalp} holds.   By (1)
 the sets
   $\mc{B}_{I_{k}}, k\in\N$ are disjoint and thus  we may apply
   Lemma~\ref{incomparable2} to get  $K\in[\N]$ and $(l_{k})_{k\in K}$
   such that \eqref{eq:12a} holds.  Continuing as in the proof  of
   Proposition~\ref{lemmalp}
   we get the subsequence $(x^{k_{j}}_{n_{j}})_{j}$   which is equivalent to the 
   unit vector basis of $\ell_{p}$.
\end{proof}
\begin{proposition}\label{lpvectornorm1}
  Let $\brnt_{1},\dots\brnt_{d}$ be a finite set of branches and $Y$ be a subspace of $\jtp$ which contains a normalized weakly null sequence $(y_{n})_{n}$ such that
  $\lim_{n} \norm{y_{n|\cup\brnt_{i}}}=0$ and its 
  $\ell_{p}$-vector $(c_{i})_{i\in \N}$ has norm $\norm{(c_{i})_{i}}_{\ell_{p}}=a\in (0,1)$.
  Then, there exists a normalized weakly null sequence $(w_{n})_{n}$
  in $Y$ having an $\ell_{p}$-vector $(d_{i})_{i\in \N}\in S_{\ell_{p}}$ and the same
  associated set of branches as $(y_{n})_{n}$
  and moreover
    $\lim_{n} \norm{w_{n|\cup\brnt_{i}}}=0$.
  \end{proposition}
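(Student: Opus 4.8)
The plan is to obtain $(w_n)_n$ as renormalized $\ell_2$-averages of $(y_n)_n$ taken \emph{inside} $Y$. The guiding idea is that $\ell_2$-averages of increasing length preserve the $\ell_p$-vector but push the norm down to $a=\norm{(c_{i})_{i\in\N}}_{\ell_p}$ (this is the content of Lemma~\ref{lemmap1}); dividing by the norm then rescales the $\ell_p$-vector to the unit vector $(c_i/a)_{i\in\N}$, while normalization, weak nullity, membership in $Y$, and vanishing on $\brnt_1,\dots,\brnt_d$ are all retained. The one genuine difficulty is that Lemmas~\ref{l2averages} and~\ref{lemmap1} are proved for block sequences, whereas we are forced to stay in $Y$; this is handled by a small-perturbation argument.

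\textbf{Step 1 (reduction to a block sequence).} Since $\jtp$ is reflexive, $(e_n)_n$ is a shrinking, boundedly complete Schauder basis, so the normalized weakly null sequence $(y_n)_n$ admits, after passing to a subsequence, a gliding-hump approximation: there are successive intervals $(I_n)_n$ of $\N$ and a summable sequence $(\delta_n)_n$ with $\norm{y_n-y_{n|I_n}}<\delta_n$. Put $\bar{y}_n=y_{n|I_n}$; this is a block sequence with $\supp(\bar{y}_n)\subset\supp(y_n)$. By Lemma~\ref{block}, $(\bar{y}_n)_n$ carries the same $\ell_p$-vector $(c_{i})_{i\in\N}$ with the same associated branches $(\brn_i)_{i\in\N}$, and since $\norm{\bar{y}_{n|\cup_{j}\brnt_{j}}}\le\norm{y_{n|\cup_{j}\brnt_{j}}}\to0$ it still vanishes on $\brnt_1,\dots,\brnt_d$.

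\textbf{Step 2 (averaging and transfer to $Y$).} Applying Lemma~\ref{lemmap1} to $(\bar{y}_n)_n$ yields a subsequence, relabelled $(\bar{y}_{n_i})_i$, and successive sets $F_1<F_2<\dots$ with $\#F_m\nearrow\infty$, such that $\bar{w}_m=\tfrac{1}{\sqrt{\#F_m}}\sum_{i\in F_m}\bar{y}_{n_i}$ obeys $\norm{\bar{w}_m}\to a$ and has $\ell_p$-vector $(c_{i})_{i\in\N}$ with branches $(\brn_i)_{i\in\N}$. Define the in-$Y$ averages $w_m=\tfrac{1}{\sqrt{\#F_m}}\sum_{i\in F_m}y_{n_i}\in Y$. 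Using $\sum_n\delta_n<\infty$, the key estimate is $\norm{w_m-\bar{w}_m}\le\tfrac{1}{\sqrt{\#F_m}}\sum_{i\in F_m}\delta_{n_i}\to0$, the bounded numerator being absorbed by $\sqrt{\#F_m}\to\infty$. Hence $\norm{w_m}\to a$; and since $\norm{(w_m-\bar{w}_m)_{|\brn}}\le\norm{w_m-\bar{w}_m}$ for every branch $\brn$, the sequence $(w_m)_m$ inherits the $\ell_p$-vector $(c_{i})_{i\in\N}$ and branches $(\brn_i)_{i\in\N}$. For the vanishing on the $\brnt_j$ I would invoke Remark~\ref{remnormjt}(iv): for each branch $\brnt_j$ one has $\norm{\bar{w}_{m|\brnt_j}}=\tfrac{1}{\sqrt{\#F_m}}\big(\sum_{i\in F_m}\norm{\bar{y}_{n_i|\brnt_j}}^2\big)^{1/2}\le\max_{i\in F_m}\norm{\bar{y}_{n_i|\brnt_j}}\to0$, and summing over the finitely many $j$ gives $\norm{w_{m|\cup_{j}\brnt_{j}}}\to0$.

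\textbf{Step 3 (renormalization).} Finally set $w'_m=w_m/\norm{w_m}\in Y$, well defined for large $m$ as $\norm{w_m}\to a>0$. Then $(w'_m)_m$ is normalized; it is weakly null, because the bounded block sequence $(\bar{w}_m)_m$ of the shrinking basis is weakly null and the facts $\norm{w_m-\bar{w}_m}\to0$, $\norm{w_m}\to a$ transfer this to $(w'_m)_m$. Dividing the limiting norms by $a$ gives $\lim_m\norm{w'_{m|\brn_i}}=c_i/a$ and $\lim_m\norm{w'_{m|\brn}}=0$ for $\brn\ne\brn_i$, so $(w'_m)_m$ has $\ell_p$-vector $(d_{i})_{i\in\N}=(c_i/a)_{i\in\N}$, with $\norm{(d_{i})_{i\in\N}}_{\ell_p}=a/a=1$, hence $(d_{i})_{i\in\N}\in S_{\ell_p}$, and the same associated branches; likewise $\lim_m\norm{w'_{m|\cup_{j}\brnt_{j}}}=0$. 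This is the required sequence. The \textbf{main obstacle} is precisely keeping the new sequence inside $Y$ while retaining control of its norm and $\ell_p$-vector: the perturbation estimate $\norm{w_m-\bar{w}_m}\to0$ is what bridges the block averages $(\bar{w}_m)_m$, to which Lemma~\ref{lemmap1} applies, and the genuine elements $(w_m)_m$ of $Y$.
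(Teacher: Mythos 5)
Your proposal is correct and follows essentially the same route as the paper's proof: reduce to a block sequence (Lemma~\ref{block}), apply Lemma~\ref{lemmap1} to get $\ell_2$-averages whose norms tend to $a$ while preserving the $\ell_p$-vector, kill the restrictions to $\brnt_1,\dots,\brnt_d$ via Remark~\ref{remnormjt}(iv), and rescale. The only difference is presentational: you form the averages directly from the $y_{n_i}\in Y$ and make explicit the perturbation estimate $\norm{w_m-\bar{w}_m}\le(\#F_m)^{-1/2}\sum_{i}\delta_{n_i}\to 0$, which is exactly what the paper compresses into ``standard perturbation arguments.''
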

\begin{proof}
From Lemma~\ref{block}  we may assume that $(y_{n})_{n}$ is
equivalent to a block sequence $(x_{n})_{n\in\N}$. From Lemma~\ref{lemmap1} we get 
a subsequence  $(x_{n_{i}})_{i\in\N}$ such that if
$(z_{m})_{m}$ is a sequence of successive $\ell_{2}$-averages of
$(x_{n_{i}})_{i\in\N}$ of increasing length, $z_{m}=\frac{1}{\sqrt{\# F_{m}}}\sum_{i\in
F_{m}}x_{n_{i}}$,  then  $(z_{m})_{m}$  has the same  $\ell_{p}$-vector
and associated set of  brances  with $(y_{n})_{n}$ 
and  moreover $\norm{z_{m}}\to a$.

We show that $\lim_{m}\norm{z_{m|\cup_{i}\brnt_{i}}}=0$.
Let $\e>0$.  Since  $\lim_{n}\norm{x_{n|\cup\brnt_{i}}}$ there exists  $n_{0}\in\N$  such that
\begin{equation*}
  \norm{x_{n|\cup_{1}^{d}\brnt_{i} }}<\frac{\e}{d}\,\,\,\text{for every}\,\, n\geq n_{0}.
\end{equation*}
For every branch $\tau_{j}$  we have that
\[
  \norm{\frac{1}{\sqrt{\# F_{m}}}\sum_{i\in F_{m}}x_{n_{i}|\brnt_{j}}}\leq
\frac{1}{\sqrt{\# F_{m}}}
 \left(\sum_{i\in F_{m}}\norm{x_{n_{i}|\brnt_{j}}}^{2}\right)^{1/2}\leq \frac{\e}{d}\,\,\,\text{for every}\,\, m>n_{0}.
\]
From the triangle inequality we get $\lim_{m}\norm{z_{m|\cup_{j=1}^{d}\brnt_{j}}}=0$. Set $w_{m}=a^{-1}z_{m}$, $m\in\N$. Using that $(x_{n})_{n}$ is equivalent to $(y_{n})$, by standard perturbation arguments, passing if necessary to a further  subsequence, we get the desired weakly null sequence in $Y$.
\end{proof}
We state now a lemma, probably a standard one, whose proof  is presented for the sake of completeness,  which  will be used in the proof  of the   main result of this section. 
\begin{lemma}\label{standardlemma}
  Let $X$ be a Banach space,   $Z\subseteq Y$  be subspaces of $X$ such that
  $Z$ is a finite codimensional subspace  of  $Y$  and also  is complemented in $X$.  
 Then  $Y$ is also complemented in $X$.
\end{lemma}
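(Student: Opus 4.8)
The plan is to use the complementation of $Z$ to transfer the finite-dimensional ``error'' $Y/Z$ into a complement of $Z$, and then to invoke the fact that a finite-dimensional subspace of any Banach space is complemented. First I would fix a bounded projection $P\colon X\to Z$ witnessing that $Z$ is complemented in $X$, and set $W=\ker P$, so that $X=Z\oplus W$ topologically and $I-P$ is the associated bounded projection onto $W$. Since $Z$ is finite codimensional in $Y$, I would also fix a finite-dimensional subspace $F\subseteq Y$ with $Y=Z\oplus F$; here the topological direct sum is automatic because $F$ is finite dimensional, and $Y$ is closed as the sum of the closed subspace $Z$ and a finite-dimensional space.

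The key step is to replace $F$ by an isomorphic copy lying inside $W$. Put $G=(I-P)(F)$. Since $\ker(I-P)=Z$ and $Z\cap F=\{0\}$, the map $I-P$ is injective on $F$, so $G$ is a finite-dimensional subspace of $W$ with $\dim G=\dim F$. Moreover $G\subseteq Y$, because for $f\in F$ both $f$ and $Pf$ lie in $Y$, whence $(I-P)f\in Y$. I would then check the compatibility $Y=Z\oplus G$: writing $f=Pf+(I-P)f$ shows $F\subseteq Z+G$, so $Y=Z+F=Z+G$, while $G\subseteq W=\ker P$ forces $Z\cap G=\{0\}$.

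Finally, since $G$ is finite dimensional it is complemented in the Banach space $W$ (extend the coordinate functionals of a basis of $G$ by Hahn--Banach), giving a bounded projection $S\colon W\to G$. I would define $Q\colon X\to Y$ on the decomposition $x=z+w$ with $z\in Z$, $w\in W$ by $Q(z+w)=z+S(w)$. Then $Q$ is bounded as a composition of the bounded maps $P$, $I-P$, $S$, its range lies in $Z\oplus G=Y$, and it restricts to the identity on $Y$: any $y\in Y$ decomposes as $y=z+g$ with $g\in G\subseteq W$, so that $Q(y)=z+S(g)=z+g=y$. Hence $Q$ is the desired bounded projection of $X$ onto $Y$. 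I do not anticipate a genuine obstacle here; the only point demanding care is the bookkeeping of the two direct-sum decompositions, namely verifying $Y=Z\oplus G$ and that the $W$-component of an element of $Y$ already lands in $G$, after which the formula for $Q$ works verbatim.
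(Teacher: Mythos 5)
Your proof is correct, and it reaches the conclusion by a different decomposition than the paper's. The paper keeps the finite-dimensional complement $F$ of $Z$ in $Y$ fixed and enlarges $Z$: it takes the coordinate functionals $y_i^*$ on $Y$ associated with $Y=Z\oplus F$, extends them by Hahn--Banach to $\hat y_i^*\in X^*$, sets $W=\bigcap_{i=1}^d\ker(\hat y_i^*)$ (a subspace containing $Z$), and uses the splitting $X=W\oplus F$ to define the projection $P_Y(w+f)=P(w)+f$. You instead keep the complement $\ker P$ of $Z$ fixed and move $F$ into it: replacing $F$ by $G=(I-P)(F)\subseteq \ker P\cap Y$, you get $Y=Z\oplus G$ compatibly nested inside $X=Z\oplus\ker P$, then complement $G$ inside $\ker P$ by a bounded projection $S$ and set $Q=P+S(I-P)$. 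Both routes spend Hahn--Banach only on the finite-dimensional part. Yours buys a transparent verification that $Q$ restricts to the identity on $Y$, precisely because $G\subseteq\ker P$ aligns the two direct sums; the paper's avoids introducing $G$ altogether, at the cost of the (implicit) check that $X=W\oplus F$, i.e.\ that the extended functionals $\hat y_i^*$ still satisfy $\hat y_i^*(y_j)=\delta_{ij}$ and hence split $F$ off from their common kernel. Either argument is complete and elementary; there is no gap in yours.
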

\begin{proof}
  Let  $P:X\to Z$ be a bounded projection and
$Y=Z\oplus F$ where 
  $F=\spann\{y_{1},\dots,  y_{d}\}$,
  $y_{i}\in  S_{Y}$. Choose $y_{i}^{*}\in Y^{*}$ such that $y_{i}^{*}(y_{i})=1$ and $y^{*}_{i}(z+\lambda y_{j})=0$ for every $z\in Z$, $\lambda\in\R$ and $j\ne i$. Denote by $\hat{y}_{i}^{*}$ a norm-preserving extension of $y_{i}^{*}$ to $X$.
Note that $W=\cap_{i=1}^{d}\kerr(\hat{y}_{i}^{*})\supset Z$ so $P_{|W}:W\to Z$ is a projection.  Defining $P_{Y}:X=W\oplus F\to Y=Z\oplus F$ by $P_{Y}(w+f)=P_{|W}(w)+f$ we have that $P_{Y}$ is a projection on $Y$.
\end{proof}
We proceed now to the main result of this section.

\begin{theorem}\label{complemented2}
  Let $Y$ be a  subspace of $\jtp$  not containing  $\ell_{p}$.
  Then, $Y$ is   a  complemented subspace of $\jtp$ isomorphic to $\ell_{2}$.
\end{theorem}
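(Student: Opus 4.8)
The plan is to first establish the key structural fact announced in the introduction: for a subspace $Y$ of $\jtp$ not containing $\ell_{p}$ there exist a finite set of branches $T=\{\brnt_{1},\dots,\brnt_{d}\}$ and a finite codimensional subspace $Y_{0}\subseteq Y$ such that the natural basis projection $P_{T}$ onto $Z_{T}=\overline{\langle\{e_{n}:n\in\cup_{i=1}^{d}\brnt_{i}\}\rangle}$, which acts by $P_{T}x=x_{|\cup_{i=1}^{d}\brnt_{i}}$, is an isomorphism when restricted to $Y_{0}$. Granting this, the conclusion follows quickly. Since each branch carries an $\ell_{2}$-basis (property \ref{tbranchell2}) and, by Lemma~\ref{suminbranches}, the tails along incomparable final segments combine as an $\ell_{p}$-sum of $d$ copies of $\ell_{2}$ — a norm that on a direct sum of finitely many summands is equivalent to the $\ell_{2}$-sum — the space $Z_{T}$ is isomorphic to $\ell_{2}$. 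Hence $P_{T}(Y_{0})$ is an infinite dimensional closed subspace of a Hilbert space, so $Y_{0}\cong P_{T}(Y_{0})\cong\ell_{2}$, and therefore $Y\cong Y_{0}\oplus(\text{finite dimensional})\cong\ell_{2}$.

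For complementation, note that $Z_{T}$ is $1$-complemented in $\jtp$ because the basis is $1$-unconditional, so $P_{T}$ has norm one. As $P_{T}(Y_{0})$ is a closed subspace of the Hilbertian space $Z_{T}$, it is the range of a bounded projection $\Pi$ on $Z_{T}$. Writing $R$ for the bounded inverse of $P_{T}|_{Y_{0}}$, the operator $S=R\,\Pi\,P_{T}:\jtp\to Y_{0}$ fixes $Y_{0}$ pointwise and is therefore a bounded projection of $\jtp$ onto $Y_{0}$; hence $Y_{0}$ is complemented in $\jtp$. Since $Y_{0}$ is finite codimensional in $Y$ and complemented in $\jtp$, Lemma~\ref{standardlemma} yields that $Y$ is complemented in $\jtp$.

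It remains to prove the key structural fact, which I would do by contradiction. Suppose that for \emph{every} finite set of branches $T$ the restriction $P_{T}|_{Y}$ fails to be bounded below on every finite codimensional subspace of $Y$. Using that $\jtp$ is reflexive with separable dual, a standard diagonalization over a dense sequence in $Y^{*}$ (at stage $m$ choose a unit vector annihilated by the first $m$ functionals with $\norm{P_{T}\cdot}$ small) produces, for each such $T$, a normalized weakly null sequence $(y_{n})_{n}$ in $Y$ with $\norm{P_{T}y_{n}}=\norm{y_{n|\cup_{i=1}^{d}\brnt_{i}}}\to 0$. After a small perturbation to a block sequence (transferring the asymptotic data via Lemma~\ref{block}), Proposition~\ref{lpdecomp} provides a subsequence that either is equivalent to the $\ell_{p}$-basis — impossible, since $\ell_{p}\not\hookrightarrow Y$ — or accepts an $\ell_{p}$-vector. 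If that vector were zero, then $\lim_{n}\norm{y_{n|\brn}}=0$ for every branch and Theorem~\ref{jtunique} would again embed $\ell_{p}$ into $Y$; so the $\ell_{p}$-vector is non-zero, of some norm $a\in(0,1]$. When $a<1$ I invoke Proposition~\ref{lpvectornorm1} to replace $(y_{n})_{n}$ by a normalized weakly null sequence in $Y$ whose $\ell_{p}$-vector lies in $S_{\ell_{p}}$ and which still satisfies $\lim_{n}\norm{y_{n|\cup_{i=1}^{d}\brnt_{i}}}=0$; when $a=1$ nothing is needed. Thus for every finite set of branches $Y$ contains a normalized weakly null sequence with $\ell_{p}$-vector in $S_{\ell_{p}}$ whose restriction to the union of those branches tends to $0$ — exactly the hypothesis of Proposition~\ref{lemmalp}, which forces $\ell_{p}\hookrightarrow Y$ and contradicts our assumption on $Y$. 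Hence there is a finite $T$ for which $P_{T}|_{Y}$ is bounded below on a finite codimensional subspace $Y_{0}$, as required.

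The main obstacle is this last step: one must handle the quantifier over \emph{all} finite sets of branches simultaneously so that the conclusion of Proposition~\ref{lemmalp} becomes available. The delicate ingredient is the renormalization of the $\ell_{p}$-vector to norm one while remaining inside $Y$ and preserving $\lim_{n}\norm{y_{n|\cup_{i=1}^{d}\brnt_{i}}}=0$, which is precisely what Propositions~\ref{lpvectornorm1} and~\ref{lemmalp} are engineered to supply. Together with the reflexivity-based dichotomy for the operator $P_{T}$ (bounded below on a finite codimensional subspace, or a normalized weakly null sequence sent to zero), they reduce the theorem to the $\ell_{p}$-vector calculus already developed in this part.
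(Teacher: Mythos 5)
Your proposal is correct and follows essentially the same route as the paper: the identical key proposition (a finite set of branches $T$ and a finite codimensional $Y_{0}\subseteq Y$ on which $P_{T}$ is an isomorphism), proved by the same contradiction argument combining the diagonal construction of weakly null sequences with small $P_T$-norm, Proposition~\ref{lpdecomp}, Theorem~\ref{jtunique}, Proposition~\ref{lpvectornorm1}, and Proposition~\ref{lemmalp}, followed by the same deduction that $Z_{T}\cong\ell_{2}$ via Lemma~\ref{suminbranches} and the passage from $Y_{0}$ to $Y$ via Lemma~\ref{standardlemma}. Your write-up is in places more explicit than the paper (the projection $S=R\,\Pi\,P_{T}$, and the separate treatment of the cases $a<1$ and $a=1$ before invoking Proposition~\ref{lpvectornorm1}), but these are elaborations of the same argument rather than a different approach.
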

To prove the theorem we shall need the following proposition.
\begin{proposition}
  Let $Y$ be a  subspace of $\jtp$ not containing $\ell_{p}$.  There exist a finite set  of 
  branches $\brnt_{1},\dots,\brnt_{d}$
  and a finite codimensional subspace $Y_{0}$ of $Y$
 such that  the bounded  projection  $P(y)=y_{|\text{T}}$,
$\text{T}=\cup\{\brnt_{i}: i\leq d\}$
restricted to $Y_{0}$  is an  isomorphism.
  \end{proposition}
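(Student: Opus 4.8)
The plan is to read off the required branches from the contrapositive of Proposition~\ref{lemmalp} and then to show that the associated coordinate projection is bounded below on a finite codimensional subspace of $Y$.

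\textbf{Step 1: locating the branches.} Since $Y$ does not contain $\ell_p$, the hypothesis of Proposition~\ref{lemmalp} must fail, so there is a finite set of branches $\brnt_1,\dots,\brnt_d$ for which no normalized weakly null sequence in $Y$ with $\ell_p$-vector in $S_{\ell_p}$ satisfies $\lim_n\norm{y_{n|T}}=0$, where $T=\cup_{i=1}^d\brnt_i$. I would first upgrade this to the stronger assertion that \emph{no} normalized weakly null sequence $(y_n)_n$ in $Y$ satisfies $\lim_n\norm{y_{n|T}}=0$ at all. Given such a sequence, I would pass to a subsequence and perturb it to a block sequence $(x_n)_n$ with $\norm{x_n-y_n}\to 0$; then $\lim_n\norm{x_{n|T}}=0$ as well, since coordinate restriction is norm nonincreasing. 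By Proposition~\ref{lpdecomp} the block sequence has a subsequence that either is equivalent to the $\ell_p$-basis---impossible, as $\ell_p$ does not embed in $Y$---or accepts an $\ell_p$-vector; and by Theorem~\ref{jtunique} this vector cannot be zero, again because $\ell_p$ does not embed in $Y$. By Lemma~\ref{block} the original $(y_n)_n$ shares this nonzero $\ell_p$-vector, of some norm $a\in(0,1]$, while still vanishing on $T$. If $a=1$ this already contradicts the choice of $T$; if $a\in(0,1)$, Proposition~\ref{lpvectornorm1} renormalizes it to a normalized weakly null sequence $(w_n)_n$ in $Y$ with $\ell_p$-vector in $S_{\ell_p}$ and $\lim_n\norm{w_{n|T}}=0$, contradicting the choice of $T$ once more.

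\textbf{Step 2: bounding the projection below.} Let $P(y)=y_{|T}$ be the coordinate projection onto $Z_T=\overline{\langle e_n:n\in T\rangle}$; it has norm at most one by $1$-unconditionality of the basis. For a finite codimensional subspace $Y_0\subseteq Y$ set $\delta(Y_0)=\inf\{\norm{y_{|T}}:y\in S_{Y_0}\}$. It suffices to produce $Y_0$ with $\delta(Y_0)>0$, for then $P|_{Y_0}$ is bounded below and hence an isomorphism onto its image. Suppose, for contradiction, that $\delta(Y_0)=0$ for every finite codimensional $Y_0$. Since $\jtp$ is separable and reflexive, $Y$ is separable and reflexive, whence $Y^*$ is separable; fix a dense sequence $(f_k)_k$ in $S_{Y^*}$. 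Setting $Y_n=\cap_{k\leq n}\kerr f_k$, the assumption yields $y_n\in S_{Y_n}$ with $\norm{y_{n|T}}<1/n$. Then $f_k(y_n)=0$ for all $n\geq k$, and by density together with $\norm{y_n}=1$ this forces $f(y_n)\to 0$ for every $f\in Y^*$; thus $(y_n)_n$ is a normalized weakly null sequence in $Y$ with $\lim_n\norm{y_{n|T}}=0$, contradicting Step 1. Hence some finite codimensional $Y_0$ has $\delta(Y_0)>0$, which completes the proof with $\brnt_1,\dots,\brnt_d$ as above.

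I expect the crux to be the upgrade carried out in Step 1---passing from ``no sequence with a normalized $\ell_p$-vector vanishes on $T$'' to ``no normalized weakly null sequence vanishes on $T$''---since it is precisely there that the failure of $\ell_p$ to embed is exploited, through the combined use of Proposition~\ref{lpdecomp}, Theorem~\ref{jtunique}, Lemma~\ref{block}, and Proposition~\ref{lpvectornorm1}. The weak-nullity construction in Step 2 is routine once the separability of $Y^*$ is in hand, and the final identification of $P|_{Y_0}$ as an isomorphism is immediate from the lower bound $\delta(Y_0)>0$.
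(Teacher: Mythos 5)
Your proof is correct and follows essentially the same route as the paper's: both reduce to Proposition~\ref{lemmalp} through the $\ell_p$-vector machinery (Proposition~\ref{lpdecomp}, Theorem~\ref{jtunique}, Lemma~\ref{block}, Proposition~\ref{lpvectornorm1}) and obtain the required weakly null sequences by selecting norm-one vectors with small $T$-projection in finite codimensional subspaces of $Y$. The only differences are cosmetic: the paper runs a single global contradiction (assuming failure for all pairs $(T,Y_0)$) and kills coordinates via $\cap_{i\leq k}\ker(e_i^*)$, whereas you first extract $T$ from the contrapositive of Proposition~\ref{lemmalp}, upgrade it, and then use a dense sequence in $S_{Y^*}$ to get weak nullness.
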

\begin{proof}
Assume that the conclusion does not hold.
Then
for every  finite set of branches $T=\{\brnt_{i}: i\leq d\}$ and
a finite codimensional subspace $Y_{0}$ of $Y$,
the restriction of $P$ to $Y_{0}$   is not an isomorphism.
For every $k\in\N$ set $\displaystyle H_{k}=\cap_{i=1}^{k}\ker (e_{i}^{*})$
and $Y_{k}=Y\cap H_{k}$. Then  inductively   we choose  normalized vectors  $x_{k}\in Y_{k}$  such that $\norm{x_{k|\text{T}}}<k^{-1}$.  It follows that   the sequence $(x_{k})_{k}$ is a normalized weakly null sequence  and $\lim_{k}\norm{x_{k|\text{T}}}=0$. Moreover 
  the $\ell_{p}$-associated vector $(c_{i})_{i\in \N}$ of $(x_{n})_{n}$  has  norm
  $
  \norm{(c_{i})_{i\in \N}}_{\ell_{p}}=a>0$. 
From  the above and  Proposition~\ref{lpvectornorm1}  we get that the assumptions
of Proposition~\ref{lemmalp} holds.  It follows that  $\ell_{p}$ embeds
in $Y$ a contradiction.
\end{proof}

\begin{proof}[Proof of Theorem~\ref{complemented2}]
  From the previous proposition
we get
 a finite codimensional subspace  $Y_{0}$ of $Y$
and  a finite set of branches $T=\{\brnt_{1},\dots,\brnt_{d}\}$ such that  the bounded  projection  $P_{|Y_{0}}(y)=y_{|\text{T}}$  is an isomorphism.
Let $Z=[e_{n}:n\in \text{T}]$ and $n_{0}\in\N$ be such that  the final  segments  $\brnt_{i,>n_{0}}$, $i\leq d$, are incomparable.  We have that
$Z=(Z\cap [e_{n}:n\leq n_{0}] )\oplus (Z\cap [e_{n}:n>n_{0}])$.
The finite dimensional subspace $Z\cap [e_{n}:n\leq n_{0}] $ is isomorphic to $\ell_{2}^{k}$ and also from  Lemma~\ref{suminbranches} we get that
$Z\cap [e_{n}:n>n_{0}]$ isomorphic to $\sum_{i=1}^{d}\oplus\ell_{2}(\simeq\ell_{2})$.
It follows that $Z$ is isomorphic to $\ell_{2}$.  Since $Y_{0}$ embeds in
$Z$ and $Z$  is Hilbert space, it holds that  $Y_{0}$ is a   complemented subspace  of $\jtp$ isomorphic to $\ell_{2}$.

Since $Y=Y_{0}\oplus G$, $G$ finite dimensional it follows that $Y$ is also  a Hilbert space.
From Lemma~\ref{standardlemma}  we get   that $Y$ is complemented in $\jtp$ and this completes the proof.
\end{proof}
\begin{proposition}
If   $\jtp=Z\oplus\ell_{2}$ then  $Z$ is isomorphic to $\jtp$.
\end{proposition}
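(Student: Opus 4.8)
The plan is to realize $\jtp$ as an $\ell_p$-sum of copies of itself and then to run a Pełczyński-type decomposition, the only non-formal input being that $\jtp$ embeds as a \emph{complemented} subspace of $Z$. First I would record the structural identity $\jtp\cong(\sum_n\oplus\,\jtp)_{\ell_p}$. Since the root of $\tree$ is the adjoined empty node and $\tree_0$ has infinitely many minimal nodes, I partition these minimal nodes into infinitely many infinite sets; each resulting sub-forest together with the root is again an infinitely branching tree, so the band $[e_j:j\in A_n]$ spanned by it is isometric to $\jtp$. Because every segment is contained in the subtree of a single minimal node (Lemma~\ref{suminbranches} is the prototype of this computation), the norm is $\ell_p$-additive across the bands, giving the displayed isometry. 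In particular $\jtp\cong\jtp\oplus\jtp$, and peeling off a single branch (which spans a norm-one complemented $\ell_2$, using $\ell_2\cong\ell_2\oplus\ell_2$) gives $\jtp\cong\jtp\oplus\ell_2\cong\jtp\oplus\mathbb{R}^k$.

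Granting for the moment that $\jtp$ is complemented in $Z$, i.e. $Z\cong\jtp\oplus W$, the conclusion is immediate. Substituting $\jtp\cong Z\oplus\ell_2$ into the $\ell_p$-sum and absorbing one copy of $Z$ gives $\jtp\oplus Z\cong(\sum_n\oplus Z)_{\ell_p}\oplus(\sum_n\oplus\ell_2)_{\ell_p}\oplus Z\cong\jtp$, while on the other hand $\jtp\oplus Z\cong\jtp\oplus\jtp\oplus W\cong\jtp\oplus W\cong Z$; hence $Z\cong\jtp\oplus Z\cong\jtp$. So everything reduces to producing a complemented copy of $\jtp$ inside $Z$.

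The hard part is exactly this complementation. Write $\jtp=Z\oplus L$ with $L\cong\ell_2$ and let $P\colon\jtp\to Z$, $P_L\colon\jtp\to L$ be the projections. For a finite set $N_1$ of band-indices put $U_{N_1}=[e_j:j\in\bigcup_{n\notin N_1}A_n]\cong\jtp$ and set $\kappa(N_1)=\sup\{\limsup_k\|P_L u_k\|:(u_k)\subset U_{N_1}\text{ normalized weakly null}\}$. I would first show $\inf_{N_1}\kappa(N_1)=0$. If not, a gliding-hump selection yields normalized $w_i$ with pairwise disjoint band-supports and $\|P_L w_i\|\ge\delta>0$; by the first paragraph the $w_i$ are isometrically equivalent to the $\ell_p$-basis, whereas $(P_L w_i)$, being seminormalized and weakly null in $L\cong\ell_2$, has a subsequence equivalent to the $\ell_2$-basis. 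Comparing the two norms of $\sum_{i\le N}w_i$ forces $\delta'\sqrt N\le\|P_L\|\,N^{1/p}$, impossible for large $N$ since $p>2$. This is precisely where the hypothesis $2<p<\infty$ enters (the Pitt phenomenon for operators $\ell_p\to\ell_2$), and I expect the delicate point to be extracting the band-disjoint $w_i$ with a uniform lower bound $\delta$.

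Choosing $N_1$ with $\kappa(N_1)<1$ and writing $U:=U_{N_1}$, the identity $Pu=u-P_L u$ shows $\liminf_k\|Pu_k\|\ge1-\kappa(N_1)>0$ for every normalized weakly null $(u_k)\subset U$, so $P|_U$ is bounded below on a finite-codimensional $U_0\subseteq U$ and $\theta:=P|_{U_0}$ is an isomorphism onto $V:=P(U_0)\subseteq Z$. Applying the same estimate to $G:=Q\circ P|_U=I_U-Q\,P_L|_U$, where $Q$ is the norm-one band projection of $\jtp$ onto $U$, shows (after shrinking $U_0$) that $G|_{U_0}$ is an isomorphism onto a complemented finite-codimensional subspace of $U$; extending $(G|_{U_0})^{-1}$ to $\widetilde G\colon U\to U_0$ and setting $\Pi:=\theta\circ\widetilde G\circ Q$ produces a bounded projection of $\jtp$ onto $V$, since $v=\theta(u_0)$ gives $Qv=G(u_0)$ and hence $\Pi v=v$. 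Thus $V$ is complemented in $\jtp$, a fortiori in $Z$, and $V\cong U_0\cong\jtp$ by finite-dimensional cancellation against $\jtp\cong\jtp\oplus\mathbb{R}^k$. This supplies the complementation $Z\cong\jtp\oplus W$ and, combined with the second paragraph, completes the proof.
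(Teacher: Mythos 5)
Your first two paragraphs are sound: the identity $\jtp\cong\left(\sum_n\oplus\,\jtp\right)_{\ell_p}$ is established in the paper by the same incomparable-subtree computation, and your Pelczynski scheme correctly reduces everything to producing a complemented copy of $\jtp$ inside $Z$; your Pitt-type argument that $\inf_{N_1}\kappa(N_1)=0$ also works, and it is where $p>2$ enters, exactly as in the paper. The genuine gap is in the last paragraph, at the words ``isomorphism onto a complemented finite-codimensional subspace of $U$.'' From $\liminf_k\|Gu_k\|\geq 1-\kappa>0$ on normalized weakly null sequences you do get, via reflexivity, that $G=I_U-QP_L|_U$ is bounded below on a finite-codimensional subspace $U_0$; this makes $G$ upper semi-Fredholm, i.e.\ $\ker G$ is finite-dimensional and $G(U_0)$ is closed. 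It does \emph{not} make $G(U_0)$ finite-codimensional or complemented: you are conflating ``isomorphic to a finite-codimensional subspace'' with ``is a finite-codimensional subspace.'' The natural upgrade would be to show $I_U-K$ is Fredholm of index $0$, but $K=QP_L|_U$ only factors through $\ell_2$, and operators on $\jtp$ factoring through $\ell_2$ are neither strictly singular nor inessential (the norm-one projection onto a branch subspace $[e_n:n\in\brn]\cong\ell_2$ factors through $\ell_2$, yet the identity minus it has infinite-dimensional kernel). Even adding your weakly-null smallness condition does not force finite corank: an isometry of $\ell_2$ onto an infinite-codimensional subspace satisfies both hypotheses. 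So $G(U_0)$ could a priori be an infinite-codimensional, non-complemented subspace of $U$, the extension $\widetilde G$ need not exist, $\Pi$ is not constructed, and the Pelczynski argument never gets its complemented copy of $\jtp$.

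The paper sidesteps this difficulty entirely, and you already hold the key ingredient: it suffices to complement a copy of $\ell_2$, not of $\jtp$, inside $Z$. By the Pitt argument (in the paper's form: incomparable nodes span an isometric $\ell_p$, so $P_L$ restricted to them is compact), at most countably many branches $\brn$ satisfy $\limsup_{k\in\brn}\|P_L e_k\|>0$; choosing any other branch, one has $e_k=P_Le_k+z_k$ with $z_k\in Z$ and $\|e_k-z_k\|\to 0$ along a subsequence, so a small-perturbation argument shows $[z_k:k]$ is a complemented copy of $\ell_2$ lying inside $Z$. Writing $Z\cong W\oplus\ell_2$ and using $\ell_2\oplus\ell_2\cong\ell_2$ --- precisely the absorption trick you used to prove $\jtp\cong\jtp\oplus\ell_2$, but applied to $Z$ --- gives $Z\cong Z\oplus\ell_2=\jtp$, with no decomposition method and no complementation of $\jtp$ in $Z$ needed.
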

\begin{proof}
Let $P:\jtp\to\ell_{2}$ be a bounded projection.
It is enough to prove that  $Z$ contains a complemented subspace isomorphic to  $\ell_{2}$. Indeed if $Z_{0}$ is such a subspace and $Z=W\oplus Z_{0}$
then,
  \[
    Z\backsimeq W\oplus\ell_{2}\backsimeq (W\oplus\ell_{2})\oplus\ell_{2}\backsimeq Z\oplus\ell_{2}=\jtp. \]

  To prove that     $Z$ contains a complemented subspace isomorphic to  $\ell_{2}$ we start with the following claim.

  \textit{Claim:} For every $\e>0$ there exists a finite number of
  brances $\brn_{1},\dots,\brn_{k}$ such that  $\limsup_{k\in
    \brn_{i}}\norm{P(e_{k})}>\e$ for every $i\leq k$.

  Assume that we have prove the claim. Then it follows that there
  exists   an at most  countable  set of branches  $\brn_{i}, i\in\N$  such that for every $\brn\ne\brn_{i}$ it holds  $\lim_{k\in\brn}\norm{P(e_{k})}=0$.

  Let $\brn\ne\brn_{i}$ for every $i$. Then we have that $[e_{k}:k\in\brn ]$ is  isometric to $\ell_{2}$
and for every $k$,  $e_{k}=P(e_{k})+(I-P)(e_{k})= P(e_{k})+z_{k}$, $z_{k}\in Z$.  Since  $\lim_{k\in\brn}\norm{P(e_{k})}=0$ passing to a subsequence we may assume that $\sum_{k}\norm{e_{k}-z_{k}}<1$ and hence
  $(z_{k})_{k}\backsimeq(e_{k})_{k}$. It follows that  $[z_{k}: k\in\N ]$ is a complemented subspace of $Z$ isomorphic to $\ell_{2}$.
  \\

  \textit{Proof of the Claim}. On the contrary assume that  for some
  $\e>0$ there exist infinitely many branches $\brn_{n}, n\in \N$  such that   $\limsup_{k\in\brn_{n}}\norm{P(e_{k})}>\e$.

  Passing to a subsequence  we may assume that   $\brn_{n}\to\tau$, where
  $\tau$ is  a branch or an initial segment. Assume  that $\tau$ is a branch. The
  case $\brnt$ is a finite initial segment is similar.
  We show the existence of a sequence of incomparable nodes $(t_n)_{n}$ such that
  $\norm{P(e_{t_{n}})}>\e$.  We follow the arguments in the proof of Lemma~\ref{incomparable}.
We   choose inductively  initial segments $I_{1}\subsetneqq I_{2}\subsetneqq\dots$  of $\brnt$,
$n_{j}\nearrow +\infty$  and $l_{j}\nearrow+\infty$ such that
 \begin{enumerate}[resume]
\item $    \brn_{n|I_{j}}=\tau_{|I_{j}}$ for every  every $n\geq n_{j}$,
  \item  the final  segments    $ \brn_{n,>l_{j}}, n=1,\dots,n_j$ are incomparable,
     $l_{j}<\max I_{j+1}$,
\item   $t_{j}\in \brn_{n_{j}}$, $t_{j}>l_{l}$ and $\norm{P(e_{t_{j}})}\geq \e$
\end{enumerate}
Is easy to check that  the nodes $t_{j},j\in\N$ are  incomparable.

Since $(t_{j})_{j}$ are incomparable it follows that  the subspace $X=[e_{t_{j}} :j\in\N]$ is isometric to $\ell_{p}$.
The same holds for the subspace $X_{L}=[e_{t_{j}} :j\in L]$ for every  $L\in [\N]$. Since  $(e_{t_{j}})_{j}$ is w-null, passing to a
subsequence
we may assume also that $(P(e_{t_{j}}))_{j}$ is  a seminormalized block sequence.

So $P_{|X}:\ell_{p}\to \ell_{2}$ and by Pitt's theorem $P_{|X}$ is
compact. This leads to a contradiction since $(P(e_{t_{j}}))_{j}$
does not have convergence subsequence.
\end{proof}
\begin{remark}
 N. Ghoussoub and B. Maurey~\cite{GhM}, using an infinitely  branching   tree $\dt_{\infty}$, defined  the   James tree space $JT^{\infty}$. This space is defined   as the  James tree space, using    the tree $\dt_{\infty}$ instead  of the dyadic tree.   They showed that  the preduals  of these  spaces  are not isomorphic, by proving  that the  predual of $JT^{\infty}$ fails the PCP, while it is well known that the predual of $JT$ satisfies the PCP.
\end{remark}
Let   $JT^{\dtd}_{2,p}$ be the space  which is defined as  $\jtp$ but
using the dyadic tree $\dtd$ instead of an infinitely branching tree. We prove that these spaces are isomorphic.
  \begin{proposition}
    The spaces $\jtp$ and $JT^{\dtd}_{2,p}$ are isomorphic.
  \end{proposition}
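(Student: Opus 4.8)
The two spaces differ only in the underlying tree: $\jtp$ is built on the infinitely branching $\tree$, while $JT^{\dtd}_{2,p}$ is built on the dyadic $\dtd$. The plan is to realize each space as an isometric, norm-one complemented subspace of the other and then to invoke the Pe\l czy\'nski decomposition method. For this it suffices to establish (i) a norm-one complemented isometric copy of $JT^{\dtd}_{2,p}$ inside $\jtp$, (ii) a norm-one complemented isometric copy of $\jtp$ inside $JT^{\dtd}_{2,p}$, and (iii) the self-similarity isomorphisms $\jtp\cong\jtp\oplus_{p}\jtp$ and $JT^{\dtd}_{2,p}\cong JT^{\dtd}_{2,p}\oplus_{p}JT^{\dtd}_{2,p}$.

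For (i) I would embed $\dtd$ into $\tree$ as a genuine subtree: since every node of $\tree$ has infinitely many (in particular at least two) immediate successors, fix an order-isomorphism $j$ of $\dtd$ onto a subtree of $\tree$ sending the two children of a dyadic node to two distinct children of its image. Then $j$ preserves and reflects comparability and its image is order-convex, so every segment of $\tree$ meets the image in the image of a segment of $\dtd$; hence for a vector supported on $j(\dtd)$ the $\tree$-norm agrees with the dyadic James tree norm. Because the basis is $1$-unconditional, the canonical basis projection onto $[e_{j(t)}:t\in\dtd]$ has norm one, which gives the desired isometric, $1$-complemented copy.

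The substantive step is (ii). Here I would use a \emph{comb encoding} $\iota:\tree\to\dtd$ sending the $k$-th child of a node $u$ with $\iota(u)=v$ to the dyadic node $v1^{k-1}0$; one checks directly that $\iota$ is an order-isomorphism onto its image that also preserves incomparability, since two children of $u$ land in incomparable dyadic subtrees. The key lemma is that for $x$ supported on $\iota(\tree)$ one has $\norm{x}_{JT^{\dtd}_{2,p}}=\norm{x}_{\jtp}$. The inequality $\leq$ is routine: the intersection of any dyadic segment with $\iota(\tree)$ is, by order preservation and reflection, the image of a $\tree$-segment, and disjoint dyadic segments produce disjoint $\tree$-segments. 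The reverse inequality is the crux, and is where I expect the main difficulty to lie: each $\tree$-segment $s$ is covered by the dyadic segment joining the images of its endpoints, which differs from $\iota(s)$ only by ``spine'' nodes $v1^{j}$ on which $x$ vanishes, so the point to verify is that these covering segments stay pairwise disjoint. I would argue that if two covering segments shared a spine node of the comb at a node $u$, then each of the corresponding original $\tree$-segments would have its minimum $\preceq u$ and its maximum $\succ u$, hence would contain $u$ by convexity, contradicting the disjointness of the $\tree$-segments; therefore the covering is by disjoint dyadic segments and $\norm{x}_{JT^{\dtd}_{2,p}}\geq\norm{x}_{\jtp}$. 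Once more, $1$-unconditionality yields the norm-one projection onto $[e_{\iota(u)}:u\in\tree]$.

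For (iii) I would exploit self-similarity of the trees. The root of $\tree$ has infinitely many children, and the subtrees they span are pairwise incomparable, each order-isomorphic to $\tree$; since no segment can cross between incomparable subtrees and the root is not a basis index, the computation underlying Lemma~\ref{suminbranches} shows that $\norm{x}^{p}$ decouples as the sum over subtrees, giving $\jtp\cong(\sum_{i}\oplus\,\jtp)_{\ell_{p}}$ and in particular $\jtp\cong\jtp\oplus_{p}\jtp$. The same argument applied to the two subtrees at the dyadic root gives $JT^{\dtd}_{2,p}\cong JT^{\dtd}_{2,p}\oplus_{p}JT^{\dtd}_{2,p}$. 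Writing $X=\jtp$ and $Y=JT^{\dtd}_{2,p}$, steps (i) and (ii) yield $Y\cong X\oplus A$ and $X\cong Y\oplus B$. Using $X\oplus X\cong X$ one gets $X\oplus Y\cong X\oplus X\oplus A\cong X\oplus A\cong Y$, while using $Y\oplus Y\cong Y$ one gets $X\oplus Y\cong Y\oplus Y\oplus B\cong Y\oplus B\cong X$. Combining these, $X\cong X\oplus Y\cong Y$, which is the asserted isomorphism.
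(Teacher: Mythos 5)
Your proof is correct and is essentially the paper's own argument: the same two complemented isometric subtree embeddings (a dyadic subtree of $\tree$ inside $\jtp$, and an infinitely branching subtree of $\dtd$ --- your comb encoding --- inside $JT^{\dtd}_{2,p}$), the same $\ell_p$-decoupling at the root, and Pe\l czy\'nski's decomposition method. The only deviation is which form of that method you invoke --- you square both spaces, while the paper establishes $\jtp\cong\left(\sum_{i}\oplus\jtp\right)_{p}$ and uses the infinite-sum version --- and the extra details you supply (order-convexity of the dyadic copy, disjointness of the covering dyadic segments for the comb) are precisely the steps the paper dismisses as ``readily follows'' and ``well known''.
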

 \begin{proof}
 We show that the conditions of Pelczynski's decomposition method holds.
First  we observe that  $JT^{\dtd}_{2,p}$ is isomorphic to  a complemented subspace of $\jtp$ and also   $\jtp$ is isomorphic to  a complemented subspace of $JT^{\dtd}_{2,p}$.
Indeed,taking a dyadic subtree $\dt$ of $\tree$ it readily follows that the subspace
$[e_{t}: t\in\dt]$  of   $\jtp$  is isometric to
$JT^{\dtd}_{2,p}$,  and also  is complemented  in   $\jtp$.
For the reverse embedding it is well known the there exists a subtree $\mc{T}$ of $\dtd$ such that considering the  partial order induced by $\dtd$ is an infinitely branching tree.   It readily follows  that   the subspace $[e_{t}: t\in\mc{T}]$  of $JT^{\dtd}_{2,p}$ is isometric to  $\jtp$  and also is complemented  in $JT^{\dtd}_{2,p}$.

Let us also  observe  that the space $\jtp$ is isomorphic to $\left(\sum_{i=1}^{\infty}\oplus\jtp\right)_{p}$. Indeed if
$S_{\emptyset}=\{t_{i}:i\in\N\}$  then it holds that the subspace $[e_{t}: t_{i}\preceq t]$ is isomorphic to $\jtp$. Since the nodes of $S_{\emptyset}$ are incomparable it follows
that $\jtp\backsimeq \left(\sum_{i=1}^{\infty}\oplus\jtp\right)_{p}$.
Hence  the assumptions of Pelczynski's   decomposition method  holds and therefore  $\jtp\backsimeq JT^{\dtd}_{2,p}$. 
\end{proof} 
\part{The $\jtg$-space}
This part concerns the study of the space $\jtg$ and is organized in
seven sections. In the first one we recall the definition of the
$(2,n)$-averages  and some of their
basic properties. In the second section we give the definition of the
generalized James Tree space and show how the spaces  $JT, JT^{\infty},
\jtp$ fall under this definition.
In the third section we obtain the space $\jtg$ by specifying 
for every branch $\brn$  the norm $\norm{\cdot}_{\brn}$ and the external norm $\norm{\cdot}_{*}$ which appear in the definition of the generalized James Tree space.
For every branch $\brn$ the norm $\norm{\cdot}_{\brn}$ is determined by a  
sequence of   functionals $(\phi_{n}^\brn)_{n}$.
Each  functional
$\phi_{n}^{\brn}$  is supported by a segment 
$\s_{n}^{\brn}$ of the branch $\brn$  which is a maximal
element of the Schreier family
 $\schreier[k_{n}^{\brn}]$. To each $\phi_{n}^{\brn}$
 we associate a normalized vector
 $x_{n}^{\brn}$  also supported by  $\s_{n}^{\brn}$, satisfying
 $\phi_{n}^{\brn}(x_{n}^{\brn})=1$.
For every branch $\brn$ of
the tree we define  the projection
$P_{\brn}=\sum_{n}\phi_{n}^{\brn}(\cdot)x_{n}^{\brn}$ 
which is   an essential tool in the study of the behavior  of  weakly null
sequences in the space  $\jtg$.
We show that for every finite set of  incomparable final segments
$\brn_{1,>k},\dots,\brn_{d,>k}$
the norm of the projection $\sum_{i\leq d}P_{\brn_{i,>k}}$ is one. Let us
note that the usage of the  functionals $\phi_{n}^{\brn}$  to define the
norms $\norm{\cdot}_{\brn}$ yields that the  basis of the space is not
unconditional
and hence the above inequality is not immediate.
The fourth section concerns
the study of weakly null sequences  $\sxn$
such that for every branch $\brn$ of the tree $\lim_{n}\norm{P_{\brn}(x_{n})}=0$.
In particular we prove  the following.
\begin{partproposition}
 Let $\sxn$ be a normalized weakly null  sequence in $\jtg$ such that
 $\lim_{n}\norm{\Pbrn(x_{n})}=0$, for every  branch $\brn\in\brT$. Then,
 there exists a subsequence $(x_{n})_{n\in L}$ of $\sxn$  which admits
 an upper $S^{(2)}$-estimate with constant $3$.
\end{partproposition}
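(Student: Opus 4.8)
The plan is to follow the three-step scheme that proves Theorem~\ref{jtunique} in Part~I, replacing the branch restrictions $\norm{x_{n|\brn}}$ by the projections $\Pbrn(x_{n})$ and the external $\ell_{p}$-estimate by an external $S^{(2)}$-estimate. First I would record the reduction that turns the hypothesis into a statement about segments: since $(x_{n}^{\brn})_{n}$ is equivalent to the unit vector basis of $\ell_{2}$, the quantity $\norm{\Pbrn(x)}$ is comparable to $(\sum_{n}\phi_{n}^{\brn}(x)^{2})^{1/2}$, and the seminorm $\norm{x}_{\brn}=\sup\{f(x):f\in W(\brn)\}$ is dominated by this same $\ell_{2}$-quantity; hence $\norm{x}_{\brn}\le C\norm{\Pbrn(x)}$ for a universal $C$. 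Consequently $\lim_{n}\norm{\Pbrn(x_{n})}=0$ gives $\lim_{n}\norm{x_{n}}_{\brn}=0$ for every branch, and since $W(\brn)$ is closed under restriction to intervals we get $\norm{x_{n}}_{s}=\norm{x_{n|s}}_{\brn}\le\norm{x_{n}}_{\brn}\to0$ for every segment $s$. This is the exact analogue of the hypothesis $\norm{x_{n|\brn}}\to0$ used in Part~I.

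Next I would transcribe the two combinatorial lemmas into this language. The Ramsey lemma (analogue of Lemma~\ref{jtramsey}) asserts that for every $\e>0$ and $n_{0}$ there is $L\in[\N]$ so that for each segment $s$ with $\min s\le n_{0}$ at most one $n\in L$ has $\norm{x_{n}}_{s}\ge\e$; the proof is unchanged, using $\norm{x_{n}}_{\brn}\to0$, a pigeonhole bound from the disjoint-segment inequality built into the $\jtg$-norm, and the pointwise convergence of segments to force a single limiting branch. Feeding this into the inductive construction of Lemma~\ref{l13} then yields an increasing $(n_{k})_{k}$ and a decreasing $(\e_{k})_{k}$ such that, for each $k$ and each segment $s$ with $\min s\le q_{n_{k}}$, at most one later index satisfies $\norm{x_{n_{k'}}}_{s}\ge\e_{k}$, while the relevant weighted tail $\sum_{k}q_{n_{k}}\sum_{i\ge k}(i+1)\e_{i}$ is as small as we wish.

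The heart is the estimate replacing Lemma~\ref{lpequiv}. Discarding the harmless $\norm{\cdot}_{\infty}$ term, I would bound $\norm{\sum_{i}a_{i}x_{i}}$ by choosing arbitrary pairwise disjoint segments $s_{1},\dots,s_{m}$ and controlling $\norm{\sum_{j}\norm{\sum_{i}a_{i}x_{i}}_{s_{j}}e_{\min s_{j}}}_{S^{(2)}}$. For each $s_{j}$ let $i_{j,1}$ be the first index whose support meets $s_{j}$ and $i_{j,2}$ the unique later index (furnished by the selection) with $\norm{x_{i_{j,2}}}_{s_{j}}\ge\e_{i_{j,1}}$; the triangle inequality for $\norm{\cdot}_{s_{j}}$ gives $\norm{\sum_{i}a_{i}x_{i}}_{s_{j}}\le|a_{i_{j,1}}|\norm{x_{i_{j,1}}}_{s_{j}}+|a_{i_{j,2}}|\norm{x_{i_{j,2}}}_{s_{j}}+r_{j}$, with the tail $r_{j}$ summing to an $\e$-error. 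Grouping the two main families by the common index $i$ and using that the segments sharing a fixed $i$ are disjoint with starting nodes lying in a Schreier set, the defining inequality of the norm gives $\sum_{j}\norm{x_{i}}_{s_{j}}^{2}\le\norm{x_{i}}^{2}=1$ for each such $i$. Because $\schreier$ is hereditary, the $S^{(2)}$-norm of each main family is then controlled by the $S^{(2)}$-norm of the coefficient sequence, and since every segment lies in at most two index classes a factor $\sqrt{2}$ is incurred; adding the summable tail produces an upper $S^{(2)}$-estimate with constant at most $3$.

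The main obstacle, absent in Part~I, is that the external norm $\norm{\cdot}_{S^{(2)}}$ is position sensitive while $\ell_{p}$ is not: the outer Schreier sets act on the starting nodes $\min s_{j}$, which sit in the support window $(q_{i-1},q_{i}]$ of $x_{i_{j,1}}$ rather than at the bare index. The correct target of the estimate must therefore be phrased in terms of the positions of the $x_{n}$, and one has to match the admissible Schreier collections on the ``segment side'' with those on the ``coefficient side''. This is exactly where the hereditariness of $\schreier$ and the disjoint-segment inequality of the norm are spent, and where the slack in the constant $3$ goes; I expect the matching to require passing to a subsequence whose successive supports are sufficiently spread so that a Schreier selection of starting nodes transfers (up to a bounded number of Schreier pieces) to a Schreier selection of the $x_{n}$. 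A secondary point is that, unlike in $\jtp$, the basis of $\jtg$ is not unconditional, so every branch estimate must be routed through the norm-one projections $\Pbrn$ and the $\ell_{2}$-repeated averages defining $\phi_{n}^{\brn}$ rather than through plain coordinate restrictions.
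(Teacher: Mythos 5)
Your first reduction is false, and it is precisely the trap that the paper isolates in the Remark preceding Lemma~\ref{fprojection2}. There is no constant $C$ with $\norm{x}_{\brn}\le C\norm{\Pbrn(x)}$: the seminorm $\norm{\cdot}_{\brn}$ is a supremum over \emph{interval restrictions} $E\sum_{i}\lambda_{i}\phi_{i}^{\brn}$, and cutting $\fns$ by an interval destroys exactly the cancellation that the projection $\Pbrn$ records. Concretely, split $\segs=s_{1}\cup s_{2}$, where $s_{1}$ is the shortest initial piece of $\segs$ carrying weight $\sum_{k\in s_{1}}\beta_{k}^{2}\ge\frac{1}{2}$, and set $x=x_{n|s_{1}}^{\brn}-x_{n|s_{2}}^{\brn}$. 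Then $\phi_{n|s_{1}}^{\brn}(x)\ge\frac{1}{2}$, hence $\norm{x}_{\brn}\ge\frac{1}{2}$, while $\normg{\Pbrn(x)}=\abs{\fns(x)}\le 2\beta_{\max s_{1}}^{2}$ is arbitrarily small. This does not just invalidate your first step; it falsifies the statements you want to deduce from it. Normalizing the vectors $x_{n|s_{1}}^{\brn}-x_{n|s_{2}}^{\brn}$, $n\in\N$, gives a normalized block sequence satisfying the hypothesis (for each fixed $\brnt\ne\brn$ one has $\Pbrn[\brnt](x_{n})=0$ eventually, and $\norm{\Pbrn(x_{n})}\to0$), yet $\norm{x_{n}}_{\brn}\ge\frac{1}{4}$ for every $n$; so for the single segment $s=\brn$, whose minimum is fixed, every term of every subsequence is $\e$-large, and your transcription of Lemma~\ref{jtramsey} in terms of the seminorms $\norm{\cdot}_{s}$ fails. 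The hypothesis on the projections is genuinely weaker than decay of the branch seminorms, so the Part~I scheme cannot be transported by the substitution $\norm{x_{n|\brn}}\rightsquigarrow\norm{x_{n}}_{\brn}$.

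The repair is the actual content of the paper's proof and it changes all three of your steps: one must run the Ramsey argument and the gliding hump on \emph{functionals} $f\in W(s)$, not on seminorms, and the only valid comparison with the hypothesis is Lemma~\ref{fprojection2}, which gives $\abs{f(x)}\le\norm{\Pbrn(x)}$ \emph{only} under the support condition $\supp(x)\cap\brn\subset\ran(f)$. That condition is available in the Ramsey step because a functional witnessing $\e$-largeness on two blocks $x_{m},x_{n}$ with $m<n$ has range reaching from below $n_{0}$ into $\supp(x_{n})$, hence covering $\supp(x_{m})\cap\brn$, so the projection bound applies to the earlier block and the limit-branch contradiction goes through (Lemma~\ref{ramseyupperS2}). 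A secondary gap: your handling of the position sensitivity of $\norm{\cdot}_{S^{(2)}}$ (``pass to a subsequence with spread supports'') is a hope, not an argument; the paper needs no such passage, showing instead in Proposition~\ref{propupperS2} that the admissibility condition $k\le\minsupp f_{j}$ forces $\{\minsupp x_{i}:J_{i}\ne\emptyset\}$ to be a union of at most three $\schreier$-sets, which is exactly where the constant $\sqrt{2}\cdot\sqrt{3}=\sqrt{6}<3$ comes from.
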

This is the analogue of Proposition~\ref{l2upperestimate}
for the space  $\jtp$  and its proof
uses  a refinement of the Amemiya-Ito
lemma  in a similar manner.
As a consequence we get that the space $\jtg$ is not reflexive.
The following is proved.
\begin{partproposition}
 Every normalized block sequence $\sxn$ such that $\lim_{n}\norm{P_{\brn}(x_{n})}=0$ for every branch $\brn$ has a further block sequence that is $2$-equivalent to the unit vector basis of $c_{0}$.
\end{partproposition}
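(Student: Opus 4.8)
The plan is to combine the upper $S^{(2)}$-estimate of the preceding proposition with the elementary fact that, in the $2$-convexified Schreier space, normalized \emph{maximal} $\schreier$-averages with rapidly increasing supports are almost isometrically equivalent to the unit vector basis of $c_0$. First I would apply the preceding proposition to pass to a subsequence of $\sxn$, still denoted $\sxn$, that admits an upper $S^{(2)}$-estimate with constant $3$. Along the way, using $\lim_n\norm{P_{\brn}(x_n)}=0$ together with a refinement of the Amemiya--Ito lemma (the one already used in the preceding proposition), I would also arrange that the starting positions $m_n=\minsupp x_n$ increase as fast as I please and that, for every prescribed threshold, each segment $s$ with a bounded value of $\min s$ satisfies $\norm{x_n}_s\ge\e$ for at most one $n$. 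I would then replace $\sxn$ by a block sequence of averages: choose successive finite index sets $A_1<A_2<\cdots$ with $\#A_k=m_{\min A_k}$, set $y_k=\#A_k^{-1/2}\sum_{n\in A_k}x_n$ and normalise each $y_k$ in $\jtg$. The choice $\#A_k=m_{\min A_k}$ guarantees that the associated position profile $w_k=\#A_k^{-1/2}\sum_{n\in A_k}e_{m_n}$ is a normalized maximal $\schreier$-average. Moreover $\lim_k\norm{P_{\brn}(y_k)}=0$ persists for every branch $\brn$, since along each branch successive blocks behave like the $\ell_2$-basis and hence the branch-norm of an $\ell_2$-average is dominated by the maximum of the branch-norms of its summands, exactly as in the $\ell_2$-averaging lemma for $\jtp$.

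The lower $c_0$-estimate is immediate. The basis of $\jtg$ is bimonotone, because neither $\norm{\cdot}_\infty$ nor the segment supremum increases under a coordinate projection onto an interval. Thus, writing $R=\ran y_{k_0}$, for any scalars $(c_k)$ we have $\norm{\sum_k c_k y_k}\ge\norm{P_R(\sum_k c_k y_k)}=|c_{k_0}|\,\norm{y_{k_0}}=|c_{k_0}|$, and therefore $\norm{\sum_k c_k y_k}\ge\max_k|c_k|$.

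For the upper bound I would first record the purely Schreier computation. If the supports of the $w_k$ grow fast enough, then for every $F\in\schreier$ the smallest active profile contributes at most $\max_k|c_k|^2$, while all later profiles contribute a negligible amount: indeed $\#F\le\min F$ forces the cardinality of $F$ to be smaller than the starting position of the next profile, so the total weight carried by the later profiles tends to $0$. Hence $\norm{\sum_k c_k w_k}_{S^{(2)}}\le(1+\e)\max_k|c_k|$, i.e.\ $(w_k)$ is $(1+\e)$-equivalent to the $c_0$-basis. Transferring this through the upper $S^{(2)}$-estimate already yields a $c_0$ upper bound for $(y_k)$, and together with the lower bound this produces \emph{some} finite equivalence constant, proving that $(y_k)$ has a block sequence isomorphic to $c_0$.

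The delicate point, and the place I expect to spend the real effort, is the constant: the crude transfer multiplies the constant $3$ of the preceding proposition by the Schreier factor $1+\e$, which overshoots the claimed factor $2$. To reach $2$ I would bound the segment part of $\norm{\sum_k c_k y_k}$ directly rather than through the lossy $S^{(2)}$-estimate. Given disjoint segments $s_1,\dots,s_l$, the hypothesis $\lim_k\norm{P_{\brn}(y_k)}=0$, fed into the refined Amemiya--Ito lemma, ensures that each $s_i$ meets at most one block $y_k$ with seminorm above a prescribed level; consequently the vector $\sum_i\norm{\sum_k c_k y_k}_{s_i}\,e_{\min s_i}$ is, up to an arbitrarily small perturbation, a single $c_k$-multiple of a position profile $w_k$ plus a tail controlled by the maximal-$\schreier$ spreading. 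Estimating that vector in $\|\cdot\|_{S^{(2)}}$ then gives the bound $2\max_k|c_k|$ directly, with no factor $3$; the term $\norm{\sum_k c_k y_k}_\infty\le\max_k|c_k|$ is harmless. Thus the technical heart is to show that the vanishing of the branch projections collapses every admissible family of disjoint segments onto essentially one averaged block, which is precisely what converts the generic upper $S^{(2)}$-estimate into the sharp $c_0$-estimate with constant $2$.
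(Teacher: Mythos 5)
Your construction breaks at the point where you normalize the averages $y_k=(\#A_k)^{-1/2}\sum_{n\in A_k}x_n$. The only general lower bound for $\norm{y_k}$ is $(\#A_k)^{-1/2}$ (bimonotonicity applied to a single summand), and $\norm{y_k}$ really does tend to $0$ at this rate whenever the positive sums $\norm{\sum_{n\in A_k}x_n}$ stay bounded --- a situation fully compatible with the hypothesis $\lim_n\norm{P_{\brn}(x_n)}=0$: it occurs, for instance, for basis vectors $e_{t_n}$ at incomparable nodes $t_n$, or for any block sequence already equivalent to the unit vector basis of $c_0$. In that case both of your upper-bound routes collapse. The ``crude transfer'' through Theorem~\ref{upperS2} controls only the un-normalized averages; for the normalized vectors $z_k=y_k/\norm{y_k}$ it gives $\norm{\sum_k c_kz_k}\le 3(1+\e)\max_k\abs{c_k}/\norm{y_k}$, which is not a finite-constant estimate at all, so the assertion that the lower bound plus the transfer ``produces some finite equivalence constant'' is unjustified. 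Likewise, your reason why $\lim_k\norm{P_{\brn}(y_k)}=0$ persists (the branch seminorm of an $\ell_2$-average is dominated by the maximum over its summands) applies only before normalization; after dividing by $\norm{y_k}\sim(\#A_k)^{-1/2}$ one only gets $\norm{P_{\brn}(z_k)}\le\sqrt{\#A_k}\,\max_{n\in A_k}\norm{P_{\brn}(x_n)}$, which need not tend to $0$, and certainly not with the uniform-over-branches, summable-relative-to-supports smallness that your final ``delicate point'' requires. (Even granting branch vanishing, ``each admissible segment essentially sees one block'' by itself only yields an $\ell_2$-aggregate of the form $\bigl(\sum_k\abs{c_k}^2\cdots\bigr)^{1/2}$ over the blocks, not $2\max_k\abs{c_k}$; converting it into a maximum requires bounding the number of admissible segments/functional components by the position of the first block hit and beating that count with the uniform smallness of all later blocks in all branch norms --- exactly the quantity your normalization may have destroyed.)

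The paper's proof avoids averages altogether, and the difference is not cosmetic. After the Amemiya--Ito stabilization it uses the other half of Proposition~\ref{propupperS2}, namely item (ii): every sign-sum satisfies $\norm{\sum_{n\in I}\pm x_n}_{\brT}\le 3$ uniformly over all branches. Then it runs a dichotomy: either $\sxn$ is equivalent to the unit vector basis of $c_0$ (and then blocks $(1+\e)$-equivalent to it exist, so there is nothing to prove), or --- using that the supremum of $\norm{\sum_{i\in I}a_ix_i}$ over $[-1,1]^I$ is attained at a choice of signs, a convexity argument that substitutes for the unavailable unconditionality of the basis of $\jtg$ --- there are sign-sum blocks $y_n$ with $\norm{y_n}\to\infty$. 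Normalizing \emph{these} is harmless: $\norm{z_n}_{\brT}\le 3/\norm{y_n}\to 0$ uniformly over branches, so after passing to a subsequence with $\sum_{n>k}\norm{z_n}_{\brT}\le 1/\max\supp(z_k)$, a single gliding-hump estimate against $f=\sum_j\lambda_jf_j\in G$, where Schreier admissibility forces $d\le\max\supp(z_{n_0})$ for the first block $z_{n_0}$ met by $f$, yields $\abs{f(\sum_n a_nz_n)}\le\max_n\abs{a_n}\,\bigl(1+d/\max\supp(z_{n_0})\bigr)\le 2\max_n\abs{a_n}$. To salvage your scheme you would have to add precisely this dichotomy and replace the positive $\ell_2$-averages by sign-sums of exploding norm, at which point your argument becomes the paper's.
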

This yields the following.
\begin{partcorollary}
    The space $\jtg$ is not reflexive. In particular, for every infinite subset $N$ of $\N$ the subspace $X_{N}$
  of $\jtg$ generated  by the subsequence
  $(e_{n})_{n\in N}$ of the basis $(e_{n})_{n\in\N}$ contains a subspace isomorphic to $c_0$.
\end{partcorollary}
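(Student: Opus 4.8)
The plan is to deduce the corollary from the preceding proposition (every normalized block sequence $(x_n)_n$ with $\lim_n\|P_\sigma(x_n)\|=0$ for all branches $\sigma$ admits a block subsequence that is $2$-equivalent to the unit vector basis of $c_0$), by verifying that for an \emph{arbitrary} infinite $N$ the canonical basis subsequence $(e_n)_{n\in N}$ already satisfies the hypothesis of that proposition.

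First I would fix an infinite $N\subseteq\N$ and enumerate it as $j_1<j_2<\cdots$, so that $(e_{j_n})_n$ is a normalized block sequence supported in $N$ (normalized because $\|e_j\|\ge\|e_j\|_\infty=1$, while the ``segment'' part of the norm of a single basis vector is governed by $\|e_j\|_\sigma<1$). The key claim is that $\lim_n\|P_\sigma(e_{j_n})\|=0$ for every branch $\sigma$. To see this, fix $\sigma$: if $j\notin\sigma$ then $P_\sigma(e_j)=0$, whereas if $j\in\sigma$ there is a unique $m$ with $j\in s_m^\sigma$, and since the functionals $\phi_n^\sigma$ have pairwise disjoint supports we get $P_\sigma(e_j)=\phi_m^\sigma(e_j)\,x_m^\sigma$. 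Using $\|x_m^\sigma\|=1$ together with $\phi_m^\sigma=m_{m,\sigma}^{-2}\,x(k_{m,\sigma},F_{m,\sigma})$, where $x(k_{m,\sigma},F_{m,\sigma})$ is an $\ell_2$-normalized convex combination and hence has coordinates of modulus at most $1$, this gives $\|P_\sigma(e_j)\|=|\phi_m^\sigma(e_j)|\le m_{m,\sigma}^{-2}$ for every $j\in\sigma$.

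Next I would extract the limit. For a fixed $\sigma$, either only finitely many $j_n$ lie on $\sigma$, in which case $P_\sigma(e_{j_n})=0$ for all large $n$; or infinitely many do, and then, since each $s_m^\sigma$ is finite and $\bigcup_{m\le m_0}s_m^\sigma$ is a finite initial part of $\sigma$, the index $m(n)$ determined by $j_n\in s_{m(n)}^\sigma$ must tend to infinity. As $(m_{m,\sigma})_m$ increases to infinity, the bound $\|P_\sigma(e_{j_n})\|\le m_{m(n),\sigma}^{-2}$ then forces $\|P_\sigma(e_{j_n})\|\to0$. Thus the hypothesis holds for $(e_{j_n})_n$ and every $\sigma$.

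Finally, the preceding proposition yields a block sequence of $(e_{j_n})_n$ that is $2$-equivalent to the unit vector basis of $c_0$; being a block sequence of $(e_{j_n})_n$, it is supported in $N$, so its closed linear span is a copy of $c_0$ inside $X_N$. Specializing to $N=\N$ shows that $\jtg$ contains $c_0$ and is therefore non-reflexive, giving the first assertion. The one delicate point is the simultaneous decay of the projection norms over \emph{all} branches; I expect this to be the crux, but it is not a genuine uniformity issue: the proposition only requires a separate limit for each fixed $\sigma$, and for each $\sigma$ the coefficient is dominated by the single scalar $m_{m,\sigma}^{-2}$, which decays as the node advances along $\sigma$.
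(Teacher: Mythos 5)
Your proposal is correct and is essentially the paper's own argument: the paper also deduces the corollary by noting that $\lim_k\norm{P_{\brn}(e_{n_k})}=0$ for every branch $\brn$ and every subsequence of the basis, and then invoking Proposition~\ref{c0}. The only difference is that the paper leaves this verification implicit, whereas you correctly spell it out via the coordinate bound $\norm{P_{\brn}(e_j)}=\abs{\phi^{\brn}_m(e_j)}\leq m^{-2}_{2j_m+1}$ together with the fact that the index $m$ (and hence the weight) tends to infinity along any branch.
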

In the fifth section we define $\ell_{2}$-vectors   which are analogous to $\ell_{p}$-vectors which appeared in the first part.
They are defined in a similar manner with the usage of the projections $P_{\brn}$.
We prove the following.
\begin{partproposition}
  Let $\sxn$   be a normalized block sequence in $\jtg$. Then,  $\sxn$
either has a subsequence admitting an upper $S^{(2)}$-estimate or it has a
subsequence accepting  a non-zero $\ell_{2}$-vector.
\end{partproposition}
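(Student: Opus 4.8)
The plan is to transcribe the proof of Proposition~\ref{lpdecomp} into the $\jtg$ setting under the dictionary $\norm{x_{n|\brn}}\leftrightarrow\norm{\Pbrn(x_n)}$, with the external norm $\norm{\cdot}_{S^{(2)}}$ now playing the role formerly played by $\ell_p$ and the $\ell_2$-vector playing the role of the $\ell_p$-vector. Here, by the $\ell_2$-vector of $\sxn$ I mean a decreasing sequence $(c_i)_{i\in N}\in B_{\ell_2}$ together with distinct branches $(\brn_i)_{i\in N}$ so that $\lim_n\norm{\Pbrn[\brn_i](x_n)}=c_i>0$ for $i\in N$ and $\lim_n\norm{\Pbrn[\brn](x_n)}=0$ for every other branch $\brn$. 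First I would establish the exact analogue of the Lindenstrauss--Stegall Lemma~\ref{lslemma}: passing to a subsequence, one produces such an at most countable family of branches governing the limit behaviour of all projection norms. Granting this, the proposition is immediate. If the family is empty, then $\lim_n\norm{\Pbrn(x_n)}=0$ for every $\brn$, so the $\ell_2$-vector is the zero vector and the proposition stated above in this introduction --- that a sequence with $\lim_n\norm{\Pbrn(x_n)}=0$ for all branches has a subsequence with an upper $S^{(2)}$-estimate --- applied to the block subsequence at hand, supplies the first alternative. Otherwise the extracted subsequence accepts the non-zero $\ell_2$-vector $(c_i)_{i\in N}$.

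The crux is the $\ell_2$-analogue of the basic estimate \eqref{eq:1a}. In $\jtp$ this came for free from the $1$-unconditionality of the basis (Remark~\ref{remnormjt}(i)); in $\jtg$ the basis is not unconditional, and the replacement is the result, recorded in the introduction to this part, that $\sum_{i\le k}\Pbrn[\brn_{i,>\rho}]$ has norm one whenever the final segments $\brn_{1,>\rho},\dots,\brn_{k,>\rho}$ are pairwise incomparable. Given distinct branches $\brn_1,\dots,\brn_k$, fix $\rho$ making their finals incomparable; then every block vector $x_n$ with $\minsupp x_n>\rho$ satisfies $\norm{\sum_{i\le k}\Pbrn[\brn_i](x_n)}\le\norm{x_n}$. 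Because the summands are supported on pairwise incomparable segments, a computation in the spirit of Lemma~\ref{suminbranches} identifies the left-hand norm with the $\norm{\cdot}_{S^{(2)}}$-norm of $\sum_{i\le k}\norm{\Pbrn[\brn_i](x_n)}e_{\min(s_i)}$; and once $\minsupp x_n>k$ the $k$ positions $\min(s_i)$ form an $\schreier$-admissible set, whence that $S^{(2)}$-norm dominates $(\sum_{i\le k}\norm{\Pbrn[\brn_i](x_n)}^2)^{1/2}$. Altogether $\limsup_n\sum_{i=1}^k\norm{\Pbrn[\brn_i](x_n)}^2\le C^2$, with $C$ coming from the equivalence of $(\xns)_n$ with the unit vector basis of $\ell_2$; in particular, for each $\e>0$ and each subsequence, at most $C^2\e^{-2}$ branches can satisfy $\lim\norm{\Pbrn[\brn_i](x_n)}\ge\e$.

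With this quantitative bound the remaining steps reproduce Lemma~\ref{lslemma} line by line: choose $\e_j\searrow0$, apply the previous paragraph repeatedly to obtain nested subsequences $L_1\supset L_2\supset\cdots$ isolating at stage $j$ the finitely many branches whose projection-norm limit is at least $\e_j$, pass to a diagonal subsequence, reorder the surviving positive limits into a decreasing sequence $(c_i)_{i\in N}$ with branches $(\brn_i)_{i\in N}$, and verify $(c_i)_{i\in N}\in B_{\ell_2}$. For the last point one reads the projections through the branch seminorms $\norm{\cdot}_\brn$, which $\Pbrn$ reproduces up to the same constant $C$, so that the precise norm-one projection property yields the clean lower estimate for incomparable finals; thus $\sum_{i\in F}c_i^2>1$ for some finite $F$ would force $\lim_n\norm{x_n}>1$, contradicting normalization.

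I expect the single genuinely new difficulty, compared with the $\jtp$ argument, to be the first inequality of the second paragraph: the non-unconditionality of the basis of $\jtg$ makes the bound $\norm{\sum_i\Pbrn[\brn_i](x_n)}\le\norm{x_n}$ nontrivial, and it rests entirely on the norm-one projection result rather than on any unconditional estimate as in $\jtp$. A secondary point needing care is the descent from the $S^{(2)}$-norm to a genuine $\ell_2$-lower bound, legitimate only because block supports tend to infinity and hence eventually meet the Schreier admissibility $\#\{\min(s_i)\}\le\min_i\min(s_i)$; this is harmless inside a $\limsup_n$ but must be invoked honestly where $B_{\ell_2}$-membership is checked, and it is there that the bookkeeping of the equivalence constant of $(\xns)_n$ must be tracked so that the extracted $\ell_2$-vector indeed lands in $B_{\ell_2}$.
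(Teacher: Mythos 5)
Your proposal is correct and follows essentially the same route as the paper: the paper likewise reduces the statement to a Lindenstrauss--Stegall type lemma (its Lemma~\ref{lslemma2}), replacing the unconditionality-based estimate \eqref{eq:1a} of the $\jtp$ argument with the norm-one projection identity of Lemma~\ref{sumprojections} (together with Remark~\ref{numberds} for the counting, the diagonalization of Lemma~\ref{lslemma}, and Theorem~\ref{upperS2} for the zero-vector alternative), exactly as you outline. The only differences are cosmetic: the paper's constants are exactly $1$ rather than your tracked $C$, and the admissibility/incomparability bookkeeping you flag is handled there precisely as you describe.
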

We also prove some properties of $\ell_{2}$-vectors.

In Section six we prove  two of the main properties of the space
$\jtg$.  More precisely we show  the following.
\begin{partproposition}
    Let  $(x_{n})_{n}$ be a normalized block sequence in $\jtg$.
Then there exists a subsequence of  $(x_{n})_{n}$ that admits an upper
$\ell_{2}$ estimate with constant  $9$.
\end{partproposition}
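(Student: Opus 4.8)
The plan is to follow the proof of Proposition~\ref{l2upperestimate} for $\jtp$, transported through the dictionary that replaces the branch restriction $x_{n|\brn}$ by the projection $\Pbrn(x_{n})$, the $\ell_{p}$-vector by the $\ell_{2}$-vector, the external norm $\norm{\cdot}_{p}$ by $\norm{\cdot}_{S^{(2)}}$, Theorem~\ref{jtunique} by the Part~II Proposition producing an upper $S^{(2)}$-estimate with constant $3$ for sequences with $\lim_{n}\norm{\Pbrn(x_{n})}=0$ on every branch, and Lemma~\ref{suminbranches} by the norm-one projection $\sum_{i}P_{\brn_{i,>k}}$ over incomparable final segments established in Section~3. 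First I would pass to a subsequence and apply the $\ell_{2}$-vector dichotomy Proposition. If $\lim_{n}\norm{\Pbrn(x_{n})}=0$ for every branch $\brn\in\brT$, that Proposition gives a subsequence with an upper $S^{(2)}$-estimate with constant $3$; since $\norm{x}_{S^{(2)}}\le\norm{x}_{2}$ for every $x$, this is already an upper $\ell_{2}$-estimate with constant $3\le 9$, and we are done.

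In the remaining case the subsequence has a non-zero $\ell_{2}$-vector $(c_{i})_{i\in\N}$ with associated branches $(\brn_{i})_{i\in\N}$ and $a=\norm{(c_{i})_{i}}_{\ell_{2}}\le 1$ (the last bound being the Part~II analogue of Remark~\ref{remI3.4}(ii)). I would now reproduce the three-part argument of Proposition~\ref{l2upperestimate}: apply the $\ell_{2}$-split lemma, the analogue of Lemma~\ref{lpsplit}, to obtain strictly increasing initial intervals $(I_{k})_{k}$ of $\N$ with $\sum_{k>1}\norm{d_{k}}_{\ell_{2}}<a/2$, and then apply the $\ell_{2}$-vector stabilization lemma to pass to a subsequence $(x_{n_{k}})_{k}$ for which $(1-\e_{k})c_{l}\le\norm{P_{\brn_{l}}(x_{n_{k}})}\le(1+\e_{k})c_{l}$ for all $l\in I_{k}$, the final segments $\brn_{l,>l_{k}}$, $l\in I_{k}$, are incomparable, and $\minsupp(x_{n_{k}})>l_{k}$. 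Setting $\mc{B}_{I_{k}}=\cup\{\brn_{i}:i\in I_{k}\}$, I would decompose $x_{n_{k}}=\bar{x}_{n_{k}}+y_{n_{k}}$ with $\bar{x}_{n_{k}}=\big(\sum_{i\in I_{k}}P_{\brn_{i,>l_{k}}}\big)(x_{n_{k}})$ and $y_{n_{k}}=x_{n_{k}}-\bar{x}_{n_{k}}$, the first summand being produced by the Section~3 norm-one projection.

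As in the Claim inside the stabilization lemma, $\lim_{k}\norm{\Pbrn(y_{n_{k}})}=0$ for every branch, so the Part~II Proposition equips $(y_{n_{k}})_{k}$ with an upper $S^{(2)}$-estimate, hence an upper $\ell_{2}$-estimate with constant $3$. It then remains to give $(\bar{x}_{n_{k}})_{k}$ an upper $\ell_{2}$-estimate with constant $6$: expanding $\sum_{k}b_{k}\bar{x}_{n_{k}}$ over the branches $\brn_{l}$, $l\in I_{m}$, exactly as in \eqref{eqoo}, each single-branch block sum $\sum_{k\ge j}b_{k}P_{\brn_{l}}(x_{n_{k}})$ is an $\ell_{2}$-combination of its terms, and incomparability lets the Section~3 norm-one projection together with $\norm{\cdot}_{S^{(2)}}\le\norm{\cdot}_{2}$ convert the external $S^{(2)}$-combination into an honest $\ell_{2}$-estimate, the tail being absorbed through $\sum_{k>1}\norm{d_{k}}_{\ell_{2}}<a/2$ and $a\le1$. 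The triangle inequality then yields $\norm{\sum_{k}b_{k}x_{n_{k}}}\le\norm{\sum_{k}b_{k}\bar{x}_{n_{k}}}+\norm{\sum_{k}b_{k}y_{n_{k}}}\le(6+3)\big(\sum_{k}b_{k}^{2}\big)^{1/2}$, the desired constant $9$.

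The main obstacle is the estimate for $(\bar{x}_{n_{k}})_{k}$. Unlike in $\jtp$, the basis of $\jtg$ is not unconditional and the branch content is recorded through the projections $\Pbrn$ rather than coordinate restrictions, so the splitting $x_{n_{k}}=\bar{x}_{n_{k}}+y_{n_{k}}$ must be produced by the norm-one projection $\sum_{i}P_{\brn_{i,>l_{k}}}$ of Section~3, and one has to verify that expanding $\sum_{k}b_{k}\bar{x}_{n_{k}}$ over branches and reassembling through the external $S^{(2)}$-norm preserves the $\ell_{2}$-behaviour. The inequality $\norm{\cdot}_{S^{(2)}}\le\norm{\cdot}_{2}$ is precisely what turns the reassembly into a genuine $\ell_{2}$-estimate and keeps the constant finite; the remaining bookkeeping of the $(1\pm\e_{k})$ factors and the split tail is exactly as in Proposition~\ref{l2upperestimate}.
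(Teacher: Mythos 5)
Your overall architecture is the same as the paper's (the $\ell_{2}$-vector dichotomy, the split lemma, stabilization, the decomposition $x_{n_{k}}=\bar{x}_{n_{k}}+y_{n_{k}}$, and the triangle inequality), but there is a genuine gap at precisely the step you identify as the main obstacle. You assert that each single-branch sum $\sum_{k\ge j}b_{k}P_{\brn_{l}}(x_{n_{k}})$ ``is an $\ell_{2}$-combination of its terms.'' In $\jtp$ the corresponding fact (Remark~\ref{remnormjt}(iv)) is automatic, because disjoint blocks restrict to disjoint segments of a branch and the branch norm there is the coordinate $\ell_{2}$-norm. In $\jtg$ it is false for a general block sequence: here $P_{\brn}(x)=\sum_{j}\phi_{j}^{\brn}(x)x_{j}^{\brn}$ records coefficients against the functionals $\phi_{j}^{\brn}$, and the support of a single $\phi_{j}^{\brn}$ is a segment that can meet the ranges of arbitrarily many consecutive blocks. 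If, say, $\phi_{j}^{\brn}(x_{n_{k}})=c>0$ for $k=1,\dots,m$, then
\[
\norm{\sum_{k=1}^{m}m^{-1/2}P_{\brn}(x_{n_{k}})}\geq c\sqrt{m},
\qquad\text{while}\qquad
\Big(\sum_{k=1}^{m}(m^{-1/2})^{2}\Big)^{1/2}=1,
\]
so no upper $\ell_{2}$-estimate for the projected sums can hold. This is exactly why the paper first passes to a subsequence that is \emph{skipped block with respect to the associated branches} (Lemma~\ref{skipped}): skipping forces the vectors $P_{\brn_{i}}(x_{n_{k}})$, $k\in\N$, to be disjointly supported relative to the basis $(x_{j}^{\brn_{i}})_{j}$, and only then does the $\ell_{2}$-equality you invoke hold (Lemma~\ref{skipped}(b), which feeds into Lemma~\ref{llemma}). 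Your proposal never introduces this condition, and without it the estimate for $(\bar{x}_{n_{k}})_{k}$ collapses, so the ``bookkeeping exactly as in Proposition~\ref{l2upperestimate}'' does not go through; this extra structural step is the real new difficulty in $\jtg$ compared with $\jtp$.

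A second, smaller problem is the allocation of constants. The vectors $y_{n_{k}}=x_{n_{k}}-\bar{x}_{n_{k}}$ are not normalized, only bounded in norm by $2$, and Theorem~\ref{upperS2} is stated for normalized block sequences; after rescaling it yields an upper $S^{(2)}$ (hence upper $\ell_{2}$) estimate with constant $6$, not $3$ --- this is the ``moreover'' clause of Lemma~\ref{stabilization2}. Conversely, the $\bar{x}$-part can be given constant $3$ (Lemma~\ref{llemma}(b)), not merely $6$. So the correct split is $3+6=9$, the reverse of your $6+3$: as written, your two individual claims cannot both be realized, although the total $9$ is recoverable once the skipped-block step is added and the constants are reassigned as in the paper.
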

We also prove the following proposition  for sequences admitting a
non-zero $\ell_{2}$-vector
which is analogous   to the result of the first part for sequences
admitting a
non-zero $\ell_{p}$-vector.
\begin{partproposition}
    Let $\sxn$ be a normalized weakly  sequence  admitting a non-zero $\ell_{2}$-vector.  Then, there exists a subsequence $(x_{n_{k}})_{k}$ of $\sxn$ that is equivalent to the unit vector basis of $\ell_{2}$.
  \end{partproposition}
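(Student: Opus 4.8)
The plan is to produce a subsequence satisfying simultaneously an upper and a lower $\ell_{2}$-estimate, from which equivalence with the unit vector basis of $\ell_{2}$ follows at once. The upper estimate is already at hand: by the preceding proposition, every normalized block sequence in $\jtg$ has a subsequence admitting an upper $\ell_{2}$-estimate with constant $9$. So I first reduce to the block case. A normalized weakly null sequence has a subsequence that is, up to an arbitrarily small perturbation, a normalized block sequence; since each projection $\Pbrn$ has norm one, the quantities $\norm{\Pbrn(x_{n})}$ are stable under such perturbations, so the approximating block sequence keeps the same non-zero $\ell_{2}$-vector $(c_{i})_{i\in N}$ and associated branches $(\sigma_{i})_{i\in N}$, with $c_{1}>0$. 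I may therefore assume $\sxn$ is a normalized block sequence, pass to a subsequence with the upper $\ell_{2}$-estimate, prove the lower estimate for it, and transfer the final conclusion back to the original sequence by the small-perturbation principle.

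The lower estimate is the core of the argument, and the key device is the norm-one projection $P_{\sigma_{1}}$ attached to the first branch of the $\ell_{2}$-vector, for which $\lim_{n}\norm{P_{\sigma_{1}}(x_{n})}=c_{1}>0$. Passing to a further subsequence I arrange $\norm{P_{\sigma_{1}}(x_{n_{k}})}\geq c_{1}/2$ for all $k$. Writing $P_{\sigma_{1}}(x_{n_{k}})=\sum_{m}\phi_{m}^{\sigma_{1}}(x_{n_{k}})x_{m}^{\sigma_{1}}$ and recalling that the functionals $\phi_{m}^{\sigma_{1}}$ are supported on successive segments of $\sigma_{1}$ whose union is $\sigma_{1}$, while the $x_{n_{k}}$ have successive supports, the vectors $(P_{\sigma_{1}}(x_{n_{k}}))_{k}$ are carried by essentially successive blocks of $(x_{m}^{\sigma_{1}})_{m}$. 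By thinning the subsequence so that the ranges of the $x_{n_{k}}$ along $\sigma_{1}$ are well separated, at most one functional $\phi_{m}^{\sigma_{1}}$ is split between two consecutive $x_{n_{k}}$, and a gliding-hump perturbation absorbs this single boundary term into a summable error. Thus $(P_{\sigma_{1}}(x_{n_{k}}))_{k}$ is, up to a small perturbation, a genuine block sequence of $(x_{m}^{\sigma_{1}})_{m}$.

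Since $(x_{m}^{\sigma_{1}})_{m}$ is equivalent to the unit vector basis of $\ell_{2}$, every block sequence of it obeys a two-sided $\ell_{2}$-estimate with a constant $C'$ depending only on that equivalence constant; hence for all scalars $(a_{k})$,
\[
\Big\|\sum_{k}a_{k}P_{\sigma_{1}}(x_{n_{k}})\Big\|\geq \frac{1}{C'}\Big(\sum_{k}a_{k}^{2}\norm{P_{\sigma_{1}}(x_{n_{k}})}^{2}\Big)^{1/2}\geq \frac{c_{1}}{2C'}\Big(\sum_{k}a_{k}^{2}\Big)^{1/2}.
\]
As $\norm{P_{\sigma_{1}}}=1$ yields $\norm{\sum_{k}a_{k}x_{n_{k}}}\geq\norm{\sum_{k}a_{k}P_{\sigma_{1}}(x_{n_{k}})}$, this is the desired lower $\ell_{2}$-estimate, and together with the upper $\ell_{2}$-estimate of constant $9$ it shows the chosen subsequence is equivalent to the unit vector basis of $\ell_{2}$. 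The main obstacle is exactly this lower estimate: because the basis of $\jtg$ is \emph{not} unconditional, block behaviour cannot be read off directly from disjointness of supports. The projection $P_{\sigma_{1}}$ is what transfers the question onto the \emph{unconditional}, $\ell_{2}$-equivalent basis $(x_{m}^{\sigma_{1}})_{m}$, and the only genuinely delicate point is controlling the split boundary functionals, which is the reason for separating the supports along $\sigma_{1}$.
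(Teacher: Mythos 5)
Your skeleton is exactly the paper's: reduce to a normalized block sequence by perturbation, take the upper $\ell_2$-estimate with constant $9$ from the preceding proposition, and get the lower estimate by composing with the norm-one projection $P_{\sigma_1}$ and using that $(x_m^{\sigma_1})_m$ is $1$-equivalent to the unit vector basis of $\ell_2$. The one place you diverge from the paper is the crucial step, and as you state it, it does not work. You claim that after thinning, ``at most one functional $\phi_m^{\sigma_1}$ is split between two consecutive $x_{n_k}$, and a gliding-hump perturbation absorbs this single boundary term into a summable error.'' Note first that the at-most-one-split property is automatic for \emph{any} two elements of a block sequence (the segments $s_m^{\sigma_1}$ are successive), so the thinning you describe accomplishes nothing; more importantly, the split coefficient $\phi_m^{\sigma_1}(x_{n_k})$ of the \emph{earlier} element is bounded only by $\norm{x_{n_k}}_{\sigma_1}\leq 1$, it can carry essentially all of $\norm{P_{\sigma_1}(x_{n_k})}\geq c_1/2$, and no amount of thinning makes it small, so these boundary terms are not a summable error. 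Without controlling them the lower estimate genuinely fails: your hypotheses do not exclude a configuration in which each segment $s_{m_k}^{\sigma_1}$ straddles the supports of $x_{n_k}$ and $x_{n_{k+1}}$ and $P_{\sigma_1}(x_{n_k})=\tfrac{c_1}{\sqrt{2}}\bigl(x_{m_{k-1}}^{\sigma_1}-x_{m_k}^{\sigma_1}\bigr)$; then for $a_k=1/\sqrt{K}$ the sum $\sum_{k\leq K}a_kP_{\sigma_1}(x_{n_k})$ telescopes and has norm $c_1/\sqrt{K}\to 0$, while $\bigl(\sum_k a_k^2\bigr)^{1/2}=1$ and $\norm{P_{\sigma_1}(x_{n_k})}=c_1$ for all $k$. (Long segments straddling many consecutive blocks defeat naive thinning as well.)

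The missing idea is that the subsequence must be chosen so that \emph{no} functional is split at all, and this is possible precisely because each segment $s_m^{\sigma_1}$ is finite: having chosen $x_{n_k}$, only finitely many $m$ satisfy $s_m^{\sigma_1}\cap\supp(x_{n_k})\neq\emptyset$, so one may take $n_{k+1}$ with $\min\supp(x_{n_{k+1}})>\max s_{m^*}^{\sigma_1}$, where $m^*$ is the largest such $m$. Then the coefficient sets $\{m:\phi_m^{\sigma_1}(x_{n_k})\neq 0\}$ are pairwise disjoint, $(P_{\sigma_1}(x_{n_k}))_k$ is genuinely disjointly supported with respect to $(x_m^{\sigma_1})_m$, and the exact equality $\norm{\sum_k a_kP_{\sigma_1}(x_{n_k})}=\bigl(\sum_k a_k^2\norm{P_{\sigma_1}(x_{n_k})}^2\bigr)^{1/2}$ holds with no perturbation whatsoever. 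This is exactly the paper's skipped-block device (Lemma~\ref{skipped}). Equivalently, a legitimate gliding hump must delete the shared coefficient from the \emph{later} vector, where it is zero (for blocks) or arbitrarily small (by weak nullity, for the original sequence) once $n_{k+1}$ is chosen after $n_k$ --- never from the earlier one. With that repair the rest of your argument coincides with the paper's proof.
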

This also yields the following.
\begin{partcorollary}
    Let  $\sxn$ be a normalized block sequence in $\jtg$. Then there exists a subsequence $(x_{n_{k}})_{k}$ of $\sxn$  which is  either   equivalent to the unit vector basis of $\ell_{2}$
  or  $c_{0}$  embeds in the subspace generated by that subsequence.
\end{partcorollary}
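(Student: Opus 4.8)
The plan is to mirror the proof of the corresponding dichotomy for $\jtp$ (the Corollary following Proposition~\ref{l2upperestimate}), with the r\^oles of $\ell_{p}$ and $\ell_{2}$ replaced by those of $c_{0}$ and $\ell_{2}$, and with the branch seminorms $\norm{x_{n|\brn}}$ replaced by the projections $\Pbrn(x_{n})$. First I would pass to a subsequence of $\sxn$ that admits a well-defined $\ell_{2}$-vector. This is precisely the content of the Lindenstrauss--Stegall type extraction that underlies the dichotomy proposition (the $\jtg$-analogue of Proposition~\ref{lpdecomp}): by repeatedly stabilizing the limits $\lim_{n}\norm{\Pbrn(x_{n})}$ along a decreasing sequence of thresholds and diagonalizing, one obtains $L\in[\N]$, a decreasing $(c_{i})_{i\in N}\in B_{\ell_{2}}$, and distinct branches $(\brn_{i})_{i\in N}$ with $\lim_{n\in L}\norm{\Pbrn[\brn_{i}](x_{n})}=c_{i}>0$ and $\lim_{n\in L}\norm{\Pbrn[\brnt](x_{n})}=0$ for every other branch $\brnt$. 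Having fixed such a subsequence, I distinguish two cases according to whether its $\ell_{2}$-vector is zero or non-zero.

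If the $\ell_{2}$-vector is zero, i.e.\ $\lim_{n\in L}\norm{\Pbrn(x_{n})}=0$ for every branch $\brn\in\brT$, then the hypothesis of the proposition producing $c_{0}$-blocks is satisfied, so a further normalized block sequence of $(x_{n})_{n\in L}$ is $2$-equivalent to the unit vector basis of $c_{0}$. Since this block sequence lies in $\cspn\{x_{n}:n\in L\}$, the space $c_{0}$ embeds in the subspace generated by the subsequence, which is the second alternative.

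If the $\ell_{2}$-vector is non-zero, let $\brn_{1}$ be a branch with $\lim_{n\in L}\norm{\Pbrn[\brn_{1}](x_{n})}=c_{1}>0$. Here I would establish a lower $\ell_{2}$-estimate exactly as in the $\jtp$ corollary: since the projection $\Pbrn[\brn_{1}]$ onto a single branch has norm one, for scalars $(a_{n})$ one has $\norm{\sum a_{n}x_{n}}\ge\norm{\Pbrn[\brn_{1}](\sum a_{n}x_{n})}=\norm{\sum a_{n}\Pbrn[\brn_{1}](x_{n})}$; and because $\Pbrn[\brn_{1}](x)=\sum_{k}\fns[k](x)\xns[k]$ with $(\xns[k])_{k}$ equivalent to the unit vector basis of $\ell_{2}$, after passing to a subsequence the vectors $\Pbrn[\brn_{1}](x_{n})$ are essentially successively supported in $(\xns[k])_{k}$ with norms tending to $c_{1}$, whence they dominate the $\ell_{2}$-basis with a constant proportional to $c_{1}$. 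Thus $\sxn$ has a subsequence dominating the unit vector basis of $\ell_{2}$. Invoking the proposition giving an upper $\ell_{2}$-estimate with constant $9$, I may pass to a further subsequence that also admits an upper $\ell_{2}$-estimate; combining the two estimates yields a subsequence equivalent to the unit vector basis of $\ell_{2}$, which is the first alternative. Alternatively, once one verifies that a block sequence with a non-zero $\ell_{2}$-vector is weakly null, the $\ell_{2}$-equivalence follows directly from the proposition treating weakly null sequences with a non-zero $\ell_{2}$-vector.

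The step I expect to be the main obstacle is this lower $\ell_{2}$-estimate in the non-zero case: one must control the \emph{boundary} interaction between a single block $x_{n}$ and the segments $\snbrn$ carrying the functionals $\fns$, so as to guarantee that $(\Pbrn[\brn_{1}](x_{n}))_{n}$ is genuinely a (perturbed) disjointly supported block sequence of $(\xns[k])_{k}$ rather than merely a seminormalized sequence in an $\ell_{2}$-space. This is the same phenomenon handled by the refined Amemiya--Ito lemma in the earlier sections, and the non-unconditionality of the $\jtg$-basis is exactly what makes the norm-one estimate for single-branch projections (rather than an immediate inequality) the crucial input here.
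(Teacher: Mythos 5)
Your proposal is correct and follows essentially the same route as the paper: the paper's own proof is exactly the dichotomy on the $\ell_{2}$-vector, invoking Proposition~\ref{c0} in the zero case and Proposition~\ref{equivl2} in the non-zero case. The lower-estimate argument you sketch (norm-one single-branch projection plus the skipped-block structure of $(\Pbrn[\brn_{1}](x_{n}))_{n}$ relative to $(\xns[k])_{k}$, combined with the upper $\ell_{2}$-estimate of Proposition~\ref{uppers2-2}) is precisely the paper's proof of Proposition~\ref{equivl2}, which is stated for block sequences, so you may cite it directly without any weak-nullness verification.
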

In  the final section, i.e., Section seven, we study the Hilbertian subspaces  of
$\jtg$. We prove the following.
\begin{parttheorem}
   Let $Y$ be a  subspace of $\jtg$ such that
every normalized weakly null sequence has a subsequence equivalent to the unit vector basis of $\ell_{2}$.  Then $Y$ is   a  complemented subspace of $\jtg$ isomorphic to $\ell_{2}$. 
\end{parttheorem}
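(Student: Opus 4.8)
The plan is to transcribe the proof of Theorem~\ref{complemented2} from Part~I, making two systematic replacements dictated by the failure of unconditionality: the coordinate restriction $y\mapsto y_{|T}$ is replaced by the genuinely bounded projection $Q=\sum_{i=1}^{d}P_{\tau_i}$ attached to a finite set of branches $\tau_1,\dots,\tau_d$, and $\ell_p$-vectors are replaced everywhere by $\ell_2$-vectors. As in Part~I, everything rests on one key proposition, which I would establish first: \emph{for every $Y$ as in the statement there exist branches $\tau_1,\dots,\tau_d$ and a finite codimensional subspace $Y_0$ of $Y$ such that $Q_{|Y_0}$ is an isomorphism}, where $Q=\sum_{i=1}^{d}P_{\tau_i}$. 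This is the exact analogue of the proposition preceding Theorem~\ref{complemented2}.

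To prove the key proposition I would argue by contradiction. If it fails, then for every finite set of branches $T=\{\tau_1,\dots,\tau_d\}$ and every finite codimensional subspace the restriction of $Q$ to it is not bounded below. Fix such a $T$, let $\rho$ be chosen so that the final segments $\tau_{i,>\rho}$ are incomparable, and pick normalized $x_k\in Y\cap\bigcap_{i\le k}\ker e_i^{*}$ with $\norm{Q(x_k)}<1/k$; then $(x_k)_k$ is normalized weakly null in $Y$ with $\lim_k\norm{P_{\tau_j}(x_k)}=0$ for each $j\le d$, since for $i\ne j$ one has $P_{\tau_j}P_{\tau_i}=0$ on vectors supported above $\rho$, whence $P_{\tau_j}(x_k)=P_{\tau_j}(Q(x_k))$ for large $k$ and $\norm{P_{\tau_j}(x_k)}\le\norm{Q(x_k)}\to 0$. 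Because $\min\supp(x_k)\to\infty$, after passing to a subsequence $(x_k)_k$ is a block sequence in $Y$. Applying the $\jtg$-analogue of the Lindenstrauss--Stegall lemma (Lemma~\ref{lslemma}) I extract an $\ell_2$-vector $(c_i)_i$ whose associated branches are disjoint from $T$. If this vector is zero, i.e. $\lim_k\norm{P_\sigma(x_k)}=0$ for \emph{every} branch $\sigma$, then the two Section~four propositions produce a further block sequence of $(x_k)_k$---hence a normalized weakly null sequence in $Y$---that is $2$-equivalent to the $c_0$-basis and thus has no $\ell_2$-subsequence, contradicting the hypothesis on $Y$. Otherwise the $\ell_2$-vector is non-zero, and after rescaling it into $S_{\ell_2}$ by the $\jtg$-analogue of Proposition~\ref{lpvectornorm1} the sequence witnesses, for this arbitrary $T$, the hypothesis of the $\jtg$-analogue of Proposition~\ref{lemmalp}; that proposition, proved as in Part~I but now concluding that $c_0$ (rather than $\ell_p$) embeds in $Y$, again contradicts the hypothesis on $Y$.

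Granting the key proposition, the theorem follows as in Theorem~\ref{complemented2}. With $\rho$ chosen so that $\tau_{1,>\rho},\dots,\tau_{d,>\rho}$ are incomparable, $Q$ is a bounded projection of $\jtg$ whose range $Z=\overline{\langle x_n^{\tau_i}:1\le i\le d,\ n\in\N\rangle}$ is Hilbertian: by the norm-one bound $\norm{\sum_{i\le d}P_{\tau_{i,>\rho}}}=1$ together with the $\ell_2$-behaviour of $\norm{\cdot}_{S^{(2)}}$ on pairwise incomparable supports, $Z$ is the $\ell_2$-sum of the Hilbertian ranges $\overline{\langle x_n^{\tau_i}:n\in\N\rangle}\cong\ell_2$. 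Now $Q_{|Y_0}\colon Y_0\to Z$ is an isomorphism onto $W:=Q(Y_0)\subseteq Z$, so $Y_0$ is Hilbertian; since $W$ is complemented in the Hilbert space $Z$ by an orthogonal projection $\pi$, the operator $(Q_{|Y_0})^{-1}\circ\pi\circ Q$ restricts to the identity on $Y_0$ and is therefore a bounded projection of $\jtg$ onto $Y_0$. Writing $Y=Y_0\oplus G$ with $G$ finite dimensional, $Y$ is Hilbertian, hence isomorphic to $\ell_2$, and Lemma~\ref{standardlemma} upgrades the complementation of the finite codimensional $Y_0$ to that of $Y$.

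The principal obstacle is the non-unconditionality of the basis, which forbids coordinate projections and forces the operators $P_\sigma$ into every estimate; consequently the real work sits in the $\jtg$-analogue of Proposition~\ref{lemmalp}. There one builds an array of weakly null sequences in $Y$ whose $\ell_2$-vectors are supported on pairwise disjoint, arbitrarily high families of branches, and diagonalizes to a single sequence all of whose branch-projections vanish; by the Section~four propositions this diagonal sequence is $2$-equivalent to the $c_0$-basis, and a perturbation transports the copy of $c_0$ into $Y$. The upper estimate underlying the extraction must be driven by the identity $\norm{\sum_i P_{\tau_{i,>k}}}=1$ and by the interaction of disjointly supported $\ell_2$-blocks through $\norm{\cdot}_{S^{(2)}}$, in place of the clean $\ell_p$-additivity of Lemma~\ref{suminbranches} available in Part~I. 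A secondary point requiring care is the verification that $Z$ is genuinely Hilbertian rather than merely carrying an upper $\ell_2$-estimate, which again rests on the incomparable-segment behaviour of $\norm{\cdot}_{S^{(2)}}$.
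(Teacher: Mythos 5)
Your proposal is correct and takes essentially the same route as the paper: your key proposition is exactly the paper's proposition preceding Theorem~\ref{complemented3}, proved by the same contradiction argument (normalized $x_k\in Y\cap\bigcap_{i\leq k}\ker e_i^{*}$ with $\norm{P_{T}x_k}<1/k$, extraction of a non-zero $\ell_2$-vector from the hypothesis on $Y$, normalization of that vector, and the array/diagonalization argument producing a sequence in $Y$ whose branch-projections vanish, which contradicts the hypothesis via the upper $S^{(2)}$-estimate and $c_0$ results). The analogues you propose to construct already exist in Part II of the paper (Lemma~\ref{lslemma2}, Propositions~\ref{l2vectornorm1} and~\ref{lastprop}), and your concluding Hilbertian/complementation step, including the perturbation details you gloss over (handled by Lemma~\ref{block2}), is carried out exactly as you describe, following Theorem~\ref{complemented2} and Lemma~\ref{standardlemma}.
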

To prove the above theorem we employ some properties of the
$\ell_{2}$-vector of a normalized weakly null sequence to obtain the
following.
\begin{partproposition}
    Let $Y$ be a  subspace of $\jtg$ such that
every normalized weakly null sequence has a subsequence equivalent to the unit vector basis of $\ell_{2}$.
  Then, there exist a finite set  of 
  branches $T=\{\brnt_{1},\dots,\brnt_{d}\}$
  and a finite codimensional subspace $Y_{0}$ of $Y$
 such that  the bounded  projection  $P_{T}$,
restricted on $Y_{0}$  is an isomorphism.
\end{partproposition}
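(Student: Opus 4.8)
The plan is to argue by contradiction, mirroring the proof of the corresponding proposition for $\jtp$ in Part~I, with $\ell_{p}$-vectors replaced by $\ell_{2}$-vectors and the external $\ell_{p}$-norm replaced by the $S^{(2)}$-norm. Suppose the conclusion fails, so that for every finite set of branches $T=\{\brnt_{1},\dots,\brnt_{d}\}$ and every finite codimensional subspace $Y_{0}$ of $Y$ the restriction $P_{T}|_{Y_{0}}$ is not bounded below. Fix such a $T$. Putting $H_{k}=\cap_{i=1}^{k}\kerr(e_{i}^{*})$ and $Y_{k}=Y\cap H_{k}$, each $Y_{k}$ is finite codimensional in $Y$, so we may pick normalized $x_{k}\in Y_{k}$ with $\norm{P_{T}(x_{k})}<1/k$. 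A standard difference argument, now invoking the absence of $\ell_{1}$ in $\jtg$ in place of the reflexivity used for $\jtp$, turns a subsequence into a normalized weakly null sequence in $Y$, still with $\lim_{k}\norm{P_{T}(x_{k})}=0$; we keep the notation $(x_{k})_{k}$.

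First I would show that, after passing to a subsequence, $(x_{k})_{k}$ accepts a non-zero $\ell_{2}$-vector. By hypothesis $(x_{k})_{k}$ has a subsequence equivalent to the unit vector basis of $\ell_{2}$, which after a block perturbation (the $\ell_{2}$-analogue of Lemma~\ref{block}) may be taken to be block; the Lindenstrauss--Stegall type lemma of Section five (the $\ell_{2}$-analogue of Lemma~\ref{lslemma}) then yields a further subsequence accepting an $\ell_{2}$-vector $(c_{i})_{i}$. Were $(c_{i})_{i}$ zero, i.e.\ $\lim_{k}\norm{\Pbrn(x_{k})}=0$ for every branch $\brn$, the Proposition of Section four would produce a further block sequence $2$-equivalent to the unit vector basis of $c_{0}$; but every normalized block sequence of a sequence equivalent to the $\ell_{2}$-basis is again $\ell_{2}$-equivalent, and no normalized sequence is equivalent to both the $\ell_{2}$- and the $c_{0}$-bases, a contradiction. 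Hence $(c_{i})_{i}$ is non-zero, and since each $\norm{P_{\brnt_{j}}(x_{k})}\le\norm{P_{T}(x_{k})}\to 0$, none of $\brnt_{1},\dots,\brnt_{d}$ occurs among the associated branches. The $\ell_{2}$-analogue of Proposition~\ref{lpvectornorm1} now lets me replace $(x_{k})_{k}$ by a normalized weakly null sequence in $Y$ whose $\ell_{2}$-vector lies on $S_{\ell_{2}}$, while preserving $\lim_{k}\norm{P_{T}(x_{k})}=0$.

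Since $T$ was arbitrary, this verifies for every finite branch set $T$ the hypothesis of the $\ell_{2}$-analogue of Proposition~\ref{lemmalp}, the $c_{0}$-embedding proposition of this part. I would prove that analogue in parallel with Proposition~\ref{lemmalp}: build an array $(y_{n}^{k})_{n}$, $k\in\N$, of such sequences with normalized $\ell_{2}$-vectors, concentrate the mass of each $\ell_{2}$-vector on an initial segment of its associated branches (the $\ell_{2}$-analogue of Lemma~\ref{concentration0}), and apply the purely tree-combinatorial Lemma~\ref{incomparable} and Lemma~\ref{incomparable2} to render the relevant final segments pairwise incomparable. This produces a diagonal sequence $(z_{j})_{j}$, each $z_{j}$ supported on a union of incomparable final segments, the supports living on pairwise disjoint such unions. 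The decisive point is the analogue of the Claim in Proposition~\ref{lemmalp}, namely $\lim_{j}\norm{\Pbrn(z_{j})}=0$ for every branch $\brn$, which by the Proposition of Section four forces a subsequence of $(z_{j})_{j}$ to be $2$-equivalent to the unit vector basis of $c_{0}$; transferring this back through the perturbations places a copy of $c_{0}$ inside $Y$.

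This is the sought contradiction: the unit vector basis of $c_{0}$ is normalized and weakly null but has no subsequence equivalent to the unit vector basis of $\ell_{2}$, against the hypothesis on $Y$. The main obstacle is the Claim $\lim_{j}\norm{\Pbrn(z_{j})}=0$ uniformly in $\brn$. Because each $\Pbrn$ is assembled from the non-unconditional functionals $\phi_{n}^{\brn}$, one cannot simply discard coordinates as in $\jtp$; instead one must combine the incomparability of final segments furnished by Lemma~\ref{incomparable2} with the uniform decay $\sup_{k}c_{i}^{k}\to 0$ (as $i\to\infty$) of the associated $\ell_{2}$-vectors, so that no single branch accumulates mass across infinitely many $z_{j}$. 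Controlling $\Pbrn$ rather than a coordinate restriction is exactly what makes this step heavier than its $\jtp$ counterpart.
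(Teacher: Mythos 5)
Your skeleton is the paper's: argue by contradiction, build for each finite set $T$ of branches a normalized weakly null sequence in $Y$ with $\norm{P_{T}x_{k}}\to 0$, use the hypothesis on $Y$ to extract a non-zero $\ell_{2}$-vector, normalize it, feed an array of such sequences into an incomparability/diagonal argument, and contradict the hypothesis at the end (your $c_{0}$-route and the paper's upper $S^{(2)}$-estimate route are interchangeable there). The genuine gap is the step where you propose to "concentrate the mass of each $\ell_{2}$-vector on an initial segment of its associated branches (the $\ell_{2}$-analogue of Lemma~\ref{concentration0})". That analogue is \emph{false} in $\jtg$. The proof of Lemma~\ref{concentration0} rests on the $p$-additivity of the $\jtp$-norm over disjoint coordinate restrictions, $\norm{x_{n}}^{p}\geq \norm{x_{n|\mc{B}_{I_{0}}^{c}}}^{p}+\sum_{i\in I_{0}}\norm{x_{n|\brn_{i}}}^{p}$. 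In $\jtg$ the residual $x-P_{\mc{B}_{I}}(x)$ is not a coordinate restriction, and, more seriously, the norm is a maximum with $\norm{\cdot}_{\infty}$, so norm-one mass can sit off the branches while being invisible to all segment norms. Concretely, fix a branch $\brn$, let $u_{N}=N^{-1/2}\sum_{n=N^{2}}^{N^{2}+N-1}x_{n}^{\brn}$ (so $\norm{u_{N}}=1$ by Lemma~\ref{upperl2}) and let $t_{N}>\max\supp u_{N}$ be a node incomparable with every node of $\brn$. No segment meets both $\supp u_{N}$ and $t_{N}$, and every segment norm of $e_{t_{N}}$ is a single coefficient of some $\phi_{j}^{\tau}$, hence tiny by \eqref{imms}; therefore $\norm{u_{N}+e_{t_{N}}}\to 1$, while $P_{\brn}(u_{N}+e_{t_{N}})=u_{N}$ and $P_{\tau}(u_{N}+e_{t_{N}})\to 0$ for every $\tau\ne\brn$. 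After normalizing (and passing to a subsequence, which is weakly null, e.g.\ via Proposition~\ref{c0}), one gets a normalized weakly null sequence whose $\ell_{2}$-vector is $(1)\in S_{\ell_{2}}$ with associated branch $\brn$, yet whose residual $\approx e_{t_{N}}$ has norm one. So a norm-one $\ell_{2}$-vector does not force concentration; and without concentration your diagonal vectors $z_{j}$ are not small perturbations of (sequences equivalent to) vectors of $Y$, so neither your Claim nor the transfer of $c_{0}$ back into $Y$ survives.

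This is exactly why the paper states Proposition~\ref{lastprop} (the analogue of Proposition~\ref{lemmalp}) with the concentration condition (2) as an explicit \emph{hypothesis}, and why Proposition~\ref{l2vectornorm1} (the analogue of Proposition~\ref{lpvectornorm1}, which you cite but only for normalization and $\norm{P_{T}\cdot}\to 0$) carries the extra conclusion c): in $\jtg$ concentration must be manufactured, not deduced from the norm, and the manufacturing device is passage to $(2,n)$-averages. The mechanism is Lemma~\ref{averages1a}: by Lemma~\ref{stabilization2} the off-branch parts $x_{n_{k}}-P_{\mc{B}_{I_{k}}}(x_{n_{k}})$ admit an upper $S^{(2)}$-estimate, hence their $(2,n)$-averages tend to zero in norm (Proposition~\ref{s2averages}), while the branch parts average so as to preserve the $\ell_{2}$-vector; the averaged sequence then satisfies the concentration property. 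If you replace the appeal to the false Lemma~\ref{concentration0}-analogue by conclusion c) of Proposition~\ref{l2vectornorm1}, and carry that property into your array as in hypothesis (2) of Proposition~\ref{lastprop}, the remainder of your plan (Lemma~\ref{incomparable2}, the diagonal Claim, and the final contradiction) coincides with the paper's proof and goes through.
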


Combining the above we obtain  the following.
\begin{partcorollary}
Let   $Y$ be a  subspace of $\jtg$. Then either $Y$ is Hilbertian or  $c_{0}$ is isomorphic to a subspace of 
$Y$. In particular every reflexive  subspace of $\jtg$ is Hilbertian and  complemented.
\end{partcorollary}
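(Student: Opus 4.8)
The plan is to derive the dichotomy by splitting on whether $Y$ satisfies the hypothesis of the theorem on Hilbertian subspaces proved above. \emph{First}, suppose every normalized weakly null sequence in $Y$ has a subsequence equivalent to the unit vector basis of $\ell_{2}$. Then that theorem applies verbatim and yields at once that $Y$ is a Hilbertian subspace of $\jtg$ which is moreover complemented; in particular we land in the first horn of the stated dichotomy. (If $Y$ happens to contain no normalized weakly null sequence, this hypothesis holds vacuously and the same conclusion follows.)

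\emph{Otherwise}, $Y$ contains a normalized weakly null sequence $\sxn$ that has no subsequence equivalent to the unit vector basis of $\ell_{2}$. The key reduction is to pass to a block sequence of the basis: since $(e_{n})_{n}$ is a Schauder basis of $\jtg$ and $\sxn$ is normalized and weakly null, a standard gliding hump perturbation produces a subsequence $(x_{n_{k}})_{k}$ and a normalized block sequence $(u_{k})_{k}$ of $(e_{n})_{n}$ with $\sum_{k}\norm{x_{n_{k}}-u_{k}}$ arbitrarily small, so that $(u_{k})_{k}$ is equivalent to $(x_{n_{k}})_{k}$ and their closed linear spans are isomorphic. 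I would then invoke the block sequence dichotomy established above: some subsequence $(u_{k_{j}})_{j}$ is either equivalent to the unit vector basis of $\ell_{2}$ or generates a subspace into which $c_{0}$ embeds. The first alternative is impossible, since through the equivalence $(u_{k})_{k}\sim(x_{n_{k}})_{k}$ it would furnish a subsequence of $\sxn$ equivalent to the $\ell_{2}$ basis, contrary to the choice of $\sxn$. Hence $c_{0}$ embeds into $\overline{\langle u_{k_{j}}:j\in\N\rangle}$, which is isomorphic to $\overline{\langle x_{n_{k_{j}}}:j\in\N\rangle}\subseteq Y$; therefore $c_{0}$ is isomorphic to a subspace of $Y$. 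This establishes the main assertion.

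For the \emph{in particular} clause, assume $Y$ is reflexive. Since $c_{0}$ is non-reflexive and reflexivity passes to closed subspaces, $Y$ cannot contain an isomorphic copy of $c_{0}$, so the second case above is excluded and $Y$ must satisfy the hypothesis of the first case; thus $Y$ is Hilbertian and complemented in $\jtg$. The genuinely substantial work — the $\ell_{2}$-versus-$c_{0}$ dichotomy for block sequences and the complementation of Hilbertian subspaces — is already carried by the two results cited, so the only points requiring care here are the weakly-null-to-block perturbation and the transfer of the $c_{0}$ copy back into $Y$; I expect no serious obstacle beyond these routine steps.
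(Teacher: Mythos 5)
Your proof is correct and takes essentially the same route as the paper: both arguments combine the block-sequence dichotomy of Corollary~\ref{l2orc0} with Theorem~\ref{complemented3}, after the standard perturbation of a normalized weakly null sequence in $Y$ to a block sequence of the basis. The only difference is cosmetic — you split into cases on whether every normalized weakly null sequence in $Y$ has an $\ell_{2}$-subsequence, while the paper argues contrapositively by assuming $c_{0}$ does not embed in $Y$ and deducing that hypothesis.
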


\section{The Schreier families and repeated averages}
In this section we recall the definition of the Schreier families  $(\schreier[n])_{n}$ as well  the definition of repeated averages and some of their basic properties, which will be essential  in the sequel.

The Schreier families form an increasing sequence of families of finite subsets of the natural numbers and they first appeared in \cite{AA}. They are inductively defined in the following manner. Set
\[ \schreier[1]=\{F\subset\N: \# F\leq\min (F)\}\]
and if $\schreier[n]$ has been defined, set
\[
\schreier[n+1]=\{F\subset\N:  F=\cup_{i=1}^{d}F_{i}, \text{where}\,  F_{1}<\dots <F_{d}\in \schreier[n] \,\text{and}\,   d\leq\min (F_{1}) \}.
\]
The Schreier families $(\schreier[n])_{n}$ are regular families, i.e. hereditary, spreading and compact.

\subsection{$(2,n)$-averages}
We recall the definition of repeated averages, introduced in
\cite{AMT}, and some of  their
basic properties.
The $n$- repeated averages   $\alpha_{k}(n,L)$, $k\in\N$, are defined as
elements of $c_{00}(\N)$  as follows:

For $n=0$   we set $\alpha_{k}(0, L)=e_{l_{k}}$  where $L=(l_{k})_{k\in\N}$.

  Assume that $\alpha_{k} (n,L)$  have been defined for  some $n$, every $k\in\N$ and every 
  $L\in [\N]$.
  Let $L\in[\N]$.  We define $\alpha_{k}(n+1,L)$ as follows:
Let $l_{1}=\min L$.
  We define
  \[\alpha_{1}(n+1,L)=\frac{1}{l_{1}}(a_{1}(n, L)+\alpha_{2}(n, L)+\dots+\alpha_{l_{1}}(n,
    L))\]
  and $a_{k}(n+1,L)=a_{1}(n+1, L_{k})$, where
  $L_{k}=L\setminus\cup_{i=1}^{k-1}\supp a_{i}(n+1,L)$.

  The following properties were established in \cite{AMT},\cite{AT}.
\begin{lemma}\label{propaverages}
For every  $n,k\in\N$ and every $L\in [\N]$ it holds that
\begin{enumerate}
\item $\alpha_{k}(n,L)$ is a convex block
of the standard basis  $(e_{n})_{n}$  of $c_{00}(\N)$ and 
$L = \bigcup_{k=1}^{\infty} \supp a_{k}(n,L)$.
\item $supp \alpha_{1}(n,L)$  is the maximal initial segment of $L$ contained
  in  $\mathcal{S}_{n}$, 
\item $\alpha_{k}(n, L)\in S_{\ell_{1}}^{+}$  $\alpha_{k}(n, L)(i_{1})\geq \alpha_{k}(n,L)(i_{2})>0$ for every
  $i_{1}<i_{2}\in\supp \alpha_{k}(n,L)$
\item  For every $G\in \schreier[n-1]$, $\sum_{i\in
    G}\alpha_{1}(n,L)(i)<\frac{3}{l_{1}}$, $l_{1}=\min L$,
  \item  If $ L,M \in [\mathbb{N}]$
and $\supp \alpha_{k}(n,L)=\supp \alpha_{k}(n,M)$
for $k \leq m$, then $\alpha_{k}(n,L)=\alpha_{k} (n,M)$ for $k \leq m$.
\end{enumerate}
\end{lemma}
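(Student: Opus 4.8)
The plan is to establish all five assertions simultaneously by induction on $n$, reading off each level from the recursion $\alpha_1(n+1,L)=\frac{1}{l_1}\sum_{j=1}^{l_1}\alpha_j(n,L)$ together with $\alpha_k(n+1,L)=\alpha_1(n+1,L_k)$, where $l_1=\min L$ and $L_k=L\setminus\bigcup_{i<k}\supp\alpha_i(n+1,L)$. For the base I would verify the statements directly for small $n$ (adopting the convention $\schreier[0]=\{\emptyset\}\cup\{\{m\}:m\in\N\}$); property (4), which refers to $\schreier[n-1]$, is checked at $n=1$, where $\alpha_1(1,L)=\frac{1}{l_1}(e_{l_1}+\cdots+e_{l_{l_1}})$ is the uniform average over its first $l_1$ points and any $G\in\schreier[0]$ is a singleton, giving mass $\le 1/l_1<3/l_1$. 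So I fix $n$, assume (1)--(5) for all $L\in[\N]$ at level $n$, and propagate them to level $n+1$.

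First I would record the structural facts furnished by the inductive (1) and (3): the vectors $\alpha_1(n,L)<\alpha_2(n,L)<\cdots$ have successive supports whose union is $L$, with strictly increasing minima $m_j=\min\supp\alpha_j(n,L)$, and each is a positive convex block. Consequently $\supp\alpha_1(n+1,L)=\bigcup_{j=1}^{l_1}\supp\alpha_j(n,L)$ is an initial segment of $L$ with minimum $l_1$, the coefficients of $\alpha_1(n+1,L)$ are nonnegative and sum to $\frac{1}{l_1}\cdot l_1=1$, and the recursion defining $L_k$ tiles $L$; this yields (1). For (2), each of the $l_1=\min\supp\alpha_1(n,L)$ blocks lies in $\schreier[n]$, so by the defining clause $d\le\min F_1$ of $\schreier[n+1]$ their union lies in $\schreier[n+1]$, while maximality is inherited from the inductive (2) for the individual blocks together with the observation that appending the next node of $L$ would demand an $(l_1+1)$-st $\schreier[n]$-block. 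Positivity and unit $\ell_1$-norm in (3) follow from (1); the only delicate point is the global monotonicity of the coefficients, which holds inside each block by the inductive (3) and across consecutive blocks because the weights $\frac{1}{l_1}\alpha_j(n,L)(\cdot)$ do not increase from block $j$ to block $j+1$, the minima $m_j$ being increasing.

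The main obstacle is (4). At level $n+1$ one must bound $\sum_{i\in G}\alpha_1(n+1,L)(i)$ by $3/l_1$ for an arbitrary $G\in\schreier[n]$. I would write $G=\bigcup_{r=1}^d G_r$ with $G_1<\cdots<G_d\in\schreier[n-1]$ and $d\le\min G_1$, and expand the mass as $\frac{1}{l_1}\sum_{r=1}^d T_r$, where $T_r=\sum_j\sum_{i\in G_r\cap\supp\alpha_j(n,L)}\alpha_j(n,L)(i)$ collects the contribution of the single $\schreier[n-1]$-piece $G_r$. Since $\alpha_j(n,L)=\alpha_1(n,L^{(j)})$ with $\min L^{(j)}=m_j$ and $G_r\cap\supp\alpha_j(n,L)\in\schreier[n-1]$, the inductive (4) gives $\sum_{i\in G_r\cap\supp\alpha_j}\alpha_j(n,L)(i)<3/m_j$ on each block met, while maximality of the blocks (from (2)) tends to prevent $G_r$ from covering an entire block. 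The heart of the estimate is the bookkeeping showing that, because the minima $m_j$ increase and the number $d$ of pieces is constrained by $d\le\min G_1$, the weighted contributions of the pieces $G_r$ sum to less than $3/l_1$. This is precisely the combinatorial estimate isolated in \cite{AMT},\cite{AT}, and it is where the constant $3$ is produced; I expect this counting to be the most technical part of the argument.

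Finally, (5) is a determinacy statement that I would also carry through the induction. The values of $\alpha_k(n,L)$ depend on $L$ only through the supports $\supp\alpha_k(n,L)$ and the successive minima feeding the averaging weights $1/\min(\cdot)$, all of which are functions of the support sets alone. Hence if $\supp\alpha_k(n+1,L)=\supp\alpha_k(n+1,M)$ for $k\le m$, then the constituent blocks $\supp\alpha_j(n,\cdot)$ and the weights coincide, and the inductive (5) upgrades equality of supports to equality of the vectors. This closes the induction and proves the lemma.
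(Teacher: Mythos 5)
You should first be aware that the paper itself contains no proof of this lemma: it is imported wholesale from the literature, the sentence ``The following properties were established in \cite{AMT},\cite{AT}'' being the entire justification. So there is no internal argument to compare yours against, and your plan — induction on $n$ along the recursion $\alpha_1(n+1,L)=\frac{1}{l_1}\sum_{j=1}^{l_1}\alpha_j(n,L)$, $\alpha_k(n+1,L)=\alpha_1(n+1,L_k)$ — is indeed the natural one (and the one used in the cited sources). Your outlines of (1), (2) and (5) are essentially sound, with the caveat that in (2) the maximality claim needs the standard nested argument: if $F\cup\{x\}=G_1\cup\dots\cup G_d$ with $G_1<\dots<G_d\in\mathcal{S}_n$ and $d\leq l_1$, one shows inductively, using hereditariness of $\mathcal{S}_n$ and the inductive maximality of each block $F_j=\supp\alpha_j(n,L)$, that $G_1\cup\dots\cup G_j\subseteq F_1\cup\dots\cup F_j$ for all $j$, whence $F\cup\{x\}\subseteq F$, a contradiction; the remark that appending $x$ ``would demand an $(l_1+1)$-st block'' is the conclusion of that argument, not a proof of it.

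The genuine gaps are in (3) and, decisively, in (4). For (3): the cross-block monotonicity you need — the last coefficient of $\alpha_j(n,L)$ dominates the first coefficient of $\alpha_{j+1}(n,L)$ — is not among the inductive hypotheses (1)--(5), since (3) speaks only of a single $\alpha_k$, and your justification (``the minima $m_j$ being increasing'') restates the claim rather than proving it. It is true, but the induction must be strengthened, e.g.\ by the two facts that the first coefficient of $\alpha_1(n,M)$ equals $(\min M)^{-n}$ and the last coefficient of $\alpha_1(n,M)$ is at least $(\min M')^{-n}$, where $M'=M\setminus\supp\alpha_1(n,M)$; these propagate through the recursion and give what you need. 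For (4): this is the only clause of the lemma with real content — it is exactly what the paper later uses to prove $\norm{x_{n}^{\brn}}_{\infty}\leq 1$ and Proposition~\ref{s2averages} — and you do not prove it. Your scheme (split $G=\bigcup_{r\leq d}G_r$ into $\mathcal{S}_{n-1}$-pieces and apply the inductive (4) to each pair $(G_r,F_j)$) does not close as stated: it yields $\sum_{i\in G_r\cap F_j}\alpha_j(n,L)(i)<3/m_j$, and even with the sharpest counting available at this level of the argument — the block minima at least double, $\sum_j\bigl(\#\{r:G_r\cap F_j\neq\emptyset\}-1\bigr)\leq d-1$, the first block $F_{j_0}$ met by $G$ estimated trivially by $\norm{\alpha_{j_0}(n,L)}_{\ell_1}=1$, and $m_{j_0+1}>\max F_{j_0}\geq\min G\geq d$ — one obtains a bound of the form $\bigl(1+3(d+1)/m_{j_0+1}\bigr)/l_1\leq 4/l_1$, not $3/l_1$. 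Recovering the constant $3$ requires a finer analysis of where $G$ enters $F_{j_0}$ (when $d$ is close to $\max F_{j_0}$, the trace $G\cap F_{j_0}$ sits in the tail of $F_{j_0}$ and carries little mass), and this is precisely the ``bookkeeping'' that you defer to \cite{AMT},\cite{AT}. In short, your proposal establishes the easy structural clauses and leaves the heart of the lemma unproven.
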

\begin{definition}
  Let  $\sxn$ be a block sequence    and $p_{n}=\minsupp(x_{n})$. A
vector $x=\sum_{i\in F}a_{i}x_{i}$ is said to be an   $(2,n)$-average if
$\sum_{i\in F}a^{2}_{i}e_{p_{i}}=
a_{1}(n,L)$  of some  $L\subset (p_{n})_{n\in F}$.
\end{definition}
From the definition  and (4) of Lemma~\ref{propaverages} it follows  that if
$x=\sum_{i\in F}a_{i}e_{p_{i}}$
is an $(2,n)$-average then for every  $G\in\schreier[n-1]$
we have that$(\sum_{i\in G\cap F}a_{i}^{2})^{1/2}<(3/p_{\min F})^{1/2}$.

\begin{notation}\label{partitionL}
  Let $L$ be an infinite subset of $\N$ and $n\in\N$.
  For every $k\in\N$ we set
  \[s_{k}(n,L)=\supp \alpha_{k}(n,L).\]
  It readily follows that  $(s_{k}(n,L))_{k}$ is a partition of $L$.
\end{notation}
The following proposition will be very useful in the sequal and  is easily proved by induction.
\begin{proposition}\label{averagesprop}
a) Let $L_{1}, L_{2}$ be  infinite subsets of $\N$ and $s$ be an initial segment of both
  $L_{1}$ and $L_{2}$.  If $\max s\in s_{k}(n,L_{1})$ then
  \[
s_{j}(n,L_{1})=s_{j}(n,L_{2})\,\,\text{for every $j<k$}\quad\text{and}\qquad s_{k}(n,L_{1})_{|s}=s_{k}(n,L_{2})_{|s}.
  \]
b)   Let $F,G$ be maximal
 elements of $\schreier[n]$ so that $F\cap G$ is an initial
segment of both $F$ and $G$. Let also 
$\sum_{k\in F}a_{k}e_{k}$ and $\sum_{k\in G}b_{k}e_{k}$ be  the $(2,n)$- averages
supported by $F$ and $G$ respectively. Then, for every $k\in F\cap G$, we have $a_{k}=b_{k}$.
\end{proposition}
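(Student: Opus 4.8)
The plan is to prove by induction on $n$ a statement slightly stronger than part (a), in which the repeated averages themselves (not merely their supports) are shown to be coherent on a shared initial segment; part (b) will then fall out as the special case of block index one. Precisely, I would establish the assertion
\[
(P_n):\quad \text{for all infinite }L_1,L_2\subseteq\N\text{ and every initial segment }s\text{ of both, if }\max s\in s_k(n,L_1)\text{ then }
\]
$\alpha_j(n,L_1)=\alpha_j(n,L_2)$ for every $j<k$ and $\alpha_k(n,L_1)_{|s}=\alpha_k(n,L_2)_{|s}$. Taking supports in $(P_n)$ gives part (a) verbatim. For part (b), note that $F=s_1(n,L)$ and $G=s_1(n,M)$ for suitable $L,M$ by Lemma~\ref{propaverages}(2), so $s=F\cap G$ is an initial segment of $L$ and of $M$ with $\max s\in s_1(n,L)$; applying $(P_n)$ with $k=1$ yields $\alpha_1(n,L)_{|s}=\alpha_1(n,M)_{|s}$. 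Since the coefficients satisfy $a_k^2=\alpha_1(n,L)(k)$ and $b_k^2=\alpha_1(n,M)(k)$ and are positive by Lemma~\ref{propaverages}(3), this forces $a_k=b_k$ on $F\cap G$.

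The base case $n=0$ is immediate, as $s_k(0,L)=\{l_k\}$ is the $k$-th element of $L$, so $\max s\in s_k(0,L_1)$ means $s$ consists of exactly the first $k$ elements of $L_1$, which then coincide with those of $L_2$. For the inductive step the structural fact I would record first is that the level-$(n+1)$ partition is produced from the level-$n$ partition by greedily grouping consecutive blocks: one has $s_1(n+1,L)=\bigcup_{i=1}^{\min L}s_i(n,L)$, and, because each $\alpha_k(n,\cdot)$ is supported on the maximal initial $\schreier[n]$-segment of the remaining set (Lemma~\ref{propaverages}(2)), the level-$n$ partition of any tail $L\setminus(s_1(n,L)\cup\dots\cup s_r(n,L))$ is exactly $(s_{r+i}(n,L))_i$. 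Consequently every $s_m(n+1,L)$ is a union of consecutive level-$n$ blocks, the number of blocks in the $m$-th group being the minimum of the tail that begins that group.

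With this in hand I would argue as follows. Given $s$ with $\max s\in s_k(n+1,L_1)$, let $k'$ be the level-$n$ index with $\max s\in s_{k'}(n,L_1)$; applying $(P_n)$ to $L_1,L_2,s$ gives $\alpha_j(n,L_1)=\alpha_j(n,L_2)$ for $j<k'$ and $\alpha_{k'}(n,L_1)_{|s}=\alpha_{k'}(n,L_2)_{|s}$. Since block $k'$ lies in the $k$-th group, every group boundary preceding the $k$-th occurs at a level-$n$ index $\le k'-1$; a short sub-induction then shows these boundaries coincide for $L_1$ and $L_2$, because at each stage the two tails begin at the same agreed block and hence have equal minima, which determine the next group's length. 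This yields $\alpha_m(n+1,L_1)=\alpha_m(n+1,L_2)$ for $m<k$. For the $k$-th group I would expand $\alpha_k(n+1,L_1)_{|s}$ as the average of the level-$n$ vectors $\alpha_{b+i}(n,L_1)_{|s}$ over the group and split by index: terms with $b+i<k'$ lie entirely in $s$ and agree with those of $L_2$ by $(P_n)$, the single term $b+i=k'$ agrees on $s$ by $(P_n)$, and terms with $b+i>k'$ vanish on $s$; since the averaging multiplicity is the common tail-minimum, $\alpha_k(n+1,L_1)_{|s}=\alpha_k(n+1,L_2)_{|s}$, completing $(P_{n+1})$.

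The main obstacle is the bookkeeping in the inductive step, namely verifying that the greedy regrouping produces identical group boundaries for $L_1$ and $L_2$ up to the critical index $k$; this hinges on the tail-consistency of the level-$n$ partition together with the equality of the tail-minima that fix each group's length. Once these alignments are secured, the restriction-to-$s$ computation for the $k$-th block is routine.
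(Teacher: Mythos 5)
Your proposal is correct and takes essentially the approach the paper intends: the paper gives no explicit argument, remarking only that the proposition ``is easily proved by induction,'' and your induction on $n$ --- strengthened to the repeated averages themselves, using the tail-consistency $\alpha_{l_1+i}(n,L)=\alpha_i\bigl(n,L\setminus s_1(n+1,L)\bigr)$ and the agreement of the group minima, with part (b) recovered from the case $k=1$ via Lemma~\ref{propaverages}(2),(3) --- is a sound realization of exactly that induction.
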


          The Schreier space  $S^{(1)}$ is the completion of $c_{00}(\N)$ under
          the norm given by the formula
          \[\norm{\sum_{i=1}^{n}a_{i}e_{i}}_{S^{(1)}}=\sup\{\sum_{i\in
              F}\abs{a_{i}}: F\in\schreier[1]\}.
\]
The 2-convexification  of  $S^{(1)}$ is  denoted by
$S^{(2)}$ and it is  
the completion of $c_{00}(\N)$ under
          the norm
\[\norm{\sum_{i=1}^{n}a_{i}e_{i}}_{S^{(2)}}=\sup\{(\sum_{i\in F}a_{i}^{2})^{1/2}:
  F\in \mc{S}_{1}\}.
\]
\section{The definition of the generalized  $\jt$-space}
In this section we provide the definition of a variety of  generalized James tree spaces. Varieties of the James tree space have been given in the pasts and our definition can bee seen as combination of the two more general ones, the one of Bellenot, Haydon Odell,\cite{BHO} and the one of Albiac and Kalton \cite{AK}.
        \begin{notation}
As in the definition of $\jtp$ we consider a partial order $\prec$ on the
set  of natural numbers compatible  with the usual order, defining an infinitely
branching tree $\tree_{0}$ without a root and terminal nodes. Let $\tree$ be the tree we
obtain by subjoining to $\tree_{0}$ the  empty set as a root.
We shall denote by          $\brT$  the set of the branches  of the tree.
        \end{notation}
    \subsection{Definition of the general $JT$-space}

For every     branch  $\brn=(k_{n})_{n\in\N}$ of  $\tree$  let  $\norm{\cdot}_{\brn}$
denotes a norm such that   the sequence $(e_{k_{n}})_{n}$  is a
bimonotone basis  of the completion of $c_{00}(\brn)$ under
the norm  $\normm_{\brn}$, i.e.
the norms of the projections on the  segments  of $\brn$  are
uniformly bounded  by 1.

\begin{definition}
The family of the norms $(\norm{\cdot}_{\brn})_{\brn\in\brT}$ is said to be
tree compatible   if  for every segment  $s$ and every pair  of distinct
branches $\brn_{1},\brn_{2}$ such that $s\subset\brn_{1}\cap\brn_{2}$  we have that
\begin{equation*}
 \norm{x}_{\brn_{1}}=\norm{x}_{\brn_{2}} \,\,\,\text{for all $x$ such
   that $\supp (x)\subset s$}.
\end{equation*}  
\end{definition}
If $(\norm{\cdot}_{\brn})_{\brn\in\brT}$ is a family of 
tree compatible norms then for every segment  $s$ of $\tree$ we define
\begin{equation*}
  \norm{x}_{s}=\norm{x_{|s}}_{\brn} \,\,\,\text{where $\brn$ is a branch
  such that $s\subset\brn$}.
\end{equation*}
The tree compatibility of the norms implies that  $\norm{\cdot}_{s}$ is
well defined.Let $\normm_{*}$  be  a $1$-unconditional norm on $c_{00}(\N)$.
The general  $JT$-space corresponding to $(\|\cdot\|_\sigma)_{\sigma\in\mathcal{B}(\mathcal{T})}$, $\|\cdot\|_{*}$
is defined to be the completion of
$c_{00}(\N)$  under the norm
\begin{equation}
  \label{eq:13}
 \norm{x}=\max\left\{\norm{x}_{\infty},\sup\{
 \norm{
 \sum_{i=1}^{n}\norm{x}_{s_{i}}e_{\min (s_i)}
 }_{*}:
\s_{1},\dots,\s_{n}\,\text{are disjoint segments}, n\in\N\}\right\}.
\end{equation}
\subsection{Examples of generalized $JT$-spaces}
\begin{enumerate}[leftmargin=17pt]
\item
The first such example is the  space defined by  
N. Ghoussoub and B. Maurey \cite{GhM}, denoted by
$JT^{\infty}$. This space is defined as  the  James tree space \cite{J2}, using
an infinitely  branching tree
instead of the dyadic tree.
Following our definition, let  for every branch $\brn$, $\normm_{\brn}$ be the completion of
$c_{00}(\brn)$ under the  summing norm.
Setting $\normm_{*}=\normm_{\ell_{2}}$,   the generalized  James tree space
 $JT^{\infty}$ is
 the space we  get  by the completion of $c_{00}(\N)$  using the norm
 given by \eqref{eq:13}.
\item
  The second example of generalized $JT$-space is the space
  $\jtp$,  $2<p<+\infty$. For every branch $\brn$   let
  $\normm_{\brn}=\normm_{\ell_{2}}$ and $\normm_{*}=\normm_{\ell_{p}}$.
Then the corresponding generalized JT-space is $\jtp$.
  \item
  Let $\brn$ be a branch of  $\tree$  and let
  $(\phi_{n}^{\brn})_{n}$ be a sequence of   successive elements of
  $c_{00}(\brn)$  such that    $\cup_{n}\supp(\phi_{n}^{\brn})=\brn$  and
  $\norm{\phi_{n}^{\brn}}_{\infty}\leq 1$ for every $n\in\N$. Assume additionally that $(\phi_{n}^{\brn})_{n}$, $\sigma\in\mathcal{B}(\mathcal{T})$,  are chosen in such way so they are tree
  compatible, i.e.,
if  $s$ is a segment of  $\tree$ and $\brn_{1},\brn_{2}$ are distinct
branches such that $s\subset\brn_{1}\cap\brn_{2}$  then
$\phi_{n|s}^{\brn_{1}}=\phi_{n|s}^{\brn_{2}}$ for every $n\in\N$.

For every $\brn$  we set
\[
W(\brn)=\{E\sum_{i=1}^{n}\lambda_{i}\phi_{i}^{\brn}:   (\lambda_{i})_{i=1}^{n}\in
B_{\ell_{2}},  (\lambda_{i})_{i=1}^{n}\subset\Q, n\in\N, \,E\,\text{interval of $\N$}\}.
\]
We define for $x\in c_{00}(\brn)$
\[
\norm{x}_{\brn}=\sup\{\phi(x):\phi\in W(\brn)\}.
\]
Using the tree compatibility of $(\phi_{n}^{\brn})_{n}$  
it follows that the norms $\normm_{\brn}, \brn \in \brT$  are tree compatible.

Let $\normm_{*}$ be the norm of  $\mc{S}^{(2)}$.
Under these  settings the norm of the corresponding generalized  James tree space is
given by the formula
\[
\norm{x}=\max\left\{\norm{x}_{\infty},\sup\{\norm{\sum_{i=1}^{n}\norm{x}_{s_{i}}e_{\min
      (s_i)}}_{S^{(2)}}:\s_{1},\dots,\s_{n}\,\,\text{are disjoint segments},n\in\N\}\right\}
  .
\]
\end{enumerate}
\section{The space $\jtg$}
In this section we are going to define the space $\jtg$,  which is a specific version of the last  example.
As we have mention in the introduction the norming set of the space $\jtg$ we be an essential component of the norming set of the space $\xeh$ and will have an significant role  to the proof of the properties of $\xeh$.

Let  $(m_{j})_{j\in\N}, (n_{j})_{j\in\N}$ be two strictly increasing sequences of positive
integers such that $m_{j}\leq n_{j}$ for every $j$.

Let $\cod:\N\cup\{\emptyset\}\to \N$ be  a strictly increasing function
such that 
$\cod(\emptyset)<\cod(n)$ for every $n\in\N$.
Let $\tree_{0}$ be an infinite branching tree  with no terminal nodes.
We consider the complete, infinitely branching tree $\tree$  of $\tree_{0}$  with the additional property,  
\begin{equation}\label{imms}
k(n)=\min\{k\in\N: k\,\,\text{is an immediate successor  of $n$}\}>(3m_{2\gamma(n)+1}^{2})^{2} .
\end{equation}

\begin{notation}
  We partition each branch $\brn$ of $\tree$ into successive segments  $\s_{k}^{\brn}$,$k\in\N$. This partition is a more carefully defined version of the one from Notation \ref{partitionL}.

 Let  $\brn$ be a branch of $\tree$  and $j_{1}=\cod(\emptyset)$. Then $s_{1}^{\brn}$ is the initial  segment  of $\brn$ which  is maximal in $\mc{S}_{n_{2j_{1}+1}}$.
Assume we have define
$\segs[1],\dots,\segs[k-1]$.
Let
$j_{k}=\cod(\max\segs[k-1])$.
We take $\s_{k}^{\brn}$ 
be the initial segment of
$\brn\setminus\cup_{i=1}^{k-1}\segs[i]$
which is maximal in $\schreier[n_{2j_{k}+1}]$.

It readily follows that  $\s_{k}^{\brn}<\s_{k+1}^{\brn}$ for every $k$ and that $(\s^{\brn}_{k})_{k}$ is a partition of $\brn$.
\end{notation}
Proposition~\ref{averagesprop}(a)  yields  that  if $\brn_{1},\brn_{2}$ are distinct
branches and $s$ is the maximal segment such that
$s\subset\brn_{1}\cap\brn_{2}$  and $\max\s\in \segss{n_{0}}{1}$ then
\begin{equation*}
  \segss{n|s}{1}=\segss{n|s}{2}\,\,\,\text{for all $n\leq n_{0}$}.
\end{equation*}
In particular   $\segss{n}{1}=\segss{n}{2}$  for all $n< n_{0}$
and $\segss{n_{0}|s}{1}=\segss{n_{0}|s}{2}$.

For a branch $\brn$ we define the sequence of pairs 
$(x_{k}^{\brn},\phi_{k}^{\brn})_{n}$ where

(i) $x_{k}^{s}=m_{2j_{k}+1}^{2}\sum_{l\in\segs[k]}\beta_{l}e_{l}$ where
$\sum_{l\in\segs[k]}\beta_{l}e_{l}$ is the  $(2,n_{2j_{k}+1}^{\brn})$-basic average supported
by the set  $\segs[k]$,

(ii) $\phi_{k}^{\brn}=m_{2j_{k}+1}^{-2}\sum_{l\in\segs[k]}\beta_{l}e_{l}$.

We set
\[W(\brn)=\{E\sum_{i=1}^{n}\lambda_{i}\fns[i]: \lambda_{i}\in\Q\,\,\text{for every $i$}, \,\,(\lambda_{i})_{i=1}^{n}\in
  B_{\ell_{2}},\,E\,\,\text{an interval of $\N$}, n\in\N\}.
\]
Using that the partial order of $\tree$ is compatible with the usual order of $\N$, it follows  that for every interval $E$ of  $\N$ such that $E\cap\brn\ne\emptyset$ it holds that
$E\cap\brn=\s$ is a segment of $\brn$. This yields that
\[W(\brn)=\{E\sum_{i=1}^{n}\lambda_{i}\fns[i]: \lambda_{i}\in\Q\,\,\text{for every $i$},\,\,(\lambda_{i})_{i=1}^{n}\in
  B_{\ell_{2}},\,E\,\,\text{a segment  of $\brn$}, n\in\N\}.
\]
Let $\normm_{\brn}$ be the norm defined on  $c_{00}(\brn)$ by the formula
\begin{equation*}
  \norm{x}_{\brn}= \sup\{f(x): f\in W(\brn)\}.
\end{equation*}
It readily follows   that $(e_{n})_{n\in\brn}$ is a   
bimonotone basis for the completion of $c_{00}(\brn)$ under the norm
$\normm_{\brn}$.

We show now that the norms $\normm_{\brn},\brn\in\brT$ are tree
compatible.
Let  $\brn$ be branch of $\tree$. For a segment $\s$ of $\brn$ we set
\[
W^{\brn}(\s)=\{f: f\in W(\brn)\,\,\text{and}\,\,\, \supp(f)\subset \s\}.
\]
From Proposition~\ref{averagesprop}b) it follows that if $\s\subset\brn_{1}\cap\brn_{2}$
and $\max(s)\in \s_{n_{0}}^{\brn_{1}}$ then
\[
  \fnss{n}{1}=\fnss{n}{2}\,\text{for every $n< n_{0}$ and }
\,\, \fnss{n_{0}|s}{1}=\fnss{n_{0}|s}{2}.
\]
This implies  that $W^{\brn_{1}}(\s)=W^{\brn_{2}}(\s)$ hence $W^{\brn}(s)$ is independent of the branch $\brn$ such that $\s\subset\brn$. This yields
that the norms   $\normm_{\brn},\brn\in\brT$ are tree
compatible.

In the rest of this part given  a  segment $\s$, unless otherwise  is stated, $t_{\s}$  will denote the minimum of  $\s$, i.e., $t_{\s}=\min\s$.

Let $\normm_{*}$ be the norm  of the $2$-convexification of the Schreier space.  The generalized James tree space we get
under these settings is denoted $JT_{G}$ and is the completion of
$c_{00}(\N)$  under the norm
\begin{equation*}
\begin{aligned}
  \norm{x}=\max\Big\{
  \norm{x}_{\infty},
\sup\{
\norm{
\sum_{i=1}^{n}
\norm{x}_{\s_{i}}e_{t_{\s_{i}}}}_{S^{(2)}}
 &:
 s_{1},\dots,s_{n},\,\text{are disjoint segments and}
 \\
 \,
 & \qquad\qquad  \{\min\s_{i}:i\leq n\}\in\schreier[1]
 \}
\Big\}.
\end{aligned}
\end{equation*}
\subsection{An alternative definition of the norm}
We consider the set
\begin{align*}G=\{\sum_{i=1}^{n}\beta_{i}f_{i}&: (\beta_{i})_{i=1}^{d}\in  B_{\ell_{2}},
  f_{i}\in W(\brn_{r_{i}})\,\,\text{for some branch $\brn_{r_{i}}$},
  \\
&  \supp f_{i}\cap\supp f_{j}=\emptyset\,\,\text{if $i\ne j$ and}\,\,
  \{\min\supp  f_{i}:i\leq n\}\in\schreier[1]
  \}.
\end{align*}
Let $\norm{\cdot}_{G}$  be the norm
\[
\norm{x}_{G}=\sup\{e^{*}_{n}(x):n\in\N\}\vee \sup\{f(x): f\in G\}.
\]
It readily follows that  $\norm{x}=\norm{x}_{G}$ for any $x\in c_{00}(\N)$.
\begin{remark}\label{numberds}
For every $\e>0$
 there exists  $n(\e)\in\N$ satisfying the following: if $x\in c_{00}(\N)$ is such that $\normg{x}\leq 1$ and $\s_{1},\dots,\s_n$  are disjoint segments such that $\norm{x}_{\s_{i}}\geq \e$ for
 every $i\leq n$ 
 then   $n\leq n(\e)$.

 Indeed, assume that there exist $\s_{1},\dots,\s_{k}$ such that
 $\norm{x}_{\s_{i}}\geq\e$ for every $i$. Then at least $\floor{k/2}$   of them are
 Schreier admissible. Let the $\schreier[1]$ admissible segments be
 the  $\{s_{i_{1}},\dots, \s_{i_{d}}\}$,
 $d\geq\floor{\frac{k}{2}}$.  It follows:
 \[
  1\geq \normg{x}\geq\norm{\sum_{j=1}^{d}\norm{x}_{\s_{i_{j}}}e_{t_{\s_{i_{j}}}}}_{S^{(2)}}
   \geq \left(\sum_{j=1}^{d}\norm{x}_{\s_{i_{j}}}^{2}\right)^{1/2}\geq d^{1/2}\e.
 \]
This yields  $\floor{\sfrac{k}{2}}\leq d\leq(1/\e)^{2}$ and hence $k\leq\floor{\sfrac{2}{\e^{2}}}+1$.
\end{remark}
\begin{lemma}
  For every branch $\brn$ and every $\xns=m_{2j_{n}+1}^{2}\sum_{j\in\s_{n}^{\brn}}\beta_{j}e_{j}$,
  $n\in\N$,  it holds that $\norm{\xns}=1$.
 \end{lemma}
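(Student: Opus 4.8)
The plan is to evaluate separately the two quantities entering the definition of $\norm{\xns}$, namely $\norm{\xns}_{\infty}$ and the $S^{(2)}$-supremum over disjoint segments, after first recording the value of the branch seminorm $\norm{\xns}_{s}$ on an arbitrary segment $s$. Write $m=m_{2j_{n}+1}$ and let $\sum_{j\in\segs}\beta_{j}e_{j}$ be the defining $(2,\cdot)$-average, so that $\xns=m^{2}\sum_{j\in\segs}\beta_{j}e_{j}$ and $\fns=m^{-2}\sum_{j\in\segs}\beta_{j}e_{j}$. Since this average lies in $S_{\ell_{1}}^{+}$ by Lemma~\ref{propaverages}(3), we have $\sum_{j\in\segs}\beta_{j}^{2}=1$. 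First I would compute $\norm{\xns}_{s}$ for a segment $s$. Because $\xns$ is supported on $\segs$ and the functionals $(\fns[i])_{i}$ are supported on the pairwise disjoint segments $\segs[i]$, of which only the $n$-th meets $\segs$, every $g=E\sum_{i}\lambda_{i}\fns[i]\in W(\brn)$ with $(\lambda_{i})\in B_{\ell_{2}}$ satisfies $g((\xns)_{|s})=\lambda_{n}\sum_{j\in E\cap s\cap\segs}\beta_{j}^{2}$ with $\abs{\lambda_{n}}\le1$. Optimising over $\lambda_{n}$ and over the segment $E$ yields
\[
\norm{\xns}_{s}=\sum_{j\in s\cap\segs}\beta_{j}^{2},
\]
and by tree compatibility this value is correct for a segment of any branch, as only the part of $s$ lying inside $\segs\subseteq\brn$ contributes.

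With this formula the estimates are short. For the lower bound I would take the single segment $s=\segs$: then $\norm{\xns}_{\segs}=1$ and $\{\min\segs\}\in\schreier$, so the corresponding $S^{(2)}$-term equals $\norm{e_{\min\segs}}_{S^{(2)}}=1$, giving $\norm{\xns}\ge1$. For the matching upper bound on the $S^{(2)}$-part, fix disjoint segments $s_{1},\dots,s_{k}$ with $\{\min s_{i}\}\in\schreier$ and set $b_{i}=\norm{\xns}_{s_{i}}=\sum_{j\in s_{i}\cap\segs}\beta_{j}^{2}\ge0$. The sets $s_{i}\cap\segs$ are disjoint subsets of $\segs$, so $\sum_{i}b_{i}\le\sum_{j\in\segs}\beta_{j}^{2}=1$; since the $b_{i}$ are non-negative, $\sum_{i}b_{i}^{2}\le(\sum_{i}b_{i})^{2}\le1$. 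As $\{\min s_{i}\}\in\schreier$ the whole support is Schreier-admissible, whence $\norm{\sum_{i}b_{i}e_{\min s_{i}}}_{S^{(2)}}=(\sum_{i}b_{i}^{2})^{1/2}\le1$.

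It remains to check $\norm{\xns}_{\infty}\le1$, and this is exactly where the growth condition \eqref{imms} enters. Since the average has non-negative decreasing coefficients (Lemma~\ref{propaverages}(3)), $\norm{\xns}_{\infty}=m^{2}\beta_{l_{1}}$ with $l_{1}=\min\segs$; applying Lemma~\ref{propaverages}(4) to the singleton $\{l_{1}\}$ gives $\beta_{l_{1}}^{2}<3/l_{1}$, while \eqref{imms} forces $l_{1}>(3m^{2})^{2}=9m^{4}$, so that $\norm{\xns}_{\infty}^{2}=m^{4}\beta_{l_{1}}^{2}<3m^{4}/(9m^{4})=1/3<1$. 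Combining the three estimates yields $\norm{\xns}=1$. I expect the only genuinely delicate point to be the computation of $\norm{\xns}_{s}$: one must see that the $m^{2}$/$m^{-2}$ scaling of the pair $(\xns,\fns)$ turns the branch seminorm into the partial $\ell_{2}$-mass $\sum_{j\in s\cap\segs}\beta_{j}^{2}$ rather than its square root, after which everything follows from the normalisation $\sum_{j\in\segs}\beta_{j}^{2}=1$ together with the elementary convexity bound $\sum_{i}b_{i}^{2}\le(\sum_{i}b_{i})^{2}$ for non-negative $b_{i}$.
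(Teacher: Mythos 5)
Your proof is correct and follows essentially the same route as the paper's: the lower bound via evaluating on the single segment $s_n^{\sigma}$ (equivalently, against $\phi_n^{\sigma}$), the upper bound by computing $\norm{x_n^{\sigma}}_{s}=\sum_{j\in s\cap s_n^{\sigma}}\beta_j^{2}$ from the disjointness of the supports of the $\phi_i^{\sigma}$ and then summing over disjoint segments, and the $\norm{\cdot}_{\infty}$ bound from Lemma~\ref{propaverages}(4) together with the growth condition \eqref{imms}. The only cosmetic difference is that you keep the exact value of $\norm{x_n^{\sigma}}_{s}$ and use $\sum_i b_i^{2}\le(\sum_i b_i)^{2}$, where the paper bounds each $\norm{x_n^{\sigma}}_{s_i}$ by $\bigl(\sum_{j\in s_i\cap s_n^{\sigma}}\beta_j^{2}\bigr)^{1/2}$ first; these are interchangeable.
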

\begin{proof}
 From the definition of the pairs $(\xns,\fns)$  it readily follows
 that
 \begin{equation*}
 \norm{\xns}\geq\fns(\xns)=1.   
 \end{equation*}
To prove that $\norm{\xns}\leq 1$, let   $\s$ be a segment  such that $\s\cap\segs\ne\emptyset$.
Using  the compatibility of the norms
$\normm_{\brn}, \brn\in\brT$, it follows that
\[
  \norm{\xns}_{s}=\norm{x_{n|\s\cap\s_{n}^{\brn}}^{\brn}}_{\brn}=
  \phi_{n}^{\brn}(x_{n|\s\cap\s_{n}^\brn}^{\brn})=\sum_{j\in \s\cap\s_{n}^{\brn}}\beta_{j}^{2}\leq
 \left(\sum_{j\in\s\cap\s_{n}^{\brn}}\beta_{j}^{2}\right)^{1/2}.
\]
 Using the above inequality,   for every disjoint segments $\s_{1},\dots,\s_{n}$
such that  $\{\min\s_{i}:i\leq n\}\in\schreier[1]$,
 we get
  \begin{equation}
    \norm{\sum_{i=1}^{n}\norm{\xns}_{\s_{i}}e_{t_{\s_{i}}}}_{S^{(2)}}
    =\left(\sum_{i=1}^{n}\norm{\xns}_{\s_{i}}^{2}\right)^{1/2}
    \leq \left(\sum_{i=1}^{n}
\sum_{j\in\s_{i}\cap\s_{n}^{\brn}}\beta_{j}^{2}
    \right)^{1/2}\leq 1.
    \label{norma1-1}
  \end{equation}
 Also it holds that  $\norm{\xns}_{\infty}<1$.  Indeed since  $\sum_{j\in\snbrn}\beta_{j}^{2}e_{j}$ is an $\alpha_{1}(2,n_{2j_{n}+1})$-average
 from  (4) of Lemma~\ref{propaverages}  we have that $\max\{\beta^{2}_{j}:j\in\snbrn\}\leq\frac{3}{\min\snbrn}$. 
 By the choice of the subtree $\tree$, see \eqref{imms},  we  get
 \[
 \max\{\beta^{2}_{j}:j\in \snbrn\}\leq
\frac{3}{\min\snbrn}<
\frac{3}{9m_{2j_{n}+1}^{4}}.\]
 From the above inequality  we get that $\norm{\xns}_{\infty}=
 m_{2j_{n}+1}^{2}\max\{\beta_{j}:j\in\snbrn\}\leq 1$.
Combining the above inequality with \eqref{norma1-1}   we have that $\norm{\xns}\leq 1$.
\end{proof}

\begin{lemma}\label{upperl2}
For every   $\brn\in\brT$ the sequence  $(\xns)_{n}$ admits an upper
$\ell_{2}$-estimate with constant 1. In particular, $(\xns)_{n}$  is 
$1$-equivalent  to the  unit vector basis of $\ell_{2}$.
\end{lemma}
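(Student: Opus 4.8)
The plan is to establish the two one-sided estimates separately: the upper $\ell_2$-estimate with constant $1$ is the substantive part, while the matching lower estimate, needed for the ``in particular'' clause, follows quickly from the functionals $\phi_n^\sigma$. Fix scalars $a_1,\dots,a_m$ and set $y=\sum_{n=1}^m a_n x_n^\sigma$, which is supported on the branch $\sigma$. First I would record the two facts that drive everything: writing $x_n^\sigma=m_{2j_n+1}^2\sum_{l\in s_n^\sigma}\beta_l e_l$, the vector $\sum_{l\in s_n^\sigma}\beta_l^2 e_l$ is the average $\alpha_1(n_{2j_n+1},\cdot)\in S_{\ell_1}^+$, so $\sum_{l\in s_n^\sigma}\beta_l^2=1$; and for any segment $s$ of $\sigma$ one computes, exactly as in the previous lemma, that $\phi_n^\sigma(y|_s)=a_n\gamma_n^s$ where $\gamma_n^s:=\sum_{l\in s_n^\sigma\cap s}\beta_l^2\in[0,1]$ (the cross terms vanish because the segments $s_n^\sigma$ partition $\sigma$).

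For the upper estimate I would bound a single segment contribution and then sum. Since $y$ is supported on $\sigma$, for any tree-segment $s_i$ the quantity $\|y\|_{s_i}$ depends only on $s_i\cap\sigma$, so I may assume the competing segments $s_1,\dots,s_k$ are segments of $\sigma$. Using that every $f\in W(\sigma)$ has the form $E\sum_i\lambda_i\phi_i^\sigma$ with $(\lambda_i)\in B_{\ell_2}$, a Cauchy--Schwarz argument (together with $|\phi_i^\sigma(E(y|_s))|\le|\phi_i^\sigma(y|_s)|$, which holds since the entries $\beta_l$ are nonnegative) gives $\|y|_s\|_\sigma\le(\sum_n\phi_n^\sigma(y|_s)^2)^{1/2}=(\sum_n a_n^2(\gamma_n^s)^2)^{1/2}$, hence $\|y\|_{s}^2\le\sum_n a_n^2\gamma_n^s$ because $\gamma_n^s\le1$. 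Summing over the disjoint segments $s_1,\dots,s_k$ and interchanging the order of summation,
\[
\sum_{i=1}^k\|y\|_{s_i}^2\le\sum_n a_n^2\sum_{i=1}^k\gamma_n^{s_i}\le\sum_n a_n^2,
\]
since for fixed $n$ the sets $s_n^\sigma\cap s_i$ are disjoint subsets of $s_n^\sigma$ and $\sum_{l\in s_n^\sigma}\beta_l^2=1$. As the $S^{(2)}$-norm never exceeds the full $\ell_2$-norm of the coefficient vector, this bounds the ``segment part'' of $\|y\|$ by $(\sum_n a_n^2)^{1/2}$; the $\|y\|_\infty$ part is handled by the coordinatewise bound $|y(l)|\le|a_n|\,m_{2j_n+1}^2\beta_l\le|a_n|$ already established in the previous lemma. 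Thus $\|y\|\le(\sum_n a_n^2)^{1/2}$.

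For the reverse inequality I would test against a single functional. Taking $s=\bigcup_{n=1}^m s_n^\sigma$ as one segment makes $\{\min s\}\in\mathcal{S}_1$ trivially, so $\|y\|\ge\|y\|_s=\|y\|_\sigma$; and choosing $(b_n)\in B_{\ell_2}\cap\mathbb{Q}^m$ approximating the $\ell_2$-conjugate of $(a_n)$, the functional $\sum_n b_n\phi_n^\sigma\in W(\sigma)$ satisfies $\sum_n b_n\phi_n^\sigma(y)=\sum_n b_n a_n\to(\sum_n a_n^2)^{1/2}$. Hence $\|y\|\ge(\sum_n a_n^2)^{1/2}$, and combined with the upper estimate this yields $\|\sum_n a_n x_n^\sigma\|=(\sum_n a_n^2)^{1/2}$, i.e.\ $1$-equivalence to the unit vector basis of $\ell_2$.

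The only delicate point is the non-unconditionality flagged in the introduction: because the branch norm is defined through the coefficient functionals $\phi_n^\sigma$ rather than coordinatewise, one cannot simply restrict to coordinates, and must verify that restricting $y$ to a segment $s$ genuinely produces the coefficients $a_n\gamma_n^s$ and that these weights telescope to at most $1$ across disjoint segments. This is exactly where the identity $\sum_{l\in s_n^\sigma}\beta_l^2=1$ and the fact that the $s_n^\sigma$ partition $\sigma$ are used; once these are in place the estimate is clean and the constant $1$ is optimal.
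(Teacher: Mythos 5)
Your proof is correct and follows essentially the same route as the paper: bound the single-segment norm $\|y\|_s$ through the structure of $W(\sigma)$ (your $\gamma_n^s$ weights are exactly the paper's boundary terms $\sum_{j\in s\cap s_{n_i}^\sigma}\beta_j^2$), sum over disjoint segments using $\sum_{l\in s_n^\sigma}\beta_l^2=1$ and disjointness, handle $\|\cdot\|_\infty$ separately, and get the lower bound from the $\ell_2$-conjugate functional in $W(\sigma)$. The only differences are cosmetic: you use Cauchy--Schwarz where the paper writes the segment norm as an exact formula, and you explicitly address the rational-coefficient approximation in the lower bound, which the paper leaves implicit.
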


\begin{proof}
  Let $d\in\N$,  $a_{1},\dots,a_{d}\in\R$  and  $x=\sum_{n=1}^{d}a_{n}\xns$.
  Let $\s$ be a segment such that  $\s\cap\supp
x\ne\emptyset$. Set $n_{1}=\min\{n:\s\cap\segs \ne\emptyset\}$ and
$n_{2}=\max\{n:\s\cap \segs \ne\emptyset\}$. From the compatibility of the norms
$\normm_{\brn},\brn\in\brT$ it follows that
$ \norm{x}_{s}=\norm{x_{|s}}_{\brn}$.   Then
\begin{equation}
\begin{aligned}
  \norm{x}_{s}&=\norm{x_{|s}}_{\brn}=
  \sup\{\phi(x_{|s}): \phi \in W(\brn)\}\\ 
&= (a_{n_{1}}^{2}(\sum_{j\in\s\cap\segss{n_{1}}{}}\beta_{j}^{2})^{2}+\sum_{n=n_{1}+1}^{n_{2}-1}a_{n}^{2}+a_{n_{2}}^{2}(\sum_{j\in\s\cap\segss{n_{2}}{}}\beta_{j}^{2})^{2}
                                       )^{1/2}
\\
 & \leq
(a_{n_{1}}^{2}\sum_{j\in\s\cap\segss{n_{1}}{}}\beta_{j}^{2}+\sum_{n=n_{1}+1}^{n_{2}-1}a_{n}^{2}+a_{n_{2}}^{2}\sum_{j\in\s\cap\segss{n_{2}}{}}\beta_{j}^{2}
)^{1/2}.
    \label{eq:8aaa}
\end{aligned}
\end{equation}
Let $\s_{1},\dots,s_{m}$ disjoint segments such that  $m\leq \min\s_{i}$
for every $i\le m$.
Define for every  $\s_{i}$,   $n_{i,1}, n_{i,2}$ as we defined
$n_{1},n_{2}$ above.

Setting  $C=\{n\leq d: \segs\cap\s_{i}\ne\emptyset\,\,\text{for some}\,\, i\leq m\}$,
from   \eqref{eq:8aaa} and the fact that
$\s_{i}$ are disjoint segments we get,
\begin{align*}
  \norm{\sum_{i=1}^{m}\norm{x}_{\s_{i}}e_{t_{i}}}_{S^{(2)}}&=
  \left(\sum_{i=1}^{m}\norm{x}_{\s_{i}}^{2}\right)^{1/2}\\
  &\leq\left(\sum_{i=1}^{m}
    (a_{n_{i,1}}^{2}\sum_{j\in\s_{i}\cap\segss{n_{i,1}}{}}    \beta^{2}_{j}
    +\sum_{n=n_{i,1}+1}^{n_{i,2}-1}a_{n}^{2}+a_{n_{i,2}}^{2}
    \sum_{j\in\s_{i}\cap\segss{n_{i,2}}{}}\beta_{j}^{2})
    \right)^{1/2}
  \\
 &\leq\left(\sum_{n\in C}a_{n}^{2}\right)^{1/2}.                                                   
\end{align*}
Since it holds also that
$\norm{\sum_{n=1}^{d}a_{n}\xns}_{\infty}\leq\left(\sum_{n=1}
  ^{d}a_{n}^{2}\right)^{1/2}$, 
we get
\[
\normg{\sum_{n=1}^{d}a_{n}\xns}\le\left(\sum_{n=1}^{d}a_{n}^{2}\right)^{1/2}.
\]
This completes the proof that $(\xns)_{n}$ admits upper
$\ell_{2}$-estimate with constant 1.

To see that $(\xns)_{n}$ is  $1$-equivalent  to the unit vector basis of 
$\ell_{2}$,
let $(\beta_{n})_{n=1}^{d}\in B_{\ell_{2}}$ be the $\ell_{2}$-conjugate  sequence of $(a_{n})_{n=1}^{d}$.
From the definition of the
 norm it follows that
 \[
   \normg{x}\geq\norm{x}_{\brn}\geq
   \sum_{n=1}^{d}\beta_{n}\fns(x)=
   \sum_{n=1}^{d}\beta_{n}a_{n}=
   \left(\sum_{n=1}^{d}a_{n}^{2}\right)^{1/2}.
 \]
This completes the proof that $(\xns)_{n}$ is isometric to the unit vector basis 
of $\ell_{2}$. 
\end{proof}
\begin{notation}
  Throughout the rest of Part 2, for a branch $\brn$ of $\tree$ and $k\in\N$  we denote by $\brn_{,>k}$ the final segment
  \[\brn_{,>k}=\cup_{j=j(k)}^{+\infty}\s_{j}^{\brn}
\,\,\,\,\text{where}\,\, j(k)=\min\{j:k\leq\min\s_{j}^{\brn}\}.
  \]
\end{notation}
\begin{notation}
  For every  branch $\brn$ of $\tree$ we denote by  $Z_{\brn}$ the subspace of $\jtg$
  generated  by the sequence $(\xns)_{n}$.

  We also denote by $Z_{\brn,>k}$    the subspace of $\jtg$
  generated  by the $\xns$ such that $\snbrn\subset\brn_{,>k}$.
\end{notation}
\begin{lemma}\label{supperl2}
  Let $k,m\in\N$, $\brn_{1},\dots,\brn_{m}$ be  distinct branches
  such that $\brn_{j,>k}$, $1\leq j\leq m$ are  incomparable.  Then,  if $ x_{j}\in Z_{\brn_{j},>k}$, $1\leq j\leq m$,   are finitely supported 
and $m\leq \minsupp x_{j}$ for every $j\leq m$
we have that,
  \[ \normg{\sum_{j=1}^{m}x_{j}}=\left(\sum_{j=1}^{m}\norm{x_{j}}^{2}\right)^{1/2}.
    \]
\end{lemma}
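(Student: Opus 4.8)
The plan is to prove both inequalities directly from the defining formula of the norm of $\jtg$, after recording the structure of the vectors $x_j$. Since the segments $s^{\sigma_j}_n$ are successive and each $x^{\sigma_j}_n$ is supported on the whole of $s^{\sigma_j}_n$, every finitely supported $x_j\in Z_{\sigma_j,>k}$ is a finite combination $x_j=\sum_n a^{(j)}_n x^{\sigma_j}_n$, and Lemma~\ref{upperl2} gives $\norm{x_j}=\norm{x_j}_{\sigma_j}=(\sum_n (a^{(j)}_n)^2)^{1/2}$. I would then record two facts that use only the incomparability of the final segments $\sigma_{j,>k}$: their supports are pairwise disjoint, and \emph{any} segment $s$ of $\tree$ meets the support of at most one of them. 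Both follow from one observation: if some node were a $\preceq$-successor of the first nodes of two distinct $\sigma_{i,>k}$, $\sigma_{j,>k}$, then those first nodes would be comparable, contradicting incomparability.

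For the lower bound I would test the norm against the single admissible family $s_1,\dots,s_m$, where $s_j$ is the segment of $\sigma_j$ spanning $\supp x_j$. These are disjoint (being subsegments of the incomparable $\sigma_{j,>k}$), and since no other $x_{j'}$ meets $s_j$ we have $x|_{s_j}=x_j$, hence $\norm{x}_{s_j}=\norm{x_j}_{\sigma_j}=\norm{x_j}$. The hypothesis $m\le\minsupp x_j$ makes $\{\min s_j:j\le m\}\in\schreier[1]$ (it has $m$ elements, each $\ge m$), so the family is admissible; evaluating the inner $S^{(2)}$-norm on $F=\{\min s_j\}\in\schreier[1]$ yields
\[
\normg{x}\ge\norm{\sum_{j=1}^m\norm{x_j}e_{t_{s_j}}}_{S^{(2)}}\ge\Big(\sum_{j=1}^m\norm{x_j}^2\Big)^{1/2}.
\]

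For the upper bound, $\norm{x}_\infty\le\max_j\norm{x_j}\le(\sum_j\norm{x_j}^2)^{1/2}$ is immediate from disjointness of supports, so it remains to control the supremum over disjoint segments. Given disjoint $s_1,\dots,s_n$, each $s_i$ meets at most one $\sigma_{j,>k}$; writing $I_j$ for the indices meeting $\sigma_{j,>k}$ and using $x|_{s_i}=x_j|_{s_i}$ together with tree compatibility of the norms $\normm_{\sigma}$, we get $\norm{x}_{s_i}=\norm{x_j}_{s_i\cap\sigma_j}$ for $i\in I_j$ and $\norm{x}_{s_i}=0$ otherwise. For fixed $j$ the segments $(s_i\cap\sigma_j)_{i\in I_j}$ are disjoint subsegments of $\sigma_j$, so everything reduces to the branchwise estimate $\sum_{i\in I_j}\norm{x_j}_{s_i\cap\sigma_j}^2\le\norm{x_j}^2$; summed over $j$ this gives $\sum_i\norm{x}_{s_i}^2\le\sum_j\norm{x_j}^2$, and therefore $\norm{\sum_i\norm{x}_{s_i}e_{t_{s_i}}}_{S^{(2)}}\le(\sum_i\norm{x}_{s_i}^2)^{1/2}\le(\sum_j\norm{x_j}^2)^{1/2}$.

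The main obstacle is precisely this branchwise estimate, since the basis of $\jtg$ is not unconditional and a segment may cut across the supports $s^{\sigma_j}_n$ of the defining functionals $\phi^{\sigma_j}_n$. To prove it I would use the description $\norm{y}_{\sigma_j}=\sup_E(\sum_n \phi^{\sigma_j}_n(y|_E)^2)^{1/2}$, the supremum over segments $E$ of $\sigma_j$. Choosing disjoint subsegments $s'_i\subseteq s_i\cap\sigma_j$ realizing each $\norm{x_j}_{s_i\cap\sigma_j}$ and setting $c_{n,i}=\sum_{l\in s^{\sigma_j}_n\cap s'_i}\beta_l^2\in[0,1]$, one has $\phi^{\sigma_j}_n(x_j|_{s'_i})=a^{(j)}_n c_{n,i}$ and $\sum_i c_{n,i}\le\sum_{l\in s^{\sigma_j}_n}\beta_l^2=1$. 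The boundary overlaps are then absorbed by the elementary inequality $\sum_i c_{n,i}^2\le(\sum_i c_{n,i})^2\le1$ for nonnegative reals, which gives $\sum_i\phi^{\sigma_j}_n(x_j|_{s'_i})^2\le(a^{(j)}_n)^2$; summing over $n$ yields $\sum_{i\in I_j}\norm{x_j}_{s_i\cap\sigma_j}^2\le\sum_n(a^{(j)}_n)^2=\norm{x_j}^2$, completing the argument.
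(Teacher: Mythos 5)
Your proof is correct, and its overall skeleton is the same as the paper's: the lower bound comes from testing the norm against the admissible family $\s_j=\ran(x_j)\cap\brn_j$, $j\leq m$ (exactly the paper's choice), and the upper bound from splitting an arbitrary disjoint admissible family $(\s_i)_i$ according to the unique final segment $\brn_{j,>k}$ that each $\s_i$ can meet, with both arguments resting on Lemma~\ref{upperl2} to identify $\norm{x_j}$ with $\big(\sum_n (a_n^{(j)})^2\big)^{1/2}$. Where you genuinely diverge is the key branchwise inequality $\sum_{i\in C_j}\norm{x_j}_{\s_i}^2\leq\norm{x_j}^2$, where $C_j$ is the set of indices $i$ with $\s_i\cap\brn_{j,>k}\ne\emptyset$. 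The paper obtains this in one line: the subfamily $(\s_i)_{i\in C_j}$ is itself disjoint and $\schreier[1]$-admissible (since $\# C_j\leq p\leq\min\s_i$), so the definition of $\normg{\cdot}$ applied to the single vector $x_j$ immediately gives $\big(\sum_{i\in C_j}\norm{x_j}_{\s_i}^2\big)^{1/2}=\norm{\sum_{i\in C_j}\norm{x_j}_{\s_i}e_{t_{\s_i}}}_{S^{(2)}}\leq\normg{x_j}$. You instead prove it by hand from the norming set $W(\brn_j)$: writing $\phi_n^{\brn_j}(x_{j|\s'_i})=a_n^{(j)}c_{n,i}$ and absorbing the overlaps of segments with the sets $\s_n^{\brn_j}$ via $\sum_i c_{n,i}^2\leq\big(\sum_i c_{n,i}\big)^2\leq 1$. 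This computation is valid (the identification of the functionals' action and the duality description of $\norm{\cdot}_{\brn_j}$ are both correct), it is more self-contained, and it even yields a slightly stronger fact — the branchwise $\ell_2$-estimate holds for \emph{arbitrary} disjoint segments, with no Schreier admissibility assumption — but it re-derives by an explicit convexity argument what the paper extracts for free from the definition of the global norm.
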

\begin{proof} Let $x=\sum_{j=1}^{m}x_{j}$ and $p\leq\s_{1},\dots,\s_{p}$  be  disjoint segments
  such that   $\s_{i}\cap\supp(x)\ne\emptyset$  for every $i\leq p$.
 Since the  final
  segments $\brn_{j,>k}$ are  incomparable we get that for every $i$ it holds
  $\s_{i}\cap\brn_{j,>k}\ne\emptyset$ for a unique $j$.
Clearly we may assume that $m\leq\s_{i}$ for every $i\leq p$.
  For every $j$ set $C_{j}=\{i\leq
  p:\s_{i}\cap\brn_{j,>k}\ne\emptyset\}$.
  It  follows that
  \begin{equation*}
   \sum_{i\in C_{j}}\norm{x}_{\s_{i}}^{2}=\sum_{i\in
     C_{j}}\norm{x_{j}}_{s_{i}}^{2}\leq\norm{x_{j}}^{2}.
  \end{equation*}
  Therefore
  \begin{align}
    \label{eq:12}
    \norm{ \sum_{i=1}^{p}\norm{x}_{\s_{i}}e_{t_{\s_{i}}}}_{S^{(2)}}=
    \left(\sum_{j=1}^{m}\sum_{i\in C_{j}}\norm{x_{j}}_{\s_{i}}^{2}\right)^{1/2}
    \leq\left(\sum_{j=1}^{m}\norm{x_{j}}^{2}\right)^{1/2}.
  \end{align}
 Inequality  \eqref{eq:12} yields that
  $\normg{\sum_{j=1}^{m}x_{j}}\leq\left(\sum_{j=1}^{m}\norm{x_{j}}^{2}\right)^{1/2}$.
Taking the disjoint segments $\s_{j}=\rang(x_{j})\cap\brn_{j}$, $j\leq m$ we have
that $\{\min\s_{j}:j\leq m\}\in \schreier[1]$ and
\[\normg{\sum_{j=1}^{m}x_{j}}\geq\norm{\sum_{j=1}^{m}\norm{x_{j}}_{\s_{j}}e_{t_{\s_{j}}}}_{S^{(2)}}
  =\left(\sum_{j=1}^{m}\norm{x_{j}}^{2}\right)^{1/2}.
\]
Combining the above inequality with \eqref{eq:12} we have the result.
\end{proof}

\begin{definition}
For  $\brn\in\brT$  we denote  by  $P_{\brn}$  the projection
$P_{\brn}:JT_{G}\to Z_{\brn}$,  defined as
\[
P_{\brn}(x)=\sum_{n=1}^{\infty}\fns(x)\xns.
\]
Also,  for $j\in\N$  let  $P_{\brn_{,>j}}$ denote the projection
\[
P_{\brn_{,>j}}(x)=\sum_{n=n(j)}^{{\infty}}\fns(x)\xns\quad \text{where}\,\,  n(j)=\min\{n:\s_{n}^{\brn}\subset\brn_{,>j}\}.
\]

\end{definition}
\begin{notation}
  Let $F$  be a finite subset of $\N$, $\mc{B}_{F}=\{\brn_{i}:i\in F\}$
  a set of branches and $k\in\N$ such that  the final segments
  $\brn_{i,>k}$, $i\in F$, are incomparable.
  By $P_{\mc{B}_{F,>k}}$ we shall denote the projection
 \[
P_{\mc{B}_{F,>k}}=\sum_{i\in F}P_{\brn_{i,>k}}.
  \]
\end{notation}
\begin{lemma}
 For  $\brn\in\brT$  and $j\in\N$ it holds  that $\norm{P_{\brn,>j}}=\norm{P_{\brn}}=1$.
\end{lemma}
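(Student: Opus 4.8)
The plan is to verify the two bounds $\ge 1$ and $\le 1$ for $P_{\brn,>j}$; the assertion for $P_{\brn}$ is the special case in which $n(j)$ is replaced by $1$ and needs no separate argument. For the lower bound I would exploit the biorthogonality of $(\xns)_{n}$ and $(\fns)_{n}$. Since $\fns[m]$ is supported by $\segs[m]$ and $\xns$ by $\segs$, and these segments are pairwise disjoint, the off-diagonal values $\fns[m](\xns)$ vanish, while $\fns(\xns)=\sum_{l\in\segs}\beta_{l}^{2}=1$ because $\sum_{l\in\segs}\beta_{l}^{2}e_{l}$ is an $\alpha_{1}$-average and hence lies in $S_{\ell_{1}}^{+}$. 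Consequently $P_{\brn,>j}(\xns)=\xns$ for every $n\ge n(j)$, and as $\norm{\xns}=1$ this forces $\norm{P_{\brn,>j}}\ge 1$.

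For the reverse inequality I would fix $x\in c_{00}(\N)$, put $a_{n}=\fns(x)$, and note that $P_{\brn,>j}(x)=\sum_{n\ge n(j)}a_{n}\xns$ is a finite block of $(\xns)_{n}$. Two ingredients then combine. First, the upper $\ell_{2}$-estimate with constant $1$ from Lemma~\ref{upperl2} gives
\[
\norm{P_{\brn,>j}(x)}\le\Big(\sum_{n\ge n(j)}a_{n}^{2}\Big)^{1/2}\le\Big(\sum_{n}a_{n}^{2}\Big)^{1/2}.
\]
Second, I would recover $(\sum_{n}a_{n}^{2})^{1/2}$ as a lower bound for the branch seminorm: testing $\norm{x}_{\brn}=\sup\{f(x):f\in W(\brn)\}$ against the functional $f=E\sum_{i}\lambda_{i}\fns[i]$, where $E$ is a segment of $\brn$ containing $\supp(x)\cap\brn$ and $(\lambda_{i})$ is the $\ell_{2}$-conjugate sequence of the finitely many nonzero $a_{i}$ (rounded to rationals, at an arbitrarily small cost), yields $f(x)=\sum_{i}\lambda_{i}a_{i}$ arbitrarily close to $(\sum_{n}a_{n}^{2})^{1/2}$, so $\norm{x}_{\brn}\ge(\sum_{n}a_{n}^{2})^{1/2}$.

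It then remains to bound $\norm{x}_{\brn}$ by the $\jtg$-norm, which I would do by reading off a single term of the defining supremum: take $n=1$ and one segment $\s_{1}$ equal to the initial part of $\brn$ containing $\supp(x)\cap\brn$, so that $\norm{x}_{\s_{1}}=\norm{x}_{\brn}$, the singleton $\{\min\s_{1}\}$ lies in $\schreier[1]$ automatically, and $\norm{\norm{x}_{\s_{1}}e_{t_{\s_{1}}}}_{S^{(2)}}=\norm{x}_{\brn}$; hence $\normg{x}\ge\norm{x}_{\brn}$. Chaining the three estimates gives $\norm{P_{\brn,>j}(x)}\le\normg{x}$ on $c_{00}(\N)$, and by density $\norm{P_{\brn,>j}}\le 1$, which together with the lower bound completes the proof.

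The only step that genuinely uses the construction is the middle one, $(\sum_{n}a_{n}^{2})^{1/2}\le\norm{x}_{\brn}$: this is where the $\ell_{2}$-normalisation of the repeated averages defining the pairs $(\xns,\fns)$ and the $\ell_{2}$-ball of coefficients built into $W(\brn)$ enter, and the only subtlety I expect is the harmless passage to rational coefficients. Everything else is a direct consequence of Lemma~\ref{upperl2} and of extracting a single term from the definition of the norm.
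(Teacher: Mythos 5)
Your proof is correct and follows essentially the same route as the paper: the lower bound via biorthogonality and $\norm{\xns}=1$, and the upper bound by chaining Lemma~\ref{upperl2} with the estimates $\bigl(\sum_{n}\fns(x)^{2}\bigr)^{1/2}\leq\norm{x}_{\brn}\leq\normg{x}$. The only cosmetic difference is that the paper bounds $\norm{P_{\brn}}$ first and then uses $\norm{P_{\brn,>j}}\leq\norm{P_{\brn}}$, whereas you argue directly for $P_{\brn,>j}$; your spelled-out justification of the middle inequality (conjugate rational coefficients in $W(\brn)$) and of $\norm{x}_{\brn}\leq\normg{x}$ (a single segment, Schreier-admissible as a singleton) is exactly what the paper leaves implicit.
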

\begin{proof}
 Let $x\in c_{00}(\N)$.  We may assume that $\fns(x)\ne0$ for at least one
 $n$, otherwise  trivially
 holds that  $\norm{P_{\brn}(x)}\leq\norm{x}$.  Let $I=\{n:\supp(\phi_{n}^{\brn})\cap\supp(x)\ne\emptyset\}$ . 
 Then 
 \begin{align*} \normg{P_{\brn}(x)}=
 \normg{\sum_{n}\fns(x)\xns}&
 \leq\left(\sum_{n\in I}\fns(x)^{2}\right)^{1/2}\,\,\text{by
   Lemma~\ref{upperl2}}\\
     &\leq\norm{x}_{\brn_{j}}\leq\normg{x}.                           
 \end{align*}
 Also it is clear that  $\norm{P_{\brn,>j}}\leq\norm{P_{\brn}}$.
Since $\norm{\xns}=1$ for every $n$  it follows that  $\norm{P_{\brn,>j}}\geq\norm{P_{\brn,>j}(x_{j}^{\brn})}=1$.
\end{proof}

\begin{lemma}\label{sumprojections}
  Let  $F$ be a finite subset  of $\N$, $ \{\brn_{j}:j\in F\}$ be  distinct branches
and $k\in\N$  such that the final segments $\brn_{j,>k}$, $j\in F$, are  incomparable.
  Then
if $\#F\leq k$ and $x\in
c_{00}(\N)$ is such that $\minsupp(x)\geq\max\{\min \brn_{j,>k}:j\in F\}$
we have that
 \[
   \norm{P_{\mc{B}_{F,>k}}(x)}=
  \left( \sum_{j\in F}\norm{P_{\brn_{j,>k}}(x)}^{2}\right)^{\frac{1}{2}}
   \leq\normg{x}.
\]
\end{lemma}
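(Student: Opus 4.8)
The plan is to get the first equality for free from Lemma~\ref{supperl2} and to obtain the upper estimate $\le\normg{x}$ by packaging the relevant functionals into a single element of the norming set $G$.

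First I would put $x_{j}=P_{\brn_{j,>k}}(x)$ for $j\in F$ and write $m=\#F$. Because $x\in c_{00}(\N)$ and the functionals $\fns[n]$ have successive supports, only finitely many $\phi_{n}^{\brn_{j}}(x)$ are nonzero, so each $x_{j}$ is a finite combination of the vectors $x_{n}^{\brn_{j}}$ with $s_{n}^{\brn_{j}}\subset\brn_{j,>k}$; thus $x_{j}\in Z_{\brn_{j},>k}$ is finitely supported, its support lies in $\brn_{j,>k}$, and $\minsupp x_{j}\ge\min\brn_{j,>k}\ge k\ge m$. All hypotheses of Lemma~\ref{supperl2} are therefore in place, and it yields the first equality $\normg{P_{\mc{B}_{F,>k}}(x)}=(\sum_{j\in F}\norm{P_{\brn_{j,>k}}(x)}^{2})^{1/2}$ immediately. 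Note that the truncation at $k$ also makes the supports of the different $x_{j}$ disjoint, so no hypothesis beyond $\#F\le k$ is actually needed here.

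For the inequality I would first evaluate each factor via Lemma~\ref{upperl2}: since $(x_{n}^{\brn_{j}})_{n}$ is $1$-equivalent to the unit vector basis of $\ell_{2}$, $\norm{P_{\brn_{j,>k}}(x)}=(\sum_{n}\phi_{n}^{\brn_{j}}(x)^{2})^{1/2}$, the sum taken over those $n$ with $s_{n}^{\brn_{j}}\subset\brn_{j,>k}$. Discarding the indices $j$ with $P_{\brn_{j,>k}}(x)=0$ and setting $A=(\sum_{j\in F}\norm{P_{\brn_{j,>k}}(x)}^{2})^{1/2}$, I would take the conjugate coefficients inside each branch, $\psi_{j}=\sum_{n}c_{j,n}\phi_{n}^{\brn_{j}}$ with $c_{j,n}=\phi_{n}^{\brn_{j}}(x)/\norm{P_{\brn_{j,>k}}(x)}$, so that $\sum_{n}c_{j,n}^{2}=1$ and $\psi_{j}(x)=\norm{P_{\brn_{j,>k}}(x)}$; hence $\psi_{j}\in W(\brn_{j})$ after a rational approximation of the finitely many $c_{j,n}$, which alters $\psi_{j}(x)$ by an arbitrarily small amount. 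With the outer weights $d_{j}=\norm{P_{\brn_{j,>k}}(x)}/A$, which satisfy $\sum_{j}d_{j}^{2}=1$, I would then form $f=\sum_{j\in F}d_{j}\psi_{j}$.

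The heart of the argument is the verification that $f\in G$. The vector $(d_{j})_{j\in F}$ lies in $B_{\ell_{2}}$; the functionals $\psi_{j}$ are supported in $\brn_{j,>k}$, and since these final segments are pairwise incomparable they are pairwise disjoint; and, crucially, $\min\supp\psi_{j}\ge\min\brn_{j,>k}\ge k\ge\#F$, so the set $\{\min\supp\psi_{j}:j\in F\}$ has cardinality at most $\#F$ and minimum at least $\#F$, hence belongs to $\schreier[1]$. This is the one spot where the assumption $\#F\le k$ is used, and it is exactly what forces Schreier-admissibility. Once $f\in G$ is established, $A=f(x)\le\normg{x}$ follows from the definition of $\norm{\cdot}_{G}$, which is the desired inequality. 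I expect the only real obstacle to be this bookkeeping around membership in $G$—checking disjointness of supports and $\schreier[1]$-admissibility simultaneously—while the rational approximation placing the inner functionals in $W(\brn_{j})$ is routine, since all sums involved are finite.
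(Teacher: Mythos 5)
Your proof is correct; the equality half is exactly the paper's argument, but your proof of the inequality takes a genuinely different route. Like the paper, you obtain the equality from Lemma~\ref{supperl2} applied to $x_j=P_{\brn_{j,>k}}(x)$, after noting $\minsupp x_j\ge\min\brn_{j,>k}\ge k\ge\#F$ and that $P_{\brn_j}(x)=P_{\brn_{j,>k}}(x)$. For the inequality, the paper works on the primal side: it fixes an arbitrary $\schreier[1]$-admissible family of disjoint segments, bounds $\sum_{i\in C_j}\norm{P_{\brn_{j,>k}}(x)}^2_{\s_i}\le\norm{x_{|\brn_{j,\ge l_j}}}^2$ branch by branch via compatibility of the branch norms, and then regroups, using $\#F\le k\le\min\brn_{j,\ge l_j}$ so that the segments $\brn_{j,\ge l_j}$, $j\in B$, themselves form an admissible disjoint family evaluated against $x$; together with the separate $\norm{\cdot}_\infty$ check this bounds $\normg{P_{\mc{B}_{F,>k}}(x)}$ by $\normg{x}$ directly. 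You work on the dual side: conjugate coefficients inside each branch give $\psi_j\in W(\brn_j)$ (after the routine rational perturbation), outer $\ell_2$-weights assemble them into a single $f\in G$, and the hypothesis $\#F\le k$ enters exactly once, to certify that $\{\min\supp\psi_j:j\in F\}\in\schreier[1]$; then $\bigl(\sum_{j\in F}\norm{P_{\brn_{j,>k}}(x)}^2\bigr)^{1/2}=f(x)\le\norm{x}_G=\normg{x}$, up to an arbitrarily small error. The two uses of $\#F\le k$ are dual manifestations of the same admissibility constraint. Your route is shorter, avoids the segment bookkeeping (the sets $C_j$ and segments crossing several $\s_n^{\brn_j}$) and the sup-norm estimate, but it leans on the identification of $\normg{\cdot}$ with $\norm{\cdot}_G$ and on the isometric statement of Lemma~\ref{upperl2}; the paper's primal computation stays inside the segment definition of the norm and, as a byproduct, yields the operator statement $\norm{P_{\mc{B}_{F,>k}}}=1$ recorded at the end of its proof.
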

\begin{proof}
Let $x\in c_{00}(\N)$ such that
$\minsupp(x)\geq\max\{\min \brn_{j,>k}:j\in F\}$. It follows that the vectors
  $P_{\brn_{j}}(x)=P_{\brn_{j,>k}}(x)$, $j\in F$, satisfies the assumption
of Lemma~\ref{supperl2}. 
Hence
  \begin{equation*}
    \norm{P_{\mc{B}_{F,>k}}(x)}=
    \left(\sum_{j\in F}\norm{P_{\brn_{j,>k}}(x)}^{2} \right)^{1/2}
.
\end{equation*}
Let $\s_{1},\dots,\s_{p}$ be disjoint segments with $p\leq\min\s_{i}$ for every $i\leq p$.
    Since
the final segments $\brn_{j,>k}$,$j\in F $, are  incomparable  it follows
that for every $i\leq  p$ there exists at most one $j\in F$ such that
$\s_{i}\cap\brn_{j,>k}\ne\emptyset$.
For every $j\in F$, set $C_{j}=\{i\leq p: \s_{i}\cap\brn_{j,>k}\ne\emptyset\}$.
For every $i\in C_{j}$ we have that
\[P_{\brn_{j,>k}}(x)=\sum_{n\geq n(k)}\fns(x)\xns\Rightarrow
P_{\brn_{j,>k}}(x)_{|s_{i}}=\sum_{n\geq n(k)}\fns(x)x_{n|\s_{i}}^{\brn}.
\]
Let
$I_{j}=\{n: \s_{n}^{\brn}\subset\brn_{j,>k}\,\text{and}\,\,\snbrn\cap\s_{i}\ne\emptyset\,\,\text{for some}\, i\in C_{j}\}$.
For every $n\in I_{j}$ and $i\in C_{j}$  it holds $\fns(x_{n|\s_i}^{\brn})=
\sum_{l\in s_{i}\cap\snbrn}\beta_{l}^{2}$.
Using the compatibility  of the norms $(\norm{\cdot}_{\brn})_{\brn\in\brT}$ it follows
\begin{equation}
	\begin{aligned}
  \label{eq:28}
    \norm{P_{\brn_{j,>k}}(x)}^{2}_{\s_{i}}&=
    \norm{P_{\brn_{j,>k}}(x)_{|\s_{i}}}_{\brn_{j}}^{2}=
 \sum_{n\in I_{j}}\fns(x)^{2}\fns(x_{n|\s_{i}}^{\brn})^{2}
\\
& \leq\sum_{n\in I_{j}}\fns(x)^{2}\sum_{l\in s_{i}\cap\snbrn}\beta_{l}^{2}.
\end{aligned}
\end{equation}
Let $l_{j}=\min(\brn_{j,>k}\cap \ran(x_{j}))$. 
Using that  the  segments $\s_{i}, i\leq p$ are disjoint 
and \eqref{eq:28} it follows,
\begin{equation}
	\begin{aligned}
		\label{eq:280}
		\sum_{i\in C_{j}}\norm{P_{\brn_{j,>k}}(x)}^{2}_{\s_{i}}&\leq
\sum_{n\in I_{j}}\fns(x)^{2}\sum_{i\in C_{j}}\sum_{l\in s_{i}\cap\snbrn}\beta_{l}^{2}\\
&\leq \sum_{n\in I_{j}}
\fns(x_{|\brn_{j,\geq l_{j}}})^{2}\leq \norm{x_{|\brn_{j,\geq l_{j}}}}^{2}.
	\end{aligned}
\end{equation}
Finally we set $B=\{j\in F: C_{j}\ne\emptyset\}$.  Note that
$\# B\leq k\leq \min \brn_{j,\geq l_{j}}$ for every $j\in B$.
Combining the above  we get
\begin{align}
  \label{eq:29}
  \sum_{i=1}^{p}\norm{P_{\mc{B}_{F,>k}}(x)}^{2}_{\s_{i}}
  =\sum_{j\in B}\sum_{i\in C_{j}}\norm{P_{\mc{B}_{F,>k}}(x)}^{2}_{\s_{i}}
  \leq\sum_{j\in B}\norm{x_{|\brn_{j,\geq l_{j}}}}^{2}
  \leq\norm{x}^{2}.
\end{align}
Also  $\norm{P_{\mc{B}_{F,>k}}(x)}_{\infty}\leq\norm{x}$.   This 
inequality combined with
\eqref{eq:29} yields that 
$\norm{P_{\mc{B}_{F,>k}}}\leq 1$. In particular $\norm{P_{\mc{B}_{F,>k}}}= 1$  since $P_{\mc{B}_{F,>k}}$ is a projection.
\end{proof}
\section{Upper $S^{(2)}$-estimates}
In this section we will prove the theorem bellow. By slightly abusing language, we will say that a sequence $(x_n)_{n\in\N}$ in a Banach space admits an upper $S^{(2)}$-estimate with constant $C$, if it is $C$-dominated by a subsequence of the unit vector basis of $S^{(2)}$.

\begin{theorem}\label{upperS2}
 Let $\sxn$ be a normalized block sequence in $\jtg$ such that
 $\lim_{n}\norm{\Pbrn(x_{n})}=0$, for every  branch $\brn\in\brT$. Then,
 there exists a subsequence $(x_{n})_{n\in L}$ of $\sxn$  which admits
 an upper $S^{(2)}$-estimate with constant $3$.
\end{theorem}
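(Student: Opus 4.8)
The plan is to follow the three-step scheme that proves Theorem~\ref{jtunique} in Part~I, transported to the present setting by replacing the outer $\ell_p$-norm with $\norm{\cdot}_{S^{(2)}}$ and the quantity $\norm{x_{n|\brn}}_{\ell_2}$ with the branch seminorm $\norm{x_n}_\brn$. The first observation that makes the analogy exact is that, by Lemma~\ref{upperl2}, $\norm{\Pbrn(x_n)}=\big(\sum_k\fns[k](x_n)^2\big)^{1/2}=\norm{x_n}_\brn$, so the hypothesis reads $\lim_n\norm{x_n}_\brn=0$ for every branch $\brn$. The second, which substitutes for Remark~\ref{remnormjt}(i), is a purely local consequence of the definition of the norm: whenever $s_1,\dots,s_r$ are pairwise disjoint segments with $\{\min s_i\}\in\schreier[1]$ one has $\big(\sum_i\norm{x}_{s_i}^2\big)^{1/2}=\norm{\sum_i\norm{x}_{s_i}e_{\min s_i}}_{S^{(2)}}\le\norm{x}$, and Remark~\ref{numberds} plays the role of Remark~\ref{remnormjt}(ii).

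First I would establish the Amemiya--Ito/Ramsey step, the verbatim analogue of Lemma~\ref{jtramsey}: for every $\varepsilon>0$ and $n_0\in\N$ there is an infinite $L$ so that for each segment $s$ with $\min s\le n_0$ at most one $n\in L$ has $\norm{x_n}_s\ge\varepsilon$. The proof is the one in Part~I, with Remark~\ref{numberds} supplying the counting in place of the $\ell_p$ inequality, and with the infinite branching of $\tree$ forcing the limiting object in the Ramsey dichotomy to be a genuine branch rather than a finite initial segment; the conclusion $\norm{x_n}_\brn\ge\varepsilon$ for infinitely many $n$ then contradicts the hypothesis. Feeding this into the inductive selection of Lemma~\ref{l13} yields a subsequence $(x_{n_k})_k$, with $q_k=\max\supp x_{n_k}$ and reals $\varepsilon_k\searrow0$, such that (1) for each $k$ and each segment $s$ with $\min s\le q_k$ at most one $k'>k$ has $\norm{x_{n_{k'}}}_s\ge\varepsilon_k$, and (2) $\sum_k q_k\sum_{i\ge k}(i+1)\varepsilon_i$ is smaller than a prescribed $\varepsilon$.

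The heart of the argument is the analogue of Lemma~\ref{lpequiv}. I would use the description $\norm{x}=\norm{x}_\infty\vee\sup\big(\sum_{j=1}^m f_j(x)^2\big)^{1/2}$, the supremum taken over families $f_j\in W(\brn_{r_j})$ with pairwise disjoint supporting segments $s_j$ and $\{\min s_j\}\in\schreier[1]$ (this is exactly optimizing the $\ell_2$-coefficients in the definition of $G$). Fixing such a family and a vector $x=\sum_k a_k x_{n_k}$, and writing $c_j=\sum_k a_k f_j(x_{n_k})$ with $\abss{f_j(x_{n_k})}\le\norm{x_{n_k}}_{s_j}$, I split $c_j$ as in Part~I into its primary block $k_{j,1}$ (the unique $k$ with $q_{k-1}<\min s_j\le q_k$), the at most one secondary block $k_{j,2}>k_{j,1}$ with $\norm{x_{n_{k_{j,2}}}}_{s_j}\ge\varepsilon_{k_{j,1}}$ permitted by (1), and a tail. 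Applying Minkowski's inequality over $j$, the primary part equals $\big(\sum_j a_{k_{j,1}}^2\norm{x_{n_{k_{j,1}}}}_{s_j}^2\big)^{1/2}$; grouping the $j$'s by their primary block $k$ and using the local fact for the disjoint family $\{s_j:k_{j,1}=k\}$, whose starting points form a hereditary hence Schreier subfamily, gives $\sum_{k_{j,1}=k}\norm{x_{n_k}}_{s_j}^2\le1$, so this part is at most $\big(\sum_{k\in A}a_k^2\big)^{1/2}$ with $A=\{k_{j,1}\}$. Since $\#A\le m\le\min_j\min s_j\le q_{\min A}$, the set $\{q_k:k\in A\}$ is Schreier, whence the primary part is at most $\norm{\sum_k a_k e_{q_k}}_{S^{(2)}}$; the secondary part, with $B=\{k_{j,2}\}$, is bounded identically (here $\#B\le m\le q_{\min B}$ because every secondary block exceeds some primary block), and the tail is at most $\max_k\abss{a_k}\sum_k q_k\sum_{i\ge k}(i+1)\varepsilon_i<\varepsilon\,\norm{\sum_k a_k e_{q_k}}_{S^{(2)}}$ by (2). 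Collecting the three contributions gives $\norm{\sum_k a_k x_{n_k}}\le3\,\norm{\sum_k a_k e_{q_k}}_{S^{(2)}}$, i.e.\ the desired upper $S^{(2)}$-estimate with constant $3$.

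The step I expect to be most delicate is the passage from the Schreier admissibility of the segment-starts $\{\min s_j\}$ to an $S^{(2)}$-bound expressed through the block data: the naive grouping only yields an $\ell_2$-bound $\big(\sum_{k\in A}a_k^2\big)^{1/2}$, which is strictly weaker than an $S^{(2)}$-estimate, so one must reintroduce positions, and the correct choice is $e_{q_k}$ with $q_k=\max\supp x_{n_k}$ rather than $e_{\minsupp x_{n_k}}$, since only then does $\min s_j\le q_{k_{j,1}}$ guarantee $\{q_k:k\in A\}\in\schreier[1]$. A related subtlety is the secondary term, whose segments start far from the block that carries their weight; it is controlled not by a position argument but by the cardinality bound $\#B\le m$ together with $m\le q_{\min B}$. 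Finally one must check that truncating an element of $W(\brn)$ by an interval $E$, which may cut through a single average $\fns[l]$, does not spoil the estimate $\abss{f_j(x_{n_k})}\le\norm{x_{n_k}}_{s_j}$; this holds because the right-hand side is by definition a supremum over all of $W(\brn)$ restricted to the segment $s_j$, truncations included.
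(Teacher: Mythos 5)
Your reduction fails at its first step, and the failure is not repairable within your scheme. The claimed identity $\norm{\Pbrn(x_n)}=\bigl(\sum_{k}\fns[k](x_n)^{2}\bigr)^{1/2}=\norm{x_n}_{\brn}$ is false: the first equality is correct (Lemma~\ref{upperl2}), but $\norm{\cdot}_{\brn}$ is a supremum over $W(\brn)$, which contains the \emph{interval truncations} $E\sum_{i}\lambda_{i}\fns[i]$, and their action is not determined by the numbers $\fns[k](x)$; in general one only has $\norm{\Pbrn(x)}\leq\norm{x}_{\brn}$, possibly with a large gap. This is exactly the point of the Remark immediately following the statement of Theorem~\ref{upperS2} in the paper: taking $I$ to be the minimal initial part of $\segs$ with $\sum_{l\in I}\beta_{l}^{2}\geq 1/2$ and $z_{n}=x^{\brn}_{n|I}-x^{\brn}_{n|I^{c}}$, one gets $\norm{z_{n}}_{\brn}\geq\phi^{\brn}_{n|I}(z_{n})\geq 1/2$, while $\norm{\Pbrn(z_{n})}=\abss{\fns(z_{n})}\leq 2\beta^{2}_{\max I}\leq 6/\min\segs\to 0$. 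Moreover $(z_{n})_{n}$ is a seminormalized block sequence ($1/2\leq\norm{z_{n}}\leq 2$, since $G$ is closed under restriction to intervals) satisfying the hypothesis of Theorem~\ref{upperS2}: $\norm{\Pbrn[\tau](z_{n})}\to 0$ for $\tau=\brn$ by the computation above, and $\Pbrn[\tau](z_{n})=0$ eventually for every $\tau\neq\brn$, since $\supp(z_{n})\subset\segs$ escapes the finite segment $\tau\cap\brn$. For this sequence, after normalizing, your Ramsey step is false: with $s=\brn$ and any $n_{0}\geq\min\brn$, \emph{every} $n$ satisfies $\norm{z_{n}}_{s}\geq 1/4$, so no infinite $L$ with your property exists for $\e<1/4$. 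Since your gliding-hump selection and your main estimate both consume exactly this Ramsey statement (you bound $\abss{f_{j}(x_{n_{k}})}$ by $\norm{x_{n_{k}}}_{s_{j}}$ and then invoke conditions (1)--(2) for these seminorms), the whole argument collapses, even though the theorem itself remains true for such sequences.

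The paper's proof is engineered around precisely this obstruction and never controls the seminorms $\norm{x_{n}}_{s}$. Its Ramsey lemma (Lemma~\ref{ramseyupperS2}) and gliding-hump lemma quantify over individual functionals $f\in W(s)$ subject to the extra constraint $\minsupp(f)\leq n_{0}$, and the bridge to the hypothesis is Lemma~\ref{fprojection2}: $\abss{f(x)}\leq\norm{\Pbrn(x)}$ holds \emph{only} under the support condition $\supp(x)\cap\brn\subset\ran(f)$, which in the Ramsey argument is verified because the offending functional acts with value $\geq\e$ on two blocks $x_{m},x_{n}$ with $m<n$ and starts before $\supp(x_{m})$, so its range covers $\supp(x_{m})\cap s$. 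Any correct proof must contain some substitute for this mechanism; your proposal has none. (A minor positive point: in the final estimate, your use of $e_{q_{k}}$, $q_{k}=\max\supp x_{n_{k}}$, so that $\#A\leq m\leq\min_{j}\min s_{j}\leq q_{\min A}$ gives Schreier admissibility outright, is correct and slightly cleaner than the paper's splitting of $\{\minsupp x_{i}:J_{i}\neq\emptyset\}$ into three Schreier sets; but it does not touch the fatal first step.)
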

We start with the following remark.
\begin{remark}
 Note that if  for some  $f\in W(\brn)$ and an   interval $I$ we have that
$\abss{f_{|I}(x)}\geq\e$  it does not follow that
$\normg{P_{\brn}(x)}$ is  necessarily  comparable to
$\abss{f_{|I}(x)}$.

For example, let $x_{n}^{\brn}=m_{2j_{n}+1}^{2}\sum_{k\in\segs}\beta_{k}e_{k}$
and partition $\segs=\s_{1}\cup \s_{2}$ where $s_{1}$ is the minimal
initial segment of $\segs$ such that $\sum_{k\in
  s_{1}}\beta^{2}_{k}\geq\frac{1}{2}$.  Let $I=\s_{1}$
and  $x=x_{n|I}^{\brn}-x_{n|I^{c}}^{\brn}$. It readily follows that
$\phi_{n|I}^{\brn}(x)\geq\frac{1}{2}$ while
\[
  \normg{P_{\brn}(x)}=\norm{\fns(x)\xns}=\abs{\fns(x)}\leq2\beta^{2}_{\max\s_{1}}
  \leq\frac{2}{(\min\xns)}.
\]
\end{remark}
\begin{lemma}\label{fprojection2}
Let $\brn\in\brT$, $f\in W(\brn)$ and  $x\in c_{00}(\N)$ such that $\supp(x)\cap\brn\subset\ran(f)$. Then  $\abs{f(x)}\leq\norm{P_{\brn}(x)}$.
\end{lemma}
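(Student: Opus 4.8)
The plan is to unwind the definition of $f\in W(\brn)$, to use the support hypothesis to discard the interval restriction occurring in the description of the elements of $W(\brn)$, and then to conclude by a single application of the Cauchy--Schwarz inequality together with the fact, established in Lemma~\ref{upperl2}, that $(\xns[n])_n$ is isometrically equivalent to the unit vector basis of $\ell_2$. First I would write $f=E\sum_{i=1}^{n}\lambda_i\fns[i]$, where $(\lambda_i)_{i=1}^n\in B_{\ell_2}$ and $E$ is an interval of $\N$, so that $f(x)=\sum_{i=1}^n\lambda_i\fns[i](x_{|E})$.

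The crucial observation is that the hypothesis $\supp(x)\cap\brn\subset\ran(f)$ renders the restriction by $E$ harmless. Indeed, since $f$ is supported inside the interval $E$ we have $\ran(f)\subseteq E$, whence $\supp(x)\cap\brn\subseteq E$. Each functional $\fns[i]$ is supported on the segment $\segs[i]\subset\brn$, so $\fns[i]$ only sees coordinates of $x$ lying on $\brn$; for every such coordinate $l$ with $x(l)\neq 0$ we have $l\in\supp(x)\cap\brn\subseteq E$ and therefore $x_{|E}(l)=x(l)$. Consequently $\fns[i](x_{|E})=\fns[i](x)$ for every $i$, and thus $f(x)=\sum_{i=1}^n\lambda_i\fns[i](x)$.

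From here the estimate is immediate. By Cauchy--Schwarz and $(\lambda_i)_{i=1}^n\in B_{\ell_2}$,
\[
\abs{f(x)}\le\Big(\sum_{i=1}^n\lambda_i^2\Big)^{1/2}\Big(\sum_{i=1}^n\fns[i](x)^2\Big)^{1/2}\le\Big(\sum_{n=1}^{\infty}\fns[n](x)^2\Big)^{1/2}.
\]
On the other hand, by Lemma~\ref{upperl2} the sequence $(\xns[n])_n$ is $1$-equivalent to the unit vector basis of $\ell_2$, so that $\normg{\Pbrn(x)}=\normg{\sum_n\fns[n](x)\xns[n]}=(\sum_n\fns[n](x)^2)^{1/2}$. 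Combining the two displays yields $\abs{f(x)}\le\normg{\Pbrn(x)}$, as required.

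I do not expect a genuine difficulty here: the only point demanding care is the first one, namely recognizing that the condition $\supp(x)\cap\brn\subset\ran(f)$ is precisely what permits the interval projection $E$ in the definition of $f$ to be dropped when $f$ is evaluated on $x$. Once the identity $\fns[i](x_{|E})=\fns[i](x)$ is in place, the remainder is a routine Cauchy--Schwarz computation combined with the already proved $\ell_2$-isometry of $(\xns[n])_n$.
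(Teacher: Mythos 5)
Your proof is correct and follows essentially the same route as the paper's: in both, the key step is that $\supp(x)\cap\brn\subset\ran(f)\subseteq E$ allows the interval restriction $E$ to be dropped, so that $f(x)=\sum_{i}\lambda_{i}\fns[i](x)$. The only cosmetic difference lies in the concluding estimate: the paper observes $\fns[i](x)=\fns[i](P_{\brn}(x))$ and evaluates the functional $\sum_{i}\lambda_{i}\fns[i]\in W(\brn)$ directly at $P_{\brn}(x)$, whereas you use Cauchy--Schwarz together with the isometric $\ell_{2}$-identification of $(\xns[n])_{n}$ from Lemma~\ref{upperl2}; these are equivalent packagings of the same inequality.
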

\begin{proof}
  Let  $f=E\sum_{n=1}^{d}\lambda_{n}\fns\in W(\brn)$.
  Set $n_{1}=\min\{n\leq d: \supp(E\fns)\cap\supp (x)\ne\emptyset\}$
  and $n_{2}=\max\{n\leq d: \supp(E\fns)\cap\supp (x)\ne\emptyset\}$. Then
  \begin{align*}
    \abss{f(x)}&=\abss{\lambda_{n_1}E\phi_{n_1}^{\brn}(x)+\sum_{n=n_{1}+1}^{n_{2}-1}
    \lambda_{n}\fns(x)+\lambda_{n_2}E\phi_{n_2}^{\brn}(x)}\\
    &=\abss{\lambda_{n_1}\phi_{n_1}^{\brn}(x)+\sum_{n=n_{1}+1}^{n_{2}-1}
      \lambda_{n}\fns(x)+\lambda_{n_2}\phi_{n_2}^{\brn}(x)}\quad\text{since $\supp(x)\cap\brn\subset\supp(f)$}
    \\
               &=\abs{\sum_{n=n_{1}}^{n_{2}}\lambda_{n}\fns(\sum_{j}\phi_{j}^{\brn}(x)x_{j}^{\brn})}
                 =\abs{\sum_{n=n_{1}}^{n_{2}}\lambda_{n}\fns(P_{\brn}(x))}\leq\norm{P_{\brn}(x)}.
  \end{align*}
\end{proof}
The following lemma, which is a variation of I. Amemiya and T.Ito  Lemma \cite{AI},
is analogous to  Lemma~\ref{jtramsey}  for  the space $\jtp$.

\begin{lemma}\label{ramseyupperS2}
  Let $\xn$ be a normalized block sequence such that
  $\lim_{n}\norm{P_{\brn}x_{n}}=0$ for every branch $\brn\in\brT$.
  Then, for every $\e>0$ and $n_{0}\in\N$ there exists $L\in[\N]$ such that
  for every segment $\s$ with $\min\s\leq n_{0}$  and every $f\in W(\s)$ with $\minsupp(f)\leq n_{0}$ it holds that
  $\abs{f(x_{n})}<\e$ for all but one  $n\in L$.
\end{lemma}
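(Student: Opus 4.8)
The plan is to argue by contradiction, following the architecture of the proof of Lemma~\ref{jtramsey}, but with the decisive new ingredient being Lemma~\ref{fprojection2}, which converts a lower bound on a functional value into a lower bound on the norm of a projection. Set $[p_n,q_n]=\ran(x_n)$. Assuming the conclusion fails, I would colour each pair $m<n$ according to whether there is a pair $(\s,f)$ with $\min\s\le n_0$, $\minsupp(f)\le n_0$, $f\in W(\s)$ and $\abs{f(x_m)}\ge\e$, $\abs{f(x_n)}\ge\e$, and apply Ramsey's theorem. If the homogeneous set $L$ carries the opposite colour, then no $(\s,f)$ can witness two elements of $L$, so $L$ already satisfies the conclusion, a contradiction; hence every pair $m<n$ in $L$ admits such a witness $f_{m,n}\in W(\s_{m,n})$, lying on a branch $\brn_{m,n}\supset\s_{m,n}$. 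Discarding an initial part of $L$, I assume $p_m\ge n_0$ for all $m\in L$.

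The key step comes next. Since $\minsupp(f_{m,n})\le n_0\le p_m$, and since $\abs{f_{m,n}(x_n)}>0$ forces $\supp(f_{m,n})$ to meet $\supp(x_n)$, we also have $\max\supp(f_{m,n})\ge p_n>q_m$; therefore $\ran(f_{m,n})\supset[p_m,q_m]\supset\supp(x_m)\cap\brn_{m,n}$. Lemma~\ref{fprojection2} then gives $\norm{P_{\brn_{m,n}}(x_m)}\ge\abs{f_{m,n}(x_m)}\ge\e$. This is exactly the step that a naive ``segment-norm'' translation of Lemma~\ref{jtramsey} cannot supply: as the remark preceding Lemma~\ref{fprojection2} shows, $\norm{x_m}_{\brn}$ may be large while $\norm{P_{\brn}(x_m)}$ is negligible, and it is only the hypothesis $\minsupp(f)\le n_0$ together with the pairing that places both $x_m$ and $x_n$ inside $\ran(f_{m,n})$ that renders Lemma~\ref{fprojection2} applicable.

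It remains to locate one fixed branch $\sigma$ with $\norm{P_{\sigma}(x_m)}\ge\e$ for infinitely many $m$, which will contradict the hypothesis. Here I would reproduce the two claims of Lemma~\ref{jtramsey}. For fixed $n$, the number of distinct initial segments $\brn_{m,n}\cap[1,p_n)$ (over $m<n$) is at most $n(\e)$: distinct such initial segments branch below $p_n$, so their tails in $[p_n,q_n]$ are pairwise disjoint segments, and after restricting $f_{m,n}$ to $[p_n,q_n]$ and using the symmetry of $W$, on each such tail $s$ one has $\norm{x_n}_{s}\ge\abs{f_{m,n}(x_n)}\ge\e$, contradicting Remark~\ref{numberds} if there were too many. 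Enumerating these initial segments, defining sets $L_i^n$ and passing, exactly as in Lemma~\ref{jtramsey}, to a subsequence along which $(L_i^n)_n$ stabilizes, I obtain an infinite ``type'' $L_{i_0}$ with $\brn_{m,n}\cap[1,p_n)=s_{n,i_0}$ for all $m\in L_{i_0}$, $m<n$. A Claim~2 argument, again via Remark~\ref{numberds} and the infinite branching of $\tree$, shows that along a further subsequence $(s_{n,i_0})_n$ converges pointwise to a genuine branch $\sigma$, not to a finite initial segment.

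Finally I would pass to the limit. For each fixed $m\in L_{i_0}$ and all large $n$, $s_{n,i_0}$ agrees with $\sigma$ on $[1,q_m]$, so $\brn_{m,n}\cap[1,q_m]=\sigma\cap[1,q_m]$. Because $(\xns[k])_k$ is $1$-equivalent to the $\ell_2$-basis by Lemma~\ref{upperl2}, one has $\norm{P_{\brn}(x_m)}=(\sum_k\fns[k](x_m)^2)^{1/2}$, and by the tree compatibility of the $(\fns[k])_k$ together with Proposition~\ref{averagesprop} each coefficient $\fns[k](x_m)$ depends only on $\brn\cap[1,q_m]$; hence so does $\norm{P_{\brn}(x_m)}$. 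Consequently $\norm{P_{\sigma}(x_m)}=\norm{P_{\brn_{m,n}}(x_m)}\ge\e$ for every $m\in L_{i_0}$, contradicting $\lim_m\norm{P_{\sigma}(x_m)}=0$. The main obstacle throughout is precisely the discrepancy between functional values and projection norms flagged by that remark; the whole argument is arranged so that the strong conclusion of Lemma~\ref{fprojection2} is invoked only on $x_m$ (whose support is safely engulfed by $\ran(f_{m,n})$), while the weaker segment-norm information suffices for $x_n$ in the counting step.
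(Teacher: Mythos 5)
Your proof is correct and follows essentially the same route as the paper's: contradiction plus Ramsey, the counting of initial-segment types via Remark~\ref{numberds}, stabilization of the types $L_i^n$, a Claim-2 argument showing $(s_{n,i_0})_n$ converges to a genuine branch, and Lemma~\ref{fprojection2} as the decisive new ingredient. The only real difference is at the finish: the paper invokes Lemma~\ref{fprojection2} once, after stabilization, and applies the resulting estimate \eqref{eq:32} directly to the limit branch, whereas you apply it pairwise (where its hypothesis is verified cleanly from $\minsupp(f_{m,n})\le n_0\le p_m$ and $\max\supp(f_{m,n})\ge p_n$) and then transport the lower bound to $\sigma$ via the locality observation that $\norm{P_{\brn}(x_m)}$ depends only on $\brn\cap[1,\max\supp(x_m)]$; this transfer step is legitimate and in fact slightly more careful than the paper's, since Lemma~\ref{fprojection2} literally requires $f\in W(\brn)$ (the branch must pass through the support of the witness, which the limit branch need not do), and your locality argument, justified exactly by tree compatibility, Proposition~\ref{averagesprop} and Lemma~\ref{upperl2}, is precisely what bridges that point.
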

\begin{proof}
Assume that the result does not hold. Then by  Ramsey theorem
we get $L\in [\N]$ such that for every $m<n\in L$ there exist
a segment $\s_{m,n}$   with $\min\s_{m,n}\leq n_{0}$ and an $f_{\s_{m,n}}\in W(\s_{m,n})$
with
\begin{equation}
  \label{eq:30}
\minsupp f_{\s_{m,n}}\leq n_{0},\quad
\abss{f_{\s_{m,n}}(x_{m})}\geq\e\,\,\text{and}\,\,
\abss{f_{\s_{m,n}}(x_{n})}\geq \e.
\end{equation}
Let $p_{n}=\min\supp x_{n}$, $n\in\N$.
For every segment $\s_{m,n}$ we denote by $\bar{\s}_{m,n}$ the unique
initial segment determined by  $\s_{m,n}$.

Using \eqref{eq:30} and  Remark~\ref{numberds} it  follows  similarly to
Claim 1 of Lemma~\ref{jtramsey} that for  every $n\in L$ we have that
\begin{center}$\#\{\bar{s}_{m,n|[1,p_{n})}: m<n\}=d_{n}\leq d= \floor{2/\e^{2}}+1$.
\end{center}
Enumerate for every $n\in L$ the set of the different segments as
$\{\s_{n,1},\dots,\s_{n,d_{n}}\}$.
Set for $i\leq d_{n}$,
\[
L_{i}^{n}=\{m<n:   \abss{f_{\s_{n,i}}(x_{m})}\geq\e\}\quad
\text{
  and $L_{i}^{n}=\emptyset$ for every $d_{n}<i\leq d$}.
\]
Passing to a further infinite subset $M $of $L$ we may assume that
$L_{i}^{n}\xrightarrow[n\in M]{} L_{i}$ for every  $i\leq d$.  Note that for every $n\in L$,
$\{m\in L:m<n\}=\cup_{i=1}^{d}L_{i}^{n}$ and  hence $L=\cup_{i=1}^{d}L_{i}$.
Since $L$ is an infinite subset of $\N$ it follows that there exists
$i_{0}\leq d$ such that $L_{i_{0}}$ is infinite.
Moreover for every $n\in M$ and every $m<n$, $m\in L_{i_{0}}$, 
there exists 
$f_{\s_{n,i_{0}}}\in W(\s_{n,i_{0}})$  with $\minsupp(f_{\s_{n,i_{0}}})\leq n_{0}$
such that  $\abss{f_{\s_{n,i_{0}}}(x_{m})}\geq\e$.
In particular  if  $n_{0}<\minsupp(x_{m})$ and $m<n$
it follows that
$\supp(x_{m})\cap\s_{n,i_{0}}\subset\rang (f_{\s_{n,i_{0}}})
$. Lemma~\ref{fprojection2}  yields that
\begin{equation}
  \label{eq:32}
    \norm{P_{\brn}(x_{m})}\geq\e\,\,\text{for every branch
$\brn$ such that}\,\, \supp(x_{m})\cap \s_{n,i_{0}}\subset\brn.
\end{equation}
Passing to further subset $P$ of $M$ we get 
that  $(\s_{n,i_{0}})_{n\in P}$ converges pointwise  to  an initial segment
or a  branch.  Similarly to Claim 2 of Lemma~\ref{jtramsey} 
we get  that   $(s_{n,i_{0}})_{n\in P}$ converges to a branch $\brn$.
From \eqref{eq:32}  we get  that   $\norm{P_{\brn}(x_{m})}\geq\e$ for infinitely many $m$ which yields  a contradiction.
\end{proof}

\begin{lemma}\label{lemmaupperS2}
Let $\e>0$   and $\xn$ be a normalized block sequence such
that $\lim_{n}\norm{P_{\brn}(x_{n})}=0$ for every branch $\brn\in\brT$.
Let $q_{n}=\max\supp (x_{n}), n\in\N$. Then there exist a strictly
increasing sequence $(n_{k})_{k\in\N}$ of positive integers and a
strictly decreasing sequence $(\e_{n})_{n}$ of positive numbers such
that
\begin{enumerate}
\item for every $k\in\N$, for every segment $\s$  and $f\in W(s)$ such that  $\minsupp(f)\leq q_{n_{k}}$
  there exists  at most one $k^{\prime}>k$ such that $\abss{f(x_{n_{k^{\prime}}})}\geq\e_{k}$ and
  \item $\sum_{k=1}^{\infty}q_{n_{k}}\sum_{i=k}^{\infty}(i+1)\e_{i}<\e$.
\end{enumerate}
\end{lemma}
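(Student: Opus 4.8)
The plan is to imitate, essentially verbatim, the inductive construction used to prove Lemma~\ref{l13} in Part~I, with the Ramsey-type statement of Lemma~\ref{ramseyupperS2} playing the role of Lemma~\ref{jtramsey}, and with the functional evaluations $\abss{f(x_{n})}$ replacing the segment norms $\norm{x_{n|\s}}$. Since Lemma~\ref{ramseyupperS2} already carries the analytic weight of the argument (the Amemiya--Ito style extraction of a single ``bad'' index per segment--functional pair), the present statement is a diagonalization combined with a numerical bookkeeping step, and I expect no deep obstacle. First I would fix a sequence $(\delta_{n})_{n}$ of positive reals with $\sum_{n}\delta_{n}<\e$; these will absorb the contributions to the series in item~(2). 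I would then build by induction on $k$ a decreasing chain $L_{1}\supset L_{2}\supset\cdots$ of infinite subsets of $\N$, put $n_{k}=\min L_{k}$, and choose a strictly decreasing $(\e_{k})_{k}$, while maintaining at each finite stage the invariant
\[
q_{n_{m}}\sum_{i=m}^{k}(i+1)\e_{i}<\delta_{m}\qquad\text{for every }m\leq k,
\]
together with the Ramsey property described below.

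For the base step I would take $L_{1}=\N$, so $n_{1}=1$, and pick $\e_{1}$ small. At the inductive step, given $L_{k}$, $n_{k}=\min L_{k}$ and $\e_{k}$, I would apply Lemma~\ref{ramseyupperS2} to the block sequence indexed by $L_{k}$ with parameters $n_{0}=q_{n_{k}}$ and $\e=\e_{k}$, obtaining an infinite $L_{k+1}\subseteq L_{k}$ with $\min L_{k+1}>n_{k}$ such that for every segment $\s$ with $\min\s\leq q_{n_{k}}$ and every $f\in W(\s)$ with $\minsupp(f)\leq q_{n_{k}}$ one has $\abss{f(x_{n})}<\e_{k}$ for all but one $n\in L_{k+1}$. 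I would then set $n_{k+1}=\min L_{k+1}$ and choose $\e_{k+1}<\e_{k}$ small enough that $q_{n_{k+1}}(k+2)\e_{k+1}<\delta_{k+1}$ and $q_{n_{m}}\sum_{i=m}^{k+1}(i+1)\e_{i}<\delta_{m}$ for every $m\leq k$; only finitely many strict inequalities are at stake at each stage and each has slack, so such an $\e_{k+1}$ exists, and the invariant is preserved.

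Once the construction is complete, item~(2) follows by letting $k\to\infty$ in the invariant, which yields $q_{n_{m}}\sum_{i=m}^{\infty}(i+1)\e_{i}\leq\delta_{m}$ for each $m$, and then summing over $m$ to obtain $\sum_{m}q_{n_{m}}\sum_{i=m}^{\infty}(i+1)\e_{i}\leq\sum_{m}\delta_{m}<\e$. Item~(1) follows because the choice $n_{k}=\min L_{k}$ forces $\{n_{k'}:k'>k\}\subseteq L_{k+1}$ (indeed $n_{k'}=\min L_{k'}\in L_{k'}\subseteq L_{k+1}$ for every $k'\geq k+1$), so the ``all but one $n\in L_{k+1}$'' clause from Lemma~\ref{ramseyupperS2} translates precisely into ``at most one $k'>k$''. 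Here I would use the elementary observation that any $f\in W(\s)$ satisfies $\supp(f)\subset\s$, hence $\min\s\leq\minsupp(f)$; consequently the hypothesis $\minsupp(f)\leq q_{n_{k}}$ appearing in item~(1) automatically entails $\min\s\leq q_{n_{k}}$, so every relevant pair $(\s,f)$ falls within the scope of the Ramsey conclusion applied at stage $k$.

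The only point requiring genuine care is this indexing alignment, namely that a single invocation of Lemma~\ref{ramseyupperS2} per step controls all functionals $f\in W(\s)$ over all segments $\s$ simultaneously, and that taking $n_{k}=\min L_{k}$ is what guarantees that the later indices of the extracted subsequence all lie in $L_{k+1}$ so that the exceptional index is unique. Beyond that, the argument is a routine bookkeeping of the $\delta_{m}$-slack, exactly parallel to the proof of Lemma~\ref{l13}, and I do not anticipate any substantive difficulty.
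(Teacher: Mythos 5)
Your proposal is correct and follows essentially the same route as the paper: the paper's proof of Lemma~\ref{lemmaupperS2} is literally a pointer to the proof of Lemma~\ref{l13}, and your argument reproduces that inductive construction verbatim, with Lemma~\ref{ramseyupperS2} replacing Lemma~\ref{jtramsey} and the $\delta_m$-slack bookkeeping yielding item (2). Your observation that $f\in W(\s)$ forces $\min\s\leq\minsupp(f)$, so the hypothesis of item (1) falls within the scope of the Ramsey conclusion, is exactly the point that makes the transfer from Part I work.
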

The proof of the lemma is identical to the proof of Lemma~\ref{l13}.
\begin{proposition}\label{propupperS2}
 Let  $\e>0$, $\e_{n}\searrow 0$ and $\sxn$  be a normalized block
 sequence.  Let $q_{n}=\max\supp x_{n}$,$n\in \N$ and assume that
 \begin{enumerate}
\item[a)] for every $n\in\N$ and every segment $\s$ and $f\in W(s)$ with $\min f\leq q_{n}$
  there exists  at most one $p>n$ such that $\abss{f(x_{p})}\geq\e_{n}$,
  \item[b)] $\sum_{n=1}^{\infty}q_{n}\sum_{i=n}^{\infty}(i+1)\e_{i}<\e$.
\end{enumerate}
Then,
\begin{enumerate}[label=\roman*)]
\item for every $n\in\N$,  every choice of scalars $a_{1},\dots,a_{n}$, setting $p_{i}=\minsupp (x_{i})$, $1\leq i\leq n$
it holds that,
 \[
\normg{\sum_{i=1}^{n}a_{i}x_{i}}\leq(\sqrt{6}+\e)\norm{\sum_{i=1}^{n}a_{i}e_{p_{i}}}_{S^{(2)}}.
\]
\item For every branch $\brn$ and every finite subset $I$ of $\N$ we have $\norm{\sum_{n\in I}\pm x_{n}}_{\brn}\leq 3$.
\end{enumerate}
\end{proposition}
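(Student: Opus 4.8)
The plan is to test $\normg{\cdot}$ against its norming set: in part (i) I would bound $|g(\sum_i a_ix_i)|$ for every $g\in G$, and in part (ii) bound $f(\sum_{n\in I}\pm x_n)$ for every $f\in W(\brn)$. Both halves rest on the same ``refined Amemiya--Ito'' decomposition that drives Lemma~\ref{lpequiv}. For a single functional $f\in W(s)$ supported on a segment $s$, let $i_f$ be the unique index with $q_{i_f-1}<\min\supp f\le q_{i_f}$; then $f$ annihilates $x_i$ for $i<i_f$, and by hypothesis a) there is at most one further index $i_f'>i_f$ with $|f(x_{i_f'})|\ge\e_{i_f}$, while $|f(x_i)|<\e_{i_f}$ on all remaining blocks. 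Exactly as in the derivation of items iii)--iv) of Lemma~\ref{lpequiv}, hypothesis a) also gives $\#\{i>i_f:\e_k\le|f(x_i)|<\e_{k-1}\}\le k$, whence $\sum_{i>i_f,\,i\ne i_f'}|f(x_i)|\le\sum_{l\ge i_f}(l+1)\e_l$.

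I would prove (ii) first. Fix $\brn$ and $f\in W(\brn)$; since $f$ is supported on a segment of $\brn$ we may apply the decomposition with $s=\supp f$. Writing $y=\sum_{n\in I}\pm x_n$, linearity gives $|f(y)|\le\sum_n|f(x_n)|$, which the decomposition splits into the single main term $|f(x_{i_f})|$, the single secondary term $|f(x_{i_f'})|$, and a tail. As $f$ is a norm-one functional for $\norm{\cdot}_\brn$, the first two are at most $\norm{x_{i_f}}_\brn\le 1$ and $\norm{x_{i_f'}}_\brn\le 1$, while the tail is $\le\sum_{l\ge i_f}(l+1)\e_l<\e$ by hypothesis b) (which forces $q_{i_f}\sum_{l\ge i_f}(l+1)\e_l<\e$ and $q_{i_f}\ge 1$). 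Hence $|f(y)|\le 2+\e$, which is $\le 3$ once $\e\le 1$; taking the supremum over $f\in W(\brn)$ yields $\norm{y}_\brn\le 3$.

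For (i) I would bound $|g(\sum_i a_ix_i)|$ for $g=\sum_j\beta_jf_j\in G$, where $(\beta_j)\in B_{\ell_2}$, the $f_j\in W(\brn_{r_j})$ have pairwise disjoint supporting segments $s_j$, and $M:=\{\min s_j\}\in\schreier$. Applying the decomposition to each $f_j$ produces $i_{j,1}:=i_{f_j}$, $i_{j,2}:=i_{f_j}'$, and, using $|f_j(x_i)|\le\norm{x_i}_{s_j}$,
\[
\Big|g\big(\sum_i a_ix_i\big)\Big|\le\sum_j|\beta_j|\Big(|a_{i_{j,1}}|\norm{x_{i_{j,1}}}_{s_j}+|a_{i_{j,2}}|\norm{x_{i_{j,2}}}_{s_j}\Big)+\sum_j|\beta_j|\sum_{i>i_{j,1},\,i\ne i_{j,2}}|a_i||f_j(x_i)|.
\]
The tail is handled as in Lemma~\ref{lpequiv}: with $\#\{j:i_{j,1}=i\}\le q_i$ and $|\beta_j|\le 1$ it is at most $\max_i|a_i|\sum_i q_i\sum_{l\ge i}(l+1)\e_l<\e\max_i|a_i|\le\e\norm{\sum_i a_ie_{p_i}}_{S^{(2)}}$. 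For the main-plus-secondary term, Cauchy--Schwarz in $j$ (using $\sum_j\beta_j^2\le 1$) together with $(u+v)^2\le 2(u^2+v^2)$ gives the bound $\big(2\sum_i a_i^2\sum_{j\in J_i}\norm{x_i}_{s_j}^2\big)^{1/2}$, where $J_i=\{j:i_{j,1}=i\text{ or }i_{j,2}=i\}$. For fixed $i$ the segments $\{s_j:j\in J_i\}$ are disjoint with Schreier-admissible minima, so $\sum_{j\in J_i}\norm{x_i}_{s_j}^2\le\norm{x_i}^2=1$, and the term is $\le\sqrt2\,(\sum_{i\in R}a_i^2)^{1/2}$ with $R=\{i:J_i\ne\emptyset\}$.

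The decisive step is then to pass from $(\sum_{i\in R}a_i^2)^{1/2}$ to $\norm{\sum_i a_ie_{p_i}}_{S^{(2)}}$, which I would do by covering $\{p_i:i\in R\}$ with at most three members of $\schreier$, giving $\sum_{i\in R}a_i^2\le 3\norm{\sum_i a_ie_{p_i}}_{S^{(2)}}^2$ and hence the constant $\sqrt6$. For the home indices, enumerate $\{i:i=i_{j,1}\text{ for some }j\}=\{i_1<\dots<i_K\}$ and pick $\mu_l\in M$ inside block $i_l$, so $p_{i_l}\le\mu_l\le q_{i_l}<p_{i_{l}+1}$; then $\mu_{l-1}<p_{i_l}$, so $\{p_{i_2},\dots,p_{i_K}\}$ spreads $\{\mu_1,\dots,\mu_{K-1}\}\in\schreier$ and thus lies in $\schreier$, while $\{p_{i_1}\}$ is a singleton --- two Schreier sets. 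For the secondary indices, each is produced by a distinct $f_j$ (every functional has one secondary index) with $\min s_j<p_{i_{j,2}}$, so their number is at most $\#M\le\min M<\min\{p_{i_{j,2}}\}$; hence $\{p_{i_{j,2}}\}\in\schreier$ by the cardinality criterion, a third set. I expect this Schreier-covering bookkeeping to be the main obstacle, precisely because hypothesis a) controls each functional in isolation but the resulting home and secondary block indices must be reassembled into admissible sets; it is the spreading and compactness of $\schreier$, recalled at the beginning of this part, that make the three-set cover go through.
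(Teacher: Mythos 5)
Your proof is correct and follows essentially the same route as the paper's: the same decomposition of each $f_j$ into a home index $i_{j,1}$, at most one secondary index $i_{j,2}$, and a tail controlled by the level-set count $\#\{i:\e_{k}\le|f_j(x_i)|<\e_{k-1}\}\le k$ together with hypothesis b); the same Cauchy--Schwarz step with doubling constant $\sqrt{2}$ over the sets $J_i$; the same cover of $\{p_i:J_i\ne\emptyset\}$ by three $\schreier$-sets producing $\sqrt{6}$; and the same single-functional argument for part (ii). The only deviations are cosmetic: the paper gets the three-set cover by noting that $\{p_i:J_i\ne\emptyset,\ i>i_0\}$ has at most $2k$ elements, all larger than $k$, and splits it in two, while you use a spread of $M$ plus the cardinality criterion (your intermediate inequality $p_{i_l}\le\mu_l$ can fail, since $f_j$ may begin strictly between $q_{i_l-1}$ and $p_{i_l}$, but your argument never uses it --- only $\mu_{l-1}\le q_{i_{l-1}}<p_{i_l}$ matters).
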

\begin{proof}
 Let $ \brn_{1},\dots, \brn_{k}$ be branches
and  $f_{j}\in W(\brn_{j})$ for $j=1,\dots,k$ be disjointly supported
functionals with $k\leq\min\supp f_{j}$ for every $j$.
For every  $1\leq j\leq k$ we denote by
$i_{j,1}$ the  unique $i\leq n$ such
that  $q_{i-1}<\minsupp f_{j}\leq q_{i}$.
By a)  we get
\begin{equation*}
\mbox{for every $j$ there exists at most  one  $i>i_{j,1}$
  such that $\abs{f_{j}(x_{i})}\geq\e_{i_{j},1}$.}
\end{equation*}
Denote this $i$, if it exists, by
$i_{j,2}$.   
We set $G_{i}=\{j\leq k: i_{j,1}=i\}$. It follows that  $G_{i}$ are
pairwise disjoint and $\{1,\dots,k\}=\cup_{i=1}^{n}G_{i}$.
We also set $J_{i}=G_{i}\cup\{j\leq k:  i_{j,2}=i\}$. It
follows
\begin{align*}
  \abs{(\sum_{j=1}^{k}\beta_{j}f_{j})(\sum_{i=1}^{n}a_{i}x_{i})}&=
                                                        \abs{\sum_{j=1}^{k}\beta_{j}f_{j}(\sum_{i\geq i_{j,1}}^{n}a_{i}x_{i})}
  \\
  &\leq \sum_{j=1}^{k}\abs{\beta_{j}}\left(\abs{a_{i_{j,1}}}\abs{f_{j}(x_{i_{j,1}})}
    +\abs{a_{i_{j,2}}}\abs{f_{j}(x_{i_{j,2}})}
+\sum_{i_{j,1}<i\ne i_{j,2}}\abs{a_{i}f_{j}(x_{i})}
\right)
  \\
  &=\sum_{i:J_{i}\ne\emptyset}\abs{a_{i}}\left(\sum_{j\in
    J_{i}}\abs{b_{j}f_{j}(x_{i})}\right)+
\sum_{i=1}^{n} \sum_{j\in G_{i}}\abs{b_{j}}\sum_{i<l\ne i_{j,2}}\abs{a_{l}}\abs{f_{j}(x_{l})}.
\end{align*}
Note that
\begin{equation}
  \label{eq:16a}
  \sum_{j\in   J_{i}}\abs{b_{j}f_{j}(x_{i})}\le\left(\sum_{j\in J_{i}}b_{j}^{2}\right)^{1/2}\norm{x_{i}}.
\end{equation}
Also  every $j$ can appear in at most two $J_{i}$. Moreover this implies
 that $\{\minsupp x_{i}: J_{i}\ne\emptyset\}$ is the union of at most
 three  Schreier sets. Namely if $i_{0}=\min\{i\leq n: G_{i}\neq\emptyset\}$ then
 \[
\{\minsupp x_{i}: J_{i}\ne\emptyset\}=\{\minsupp x_{i_{0}}\}\cup\{\minsupp x_{i}: i_{0}<i, J_{i}\ne\emptyset\}.
 \]
The later set contains at most $2k$ elements after $k$ and hence is
the union of at most two Schreier sets. Let
\[
\{\minsupp x_{i}: J_{i}\ne\emptyset\}=A_{1}\cup A_{2}\cup A_{3}, \,\, A_{1}<A_{2}<A_{3}\in\schreier[1]
\]
with the latter  possible empty.  Note that setting $p_{i}=\minsupp x_{i}$ for every $i$, for every $q=1,2,3$ it holds,
\begin{equation}
  \label{eq:31}
 \sum_{i\in A_{q}}a_{i}^{2}\leq\norm{\sum_{i\in A_{q}}a_{i}e_{p_{i}}}_{S^{(2)}}^{2}\leq\norm{\sum_{i=1}^{n}a_{i}e_{p_{i}}}_{S^{(2)}}^{2}
\end{equation}
From  H\"older inequality, using that every $j$ can appear in at most two $J_{i}$, \eqref{eq:16a} and \eqref{eq:31} we get,
\begin{align}
  \label{eq:15a}
\sum_{i:J_{i}\ne\emptyset}\abs{a_{i}}\left(\sum_{j\in
    J_{i}}\abs{b_{j}f_{j}(x_{i})}\right)
&\leq
\left(\sum_{i:J_{i}\ne\emptyset}\abs{a_{i}}^{2}\right)^{1/2}\left(
  \sum_{i:J_{i}\ne\emptyset}\left(\sum_{j\in
    J_{i}}\abs{b_{j}}^{2}\right)\norm{x_{i}}^{2}\right)^{1/2}\\
&\leq\sqrt{2}\left(\sum_{i:J_{i}\ne\emptyset}\abs{a_{i}}^{2}\right)^{1/2}
=\sqrt{2}
 \left(\sum_{q=1}^{3}\sum_{i\in A_{q}}\abs{a_{i}}^{2}
                                                               \right)^{1/2}
 \notag \\
  &\leq\sqrt{6}\norm{\sum_{i=1}^{n}a_{i}e_{p_{i}}}_{S^{(2)}}.
    \notag
\end{align}
It remains to find un upper bound for  the term
$\sum_{j\in
  G_{i}}\abs{b_{j}}\sum_{i<l\ne i_{j,2}}\abs{a_{l}}\abs{f_{j}(x_{l})}$.
  Let us observe that for every $n\in\N$, every segment $\s$
and $f\in W(s)$  with  $\min\s\leqq q_{n}$,
setting $E_{k}=\{p>n: \e_{k}\leq \abss{f(x_{p})}<\e_{k-1}\}$, $k>n$, the following holds due to a)
  \begin{enumerate}
  \item[(c)]  $\# E_{k} 
  \leq k$    for every $k>n$.
  \end{enumerate}
Using $(c)$ we get,
\begin{align*}
\sum_{i<l\ne i_{j,2}}\abs{a_{l}}\abs{f_{j}(x_{l})}\leq
  \sum_{k=i+1}^{\infty}\max_{l\in E_{k}}\abs{a_{l}}\sum_{l\in E_{k}}\abss{f_{j}(x_{l})}\leq\sum_{k=i}^{+\infty}(k+1)\e_{k}.
\end{align*}
Combining the above inequality with (b) we get
\begin{equation}
  \label{eq:errors}
  \sum_{i=1}^{\infty}\sum_{j\in
    G_{i}}\abss{b_{j}}\sum_{i< l\ne i_{j,2}}\abs{a_{l}}\abs{f_{j}(x_{l})}
\leq\sum_{i=1}^{\infty}q_{i}\sum_{k=i}^{+\infty}(k+1)\e_{k}<\e.
\end{equation}
Combining \eqref{eq:errors}  with  \eqref{eq:15a} we have the result.

ii)   Let $\brn\in \brT$  and $f\in W(\brn)$.
Following the notation of (i)
we denote by $i_{1}$ the  unique $i\in  I$ such
that  $q_{i-1}<\minsupp (f)\leq q_{i}$
and  by 
$i_{2}$  the unique 
$i>i_{1}$
  such that $\abss{f(x_{i})}\geq\e_{i_{1}}$, if such an $i$ exists.

From \eqref{eq:errors}  for $\# G_{i}=1$ we get  that
\[
\abss{f(\sum_{i_{1}<i\ne i_{2}}\e_{i}x_{i})}\leq 1.
\]
It follows that
\[
  \abss{f(\sum_{i\in I}\e_{i}x_{i})}\leq\abs{f(x_{i_{1}})}+\abs{f(x_{i_{2}})}+
\abss{f(\sum_{i_{1}<i\ne i_{2}}\e_{i}x_{i})}\leq 3.  
\]
\end{proof}

\begin{proof}[Proof of Theorem~\ref{upperS2}]
  The proof of the Theorem follows combining  Lemma~ \ref{lemmaupperS2} and Proposition~\ref{propupperS2} choosing  $\e<1/10$.
\end{proof}
\begin{proposition}\label{c0}
 Every normalized block sequence $\sxn$ such that $P_{\brn}(x_{n})\xrightarrow[n]{} 0$ for every branch $\brn$, has a further block sequence that is $2$-equivalent to the unit vector basis of $c_{0}$.
\end{proposition}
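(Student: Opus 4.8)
The plan is to manufacture the $c_0$-basis as a sequence of $\ell_2$-normalized averages of $\sxn$ taken over maximal $\schreier[1]$-sets of increasing length, transferring to $\jtg$ the fact that such averages span $c_0$ inside the external space $S^{(2)}$. First I would pass to a subsequence, still denoted $\sxn$, for which the conclusions of Theorem~\ref{upperS2} and, more importantly, the refined Amemiya--Ito control of Lemma~\ref{lemmaupperS2} and Proposition~\ref{propupperS2} are available; in particular, after this reduction, for every segment $s$ and every $f\in W(s)$ with $\minsupp(f)\le q_{n_k}$ at most one later index responds with $\abss{f(x_{\cdot})}\ge\e_k$, with the total error series $\sum_k q_{n_k}\sum_{i\ge k}(i+1)\e_i$ as small as we wish. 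I would also thin out so that $p_n=\minsupp(x_n)$ grows fast.

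Next, fix a rapidly increasing sequence $N_k\nearrow\infty$ and choose successive finite sets $F_k$ of indices so that $\{p_n:n\in F_k\}$ is a maximal $\schreier[1]$-set, and set $u_k=N_k^{-1/2}\sum_{n\in F_k}x_n$ and $y_k=u_k/\norm{u_k}$. Two preliminary computations are needed: (a) in $S^{(2)}$ the vectors $g_k=N_k^{-1/2}\sum_{n\in F_k}e_{p_n}$ satisfy $\norm{g_k}_{S^{(2)}}=1$ and $\norm{\sum_k a_k g_k}_{S^{(2)}}\le(1+\e)\max_k\abss{a_k}$, because an $\schreier[1]$-set $F$ with $\min F=\mu$ can meaningfully meet only the block whose positions straddle $\mu$, the remaining blocks being suppressed by $\#F\le\mu\ll N_k$; and (b) $\norm{u_k}\to 1$, the upper bound coming from Theorem~\ref{upperS2} and the lower bound from testing against the disjoint admissible segments that norm the individual $x_n$.

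The lower $c_0$-estimate is the easy direction: given $x=\sum_k a_ky_k$, pick $k_0$ with $\abss{a_{k_0}}=\max_k\abss{a_k}$ and a functional in the norming set $G$ supported inside $\ran(u_{k_0})$ that nearly norms $u_{k_0}$; since the remaining $y_k$ have disjoint supports it annihilates them, giving $\normg{x}\ge(1-\e)\abss{a_{k_0}}$. For the upper estimate I would bound $\normg{x}$ through the two ingredients of the norm. The sup-part is at most $\max_k\abss{a_k}N_k^{-1/2}\le\max_k\abss{a_k}$. For the tree-part, write $x=\sum_n c_nx_n$ with $c_n=a_kN_k^{-1/2}$ on $F_k$; for disjoint segments $s_1,\dots,s_r$ with $\{\min s_i\}\in\schreier[1]$, represent the tree-part as $\Phi(x)$ for a single functional $\Phi=\sum_i\beta_if_i\in G$, and feed this into the Amemiya--Ito bookkeeping of Proposition~\ref{propupperS2}: each $f_i$ is significantly tested by at most two of the $x_n$ (plus a globally summable error), so each $\norm{x}_{s_i}$ is controlled by $\abss{c_{n_i}}+\abss{c_{m_i}}$ up to $\e$. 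Combined with the observation that the positions $\{\min s_i\}$ form an $\schreier[1]$-set and with the $c_0$-estimate (a), the resulting $\ell_2$-sum $\big(\sum_i\norm{x}_{s_i}^2\big)^{1/2}$ collapses to at most $2\max_k\abss{a_k}$.

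The main obstacle is precisely this last step: turning the generic upper $S^{(2)}$-estimate of Proposition~\ref{propupperS2}, whose constant is not sharp enough by itself, into the sharp bound $2\max_k\abss{a_k}$. The delicate point is to match the ``at most two responding blocks per segment'' dichotomy with the $\schreier[1]$-admissibility of $\{\min s_i\}$ and the $\ell_2$-normalization of the averages, so that the contributions of all but one block are genuinely negligible rather than merely $\ell_2$-summable; this is what separates a $c_0$-estimate from an $\ell_2$-estimate and is the heart of the proposition.
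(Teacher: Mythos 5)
There is a genuine gap, and it sits exactly where you place the weight of the argument. Your claim (b), that $\norm{u_k}\to 1$, is false in general. The proposed lower bound --- gluing together norming functionals of the individual $x_n$, $n\in F_k$ --- does not produce an element of $G$: a norming functional of a single $x_n$ is in general a sum of many disjointly supported branch functionals (up to roughly $\minsupp(x_n)$ of them, cf.\ Remark~\ref{numberds}), and the glued family violates the admissibility condition $\{\minsupp f_i\}\in\schreier[1]$ in the definition of $G$; indeed your choice of $F_k$ with $\{p_n:n\in F_k\}$ \emph{maximal} in $\schreier[1]$ already exhausts the admissibility budget even if each $x_n$ were normed by a single branch functional. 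The failure is not just in the argument but in the statement: take $x_n=e_{t_n}$ with $(t_n)_{n\in\N}$ pairwise incomparable nodes. This is a normalized block sequence with $P_{\brn}(x_n)\to 0$ for every branch $\brn$ (each branch meets at most one $t_n$). Every $f_i\in W(\brn)$ has all its coordinate coefficients bounded by $m_3^{-2}\le 1/9$ (the coefficients of $\phi_j^{\brn}$ are $m_{2j_k+1}^{-2}\beta_l$ with $\abss{\beta_l}\le 1$ and $2j_k+1\ge 3$), each $f_i$ meets at most one $t_n$, and disjointly supported $f_i$'s meet distinct $t_n$'s; hence for every $f=\sum_i\beta_i f_i\in G$, Cauchy--Schwarz gives $\abss{f(u_k)}\le N_k^{-1/2}\sqrt{N_k}\,m_3^{-2}=m_3^{-2}$, so that $\normg{u_k}\le\max\{N_k^{-1/2},\,1/9\}$, bounded away from $1$. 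Consequently the normalization $y_k=u_k/\norm{u_k}$ destroys the constant transferred from $S^{(2)}$: the upper estimate becomes $(3/\norm{u_k})\max_k\abss{a_k}$ with no uniform control, let alone the constant $2$. Your final paragraph concedes exactly this point (``the main obstacle'') without resolving it, so the heart of the proof is missing.

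The paper's proof avoids prescribing coefficients altogether, and this is the fix. After securing, via Lemmas~\ref{ramseyupperS2}, \ref{lemmaupperS2} and Proposition~\ref{propupperS2}(ii), the uniform branch-norm bound $\norm{\sum_{n\in I}\pm x_n}_{\brn}\le 3$ for all branches $\brn$ and all finite $I$, it runs a dichotomy: either $\sxn$ is already equivalent to the unit vector basis of $c_0$, or there exist signed blocks $y_n=\sum_{i\in I_n}\pm x_i$ with $\norm{y_n}\to\infty$. In the latter case the normalized vectors $z_n=y_n/\norm{y_n}$ satisfy $\norm{z_n}_{\brT}\to 0$, and after thinning so that $\sum_{n>k}\norm{z_n}_{\brT}\le 1/\max\supp(z_k)$, any $f=\sum_{j=1}^{d}\lambda_j f_j\in G$ contributes at most $\abss{a_{n_0}}\le\max_n\abss{a_n}$ on the first block $z_{n_0}$ it meets and at most $d/\max\supp(z_{n_0})\le 1$ on the tail, because the $\schreier[1]$-admissibility of $G$ forces $d\le\minsupp(f)\le\max\supp(z_{n_0})$. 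That is where the constant $2$ comes from: the quantity that must be made negligible is the branch norm $\norm{\cdot}_{\brT}$ of blocks whose full norm is large, not the full norm of Schreier averages --- which, as the example above shows, your hypothesis cannot keep close to $1$.
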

\begin{proof}
  Let $\sxn$   be a normalized block sequence  that  $\lim_{n}\norm{P_{\brn}(x_{n})}=0$ for every branch $\brn$.  Using  Lemmas~\ref{ramseyupperS2},\ref{lemmaupperS2} and passing to a subsequence we may assume that item (ii) of  Proposition~\ref{propupperS2} holds i.e. for every  finite subset  $I$ of $\N$ and every choice of signs $\e_{n}, n\in I$,
  \[
\norm{\sum_{n\in I}\e_nx_{n}}_{\brT}=\sup\{\norm{\sum_{n\in I}\e_{n}x_{n}}_{\brn}:\brn\in\brT \}\leq 3.
  \]
If $\sxn$ is not equivalent to the unit vector basis of $c_{0}$ we get a block sequence $(y_{n})_{n}$  of $\sxn$
such that  $y_{n}=\sum_{i\in I_{n}}\e_{i}x_{i}$ and $\norm{y_{n}}\to+\infty$.   Set $z_{n}=y_{n}/\norm{y_{n}}$. Note that  since $\norm{y_{n}}_{\brT}\leq 3$  it holds that $\norm{z_{n}}_{\brT}\to 0$. Passing to a subsequence we may assume that  $\sum_{n=k+1}^{+\infty}\norm{z_{n}}_{\brT}\leq1/\max\supp(z_{k})$ for every $k$.
Let  $p\in\N$ and $a_{n}\in [-1,1]$, $n\leq p$. We show that  $\norm{\sum_{n=1}^{p}a_{n}z_{n}}\leq 2$.

Let $f=\sum_{j=1}^{d}\lambda_{j}f_{j}\in W$ with $f_{j}\in W(\brn_{r_{j}})$. Set 
$n_{0}=\min\{n: \rang(z_{n})\cap\rang(f_{j})\ne\emptyset\,\text{for some }\,j \}$ and notice that
$d\leq\max\supp(z_{n_{0}})$. It follows 
\[
  \abss{f_{j}(\sum_{n>n_{0}}a_{n}z_{n})}<1/\max\supp(z_{n_{0}})
\Rightarrow\abss{f(\sum_{n>n_{0}}a_{n}z_{n})}<d/\max\supp(z_{n_{0}})\leq1.
\]
So we conclude
$\abss{f(\sum_{n=1}^{p}a_{n}z_{n})}<\abss{f(a_{n_{0}}z_{n_{0}})}+\abss{f(\sum_{n>n_{0}}a_{n}z_{n})}\leq
2$ which yields the desired result.
\end{proof}
\begin{corollary}
  The space $\jtg$ is not reflexive. In particular, for every infinite subset $N$ of $\N$ the subspace $X_{N}$
  of $\jtg$ generated  by the subsequence
  $(e_{n})_{n\in N}$ of the basis $(e_{n})_{n\in\N}$ contains a subspace isomorphic to $c_0$.
\end{corollary}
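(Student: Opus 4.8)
The plan is to produce, inside each subspace $X_N$, a normalized block sequence whose branch projections vanish in the limit, and then feed it into Proposition~\ref{c0}. First I would note that, since $N$ is infinite and each basis vector $e_n$ has support $\{n\}$ with $\norm{e_n}=1$, the sequence $(e_n)_{n\in N}$ is itself a normalized block sequence of $\jtg$. Moreover any block sequence of $(e_n)_{n\in N}$ consists of finite linear combinations of $\{e_n:n\in N\}$ with successive supports, so it lives in $X_N=\overline{[e_n:n\in N]}$. Hence it suffices to check the hypothesis of Proposition~\ref{c0} for $(e_n)_{n\in N}$.

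The only genuine computation is to estimate $P_\brn(e_n)$ for a fixed branch $\brn\in\brT$. If $n\notin\brn$, then $\supp(\fns[k])=\segs[k]\subset\brn$ for every $k$, so $\fns[k](e_n)=0$ for all $k$ and thus $P_\brn(e_n)=0$. If $n\in\brn$, let $k$ be the unique index with $n\in\segs[k]$; writing $\xns[k]=m_{2j_{k}+1}^{2}\sum_{l\in\segs[k]}\beta_{l}e_{l}$ and $\fns[k]=m_{2j_{k}+1}^{-2}\sum_{l\in\segs[k]}\beta_{l}e_{l}$, I obtain $P_\brn(e_n)=\fns[k](e_n)\,\xns[k]=m_{2j_{k}+1}^{-2}\beta_{n}\,\xns[k]$. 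Since the coefficient-squares of a $(2,n_{2j_k+1})$-average satisfy $\sum_{l\in\segs[k]}\beta_{l}^{2}=1$ we have $\abs{\beta_n}\le 1$, and recalling $\norm{\xns[k]}=1$ this gives $\norm{P_\brn(e_n)}\le m_{2j_{k}+1}^{-2}$. Each segment $\segs[k]$ is a finite maximal element of $\schreier[n_{2j_k+1}]$, so only finitely many indices lie in it; therefore, as $n\to\infty$ along $\brn$, the index $k$ tends to $+\infty$, and since $j_k=\cod(\max\segs[k-1])\to+\infty$ with $(m_j)_j$ strictly increasing, $m_{2j_k+1}\to+\infty$. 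Combining the two cases yields $\lim_{n\in N}\norm{P_\brn(e_n)}=0$ for every branch $\brn\in\brT$.

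With this verified, Proposition~\ref{c0} applies to the normalized block sequence $(e_n)_{n\in N}$ and produces a further block sequence $(y_k)_k$ that is $2$-equivalent to the unit vector basis of $c_{0}$. As observed, $(y_k)_k$ lies in $X_N$, so $\overline{[y_k:k\in\N]}$ is a subspace of $X_N$ isomorphic to $c_{0}$, which proves the second assertion. Specializing to $N=\N$ shows that $\jtg$ itself contains an isomorphic copy of $c_{0}$; since $c_{0}$ is not reflexive and reflexivity passes to closed subspaces, $\jtg$ is not reflexive, establishing the first assertion.

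I expect essentially no obstacle here: once Proposition~\ref{c0} is available the argument is immediate, and the entire content reduces to the elementary bound $\norm{P_\brn(e_n)}\le m_{2j_k+1}^{-2}$. The one point requiring a little care is the limit itself—one must use that each $\segs[k]$ is finite and that the weights $m_{2j_k+1}$ grow, so that the contribution of $e_n$ to the fixed branch $\brn$ becomes uniformly small as $n\to\infty$ in $N$.
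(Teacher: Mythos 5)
Your proof is correct and follows essentially the same route as the paper: the paper's own argument simply asserts that $P_{\brn}(e_{n_k})\to 0$ for every branch $\brn$ and invokes Proposition~\ref{c0}. Your only addition is the explicit (and accurate) verification of this limit via the bound $\norm{P_{\brn}(e_n)}\le m_{2j_k+1}^{-2}$, which the paper leaves to the reader.
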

Indeed, for every  sequence $(n_k)_{k}$   we have that $P_{\brn}(e_{n_{k}})\to 0$ for every $\brn$ and from the previous proposition we have the result.

The   last  result of the section is the following proposition.
\begin{proposition}\label{s2averages}
  Let $\sxn$ be a normalized block sequence  such that
  $\norm{P_{\brn}(x_{n})}\to 0$ for every branch $\brn\in\brT$. Then there
  exists a subsequence $(x_{n})_{n\in M}$ of $\sxn$ such that  for every
  block sequence $(y_{n})_{n\in N}$ of  $(x_{n})_{n\in M}$, where each
  $y_{n}$ is an $(2,n)$-average of  $(x_{n})_{n\in M}$, it holds that $\lim_{n}\norm{y_{n}}=0$.
\end{proposition}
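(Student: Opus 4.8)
The plan is to reduce the statement to two facts that are already in hand: the upper $S^{(2)}$-estimate of Theorem~\ref{upperS2}, and the flattening property of the repeated averages recorded immediately after the definition of $(2,n)$-averages (a consequence of Lemma~\ref{propaverages}(4)). First I would apply Theorem~\ref{upperS2} to $\sxn$ to obtain a subsequence $(x_{n})_{n\in M}$ admitting an upper $S^{(2)}$-estimate with constant $3$; concretely, as in Proposition~\ref{propupperS2}(i), this subsequence is $3$-dominated by $(e_{p_{i}})_{i\in M}$, where $p_{i}=\minsupp(x_{i})$. This $M$ is the subsequence asserted in the conclusion, so all that remains is to bound the norm of an arbitrary $(2,n)$-average built from $(x_{n})_{n\in M}$.

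Fix a block sequence $(y_{n})_{n\in N}$ of $(x_{n})_{n\in M}$ with each $y_{n}=\sum_{i\in F_{n}}a_{i}x_{i}$ a $(2,n)$-average, so that $\sum_{i\in F_{n}}a_{i}^{2}e_{p_{i}}=\alpha_{1}(n,L_{n})$ for some $L_{n}\subseteq\{p_{i}:i\in F_{n}\}$. Applying the upper estimate directly to the scalars $(a_{i})_{i\in F_{n}}$ gives
\[
\norm{y_{n}}\leq 3\,\norm{\sum_{i\in F_{n}}a_{i}e_{p_{i}}}_{S^{(2)}}.
\]
Next I would compute the right-hand side from the definition of the $S^{(2)}$-norm, which is $\sup_{G\in\schreier[1]}\big(\sum_{i:\,p_{i}\in G}a_{i}^{2}\big)^{1/2}$. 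Since the Schreier families are increasing, $\schreier[1]\subseteq\schreier[n-1]$ for $n\geq 2$, so every admissible $G$ lies in $\schreier[n-1]$; the estimate for $(2,n)$-averages noted after their definition then yields $\big(\sum_{i:\,p_{i}\in G}a_{i}^{2}\big)^{1/2}<(3/p_{\min F_{n}})^{1/2}$. Hence $\norm{y_{n}}<3(3/p_{\min F_{n}})^{1/2}$ for all $n\geq 2$.

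Finally I would let $n\to\infty$. Because $\{p_{i}:i\in F_{n}\}=\supp\alpha_{1}(n,L_{n})$ is an initial segment of $L_{n}$, its minimum equals $\min L_{n}=p_{\min F_{n}}$; and since $(y_{n})_{n\in N}$ is a block sequence its supports are successive, forcing $\min F_{n}\to\infty$ and therefore $p_{\min F_{n}}\to\infty$. Consequently $\norm{y_{n}}\to 0$, which is exactly the claim.

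The only point that needs genuine care is the bookkeeping in the last step: one must confirm that the minimum of the support of the average coincides with $\min L_{n}$, and that successivity of the blocks drives it to infinity. All of the analytic difficulty — the Amemiya--Ito-type Ramsey argument producing the upper $S^{(2)}$-estimate — has already been absorbed into Theorem~\ref{upperS2}, so beyond this indexing check the argument is a direct substitution of known estimates.
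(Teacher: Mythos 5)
Your proposal is correct and follows essentially the same route as the paper: apply Theorem~\ref{upperS2} to extract the subsequence with an upper $S^{(2)}$-estimate, then use the flattening property of $(2,n)$-averages (the consequence of Lemma~\ref{propaverages}(4)) to bound the $S^{(2)}$-norm of the coefficient vector by $(3/p_{\min F_{n}})^{1/2}$, which tends to $0$ since the blocks are successive. Your version is in fact slightly more careful than the paper's, which drops the domination constant $3$ in its displayed inequality.
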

\begin{proof}
 From Theorem~\ref{upperS2}  we get a subsequence $(x_{n})_{n\in M}$ of
 $\sxn$ satisfying upper $\schreier[]^{(2)}$-estimates with constant $3$.
 Let  $y_{n}=\sum_{k\in F_{n}}a_{k}x_{k}$  be an
  $(2,n)$- average of  $(x_{n})_{n\in M}$. From
  the properties of the repeated averages we get  that  for every
  $G\in\schreier[1]$ it holds $\sum_{k\in
    G}a_{k}^{2}\leq 3/p_{n}$ where $p_{n}=\minsupp (x_{\min F_{n}})$.
  It follows that
  \[
    \norm{y_{n}}=\norm{\sum_{k\in F_{n}}a_{k}x_{k}}
    \leq\sup\{    (\sum_{k\in G}a_{k}^{2})^{1/2}: G\in \schreier\}
    \leq  (3/p_{n})^{1/2}.
\]
The above inequality yields the result.
\end{proof}
\section{The $\ell_{2}$-vector of a block sequence}
In this  section we present the definition of the $\ell_{2}$-vectors, which are analogous to $\ell_{p}$-vectors that appeared  in the study of the space  $\jtp$.
\begin{definition}  Let $(x_{n})_{n\in\N}$ be a normalized block sequence in $\jtg$ . A decreasing
sequence $(c_{i})_{i\in N}$, where $N$ is either an initial finite interval of $\N$ or the set of
the natural numbers, is said to be the  $\ell_2$ -vector of the sequence $\xn$ with
associated set of
branches $\mc{B}=(\brn_{i})_{i\in N}$ if
\begin{equation*}
  \lim_{n}\norm{P_{\brn_{i}}x_{n}}=c_{i}>0\,\,\text{for every $i\in N$ and}
  \lim_{n}\norm{P_{\brn}x_{n}}=0\,\,\text{for every $\brn\ne\brn_{i}$, $i\in N$}.  
\end{equation*}
If  $\lim_{n}\norm{P_{\brn}x_{n}}=0$ for every branch $\brn$,  the
$\ell_{2}$-vector of $\xn$ is the zero one and 
and the associated set of branches $\mc{B}$ is the empty one.
\end{definition}
\begin{proposition}\label{pl2vector}
Let $\sxn$   be a normalized block sequence in $\jtg$. Then,  $\sxn$
either has a subsequence admitting an upper $S^{(2)}$-estimate or it has a
subsequence accepting  a non-zero $\ell_{2}$-vector.
\end{proposition}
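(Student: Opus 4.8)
The plan is to follow the strategy of Proposition~\ref{lpdecomp}, replacing the coordinate restrictions $x\mapsto x_{|\brn}$ by the branch projections $P_{\brn}$ and the role of Remark~\ref{remnormjt} by Lemma~\ref{sumprojections}. First I would dispose of the trivial alternative: if $\lim_{n}\norm{P_{\brn}(x_{n})}=0$ for every branch $\brn\in\brT$, then Theorem~\ref{upperS2} immediately furnishes a subsequence admitting an upper $S^{(2)}$-estimate and we are done. So the substance lies in the opposite case, where I would produce a subsequence with a non-zero $\ell_{2}$-vector; concretely, I would establish the $\ell_{2}$-analogue of the Lindenstrauss--Stegall Lemma~\ref{lslemma}.

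The cornerstone is the following estimate, the analogue of \eqref{eq:1a}: for any finite collection of distinct branches $\brn_{1},\dots,\brn_{k}$, writing $F=\{1,\dots,k\}$, one has $\limsup_{n}\sum_{i=1}^{k}\norm{P_{\brn_{i}}(x_{n})}^{2}\le 1$. To see this I would fix $\rho\ge k$ large enough that the final segments $\brn_{i,>\rho}$ are pairwise incomparable. Since $(x_{n})_{n}$ is a block sequence, $\minsupp(x_{n})\to+\infty$, so for all large $n$ we have $\minsupp(x_{n})\ge\max\{\min\brn_{i,>\rho}:i\le k\}$; for such $n$ the initial functionals $\phi_{m}^{\brn_{i}}$ annihilate $x_{n}$ and hence $P_{\brn_{i}}(x_{n})=P_{\brn_{i,>\rho}}(x_{n})$. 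Since $\#F=k\le\rho$, Lemma~\ref{sumprojections} applies and yields $\sum_{i=1}^{k}\norm{P_{\brn_{i}}(x_{n})}^{2}=\norm{P_{\mc{B}_{F,>\rho}}(x_{n})}^{2}\le\norm{x_{n}}^{2}=1$, giving the claimed bound in the limit. In particular, for every $\e>0$ and every subsequence there are at most $\lfloor \e^{-2}\rfloor$ branches $\brn$ with $\limsup_{n}\norm{P_{\brn}(x_{n})}\ge\e$, since otherwise a finite sub-sum of the above would exceed $1$.

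With this counting in hand I would run exactly the induction of Lemma~\ref{lslemma}. Choosing $\e_{j}\searrow 0$ and applying the counting bound successively, I obtain nested infinite sets $L_{1}\supset L_{2}\supset\cdots$ and, at each level $j$, finitely many branches $\brn_{j,1},\dots,\brn_{j,k_{j}}$ with $k_{j}\le\lfloor\e_{j}^{-2}\rfloor$ such that $\lim_{n\in L_{j}}\norm{P_{\brn_{j,m}}(x_{n})}=c_{j,m}\ge\e_{j}$ for $m\le k_{j}$, while $\limsup_{n\in L_{j}}\norm{P_{\brn}(x_{n})}<\e_{j}$ for every other branch $\brn$. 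Passing to a diagonal subsequence $(x_{n})_{n\in L}$ and enumerating the nonzero limits in decreasing order as $(c_{i})_{i\in N}$ with associated branches $(\brn_{i})_{i\in N}$ produces the candidate $\ell_{2}$-vector. That $(c_{i})_{i\in N}\in B_{\ell_{2}}$ follows from the estimate above: if $\sum_{i}c_{i}^{2}>1$ then already some finite set $F$ of indices has $\sum_{i\in F}c_{i}^{2}>1$, contradicting $\limsup_{n\in L}\sum_{i\in F}\norm{P_{\brn_{i}}(x_{n})}^{2}\le1$.

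I expect the main obstacle to be precisely this first estimate, i.e.\ the fact that the subadditivity \eqref{eq:1a} that was essentially free in $\jtp$ (thanks to $1$-unconditionality of the basis and Remark~\ref{remnormjt}) must here be extracted from Lemma~\ref{sumprojections}. The basis of $\jtg$ is not unconditional and the $P_{\brn}$ are genuine operators rather than coordinate restrictions, so one must verify carefully that $P_{\brn_{i}}(x_{n})$ coincides with the tail projection $P_{\brn_{i,>\rho}}(x_{n})$ to which Lemma~\ref{sumprojections} applies; this is exactly where the block hypothesis $\minsupp(x_{n})\to+\infty$ is used, to push the supports past the branching level $\rho$. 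Once this matching is secured, the remainder is a verbatim transcription of the Lindenstrauss--Stegall argument together with its $\ell_{2}$ bookkeeping.
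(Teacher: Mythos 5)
Your proposal is correct and takes essentially the same route as the paper's: the zero $\ell_{2}$-vector case is dispatched by Theorem~\ref{upperS2}, and the non-zero case is exactly the paper's Lemma~\ref{lslemma2}, whose proof rests on precisely your key estimate $\limsup_{n}\sum_{i=1}^{k}\norm{P_{\brn_{i}}(x_{n})}^{2}\le 1$ (obtained from Lemma~\ref{sumprojections} after identifying $P_{\brn_{i}}(x_{n})$ with $P_{\brn_{i,>\rho}}(x_{n})$ for large $n$, using blockness), followed by the Lindenstrauss--Stegall exhaustion of Lemma~\ref{lslemma} and the same verification that $(c_{i})_{i}\in B_{\ell_{2}}$. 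One small caution, shared with the paper's own writeup: the counting bound is valid for branches whose projections converge along a \emph{common} subsequence (as in the exhaustion, where limits persist under refinement), not literally for all branches with $\limsup\ge\e$ along a fixed subsequence, so the within-stage termination argument should be phrased that way.
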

For the proof we need the following lemma which  is a variation of Lemma~\ref{lslemma}.
\begin{lemma}\label{lslemma2}
  Let $\sxn$ be a normalized block sequence in $\jtg$
  such that there exists a branch $\brn\in\brT$ with $\limsup_{n}\norm{P_{\brn}(x_{n})}=c>0$.
Then there   exist  a subsequence $(x_{n})_{n\in L}$ of $\sxn$, an at
most  countable set of distinct branches $(\brn_{i})_{i\in N}$
and  a decreasing sequence $(c_{i})_{i\in N}\in B_{\ell_{2}}$ 
such that
\begin{equation}
  \label{eq:33}
  \lim_{n\in L}\norm{P_{\brn_{i}}x_{n}}=c_{i}>0\,\text{for all  $i\in N$
  and }\,
\lim_{n\in L}\norm{P_{\brn}x_{n}}=0\,\,\text{for all}\, \brn\ne\brn_{i}, i\in N.
\end{equation}
\end{lemma}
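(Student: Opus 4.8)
The plan is to mimic the proof of Lemma~\ref{lslemma}, replacing the coordinate restrictions $x_{n|\sigma}$ by the branch projections $P_{\brn}(x_n)$ and the exponent $p$ by $2$, with Lemma~\ref{sumprojections} taking over the role played by Lemma~\ref{suminbranches} together with Remark~\ref{remnormjt}(ii) in the $\jtp$ setting.

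First I would establish the uniform square-summability bound that is the analogue of \eqref{eq:1a}: for every finite collection $\brn_{1},\dots,\brn_{k}$ of distinct branches,
\[
\limsup_{n}\sum_{i=1}^{k}\norm{P_{\brn_{i}}(x_{n})}^{2}\le 1.
\]
To see this, choose a level $\rho\ge k$ so large that the final segments $\brn_{i,>\rho}$, $i\le k$, are incomparable (possible by property (a) of the opening Notation). Since $\sxn$ is a block sequence, for all large $n$ we have $\minsupp(x_{n})>\max\{\min\brn_{i,>\rho}:i\le k\}$; for such $n$ every functional $\phi_{m}^{\brn_{i}}$ supported before level $\rho$ annihilates $x_{n}$, so $P_{\brn_{i}}(x_{n})=P_{\brn_{i,>\rho}}(x_{n})$. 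Lemma~\ref{sumprojections}, applied with $F=\{1,\dots,k\}$ and this $\rho$, then gives $\sum_{i=1}^{k}\norm{P_{\brn_{i}}(x_{n})}^{2}=\norm{P_{\mc{B}_{F,>\rho}}(x_{n})}^{2}\le\norm{x_{n}}^{2}=1$, which is the claim.

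From this bound the combinatorial extraction proceeds as in Lemma~\ref{lslemma}. Fixing $\e>0$, the displayed estimate forces at most $\lfloor\e^{-2}\rfloor$ branches to satisfy $\limsup_{n}\norm{P_{\brn}(x_{n})}\ge\e$ along any given subsequence; passing to a further subsequence I would arrange that the relevant limits $\lim_{n}\norm{P_{\brn_{i}}(x_{n})}=c_{i}$ exist and that every other branch has $\limsup<\e$. Running this for a sequence $\e_{j}\searrow 0$ produces nested subsequences $L_{1}\supset L_{2}\supset\cdots$ and finite families of branches, whose nonzero limiting constants I reorder decreasingly as $(c_{i})_{i\in N}$ with associated branches $(\brn_{i})_{i\in N}$; a diagonal subsequence $L$ then realizes $\lim_{n\in L}\norm{P_{\brn_{i}}(x_{n})}=c_{i}>0$ for all $i\in N$ and $\lim_{n\in L}\norm{P_{\brn}(x_{n})}=0$ for every other branch. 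It remains to verify $(c_{i})_{i\in N}\in B_{\ell_{2}}$, and here I argue by contradiction exactly as in Lemma~\ref{lslemma}: if $\sum_{i\in F}c_{i}^{2}>1$ for some finite $F$, I choose $\rho$ with $\brn_{i,>\rho}$, $i\in F$, incomparable and $\#F\le\rho$, and apply Lemma~\ref{sumprojections} along large $n\in L$ to obtain $1=\lim_{n\in L}\norm{x_{n}}^{2}\ge\lim_{n\in L}\sum_{i\in F}\norm{P_{\brn_{i}}(x_{n})}^{2}=\sum_{i\in F}c_{i}^{2}>1$, a contradiction.

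The main obstacle, compared with the $\jtp$ version, is precisely the failure of unconditionality: $P_{\brn}(x)$ is not a coordinate restriction, so the elementary monotonicity $\norm{x_{|s}}\le\norm{x}$ is no longer available and must be replaced throughout by the norm-one projection estimate of Lemma~\ref{sumprojections}. The delicate point is checking that lemma's hypotheses survive the limiting procedure — the incomparability of the final segments, the support condition $\#F\le\rho$, and the identification $P_{\brn_{i}}(x_{n})=P_{\brn_{i,>\rho}}(x_{n})$ for late-supported block vectors — since this is exactly what keeps the square bound intact under the repeated passages to subsequences and the final diagonalization.
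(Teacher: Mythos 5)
Your proposal is correct and follows essentially the same route as the paper's own proof: the square-sum bound $\limsup_n\sum_{i=1}^{k}\norm{P_{\brn_i}(x_n)}^{2}\leq 1$ obtained from Lemma~\ref{sumprojections} (the paper's \eqref{eq:19a}), the extraction scheme of Lemma~\ref{lslemma} run over $\e_j\searrow 0$ with a diagonal subsequence, and the same contradiction argument via Lemma~\ref{sumprojections} to show $(c_i)_{i\in N}\in B_{\ell_2}$, including the identification $P_{\brn_i}(x_n)=P_{\brn_{i,>\rho}}(x_n)$ for late-supported blocks. The only cosmetic difference is that the paper invokes Remark~\ref{numberds} alongside the square-sum bound to cap the number of large branches, whereas you derive the cap directly from the bound; both are valid.
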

\begin{proof}
  The proof is similar to the proof of Lemma~\ref{lslemma}. We
start with the following observation which is critical for our result.
Using Lemma~\ref{sumprojections} it follows
 that for every  choice if distinct branches $\brn_{1},\dots,\brn_{k}$
 we have that
 \begin{equation}
   \label{eq:19a}
\limsup_{n}\left(\sum_{i=1}^{k}\norm{P_{\brn_{i}}x_{n}}^{2}\right)^{1/2}\leq\sup_{n}\norm{x_{n}}=1.
\end{equation}
Combining \eqref{eq:19a} and Remark~\ref{numberds} we get that for every $\e>0$ and
every subsequence $(x_{n})_{n\in L }$ there exist at most
$k\leq d(\e)=\floor{2/\e^{2}}+1$ distinct branches $\brn_{1},\dots,\brn_{k}$
such that
$\lim_{n\in L}\norm{P_{\brn_{i}}x_{n}}=c_{i}\ge\e$ and
$\limsup_{n\in L}\norm{P_{\brn}x_{n}}<\e$ for all $\brn\ne\brn_{i}$.
Let $\e_{j}\searrow 0$. Continuing as  in Lemma~\ref{lslemma},   we get 
a subsequence $(x_{n})_{n\in L}$ of $\sxn$, an at most countable  set of branches
$(\brn_{i})_{i\in N}$ 
and a decreasing sequence $(c_{i})_{i \in N}$ such that \eqref{eq:33} holds.

 We show now that $(c_{i})_{i\in N}\in B_{\ell_{2}}$. On the contrary assume
 that for some finite subset $F$ of $\N$ it holds  $\sum_{i\in
   F}c_{i}^{2}>1$.
 Chose $\rho\in\N$, $\rho>\# F$, such that the  final segments
 $\brn_{i,>\rho}$, $i\in F$ are  incomparable
 and  $\e>0$  such that $(1-\e)(\sum_{i\in F}c_{i}^{2})^{1/2}>1$. Let $n\in\N$ be such
 that
 $\min\supp x_{n}>\max\{\min\brn_{i,>\rho}: i\in F\}$ and
$\norm{P_{\brn_{i}}(x_{n})}=\norm{P_{\brn_{i,>\rho}}(x_{n})}>(1-\e)c_{i}$ for every
$i\in F$. Setting $\mc{B}_{F}=\{\brn_{i}:i\in F\}$  and using Lemma~\ref{sumprojections} we get
\begin{align*}
  \normg{x_{n}}\geq\norm{P_{\mc{B}_{F,>\rho}}(x_{n})}=
\left(\sum_{i\in F}\norm{P_{\brn_{i,>\rho}}x_{n}}^{2}\right)^{1/2}
\geq
(1-\e)\left(\sum_{i\in F}c_{i}^{2}\right)^{1/2}>1
\end{align*}
a contradiction.
\end{proof}
\begin{proof}[Proof of Proposition~\ref{pl2vector}]
If  $\sxn$ has a subsequence  $(x_{n})_{n\in L}$ with  $\ell_{2}$ vector the zero one then Theorem~\ref{upperS2} yields the result. Otherwise Lemma~\ref{lslemma2} yields a subsequence accepting non-zero $\ell_{2}$-vector.
\end{proof}
For the rest of  Part II, we will assume that a non-zero
$\ell_{2}$-vector  is an infinitely supported vector of $\ell_{2}$.  The
cases that  the $\ell_{2}$-vector  is  finitely supported  are treated in
similar manner and in most cases  the proofs are simpler.
\begin{lemma}\label{lowerl2}
  Let $(x_{n})_{n}$ be  bounded block sequence admitting a non-zero $\ell_{2}$-vector
$(c_{i})_{i\in\N}$. Then  $\liminf_{n}\norm{x_{n}}\geq\norm{(c_{i})_{i\in\N}}_{\ell_{2}}$.
\end{lemma}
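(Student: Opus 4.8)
The plan is to derive the lower bound directly from Lemma~\ref{sumprojections}, which is exactly the tool that converts norms of projections onto incomparable final segments into an $\ell_2$-sum. This is the $\jtg$-analogue of Remark~\ref{remI3.4}(ii) for $\jtp$; the difference is that, because the basis of $\jtg$ is not unconditional, we cannot argue by restricting to supports as in the $\ell_p$ case and must instead go through the projections $P_{\sigma_{i,>k}}$.

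Let $(\sigma_i)_{i\in N}$ be the branches associated to the $\ell_2$-vector $(c_i)_{i\in N}$, so that $\lim_n\norm{P_{\sigma_i}(x_n)}=c_i$ for every $i$. Fix $\varepsilon>0$ and choose a finite set $F\subset N$ with $\sum_{i\in F}c_i^2>\norm{(c_i)_{i\in N}}_{\ell_2}^2-\varepsilon$. Since the branches $(\sigma_i)_{i\in F}$ are distinct, I would pick $k\in\N$ with $k\geq\#F$ and such that the final segments $\sigma_{i,>k}$, $i\in F$, are incomparable; this is possible because distinct branches eventually separate, and incomparability is preserved when $k$ is increased, so the two requirements $k\geq\#F$ and incomparability can be met simultaneously. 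Because $(x_n)_n$ is a block sequence, $\min\supp(x_n)\to\infty$, so for all large $n$ we have $\min\supp(x_n)\geq\max\{\min\sigma_{i,>k}:i\in F\}$.

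For such $n$ the hypotheses of Lemma~\ref{sumprojections} are all in place (distinct branches, incomparable final segments, $\#F\leq k$, and the support condition), giving
\[
\norm{x_n}\geq\norm{P_{\mc{B}_{F,>k}}(x_n)}=\Big(\sum_{i\in F}\norm{P_{\sigma_{i,>k}}(x_n)}^2\Big)^{1/2}.
\]
Moreover, once $\min\supp(x_n)\geq\min\sigma_{i,>k}$ the support of $x_n$ lies beyond all segments $\segss{m}{i}$ with $m<j(k)$, so the functionals $\phi_m^{\sigma_i}$ living on those segments annihilate $x_n$ and hence $P_{\sigma_i}(x_n)=P_{\sigma_{i,>k}}(x_n)$ for every $i\in F$. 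Substituting and letting $n\to\infty$, the finitely many terms satisfy $\norm{P_{\sigma_i}(x_n)}\to c_i$, whence $\liminf_n\norm{x_n}\geq(\sum_{i\in F}c_i^2)^{1/2}$. Finally, letting $F$ exhaust $N$ (equivalently $\varepsilon\to 0$) yields $\liminf_n\norm{x_n}\geq\norm{(c_i)_{i\in N}}_{\ell_2}$.

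The only genuinely delicate point is the identification $P_{\sigma_i}(x_n)=P_{\sigma_{i,>k}}(x_n)$ together with the verification of all of the hypotheses of Lemma~\ref{sumprojections}; everything else is the finite-set bookkeeping that lets the block condition push the supports past level $k$. This is the step I expect to require care, but once the supports clear level $k$ it is routine, since the early functionals $\phi_m^{\sigma_i}$ vanish on $x_n$.
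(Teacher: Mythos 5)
Your proof is correct and follows essentially the same route as the paper's: restrict to a finite set of branches carrying most of the $\ell_2$-mass, push the supports of the $x_n$ past a level $k$ where the final segments $\sigma_{i,>k}$ are incomparable and $\#F\leq k$, apply Lemma~\ref{sumprojections}, and pass to the limit. The identification $P_{\sigma_i}(x_n)=P_{\sigma_{i,>k}}(x_n)$ that you single out as the delicate point is exactly what the paper uses implicitly (it is also verified inside the proof of Lemma~\ref{sumprojections} itself), so there is no gap.
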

\begin{proof}
Let $(\brn_{i})_{i\in\N}$ be the set of the associated  branches, 
$I$ be an initial interval of $\N$. 
Let  $l\in\N$  be such that   the final segments $\brn_{i,>l}$, $i\in I$, are
incomparable  and $\#I<l$.  Let $\e>0$. Using the definition of the $\ell_{2}$-vector
we choose  $n_{0}\in\N$  such that
$\norm{P_{\brn_{i}}(x_{n})}\geq(1-\e)c_{i}$ for every $i\in I$ and every
$n\geq n_{0}$.  Then for every $x_{n}$ such that $\minsupp
(x_{n})\geq\max\{\min\brn_{i,>l}:i\in I\}$,
Lemma~\ref{sumprojections} yields
\[
\norm{x_{n}}\geq\norm{P_{\mc{B}_{I,>l}}(x_{n})}=\left(\sum_{i\in I}\norm{P_{\brn_{i,>l}}
    (x_{n})}^{2}\right)^{1/2}\geq(1-\e)\left(\sum_{i\in I}c_{i}^{2}\right)^{1/2}.
\]
Since  the above inequality holds for any $\e>0$ and every initial
segment of $\N$  we get that $\liminf_{n}\norm{x_{n}}\geq\norm{(c_{i})_{i\in\N}}_{\ell_{2}}$.
\end{proof}
\section{Upper $\ell_{2}$-estimates in $\jtg$.}
In this section we shall  prove that every normalized block sequence
contains a subsequence which admits an upper $\ell_{2}$-estimate with constant $9$. 
We start with some lemmas concerning the  $\ell_{2}$-vectors.
\begin{lemma}\label{stabilization2}
  Let  $(x_{n})_{n}$ be a normalized block sequence with a non-zero
  $\ell_{2}$-vector  $(c_{i})_{i\in \N}$
  and let $(\brn_{i})_{i\in \N}$ be the associated set of branches.
  Let $(I_{k})_{k}$ be a strictly  increasing sequence of initial
  intervals of $\N$ and $\e_{k}\searrow 0$.
  Then, there exist  $n_{k}\nearrow +\infty$
  and $(l_{k})_{k}\subset\N$
 such that  
 \begin{enumerate}
 \item[(i)]  $    (1-\e_{k})c_{i}\leq  \norm{\Pbrn[\brn_{i}]x_{n}}\leq (1+\e_{k})c_{i}$ for every
   $i\in I_{k}$ and $n\geq n_{k}$,
 \item[(ii)]  the final  segments $\brn_{i,>l_{k}}, i\in I_{k}$ are
    incomparable, $ \# I_{k}<l_{k}$, and  \\
$\max\{\min \brn_{i,>l_{k}}:i\in I_{k}\}<\minsupp(x_{n_{k}})$.
 \end{enumerate}
 In particular, 
 setting $\bar{x}_{n_{k}}=P_{\mc{B}_{I_{k}}}(x_{n_{k}})$
 the
 following  holds:
 \begin{equation*}
   \lim_{k}\norm{\bar{x}_{n_{k}}}=\norm{(c_{i})_{i\in \N}}_{\ell_{2}}\,\,\text{and}\,\,
  \lim_{k}\norm{\Pbrn(x_{n_{k}}-\bar{x}_{n_{k}})}=0\,\, \text{for every branch}\, \brn.
\end{equation*}
Moreover the   sequence $(x_{n_{k}}-\bar{x}_{n_{k}})_{k}$ 
admits an upper $\schreier^{(2)}$-estimate with constant $6$.
\end{lemma}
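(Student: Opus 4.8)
The plan is to mimic the proof of Lemma~\ref{stabilization} from Part~I, replacing the segment restrictions $x_{|\brn}$ by the projections $P_{\brn}$ and the role of Lemma~\ref{suminbranches} by that of Lemma~\ref{sumprojections}. First I would construct $n_k$ and $l_k$ by induction. Given $I_k$ and $\e_k$, the definition of the $\ell_2$-vector supplies, for each fixed $i\in I_k$, a threshold beyond which $(1-\e_k)c_i\le\|P_{\brn_i}x_n\|\le(1+\e_k)c_i$; since $I_k$ is a finite initial interval this can be arranged simultaneously for all $i\in I_k$, giving (i). Because the branches $(\brn_i)_i$ are distinct, there is $l_k$ with $\#I_k<l_k$ such that the final segments $\brn_{i,>l_k}$, $i\in I_k$, are pairwise incomparable; choosing $n_k$ large enough that $\min\supp(x_{n_k})>\max\{\min\brn_{i,>l_k}:i\in I_k\}$ secures (ii). To obtain the final (``moreover'') assertion without a further passage to a subsequence, I would interleave this construction with the refined Amemiya--Ito selection of Lemmas~\ref{ramseyupperS2} and \ref{lemmaupperS2}, so that the resulting tail sequence also fulfils hypotheses (a),(b) of Proposition~\ref{propupperS2}.

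Next I compute the norm of $\bar x_{n_k}=P_{\mc{B}_{I_k}}(x_{n_k})$. The support condition in (ii) forces $P_{\brn_i}(x_{n_k})=P_{\brn_{i,>l_k}}(x_{n_k})$ for each $i\in I_k$, so $\bar x_{n_k}=P_{\mc{B}_{I_k,>l_k}}(x_{n_k})$. Since $\#I_k\le l_k$ and $\min\supp(x_{n_k})\ge\max\{\min\brn_{i,>l_k}\}$, Lemma~\ref{sumprojections} yields $\|\bar x_{n_k}\|=(\sum_{i\in I_k}\|P_{\brn_i}(x_{n_k})\|^2)^{1/2}$. Combining this with (i) and letting $k\to\infty$ (so that $\#I_k\to\infty$ and $\e_k\to0$) gives $\|\bar x_{n_k}\|\to\|(c_i)_i\|_{\ell_2}=:a$, and by Lemma~\ref{lowerl2} one has $a\le1$.

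The heart of the argument is the claim that $\|P_{\brn}(y_{n_k})\|\to0$ for every branch $\brn$, where $y_{n_k}=x_{n_k}-\bar x_{n_k}$. If $\brn=\brn_l$, then for $k$ large enough that $l\in I_k$ the incomparability of the tails in (ii) makes $P_{\brn_l}$ annihilate every summand $P_{\brn_i}(x_{n_k})$ with $i\ne l$ (their supports lie on $\brn_{i,>l_k}$, disjoint from $\brn_{l,>l_k}$), so $P_{\brn_l}(\bar x_{n_k})=P_{\brn_l}(x_{n_k})$ and hence $P_{\brn_l}(y_{n_k})=0$. If instead $\brn\ne\brn_i$ for all $i$, the incomparability of the tails again shows that at most one index $i_k\in I_k$ can have $\brn\cap\brn_{i_k}$ reaching beyond $l_k$ (two such branches would be comparable along $\brn$), whence $P_{\brn}(\bar x_{n_k})=P_{\brn}(P_{\brn_{i_k}}(x_{n_k}))$ and $\|P_{\brn}(\bar x_{n_k})\|\le\|P_{\brn_{i_k}}(x_{n_k})\|\le(1+\e_k)c_{i_k}$. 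Since $l_k\to\infty$ while $\brn\ne\brn_{i_k}$, the index $i_k$ must tend to $\infty$ (otherwise some fixed $\brn_{i_0}$ would share an infinite initial segment with $\brn$), so $c_{i_k}\to0$ and thus $\|P_{\brn}(\bar x_{n_k})\|\to0$; together with $\|P_{\brn}(x_{n_k})\|\to0$ this proves the claim. I expect this step---controlling $P_{\brn}(\bar x_{n_k})$ on branches outside the associated set---to be the main obstacle, since it is exactly where the incomparability of the final segments and the $\ell_2$-summability (hence decay to $0$) of $(c_i)_i$ must be combined.

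Finally, for the upper $S^{(2)}$-estimate, the claim shows $(y_{n_k})_k$ is a seminormalized sequence with $\|P_{\brn}(y_{n_k})\|\to0$ for all $\brn$. After a small perturbation turning it into a genuine block sequence (the boundary overlaps decay because $\min\supp(x_{n_k})\to\infty$ and the relevant $(2,n)$-average coefficients are uniformly small), Proposition~\ref{propupperS2}(i), available thanks to the conditions built into the construction, gives that the normalized sequence $y_{n_k}/\|y_{n_k}\|$ is $3$-dominated by the unit vector basis of $S^{(2)}$. Since $\|y_{n_k}\|\le\|x_{n_k}\|+\|\bar x_{n_k}\|\le2$ and the $S^{(2)}$-norm is $1$-unconditional, multiplying through by these norms yields that $(y_{n_k})_k$ admits an upper $S^{(2)}$-estimate with constant $6$, as required.
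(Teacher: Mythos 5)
Your proposal is correct and takes essentially the same approach as the paper's proof: the same inductive choice of $n_{k},l_{k}$ from the definition of the $\ell_{2}$-vector, the same norm identity for $\bar{x}_{n_{k}}$ obtained from incomparability of the final segments (you cite Lemma~\ref{sumprojections}, the paper the equivalent Lemma~\ref{supperl2}), the same two-case claim with at most one index $i_{k}\in I_{k}$ per $k$ and $c_{i_{k}}\to 0$ to show $\norm{P_{\brn}(x_{n_{k}}-\bar{x}_{n_{k}})}\to 0$, and the same $2\cdot 3$ accounting for the constant $6$. The only deviation is how the upper $S^{(2)}$-estimate is secured (you interleave the Amemiya--Ito selection, while the paper simply invokes Theorem~\ref{upperS2}, tacitly allowing a further subsequence), which is immaterial.
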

\begin{proof}
Using the definition of the $\ell_{2}$-vector and that $\sxn$ is a block sequence,  we choose inductively $n_{k}\nearrow+\infty$ and
$(l_{k})_{k}\subset\N$
such that  (i) and (ii) holds.
Let $\bar{x}_{n_{k}}=P_{\mc{B}_{I_{k}}}(x_{n_{k}})
$.
From (ii)  and  Lemma~\ref{supperl2} we get
 \begin{equation*}
 \norm{\bar{x}_{n_{k}}}=\left(\sum_{i\in
       I_{k}}\norm{P_{\brn_{i}}(x_{n_{k}})}^{2}\right)^{1/2}.
\end{equation*}
Using  (i)  it follows that
\begin{equation*}
(1-\e_{k})\left(\sum_{i\in I_{k}}c_{i}^{2}\right)^{1/2}
\leq\norm{\bar{x}_{n_k}}\leq (1+\e_{k})\left(\sum_{i\in
  I_{k}}c_{i}^{2}\right)^{1/2}.
\end{equation*}
The above inequalities, using that  $\#I_{k}\nearrow\infty$, yield that $\norm{\bar{x}_{n_k}}\xrightarrow[k]{}\norm{(c_{i})_{i\in\N}}_{\ell_{2}}$.

Note also that $(\bar{x}_{n_{k}})_{k}$ has as $\ell_{2}$-vector the
$(c_{i})_{i\in \N}$ with the same set of associated brances.
Indeed from (ii) it  follows that for every
$\brn_{i}$,
\[\lim_{k}\norm{P_{{\brn_{i}}}(\bar{x}_{n_{k}})}
=\lim_{k}\norm{P_{\brn_{i}}(x_{n_{k}})}=c_{i}.\]
If  $\brn\ne\brn_{i}$ for every $i$,
from (ii) we get that for every $k$ there exists 
at most one  
$i_{k}\in I_{k}$ such that  $\brn \cap\brn_{i_{k}, >l_{k}}\ne\emptyset$.  If such $i_{k}$ exists, 
\[
  P_{\brn}(\bar{x}_{n_{k}})=  P_{\brn}P_{\mc{B}_{I_{k}}}(x_{n_{k}})=
  P_{\brn}P_{\brn_{i_{k}}}(x_{n_{k}}).
\]
Otherwise  $P_{\brn}(\bar{x}_{n_{k}})=0$. It follows that
\begin{equation}
  \label{eq:26}
\normg{P_{\brn}(\bar{x}_{n_{k}})}\leq\normg{P_{\brn_{i_{k}}}(\bar{x}_{n_{k}})}
\leq(1+\e_{k})c_{i_{k}}\leq 2c_{i_{k}}. 
\end{equation}
Since  for every $i\in\N$  $\brn\ne \brn_{i}$,  we have that
for every $i\in \N$, 
\begin{equation}\label{star}
 \text{
there exist
at most finitely  many  $k$ such that
$i\in I_{k}$ and $ \brn_{i,>l_{k}}\cap \brn\ne\emptyset$}.
\end{equation}
Combining   $\eqref{star}$ and \eqref{eq:26} it follows that $
\lim_{k}\norm{P_{\brn}\bar{x}_{n_{k}}}=0$ for every $\brn\ne\brn_{i}$, $i\in \N$.

We show now that  $\lim_{k}\norm{P_{\brn}(x_{n_{k}}-\bar{x}_{n_{k}})}=0$.
If $\brn=\brn_{i}$ for some $i$, then for every $k>i$, from  (ii) we get 
\[
P_{\brn_{i}}(x_{n_{k}}-\bar{x}_{n_{k}})=P_{\brn_{i}}(x_{n_{k}})-P_{\brn_{i}}(x_{n_{k}})=0.
\]
If $\brn\ne\brn_{i}$ for every $i\in \N$, then
\begin{align*} P_{\brn}(x_{n_{k}}-\bar{x}_{n_{k}})=P_{\brn}(x_{n_{k}})-P_{\brn}P_{\brn_{i_{k}}}(x_{n_{k}})\Rightarrow
  \\
  \normg{P_{\brn}(x_{n_{k}}-\bar{x}_{n_{k}})}\leq
  \normg{P_{\brn}(x_{n_{k}})}+2c_{i_{k}}\xrightarrow[k]{}0.
  \end{align*}
The moreover  part follows from Theorem~\ref{upperS2} using that $\norm{x_{n_{k}}-\bar{x}_{n_{k}}}\leq2$.
\end{proof}
\begin{definition}
 Let $\sxn$ be a normalized block sequence with a non-zero $\ell_{2}$-vector $(c_{i})_{i\in \N}$ and let $(\brn_{i})_{i\in \N}$ be the associated
 set of  branches. We say that $\sxn$ is skipped block with respect to the
 branches $(\brn_{i})_{i\in\N}$  if for every $i\in\N$ and every $n\geq i$   there exists
 $j_{n}\in\N$ such that  $\rang(x_{n})<\rang(\phi_{j_{n}}^{\brn_{i}})<\rang(x_{n+1})$.
\end{definition}
\begin{lemma}\label{skipped}
	 Let $\sxn$ be a normalized block sequence with a non-zero $\ell_{2}$
	vector $(c_{i})_{i\in \N}$ and let $(\brn_{i})_{i\in \N}$ be the associated
	set of  branches. Then, there  exists a subsequence $(x_{n_k})_{k\in\N}$ such that
	
	a) $(x_{n_k})_{k\in\N}$ is skipped block with respect to the branches $(\brn_{i})_{i\in\N}$.
	
	b)  for every  branch $\brn_{i}$, $i\in\N$, we have that
	\begin{equation*}
		\norm{\sum_{k=i}^{\infty}a_{k}P_{\brn_i}(x_{n_k})}=
		\left(\sum_{k=i}^{\infty}a_{k}^{2}\norm{P_{\brn_{i}}(x_{n_{k}})}^{2}\right)^{1/2}.
	\end{equation*}
\end{lemma}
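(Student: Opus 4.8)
The plan is to first extract a subsequence satisfying the skipped block condition (a) by a diagonal construction, and then to derive (b) as a purely formal consequence of (a) together with the isometry $(\xns)_{n}\cong\ell_2$ furnished by Lemma~\ref{upperl2}. The guiding point is that, for a fixed branch $\sigma_i$, one has $P_{\sigma_i}(x_{n_k})=\sum_m\phi_m^{\sigma_i}(x_{n_k})\,x_m^{\sigma_i}$ inside the Hilbertian space $Z_{\sigma_i}$, so if we can guarantee that each functional $\phi_m^{\sigma_i}$ is nonzero on at most one of the vectors $x_{n_k}$ with $k\ge i$, then in $\sum_{k\ge i}a_k P_{\sigma_i}(x_{n_k})$ the coefficients along distinct basis vectors $x_m^{\sigma_i}$ involve pairwise disjoint index sets, and (b) falls out of the isometry. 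The skipped block condition is tailored to force exactly this.

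For the construction I would build $n_1<n_2<\cdots$ inductively. Having chosen $n_k$, only the finitely many branches $\sigma_1,\dots,\sigma_k$ must receive a skipped segment past $x_{n_k}$. Since for each $i$ the supports $(s_m^{\sigma_i})_m$ partition $\sigma_i$ into successive segments whose minima tend to infinity, for each $i\le k$ I may pick a segment $s_{m_i}^{\sigma_i}$ with $\min s_{m_i}^{\sigma_i}>\max\supp(x_{n_k})$; letting $Q=\max_{i\le k}\max s_{m_i}^{\sigma_i}$ and choosing $n_{k+1}>n_k$ with $\minsupp(x_{n_{k+1}})>Q$ places each $\ran(\phi_{m_i}^{\sigma_i})$ strictly between $\ran(x_{n_k})$ and $\ran(x_{n_{k+1}})$. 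This produces the skipped block property for every $\sigma_i$ and every $k\ge i$, which is (a).

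To prove (b), fix $i$ and set $M_k=\{m:\supp(\phi_m^{\sigma_i})\cap\supp(x_{n_k})\ne\emptyset\}$ for $k\ge i$. The skipped segment $s_{j_k}^{\sigma_i}$ lying strictly between $x_{n_k}$ and $x_{n_{k+1}}$ forces $\max M_k<j_k<\min M_{k+1}$: any segment of index $\ge j_k$ lies entirely above $\supp(x_{n_k})$, while any segment of index $\le j_k$ lies entirely below $\supp(x_{n_{k+1}})$. Hence the sets $(M_k)_{k\ge i}$ are successive, in particular pairwise disjoint, so each index $m$ lies in at most one $M_k$ and therefore $\phi_m^{\sigma_i}(x_{n_k})\ne0$ for at most one $k\ge i$. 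Writing
\[
\sum_{k\ge i}a_k P_{\sigma_i}(x_{n_k})=\sum_m\Big(\sum_{k\ge i}a_k\,\phi_m^{\sigma_i}(x_{n_k})\Big)x_m^{\sigma_i},
\]
each inner sum has at most one nonzero term, whence $\big(\sum_{k\ge i}a_k\phi_m^{\sigma_i}(x_{n_k})\big)^2=\sum_{k\ge i}a_k^2\phi_m^{\sigma_i}(x_{n_k})^2$. Applying the isometry of Lemma~\ref{upperl2} (both to the left-hand vector and to each $P_{\sigma_i}(x_{n_k})=\sum_m\phi_m^{\sigma_i}(x_{n_k})x_m^{\sigma_i}$) and interchanging the order of summation yields
\[
\Big\|\sum_{k\ge i}a_k P_{\sigma_i}(x_{n_k})\Big\|^2=\sum_m\sum_{k\ge i}a_k^2\,\phi_m^{\sigma_i}(x_{n_k})^2=\sum_{k\ge i}a_k^2\,\big\|P_{\sigma_i}(x_{n_k})\big\|^2,
\]
which is precisely (b).

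I do not expect a substantial obstacle here. The construction and the algebra are routine; the only point demanding care is the bookkeeping that branch $\sigma_i$ is controlled only from index $k\ge i$ onward (reflected in the summation range of (b)), which is exactly what the diagonal nature of the construction and the ``$n\ge i$'' clause in the definition of skipped block accommodate. The crux — that the skipped segments separate the index sets $M_k$ and thereby decouple the action of each $\phi_m^{\sigma_i}$ — is elementary once the partition structure of each branch into the segments $(s_m^{\sigma_i})_m$ is in hand.
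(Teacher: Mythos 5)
Your proof is correct and follows essentially the same route as the paper: part (a) by an inductive/diagonal selection of skipped segments for the first $k$ branches at each step, and part (b) by observing that the skipped segments make the projections $P_{\brn_i}(x_{n_k})$, $k\geq i$, disjointly supported with respect to the basis $(x_j^{\brn_i})_j$ of $Z_{\brn_i}$, which is isometrically equivalent to the unit vector basis of $\ell_2$ by Lemma~\ref{upperl2}. Your write-up merely makes explicit the support-separation bookkeeping ($\max M_k<j_k<\min M_{k+1}$) that the paper's proof leaves implicit in the phrase ``skipped block with respect to $(x_j^{\brn_i})_j$.''
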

\begin{proof}
a) Using that $\sxn$ is  block sequence it readily follows  that  for every branch $\brn_{i}$ and every subsequence $(x_{n_{k}})_{k\in\N}$ 
we can choose a further subsequence such that  for every $k$, $\rang(x_{n_{k}})<\ran \phi_{l_{k}}^{\brn_{i}}<\rang(x_{n_{k+1}})$  for some $l_{k}$. 
Using a diagonal argument it is  easily follows
that we can choose  a subsequence $(x_{n_k})_{k\in\N}$  which is skipped block with respect to the branches $(\brn_{i})_{i\in\N}$.

b)  For every branch $\brn_{i}$, using that $(x_{n_k})_{k=i}^{\infty}$ is 
skipped block with respect to the $\brn_{i}$,  we get that  
$P_{\brn_{i}}(x_{n_{k}}))_{k=i}^{\infty}$ is also skipped block w.r the 
$(x_{j}^{\brn_{i}})_{j\in\N}$.  Since  by Lemma~\ref{upperl2}, $(x_{j}^{\brn_{i}})_{j\in\N}$ is 
1-equivalent to the unit vector basis of $\ell_{2}$ 
it follows that
\[\norm{\sum_{k=i}^{\infty}a_{k}P_{\brn_i}(x_{n_k})}=		
\left(\sum_{k=i}^{\infty}a_{k}^{2}\norm{P_{\brn_{i}}(x_{n_{k}})}^{2}\right)^{1/2
}.\]
\end{proof}

\begin{lemma}\label{llemma}
Let  $(x_{n})_{n}$ be a normalized block sequence with a non-zero
  $\ell_{2}$-vector  $(c_{i})_{i\in \N}$
  and $(\brn_{i})_{i\in \N}$ be the associated set of branches. Then
there  exist a strictly increasing sequence $(I_{k})_{k\in\N}$ of initial intervals of
$\N$ and a subsequence $(x_{n_{k}})_{k\in\N}$ such that setting
$\bar{x}_{n_{k}}=P_{\mc{B}_{I_{k}}}(x_{n_{k}})$  the following holds:

a)  for every $\e>0$ and 
for every block sequence $(\bar{w}_{n})_{n\in\N}$ of
$(\bar{x}_{n_{k}})_{k}$ such that $\bar{w}_{n}=\sum_{k\in
  F_{n}}a_{k}\bar{x}_{n_{k}}$ is an $\ell_{2}$-convex combination,  there exists
$n_{0}$  such that for every initial interval  $I$ of $\N$ containing
$I_{n_{0}}$ there exists  $n(I)\in\N$ such that for all $n>n(I)$,
  \begin{equation*}
 \norm{\bar{w}_{n}-P_{\mc{B}_{I}}(\bar{w}_{n})}<\e.
\end{equation*}

b)  the sequence $(\bxnk)_{k}$ admits upper $\ell_{2}$-estimate with constant 3.
\end{lemma}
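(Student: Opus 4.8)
The plan is to build the intervals $(I_k)_k$ and the subsequence $(x_{n_k})_k$ in three successive refinements and then to run one branch-by-branch computation that serves both (a) and (b). First I would apply the $\ell_2$-analogue of Lemma~\ref{lpsplit} to the $\ell_2$-vector $(c_i)_{i\in\N}$ to obtain the strictly increasing initial intervals $(I_k)_k$, arranging in addition that $\sum_{j>1}\|d_j\|_{\ell_2}<\tfrac12\|(c_i)_{i\in\N}\|_{\ell_2}$, where $d_j=(c_i)_{i\in I_j\setminus I_{j-1}}$. With these intervals fixed I would invoke Lemma~\ref{stabilization2} (with $\e_k\searrow 0$, $\e_1<1/3$) to extract $n_k\nearrow+\infty$ and $(l_k)_k$ satisfying its conclusions (i)--(ii), so that $\bar x_{n_k}=P_{\mc B_{I_k}}(x_{n_k})=\sum_{i\in I_k}P_{\brn_i}(x_{n_k})$, the equality holding because $\minsupp(x_{n_k})>l_k$ and the segments $\brn_{i,>l_k}$, $i\in I_k$, are incomparable. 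Finally I would pass to a further subsequence via Lemma~\ref{skipped}, so that $(x_{n_k})_k$ is skipped block with respect to all the $\brn_i$ and the exact identity $\|\sum_{k\geq i}b_kP_{\brn_i}(x_{n_k})\|=(\sum_{k\geq i}b_k^2\|P_{\brn_i}(x_{n_k})\|^2)^{1/2}$ holds on each branch.

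For part (b) I would expand $\sum_{k=1}^m b_k\bar x_{n_k}$ along the diagonal blocks $I_j\setminus I_{j-1}$, writing it as $\sum_{i\in I_1}\sum_k b_k P_{\brn_i}(x_{n_k})+\sum_{j\geq2}\sum_{i\in I_j\setminus I_{j-1}}\sum_{k\geq j}b_kP_{\brn_i}(x_{n_k})$. Within a fixed block $j$ every contributing piece comes from some $x_{n_k}$ with $k\geq j$, hence is supported beyond $l_k\geq l_j$ where the branches $\brn_i$, $i\in I_j$, are incomparable; so on each branch the skipped-block identity and (i) give $\|\sum_{k\geq j}b_kP_{\brn_i}(x_{n_k})\|\leq(1+\e_j)c_i(\sum_k b_k^2)^{1/2}$, and Lemma~\ref{supperl2} turns the sum over $i\in I_j\setminus I_{j-1}$ into the $\ell_2$-sum $(1+\e_j)\|d_j\|_{\ell_2}(\sum_kb_k^2)^{1/2}$. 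A triangle inequality over the blocks, with $\|d_1\|_{\ell_2}\leq1$ and the splitting estimate $\sum_{j>1}\|d_j\|_{\ell_2}<\tfrac12$, bounds the whole expression by $(1+\e_1)\tfrac32(\sum_kb_k^2)^{1/2}\leq 3(\sum_kb_k^2)^{1/2}$, which is the claimed upper $\ell_2$-estimate.

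For part (a), given $\e>0$ I would first fix $n_0$ so large that $(\sum_{i\notin I_{n_0}}c_i^2)^{1/2}<\e/4$ and $\sum_{j>n_0}\|d_j\|_{\ell_2}<\e/4$. Now fix an initial interval $I\supset I_{n_0}$ and let $\rho_I$ be a level beyond which the finitely many branches $\brn_j$, $j\in I$, are pairwise incomparable. Choosing $n(I)$ so that $\minsupp(x_{n_{\min F_n}})>\rho_I$ for all $n>n(I)$, the key point is that $P_{\mc B_I}$ splits $\bar x_{n_k}$ cleanly for every $k\in F_n$: since the pieces $P_{\brn_i}(x_{n_k})$ are supported beyond $\rho_I$, all off-diagonal terms $P_{\brn_j}P_{\brn_i}(x_{n_k})$ with $j\in I$, $j\neq i$, vanish, whence $P_{\mc B_I}(\bar x_{n_k})=\sum_{i\in I_k\cap I}P_{\brn_i}(x_{n_k})$ and $\bar x_{n_k}-P_{\mc B_I}(\bar x_{n_k})=\sum_{i\in I_k\setminus I}P_{\brn_i}(x_{n_k})$. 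Summing against $(a_k)_{k\in F_n}\in S_{\ell_2}$ and reorganizing exactly as in part (b), now over the blocks of $\N\setminus I$, the skipped-block identity on each branch together with Lemma~\ref{supperl2} across branches bounds $\|\bar w_n-P_{\mc B_I}(\bar w_n)\|$ by $(1+\e_1)\big[(\sum_{i\notin I}c_i^2)^{1/2}+\sum_{j>j_0}\|d_j\|_{\ell_2}\big]$, where $j_0=\min\{j:I_j\not\subset I\}$; since $I\supset I_{n_0}$ forces $j_0>n_0$, both summands are $<\e/4$ and so $\|\bar w_n-P_{\mc B_I}(\bar w_n)\|<\e$.

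The main obstacle I anticipate is precisely this clean-splitting step in (a): one must be certain that, after taking the convex combination, no mass of $\bar w_n$ carried by a tail branch is spuriously captured or dropped by $P_{\mc B_I}$, and that the residual mass genuinely reassembles into incomparable branch-blocks at a single common support level so that Lemma~\ref{supperl2} applies. This is what dictates the order of quantifiers $n_0\to I\to n(I)$: the threshold $n(I)$ must be chosen \emph{after} $I$, so that the supports of the relevant $x_{n_k}$ lie beyond the $I$-dependent divergence level $\rho_I$; only then do the off-diagonal projection terms vanish and the branch-wise skipped-block identities combine, via the incomparability, into the desired $\ell_2$-sums whose tails are controlled by the choice of $n_0$.
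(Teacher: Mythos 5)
Your proposal follows the paper's own proof in outline and in every computation: the intervals come from Lemma~\ref{lpsplit}, the subsequence from Lemma~\ref{stabilization2} (its conditions (i)--(ii)) refined by Lemma~\ref{skipped}, and both parts are proved by the same diagonal-block decomposition of $\sum_k b_k\bar{x}_{n_k}$ over $I_1$ and the blocks $I_j\setminus I_{j-1}$, with Lemma~\ref{supperl2} applied across incomparable final segments and the skipped-block identity applied along each branch (the paper simply proves (a) before (b)).

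There is, however, one step whose justification as written does not hold up: the ``clean splitting'' $P_{\mc{B}_I}(\bar{x}_{n_k})=\sum_{i\in I_k\cap I}P_{\brn_i}(x_{n_k})$ in part (a). You derive the vanishing of the off-diagonal terms $P_{\brn_j}P_{\brn_i}(x_{n_k})$, $j\in I$, $i\neq j$, from the level $\rho_I$ beyond which the branches indexed by $I$ are pairwise incomparable. This is insufficient for two reasons. First, $\rho_I$ says nothing about pairs with $i\in I_k\setminus I$ and $j\in I$; if $k<k_0$, where $I_{k_0-1}\subsetneq I\subset I_{k_0}$, there may even be $j\in I\setminus I_k$, and such a branch $\brn_j$ can run together with some $\brn_i$, $i\in I_k$, arbitrarily far beyond both $\rho_I$ and $l_k$, so the corresponding term need not vanish and is not accounted for in your estimate. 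Second, ``$P_{\brn_i}(x_{n_k})$ is supported beyond $\rho_I$'' is not literally true: $P_{\brn_i}$ spreads mass over entire segments $\s_m^{\brn_i}$, so its image can have support below $\minsupp(x_{n_k})$; the correct statement is $\supp(P_{\brn_i}(x_{n_k}))\subset\brn_{i,>l_k}$, and this uses precisely the condition $\minsupp(x_{n_k})>\max\{\min\brn_{i,>l_k}:i\in I_k\}$ of Lemma~\ref{stabilization2}(ii). Both defects disappear if, as in the paper, you choose $n(I)$ so that $\min F_n>k_0$ for all $n\geq n(I)$: then $I\subset I_{k_r}$ for every $k_r\in F_n$, every relevant pair of indices lies in $I_{k_r}$, and condition (ii) at level $l_{k_r}$ kills all off-diagonal terms. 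With that replacement for your $\rho_I$-based choice of $n(I)$, your argument coincides with the paper's.
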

\begin{proof}
  From  Lemma~\ref{lpsplit} we get a strictly increasing sequence
  $(I_{n})_{n\in\N}$ of initial intervals of $\N$ such that
  for all
  $\e>0$ there exists
$n_{0}\in\N$ such that
 \begin{equation}
  \label{eq:008}\sum_{n\in\N\setminus I_{n_{0}}}(\sum_{i\in I_{n}\setminus
    I_{n-1}}c_{i}^{2})^{1/2}<\e/2.
\end{equation}
In particular we may assume that$\sum_{n>1}(\sum_{i\in I_{n}\setminus
    I_{n-1}}c_{i}^{2})^{1/2}<1$.

 Let $\e_{k}\searrow0$, $\e_{1}<1$.  Using the definition of the $\ell_{2}$-vector and that $\sxn$ is a block sequence,  we choose inductively $n_{k},l_{k}\nearrow+\infty$ 
such that
 \begin{enumerate}
 \item[(i)]  $    (1-\e_{k})c_{i}\leq  \norm{\Pbrn[\brn_{i}]x_{n}}\leq (1+\e_{k})c_{i}$ for every
   $i\in I_{k}$ and $n\geq n_{k}$.
 \item[(ii)]  The final  segments $\brn_{i,>l_{k}}, i\in I_{k}$ are
   incomparable, $\#I_{k}<l_{k}$   and \\
   $\minsupp (x_{n_{k}})>\max\{\min
   \brn_{i,>l_{k}}:i\in I_{k}\}$.
 \item[(iii)]  $(x_{n_{k}})_{k}$ is skipped block w.r the branches
   $(\brn_{i})_{i\in\N}$.
 \end{enumerate}

 To prove  a)   let  $\e>0$  and   $n_{0}$ be given by
 \eqref{eq:008}. Let $I$ be an initial interval of $\N$ containing $I_{n_{0}}$ and let
 $k_{0}\in\N$ such that  $I_{k_{0}-1}\subsetneq I\subset I_{k_{0}}$. Let $n(I)\in \N$
 such that $n(I)\geq k_{0}$  and $\min F_{n}>k_{0}$ for every $n\geq n(I)$.

Let  $\bar{w}_{n}=\sum_{k\in F_{n}}a_{k}\bxnk$ be  an $\ell_{2}$-average
of $(\bxnk)_{k}$, $n\geq n(I).$
 Property (ii) yields that for every $\ell\in 
I$ and every $k_{r}\in F_{n}$ it holds that 
$P_{\brn_{\ell}}(\bar{x}_{n_{k_{r}}})=
P_{\brn_{\ell}}P_{\mc{B}_{ I_{k_{r}},>l_{k_{r}}}}(x_{n_{k_{r}}})
=P_{\brn_{\ell}}( x_{n_{k_{r}}})$.
 Using that   for every $r$,
\[I_{k_{r}}=I\cup (I_{k_{0}}\setminus I)\cup (I_{k_{r}}\setminus I_{k_{0}})
  =I\cup(I_{k_{0}}\setminus I)\cup \cup_{j=1}^{r} I_{k_{j}}\setminus I_{k_{j-1}}\] we get
\begin{align}
\bxnkr &=P_{\mc{B}_{I}}(\bxnkr)+P_{\mc{B}_{(I_{k_{0}}\setminus
         I})}(\bxnkr)+\sum_{j=1}^{r}
P_{\mc{B}_{(I_{k_{j}}\setminus I_{k_{j-1}}})}(\bxnkr
         )
\notag
  \\
&=
P_{\mc{B}_{I}}(\bxnkr)+\sum_{i\in I_{k_{0}}\setminus I}P_{\brn_{i}}(\bxnkr)+\sum_{j=1}^{r}\sum_{l\in I_{k_{j}}\setminus I_{k_{j-1}}}\sum_{i\in I_{l}\setminus I_{l-1}} P_{\brn_{i}}(\bxnkr). \label{splitxnk}
\end{align}
The above analysis yields that
\begin{align}\label{0101}
\bwn-P_{\mc{B}_{I}}\bwn&=\sum_{r=1}^{d}a_{k_r}P_{\mc{B}_{I_{k_r}}}(\bxnkr)-\sum_{r=1}^{d}a_{k_r}P_{\mc{B}_{I}}(\bxnkr)\\
&=\sum_{r=1}^{d}a_{k_{r}}\sum_{i\in I_{k_{0}}\setminus
                                                                                                                 I}P_{\brn_{i}}(\bxnkr)+\sum_{r=1}^{d}a_{k_{r}}\sum_{j=1}^{r}                                                                                                                 \sum_{l\in I_{k_{j}}\setminus I_{k_{j-1}}}\sum_{i\in I_{l}\setminus I_{l-1}}P_{\brn_{i}}(\bxnkr)
\notag \\
&=
\sum_{i\in I_{k_{0}}\setminus I}\sum_{r=1}^{d}a_{k_{r}}P_{\brn_{i}}(\bxnkr) +
            \sum_{j=1}^{d}\sum_{l\in I_{k_{j}}\setminus I_{k_{j-1}}}\sum_{i\in I_{l}\setminus
            I_{l-1}}\sum_{r=j}^{d}a_{k_{r}}P_{\brn_{i}}(\bxnkr).
\notag
\end{align}
 For $l\in I_{k_{j}}\setminus I_{{k_{j-1}}}$,  and any vector $\sum_{i\in I_{l}\setminus  I_{l-1}}\sum_{r=j}^{d}a_{k_{r}}P_{\brn_{i}}(\bxnkr)$
 from  the above property (ii) and Lemma~\ref{supperl2} we get
 \begin{equation}
   \label{eq:36}
 \norm{\sum_{i\in I_{l}\setminus  I_{l-1}}
   \sum_{r=j}^{d}a_{k_{r}}P_{\brn_{i}}(\bxnkr)}
 \leq
\left(
 \sum_{i\in I_{l}\setminus  I_{l-1}}\norm{\sum_{r=j}^{d}a_{k_{r}}P_{\brn_{i}}(\bxnkr)}^{2}
 \right)^{1/2}.
 \end{equation}
 From property (iii) and Lemma~\ref{skipped}b)
 we get
\begin{align}
  \norm{\sum_{r=j}^{d}a_{k_{r}}P_{\brn_{i}}(\bxnkr)}
  &=\left(\sum_{r=j}^{d}a^{2}_{k_{r}}\norm{P_{\brn_{i}}(\bxnkr)}^{2}
    \right)^{1/2}
    \notag
 \\
  & \leq
  (1+\e_{k_{j}})c_{i}
 \left(\sum_{r=j}^{d}a^{2}_{k_{r}}
    \right)^{1/2}\quad\text{using property (i)}
   \notag
  \\
  &\leq(1+\e_{k_{j}})c_{i}\quad\text{using  that $\sum_{r=1}^{d}a_{k_{r}}^{2}\leq 1$}.
  \label{eq:40}
\end{align}
Combining \eqref{0101},\eqref{eq:36} and \eqref{eq:40} we get,
\begin{equation}
\begin{aligned}
  \norm{\bwn-P_{\mc{B}_{I}}(\bwn)}
  &\leq
\norm{\sum_{i\in I_{k_{0}}\setminus I}\sum_{r=1}^{d}a_{k_{r}}P_{\brn_{i}}(\bxnkr)}
    +\norm{\sum_{j=1}^{d}\sum_{l\in I_{k_{j}}\setminus I_{k_{j}}}
 \sum_{i\in I_{l}\setminus I_{l-1}}\sum_{r=j}^{d}a_{k_{r}}P_{\brn_{i}}(\bxnkr)}
 \label{eq:38}
   \\
 &
\leq
\norm{\sum_{i\in I_{k_{0}}\setminus I}\sum_{r=1}^{d}a_{k_{r}}P_{\brn_{i}}(\bxnkr)}+   \sum_{j=1}^{d}\sum_{l\in I_{k_{j}}\setminus I_{k_{j}}}
   \norm{\sum_{i\in I_{l}\setminus I_{l-1}}\sum_{r=j}^{d}a_{k_{r}}P_{\brn_{i}}(\bxnkr)
   }
  \\  
&                                      \leq
(1+\e_{k_{0}})\left(\sum_{i\in I_{k_{0}}\setminus I}c_{i}^{2}\right)^{1/2}+                                      
 \sum_{j=1}^{d}\sum_{l=k_{j-1}+1}^{k_{j}}
                                    (1+\e_{k_{j}})\left(\sum_{i\in I_{l}\setminus I_{l-1}} c_{i}^{2} \right)^{1/2}
  \\
  &\leq(1+\e_{k_{0}})\sum_{l\in\N\setminus I_{n_{0}}}\left(\sum_{i\in I_{l}\setminus I_{l-1}}
    c_{i}^{2} \right)^{1/2}\\
  &\leq
    2\frac{\e}{2}=\e \quad\text{by \eqref{eq:008}}.
  \end{aligned}
  \end{equation}
This completes the proof of a).

b)  We have that $\bxnk=P_{\mc{B}_{k}}x_{n_{k}}$. As in \eqref{splitxnk}    we get  that
\[
\bxnk=\sum_{l=1}^{k}\sum_{i\in I_{l}\setminus I_{l-1}}P_{\brn_{i}}(\bxnk)\qquad (I_{0}=\emptyset),
\]
and  hence
\begin{equation*}
 \begin{aligned}
  \sum_{k=1}^{d}a_{k}\bxnk&=\sum_{k=1}^{d}a_{k}\sum_{l=1}^{k}\sum_{i\in I_{l}\setminus
    I_{l-1}}P_{\brn_{i}}(\bxnk)\\
  &=
  \sum_{i\in  I_{1}}\sum_{k=1}^{d}a_{k}P_{\brn_{i}}(\bxnk)+
\sum_{l=2}^{d}\sum_{i\in  I_{l}\setminus I_{l-1}}\sum_{k=l}^{d}a_{k}P_{\brn_{i}}(\bxnk)
.
\end{aligned}
\end{equation*}
Working as  in  \eqref{eq:38} we get that
\[
  \norm{\sum_{k=1}^{d}a_{k}\bxnk}\leq (\sum_{k=1}^{d}a_{k}^{2})^{1/2}
  (1+\e_{1})2.
\]
This yields the desired result.
\end{proof}
\begin{proposition}\label{uppers2-2}
  Let  $(x_{n})_{n}$ be a normalized block sequence in $\jtg$.
Then there exists a subsequence of  $(x_{n})_{n}$ that admits upper
$\ell_{2}$ estimate with constant  $9$.
\end{proposition}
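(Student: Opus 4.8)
The plan is to mirror the proof of Proposition~\ref{l2upperestimate} from Part~I, decomposing $(x_{n})_{n}$ according to its $\ell_{2}$-vector and estimating the two resulting pieces separately. The only genuinely new ingredient is the elementary observation that, since
\[
\norm{\sum_{i} a_{i}e_{p_{i}}}_{S^{(2)}}=\sup_{F\in\schreier}\Big(\sum_{p_{i}\in F}a_{i}^{2}\Big)^{1/2}\le\Big(\sum_{i}a_{i}^{2}\Big)^{1/2},
\]
any upper $S^{(2)}$-estimate with constant $C$ is automatically an upper $\ell_{2}$-estimate with the same constant $C$.

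First I would dispose of the degenerate case. If some subsequence $(x_{n_{k}})_{k}$ has zero $\ell_{2}$-vector, i.e. $\lim_{k}\norm{P_{\brn}(x_{n_{k}})}=0$ for every branch $\brn\in\brT$, then Theorem~\ref{upperS2} provides a further subsequence admitting an upper $S^{(2)}$-estimate with constant $3$, hence an upper $\ell_{2}$-estimate with constant $3\le 9$, and we are finished. Therefore, by Proposition~\ref{pl2vector} (through Lemma~\ref{lslemma2}), it suffices to treat the case where $(x_{n})_{n}$ admits a non-zero $\ell_{2}$-vector $(c_{i})_{i\in\N}$ with associated branches $(\brn_{i})_{i\in\N}$.

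In this case I would apply Lemma~\ref{llemma} to obtain a strictly increasing sequence $(I_{k})_{k}$ of initial intervals and a subsequence $(x_{n_{k}})_{k}$ so that, writing $\bxnk=P_{\mc{B}_{I_{k}}}(x_{n_{k}})$, the sequence $(\bxnk)_{k}$ admits an upper $\ell_{2}$-estimate with constant $3$ (part (b) of that lemma). The same construction yields conditions (i) and (ii) of Lemma~\ref{stabilization2}, from which one reads off that the complementary sequence $y_{n_{k}}=x_{n_{k}}-\bxnk$ satisfies $\normg{y_{n_{k}}}\le 2$ and $\lim_{k}\norm{P_{\brn}(y_{n_{k}})}=0$ for every branch $\brn$. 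Applying Theorem~\ref{upperS2} to the normalized vectors $y_{n_{k}}/\normg{y_{n_{k}}}$ then gives an upper $S^{(2)}$-estimate with constant $3$ for the normalizations, i.e. constant $6$ for $(y_{n_{k}})_{k}$, hence an upper $\ell_{2}$-estimate with constant $6$. The triangle inequality now closes the argument: for every finitely supported scalar sequence $(a_{k})_{k}$,
\[
\normg{\sum_{k}a_{k}x_{n_{k}}}\le\normg{\sum_{k}a_{k}\bxnk}+\normg{\sum_{k}a_{k}y_{n_{k}}}\le 3\Big(\sum_{k}a_{k}^{2}\Big)^{1/2}+6\Big(\sum_{k}a_{k}^{2}\Big)^{1/2}=9\Big(\sum_{k}a_{k}^{2}\Big)^{1/2}.
\]

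Since all the hard analysis has been front-loaded into Lemmas~\ref{stabilization2} and~\ref{llemma}, the main point requiring care is bookkeeping rather than technique: I must ensure that a single subsequence and a single decomposition $x_{n_{k}}=\bxnk+y_{n_{k}}$ simultaneously supports the estimate for $(\bxnk)_{k}$ and for $(y_{n_{k}})_{k}$. This is why I would extract the subsequence once, from Lemma~\ref{llemma}, and derive the $y$-estimate from the very data produced there (so that the two bounds refer to the same $\bxnk$), rather than applying the two lemmas independently with possibly incompatible choices of $(I_{k})_{k}$ and $(n_{k})_{k}$. The constants $3$ and $6$ are exactly those supplied by the two lemmas, and $3+6=9$ gives the asserted estimate.
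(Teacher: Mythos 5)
Your proposal is correct and follows essentially the same route as the paper's own proof: the paper likewise disposes of the zero $\ell_{2}$-vector case via Theorem~\ref{upperS2}, and otherwise uses Lemmas~\ref{stabilization2} and~\ref{llemma} on a single subsequence to write $x_{n_{k}}=\bar{x}_{n_{k}}+y_{n_{k}}$ with $(\bar{x}_{n_{k}})_{k}$ admitting an upper $\ell_{2}$-estimate with constant $3$ and $(y_{n_{k}})_{k}$ an upper $S^{(2)}$-estimate with constant $6$ (coming from Theorem~\ref{upperS2} together with $\normg{y_{n_{k}}}\leq 2$), concluding by the triangle inequality. Your explicit remark that an upper $S^{(2)}$-estimate implies an upper $\ell_{2}$-estimate with the same constant, and your care that both estimates refer to the same decomposition, are exactly the (implicit) bookkeeping of the paper's argument.
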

\begin{proof}
Let us observe that in the case   there exists a subsequence
$(x_{n_{k}})_{k}$ such that $\lim\norm{\Pbrn(x_{n_{k}})}=0$ for every
branch $\brn$ it follows from Theorem~\ref{upperS2}  that  there  exists
a further subsequence that admits upper $S^{(2)}$-estimate with constant
$3$. Hence it suffices to prove the proposition
in the case  where  the sequence  admits a non-zero $\ell_{2}$  vector
$(c_{i})_{i\in \N}$.  Let  $(\brn_{i})_{i\in \N}$    be the associated branches.
Passing to a subsequence we may assume that $\sxn $ is  skipped block
with respect to the branches $(\brn_{i})_{i\in \N}$.

Let $\e>0$ and $\e_{n}\searrow 0$ such that  $\sum_{n}\e_{n}<1$. Passing to a
furhter subsequence if necessary we may assume that the conclusion of Lemmas~\ref{stabilization2} and \ref{llemma} holds i.e. there exists a strictly increasing sequence $(I_{k})_{k\in\N}$ of initial intervals of $\N$ such that setting $\bxnk=P_{\mc{B}_{k}}\bxnk$ the following holds
\begin{enumerate}
\item the sequence $(\bxnk)_{k}$  admits an upper $\ell_{2}$-estimate with constant 3,
 \item  
   setting
   $y_{n_{k}}=x_{n_{k}}-\bxnk$ it holds
   $P_{\brn}(y_{n_{k}})\to 0$ for every branch $\brn$.
\end{enumerate}
From  Theorem~\ref{upperS2} passing to a further subsequence 
we may assume that $(y_{n_{k}})_{k}$ admits upper $S^{(2)}$-estimate with constant $6$.
Since  $x_{n_{k}}=\bxnk+y_{n_{k}}$  we get that for every $b_{1},\dots, b_{m}$,
\[
  \norm{\sum_{k=1}^{m}b_{k}x_{n_{k}}}
  \leq
  \norm{\sum_{k=1}^{m}b_{k}\bar{x}_{n_{k}}}+
   \norm{\sum_{k=1}^{m}b_{k}y_{n_{k}}}\leq9\left(\sum_{k=1}^{m}b_{k}^{2}\right)^{1/2}.
 \]
 The above inequality yields that $(x_{n_{k}})_{k}$ admits upper
 $\ell_{2}$-estimate with constant $9$.
\end{proof}
\begin{proposition}\label{equivl2}
  Let $\sxn$ be a normalized block sequence  admitting non-zero $\ell_{2}$-vector.  Then, there exists a subsequence $(x_{n_{k}})_{k}$ of $\sxn$ that is equivalent to the unit vector basis of $\ell_{2}$.
\end{proposition}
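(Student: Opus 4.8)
The plan is to combine the upper $\ell_{2}$-estimate already furnished by Proposition~\ref{uppers2-2} with a matching lower $\ell_{2}$-estimate extracted from a single branch projection. Since a normalized block sequence is equivalent to the unit vector basis of $\ell_{2}$ exactly when it admits both an upper and a lower $\ell_{2}$-estimate, producing these two ingredients on a common subsequence completes the proof.

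First I would pass to a subsequence, which by Proposition~\ref{uppers2-2} admits an upper $\ell_{2}$-estimate with constant $9$. Applying Lemma~\ref{skipped} to this subsequence, I would thin it out further to a subsequence $(x_{n_{k}})_{k}$ that is skipped block with respect to the associated branches $(\brn_{i})_{i\in\N}$; consequently, for each $i$ it satisfies the orthogonality identity $\norm{\sum_{k\geq i}a_{k}P_{\brn_{i}}(x_{n_{k}})}=(\sum_{k\geq i}a_{k}^{2}\norm{P_{\brn_{i}}(x_{n_{k}})}^{2})^{1/2}$. Throughout these passages the sequence retains $(c_{i})_{i\in\N}$ as its $\ell_{2}$-vector with the same associated branches, since subsequential limits of a convergent sequence agree.

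For the lower estimate I would fix the first associated branch $\brn_{1}$, whose constant $c_{1}$ is the largest entry of the $\ell_{2}$-vector and hence strictly positive. By the definition of the $\ell_{2}$-vector $\lim_{k}\norm{P_{\brn_{1}}(x_{n_{k}})}=c_{1}$, so after one final refinement I may assume $\norm{P_{\brn_{1}}(x_{n_{k}})}\geq c_{1}/2$ for all $k$. Using that $P_{\brn_{1}}$ is a norm-one projection, together with the identity of Lemma~\ref{skipped}b) applied with $i=1$, I obtain for any scalars $(a_{k})_{k}$ the chain
\[
\Big\|\sum_{k}a_{k}x_{n_{k}}\Big\|\geq\Big\|\sum_{k}a_{k}P_{\brn_{1}}(x_{n_{k}})\Big\|=\Big(\sum_{k}a_{k}^{2}\norm{P_{\brn_{1}}(x_{n_{k}})}^{2}\Big)^{1/2}\geq\frac{c_{1}}{2}\Big(\sum_{k}a_{k}^{2}\Big)^{1/2}.
\]
Together with the upper $\ell_{2}$-estimate this yields the asserted equivalence, with constant of equivalence at most $18/c_{1}$.

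There is no genuinely hard step here: the two substantial results---the upper estimates culminating in Proposition~\ref{uppers2-2} and the skipped-block orthogonality of Lemma~\ref{skipped}---have already been established, and the one point deserving attention is that the branch projection $P_{\brn_{1}}$ has norm one, which is precisely what converts the trivial bound $\norm{P_{\brn_{1}}(y)}\leq\norm{y}$ into a lower bound for the full sum. The only mild care required is to order the successive passages to subsequences compatibly, so that both the skipped-block structure and the uniform positivity $\norm{P_{\brn_{1}}(x_{n_{k}})}\geq c_{1}/2$ survive in the final subsequence.
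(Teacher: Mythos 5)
Your proposal is correct and follows essentially the same route as the paper's proof: both combine the upper $\ell_{2}$-estimate of Proposition~\ref{uppers2-2} with a lower estimate obtained by projecting onto a single associated branch, using the skipped-block structure (Lemma~\ref{skipped}) and the fact that $(x_{n}^{\brn})_{n}$ is $1$-equivalent to the unit vector basis of $\ell_{2}$, together with $\norm{P_{\brn}}=1$. The only cosmetic difference is that you fix the first branch $\brn_{1}$ with the largest coordinate $c_{1}$, whereas the paper takes any branch on which the projections have positive limit; the argument and constants are otherwise identical.
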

\begin{proof}
   From  Proposition~\ref{uppers2-2}  we get a subsequence $(x_{n_{k}})_{k}$ of
   $\sxn$  that admits an upper $\ell_{2}$-estimate with constant $9$
   i.e.
   \begin{equation}
  \label{eq:35}
  \norm{\sum_{k}b_{k}x_{n_{k}}}\leq9\left(\sum_{k}b_{k}^{2}\right)^{1/2} \qquad\forall (b_{k})_{k}.
\end{equation}
Since  $\sxn$ admits a non-zero $\ell_{2}$-vector there exist a branch
$\brn$  and  $c>0$ such that  $\lim_{k}\norm{P_{\brn}(x_{n_{k}})}=c$. Passing to a further subsequence  we may assume that
$\norm{P_{\brn}(x_{n_{k}})}>\sfrac{c}{2}$ for every $k$ and that 
   $(x_{n_{k}})_{k}$ is skipped w.r. the branch  $\brn$. 
This yields that  $(P_{\brn}(x_{n_{k}}))_{k}$ is  also skipped block sequence of $(x_{n}^{\brn})_{n}$.  Since $(x_{n}^{\brn})_{n}$ is isometric to the unit vector basis of $\ell_{2}$ we get,
\begin{equation}
 \begin{aligned}
  \label{eq:34}
    \norm{\sum_{k}b_{k}x_{n_{k}}}&\geq\norm{P_{\brn}(\sum_{k}b_{k}x_{n_{k}})}
     =\norm{\sum_{k}b_{k}P_{\brn}(x_{n_{k}})}\\
&=\left(\sum_{k}b_{k}^{2}\norm{\Pbrn(x_{n_{k}})}^{2}\right)^{1/2}
\geq \frac{c}{2}\left(\sum_{k}b_{k}^{2}\right)^{1/2}.
\end{aligned}
\end{equation}
From \eqref{eq:34} and \eqref{eq:35} we conclude that $(x_{n_{k}})_{k\in\N}$ is equivalent to the unit vector basis of  $\ell_{2}$.
\end{proof}
\begin{corollary}\label{l2orc0}
  Let  $\sxn$ be a normalized block sequence in $\jtg$. Then there exists a subsequence $(x_{n_{k}})_{k}$ of $\sxn$  which is  either   equivalent to the unit vector basis of $\ell_{2}$
  or  $c_{0}$  embeds in the subspace generated by that subsequence.
\end{corollary}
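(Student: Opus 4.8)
The plan is to dichotomize according to the behaviour of the branch projections $P_\brn$ along the sequence. First I would ask whether, after passing to a subsequence, all branch projections can be made to vanish; that is, whether there is a subsequence $(x_{n_k})_k$ of $\sxn$ with $\lim_k\norm{P_\brn(x_{n_k})}=0$ for every branch $\brn\in\brT$. Exactly one of the two alternatives of the corollary will arise depending on the answer.

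Suppose first that such a subsequence $(x_{n_k})_k$ exists. Then I would apply Proposition~\ref{c0} directly to $(x_{n_k})_k$: it produces a further block sequence $(y_m)_m$, each $y_m$ being a finite combination of the $x_{n_k}$, that is $2$-equivalent to the unit vector basis of $c_0$. Since every $y_m$ lies in $[x_{n_k}:k\in\N]$, this shows that $c_0$ embeds in the subspace generated by the subsequence $(x_{n_k})_k$, which is the second alternative. This half of the argument is essentially immediate from Proposition~\ref{c0}.

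Suppose now that no such subsequence exists. Then in particular the full sequence $\sxn$ does not have vanishing branch projections, for otherwise it would itself be a subsequence with the forbidden property; hence there must be a branch $\brn$ with $\limsup_n\norm{P_\brn(x_n)}=c>0$. This is exactly the hypothesis of Lemma~\ref{lslemma2}, which I would invoke to extract a subsequence $(x_n)_{n\in L}$ carrying a non-zero $\ell_2$-vector with an at most countable associated set of branches. Feeding this subsequence into Proposition~\ref{equivl2} then yields a further subsequence equivalent to the unit vector basis of $\ell_2$, the first alternative. Since the two cases are exhaustive, the corollary follows.

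The hard part is not any single estimate, since all the analytic work has been done in Theorem~\ref{upperS2}, Proposition~\ref{c0}, Lemma~\ref{lslemma2} and Proposition~\ref{equivl2}; rather it is the bookkeeping that makes the case distinction clean. One must observe that the negation of the vanishing-projection property for every subsequence is equivalent to $\limsup_n\norm{P_\brn(x_n)}>0$ for some single branch $\brn$, so that Lemma~\ref{lslemma2} genuinely applies and the dichotomy $c_0$ versus $\ell_2$ is complete rather than merely disjunctive. Once this equivalence is noted, the proof is a short combination of the quoted results.
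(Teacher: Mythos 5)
Your proof is correct and takes essentially the same route as the paper: the paper's own (very short) argument likewise combines Proposition~\ref{c0}, Lemma~\ref{lslemma2}, and Proposition~\ref{equivl2}, merely organizing the dichotomy around the existence of a subsequence with a non-zero $\ell_{2}$-vector rather than around vanishing branch projections, and these two splits are interchangeable precisely via Lemma~\ref{lslemma2}. One minor caveat: your closing remark claims an \emph{equivalence} that does not hold (a branch with $\limsup_{n}\norm{P_{\brn}(x_{n})}>0$ does not preclude a subsequence with vanishing projections), but this is harmless because your argument only uses the true direction, namely that if no subsequence has vanishing projections then the full sequence itself fails it, so some branch has positive $\limsup$.
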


Indeed if $\sxn$ has a subsequence admitting a non-zero $\ell_{2}$-vector then   
Proposition~\ref{equivl2} yields  a further subsequence equivalent to the unit 
vector basis of $\ell_{2}$.
On the opposite, Proposition~\ref{c0}  yields that  $c_{0}$ embeds in the  subspace
generated by every subsequence of $\sxn$.
\section{Hilbertian subspaces of $\jtg$.}
In this section we provide a characterization of the subspaces  of
$\jtg$ which are  isomorphic to $\ell_{2}$ and we show that  each such
subspace is complemented in $\jtg$.
We start with some  preparatory lemmas.

\begin{lemma} \label{averages1a}
  Let  $(x_{n})_{n}$ be a normalized block sequence
having a non-zero   $\ell_{2}$-vector    $(c_{i})_{i\in\N}$ and
$(\brn_{i})_{i\in\N}$ be the associated set of branches. 
Then, there exists a subsequence $(x_n)_{n\in N}$ of $\sxn$ such that
if   $(w_{n})_{n\in\N}$ is  a  block sequence of $(x_n)_{n\in N}$ consisting of $(2,n)$  averages
the following holds:
\begin{enumerate}
\item[a)] for every $\e>0$ there exists an initial interval $I(\e)$ of $\N$
such  that  for every initial segment  $I$ of $\N$ containing $I(\e)$
there exists $n(I)\in\N$ such that
  $\norm{w_{n}-P_{\mc{B}_{I}}w_{n}}<\e$ for every $n\geq n(I)$,
\item[b)] the sequence $(w_{n})_{n}$
has the same 
$\ell_{2}$-vector  and  associated branches as $\sxn$,
\item[c)] $\norm{w_{n}}\to \norm{(c_{i})_{i\in\N}}_{\ell_{2}}$.
 \end{enumerate}
\end{lemma}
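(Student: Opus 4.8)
The plan is to fix a good subsequence using the $\ell_2$-vector machinery already in place, and then to analyze a generic $(2,n)$-average by splitting it into a part supported on the associated branches and a negligible error. Concretely, I would apply Lemma~\ref{llemma} to $(x_n)_n$ (which itself incorporates Lemma~\ref{stabilization2}) to obtain a subsequence $(x_{n_k})_k$, a strictly increasing sequence $(I_k)_k$ of initial intervals of $\N$ with incomparability levels $(l_k)_k$, and the splitting $x_{n_k} = \bar{x}_{n_k} + y_{n_k}$ where $\bar{x}_{n_k} = P_{\mc{B}_{I_k}}(x_{n_k})$. From Lemma~\ref{llemma} the sequence $(\bar{x}_{n_k})_k$ admits an upper $\ell_2$-estimate with constant $3$ and satisfies the concentration statement of Lemma~\ref{llemma}(a) for every $\ell_2$-convex combination; from the ``moreover'' clause of Lemma~\ref{stabilization2} the sequence $(y_{n_k})_k$ admits an upper $S^{(2)}$-estimate with constant $6$ and satisfies $\norm{P_\sigma(y_{n_k})}\to 0$ for every branch $\sigma$. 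Using Lemma~\ref{skipped} I would in addition arrange that $(x_{n_k})_k$ is skipped block with respect to the branches $(\brn_i)_i$.

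Given a $(2,n)$-average $w_n = \sum_{k\in F_n} a_k x_{n_k}$, I write $w_n = u_n + z_n$ with $u_n = \sum_{k\in F_n} a_k \bar{x}_{n_k}$ and $z_n = \sum_{k\in F_n} a_k y_{n_k}$. Since the coefficients of a $(2,n)$-average satisfy $\sum_{k\in F_n} a_k^2 = 1$, the vector $u_n$ is exactly an $\ell_2$-convex combination of $(\bar{x}_{n_k})_k$, so Lemma~\ref{llemma}(a) applies to it verbatim. The crucial preliminary is that $\norm{z_n}\to 0$: by Lemma~\ref{propaverages}(4), for every $G\in\schreier[1]\subseteq\schreier[n-1]$ one has $\sum_{p_k\in G} a_k^2 < 3/p_n$ with $p_n=\minsupp(x_{\min F_n})$, so $\norm{\sum_{k\in F_n} a_k e_{p_k}}_{S^{(2)}}\leq (3/p_n)^{1/2}\to 0$; feeding this into the upper $S^{(2)}$-estimate of $(y_{n_k})_k$ (this is exactly the mechanism of Proposition~\ref{s2averages}) yields $\norm{z_n}\leq 6(3/p_n)^{1/2}\to 0$.

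With $\norm{z_n}\to 0$ secured, the three conclusions follow in order. For (a) I transfer the concentration estimate for $u_n$ to $w_n$ via $\norm{w_n - P_{\mc{B}_I}w_n}\leq \norm{u_n - P_{\mc{B}_I}u_n} + (1+\#I)\norm{z_n}$, where $\#I$ is fixed once $I$ is, so $n(I)$ can absorb the second term. For (b) I write $P_\sigma(w_n)=P_\sigma(u_n)+P_\sigma(z_n)$ and discard the second summand. When $\sigma=\brn_i$, the skipped-block property and Lemma~\ref{skipped}(b) give $\norm{P_{\brn_i}(u_n)}=(\sum_{k\in F_n} a_k^2\norm{P_{\brn_i}(x_{n_k})}^2)^{1/2}$, which tends to $c_i$ because $\norm{P_{\brn_i}(x_{n_k})}\to c_i$ and $\sum a_k^2=1$. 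When $\sigma$ lies outside the associated set, I use that $\norm{P_\sigma(\bar{x}_{n_k})}\to 0$ as $k\to\infty$ (established inside Lemma~\ref{stabilization2} from the incomparability of the final segments) together with the upper $\ell_2$-estimate on $Z_\sigma$ provided by Lemma~\ref{upperl2}, obtaining $\norm{P_\sigma(u_n)}\leq C\max_{k\in F_n}\norm{P_\sigma(\bar{x}_{n_k})}\to 0$. Finally, for (c) part (b) lets me invoke Lemma~\ref{lowerl2} to get $\liminf_n \norm{w_n}\geq \norm{(c_i)_i}_{\ell_2}=:a$, while for the reverse bound I fix an $I$ as in (a), apply Lemma~\ref{sumprojections} to compute $\lim_n\norm{P_{\mc{B}_I}w_n}=(\sum_{i\in I}c_i^2)^{1/2}\leq a$, and combine with $\norm{w_n-P_{\mc{B}_I}w_n}<\e$ to conclude $\limsup_n\norm{w_n}\leq a+\e$.

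I expect the genuine obstacle to be the case of (b) for a branch $\sigma$ outside the associated set: there the vectors $P_\sigma(\bar{x}_{n_k})$ need not form a skipped block along $\sigma$, so their combined norm cannot be computed exactly and I must settle for an upper $\ell_2$-estimate in $Z_\sigma$, being careful that the at-most-one overlap between consecutive $P_\sigma(\bar{x}_{n_k})$ contributes only a fixed constant. Everything else reduces to bookkeeping built on the already-established lemmas.
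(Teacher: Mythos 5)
Your overall scheme coincides with the paper's: the splitting $w_n=u_n+z_n$ with $u_n=\sum_{k\in F_n}a_k\bar{x}_{n_k}$, the proof that $\norm{z_n}\to 0$ via the Schreier estimate, part a), part c), and the case $\brn=\brn_i$ of part b) are all essentially the paper's argument. The genuine gap is at the spot you yourself flagged, and your proposed fix does not close it: for a branch $\brn$ outside the associated set, the inequality $\norm{P_{\brn}(u_n)}\le C\max_{k\in F_n}\norm{P_{\brn}(\bar{x}_{n_k})}$ does not follow from Lemma~\ref{upperl2}, and it is false in general. The danger is not the overlap between \emph{consecutive} blocks. Since the sequence is skipped only with respect to the associated branches $(\brn_i)_{i}$, nothing prevents a single segment $s_j^{\brn}$ (the support of one functional $\phi_j^{\brn}$) from meeting the supports of \emph{all} the vectors $\bar{x}_{n_k}$, $k\in F_n$; this happens, for instance, when $\brn$ shares longer and longer initial segments with later branches $\brn_i$. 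In that case the $j$-th coordinate of $P_{\brn}(u_n)$ in $Z_{\brn}$ equals $\sum_{k\in F_n}a_k\phi_j^{\brn}(\bar{x}_{n_k})$, an $\ell_1$-type accumulation: if $\phi_j^{\brn}(\bar{x}_{n_k})=\delta$ for all $k\in F_n$, then $\norm{P_{\brn}(u_n)}\ge\delta\sum_{k\in F_n}a_k$, and for a $(2,n)$-average $\sum_{k\in F_n}a_k$ is of order $\sqrt{\#F_n}$, while $\max_{k\in F_n}\norm{P_{\brn}(\bar{x}_{n_k})}$ may be exactly $\delta$. The decay of $\delta$ is governed by the tail of $(c_i)_i$ (via Lemma~\ref{stabilization2}) and is unrelated to the length $\#F_n$ of the averages, so $\delta\sqrt{\#F_n}$ need not tend to $0$. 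No splitting into boundedly many disjointly supported families can remove this, because one coordinate may be common to all the blocks, not merely to consecutive pairs; so the ``fixed constant'' you hope for does not exist. Note also that your lower bound in c) invokes b) through Lemma~\ref{lowerl2}, so the gap propagates there (although only the associated-branch half of b) is really used at that point).

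The repair is the paper's argument, for which you already have every ingredient: use the concentration property established in a) instead of any quantitative estimate inside $Z_{\brn}$. Given $\e>0$, take the initial interval $I_{n_0}=I(\e/2)$ from a), so that $\norm{w_n-P_{\mc{B}_{I_{n_0}}}(w_n)}<\e/2$ for all large $n$; choose $l$ such that the final segments of $\brn$ and of $\brn_i$, $i\in I_{n_0}$, beyond $l$ are incomparable; for $n$ large, every block entering $w_n$ has $\minsupp$ beyond these levels, so $P_{\mc{B}_{I_{n_0}}}(w_n)$ is supported on nodes of the branches $\brn_i$ lying strictly after their splitting from $\brn$, whence $P_{\brn}\big(P_{\mc{B}_{I_{n_0}}}(w_n)\big)=0$ and
\begin{equation*}
\norm{P_{\brn}(w_n)}=\norm{P_{\brn}\big(w_n-P_{\mc{B}_{I_{n_0}}}(w_n)\big)}\le\norm{w_n-P_{\mc{B}_{I_{n_0}}}(w_n)}<\frac{\e}{2}.
\end{equation*}
It is this exact vanishing coming from incomparability, rather than an upper estimate against $\max_k\norm{P_{\brn}(\bar{x}_{n_k})}$, that makes part b) go through for branches outside the associated set.
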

\begin{proof}
  Passing  if necessary to a subsequence  we may assume that $\sxn$ is skipped with respect to
 the branches $(\brn_{i})_{i\in\N}$.   From Lemmas~\ref{stabilization2}
 and \ref{llemma} we get  sequences  $n_{k}, l_{k}\nearrow+\infty$, a  strictly
 increasing sequence $(I_{k})_{k}$ of initial intervals  of $\N$  and
$\e_{k}\searrow 0$ such that
 the following holds:
 
\begin{enumerate}
 \item[(i)]  $    (1-\e_{k})c_{i}\leq  \norm{\Pbrn[\brn_{i}]x_{n}}\leq (1+\e_{k})c_{i}$ for every
   $i\in I_{k}$ and $n\geq n_{k}$,
 \item[(ii)]  the final segments $\brn_{i,>l_{k}}$, $i\in I_{k}$,  are
   incomparable, $\# I_{k}<l_{k}$  and\\
   $\minsupp(x_{n_{k}})>\max\{\min \brn_{i,>l_{k}}:i\in
   I_{k}\}$,
   \item[(iii)]
   Setting $\bxnk=P_{\mc{B}_{I_{k}}}(x_{n_{k}})$, $k\in\N$, it holds that 
   for every $\e>0$ and 
for every block sequence $(\bar{w}_{n})_{n\in\N}$ of
$(\bar{x}_{n_{k}})_{k}$ such that $\bar{w}_{n}=\sum_{k\in
  F_{n}}a_{k}\bar{x}_{n_{k}}$ is an $\ell_{2}$-convex combination  there exists $n_{0}\in\N$  such that for every initial segment  $I$ of $\N$ containing $I_{n_{0}}$  there exists $n(I)\in\N$
  \begin{equation*}
 \norm{\bar{w}_{n}-P_{\mc{B}_{I}}(\bar{w}_{n})}<\e\,\,\,\forall n>n(I).
\end{equation*}
\item[(iv)] Setting $y_{n_{k}}=x_{n_{k}}-\bxnk$  it holds that the sequence
  $(y_{n_{k}})_{k}$ admits  upper $S^{(2)}$-estimation with constant $6$.
 \end{enumerate}
We choose a block sequence $(w_{n})_{n}$ of $(x_{n_{k}})_{k}$  consisting of $(2,n)$-averages, $w_{n}=\sum_{k\in F_{n}}a_{k}x_{n_{k}}$.
We set $\bwn=\sum_{k\in F_{n}}a_{k}\bxnk$ and $v_{n}=\sum_{k\in
  F_{n}}a_{k}y_{n_{k}}$. Then  $w_{n}=\bwn+v_{n}$. Using 
property (iv) of the sequence $(y_{n_{k}})_{k}$ we get   as in
Proposition~\ref{s2averages} that $\norm{v_{n}}\to 0$.

a)    Let $\e>0$ and  $I_{n_{0}}$ be the  initial segment   we get from
property (iii).  Set $I(\e)=I_{n_{0}}$.
We have that for every initial segment  $I$ of $\N$ containing $I_{n_{0}}$  there exists $n(I)\in\N$ such that
$\norm{\bwn-P_{\mc{B}_{I}}\bwn}<\sfrac{\e}{2}$    for every $n\geq n(I)$. We may also assume that
$\norm{v_{n}}<\sfrac{\e}{4}$ for every $n\geq  n(I)$. Then
\[
  \norm{w_{n}-P_{\mc{B}_{I}}(w_{n})}\leq\norm{\bwn-P_{\mc{B}_{I}}\bwn}+
  \norm{v_{n}-P_{\mc{B}_{I}}v_{n}}<\frac{\e}{2}+2\frac{\e}{4}=\e.
\]

b)   Let $\brn\ne\brn_{i}$ for every $i\in\N$.  We show that
$\lim_{n}\norm{P_{\brn}(w_{n})}\to 0$. On the contrary assume that for
some $\e>0 $  and $M\in[\N]$ it holds $\norm{P_{\brn}(w_{n})}>\e$ for
every  $n\in M$.  Since $\norm{v_{n}}\to 0$ we may assume that
$\norm{P_{\brn}(\bwn)}>\e$ for every $n\in M$.  Using (iii)   we  choose  an    interval $I_{n_{0}}$
such that  $\norm{\bwn-P_{\mc{B}_{I_{n_{0}}}}(\bwn)}<\sfrac{\e}{2}$.
Choose  $l\in\N$ such that  the final segments $\brn_{,>l},\brn_{i,>l}$, $i\in I_{n_0}$
are  incomparable. Choose  also $n(\e)$ such that
$\minsupp (x_{n_{k}})> l$ for every $k\in F_{n}$ and $n\geq n(\e)$.
This yields that $P_{\brn}(P_{\mc{B}_{I_{n_{0}}}}(\bwn))=0$ and hence
\[
  \e<\norm{P_{\brn}(\bwn)}=
  \norm{P_{\brn}(\bwn-P_{\mc{B}_{I_{n_{0}}}}(\bwn))}
\leq\norm{\bwn-P_{\mc{B}_{I_{n_{0}} }}(\bwn)}<\frac{\e}{2}\,\,\,\text{ a contradiction}.
\]
Therefore  $\lim_{n}\norm{P_{\brn}(w_{n})}\to 0$.

We show now that    $\lim_{n}\norm{P_{\brn_{i}}(w_{n})}=c_{i}$ for
every branch $\brn_{i}$, $i\in\N$.
Choose  a branch $\brn_{i}$. Since  $\sxn$ is skipped block with respect to the branches
$(\brn_{i})_{i\in\N}$,  from Lemma~\ref{skipped} we get
that    for  every $n>i$,
\begin{align}
  \norm{P_{\brn_{i}}(w_{n})}=
                              \norm{\sum_{k\in F_{n}}a_{k}P_{\brn_{i}}(x_{n_{k}})}
 &=\left(\sum_{k\in F_{n}}a^{2}_{k}\norm{P_{\brn_{i}}(x_{n_{k}})}^{2}\right)^{1/2}\xrightarrow[n]{} c_{i}
  \notag
\end{align}
since $\norm{P_{\brn_{i}}(x_{n_{k}})}\to c_{i}$.

From  the above we get  that  $(w_{n})_{n}$ has  $\ell_{2}$-vector and associated
set of brances the same with $\sxn$.

c) We show that  $\lim_{n}\norm{w_{n}}=\norm{(c_{i})_{i\in\N}}_{\ell_{2}}$.

From b) and  Lemma~\ref{lowerl2}  we get that  $\norm{(c_{i})_{i\in\N}}_{\ell_{2}}\leq\liminf\norm{w_{n}}$.
Assume on the contrary that
$\limsup\norm{w_{n}}>\norm{(c_{i})_{i\in\N}}_{\ell_{2}}$.
Hence passing to a subsequence  we may assume that
for some   $\e>0$ it holds
$\norm{w_{n}}>\norm{(c_{i})_{i\in\N}}_{\ell_{2}}+\e$ for every $n$.

From the proof  of a) we have  an   $n_{0}\in\N $ such that
for every initial interval of $\N$ containing $I_{n_{0}}$ there exists
$n(i)$ such that
$\norm{w_{n}-P_{\mc{B}_{I}}(w_{n})}<\e/4$  for every $n\geq n(I)$. 
It follows that
\begin{equation}
  \label{eq:39}
\e+ \norm{(c_{i})_{i\in\N}}_{\ell_{2}}<
\norm{w_{n}}\leq\norm{w_{n}-P_{\mc{B}_{I_{n_{0}}}}(w_{n})}+
\norm{P_{\mc{B}_{I_{n_{0}}}}(w_{n})}<\frac{\e}{4}+\norm{P_{
    \mc{B}_{I_{n_{0}}}}(w_{n})}.
\end{equation}
By (ii) we have that the final segments
$\brn_{i,>l_{n_{0}}}, i\in I_{n_{0}}$, are incomparable and $\#I_{n_{0}}<l_{n_{0}}$.
Choose  $k_{0}>n_{0}$ such that
\begin{center}
  $\e_{k_{0}}\norm{(c_{i})_{i\in\N}}_{\ell_{2}}<\frac{\e}{4}$\,\,\,  and \,\,\,
$\minsupp x_{n_{k}}>\max\{\min\brn_{i,>l_{n_{0}}}:i\in I_{n_{0}}\}\,
\forall k\geq
k_{0}$.  
\end{center}
Then  for every  $n$ such that $\min F>k_{0}$ 
we get,
\begin{align*}
 \norm{P_{\mc{B}_{I_{n_{0}}}}(w_{n})}&=\norm{\sum_{i\in I_{n_{0}}}\sum_{k\in
  F_{n}}a_{k}P_{\brn_{i}}(x_{n_{k}})}
  \\
  &=(\sum_{i\in I_{n_{0}}}\norm{\sum_{k\in
  F_{n}}a_{k}P_{\brn_{i}}(x_{n_{k}})}^{2})^{1/2}\quad \text{by
  Lemma~\ref{supperl2}}
  \\
  &=(\sum_{i\in I_{n_{0}}}
    \sum_{k\in
  F_{n}}a_{k}^{2}\norm{P_{\brn_{i}}(x_{n_{k}})}^{2})^{1/2}\quad \text{by
    Lemma~\ref{skipped}}
  \\
                    &\leq
                      (\sum_{i\in I_{n_{0}}}(1+\e_{k_{0}})^{2}c_{i}^{2})^{1/2}
\leq (1+\e_{k_{0}})\norm{(c_{i})_{i\in\N}}_{\ell_{2}} \quad  \text{by (i)}.                     
\end{align*}
Combining the above inequality with \eqref{eq:39} we get
\[\e+\norm{(c_{i})_{i\in\N}}_{\ell_{2}}
<\frac{\e}{4}+\norm{P_{
    \mc{B}_{I_{n_{0}}}}(w_{n})}\leq
  \frac{\e}{4}+(1+\e_{k_{0}})
  \norm{(c_{i})_{i\in\N}}_{\ell_{2}}<\frac{\e}{2}+ \norm{(c_{i})_{i\in\N}}_{\ell_{2}}
\]
a contradiction. Therefore
\[
 \norm{(c_{i})_{i\in\N}}_{\ell_{2}}\leq\liminf\norm{w_{n}}\leq\limsup\norm{w_{n}}\leq \norm{(c_{i})_{i\in\N}}_{\ell_{2}}
\]
and this completes the proof.
\end{proof}
\begin{lemma}\label{block2}
Let $(x_{n})_{n}$ be a normalized weakly  null sequence with
a non-zero $\ell_{2}$-vector  $(c_{i})_{i\in \N}$ and let $\mc{B}$  be the
associated set of branches. Then, if $(y_{n})_{n}$
is a  block sequence  such that
$\lim_{n}\norm{x_{n}-y_{n}}=0$,  the  sequence $(y_{n})_{n}$ also has
 $(c_{i})_{i\in\N}$ as an $\ell_{2}$-vector and the same set of
associated branches as $(x_{n})_{n}$.
  \end{lemma}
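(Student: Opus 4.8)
The plan is to exploit the fact that the $\ell_{2}$-vector, in contrast to the $\ell_{p}$-vector of Part~I, is defined directly through the norm-one projections $P_{\brn}$ rather than through segment restrictions. Consequently the whole statement will reduce to a single application of the reverse triangle inequality for these projections, and there is no need for the support condition that appeared in Lemma~\ref{block}.

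First I would recall that for every branch $\brn$ we have already established $\norm{P_{\brn}}=1$, so each $P_{\brn}$ is a bounded linear operator of norm one. The reverse triangle inequality then gives, for every branch $\brn$ and every $n\in\N$,
\[
\big|\,\norm{P_{\brn}(y_{n})}-\norm{P_{\brn}(x_{n})}\,\big|
\le\norm{P_{\brn}(y_{n}-x_{n})}
\le\norm{y_{n}-x_{n}}.
\]
Since $\lim_{n}\norm{x_{n}-y_{n}}=0$ by hypothesis, the right-hand side tends to $0$, and hence $\lim_{n}\big(\norm{P_{\brn}(y_{n})}-\norm{P_{\brn}(x_{n})}\big)=0$ for every branch $\brn$.

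It then remains only to read off the two defining conditions of the $\ell_{2}$-vector. Applying the above with $\brn=\brn_{i}$, for each associated branch, yields $\lim_{n}\norm{P_{\brn_{i}}(y_{n})}=\lim_{n}\norm{P_{\brn_{i}}(x_{n})}=c_{i}>0$, while applying it with an arbitrary branch $\brn\ne\brn_{i}$ ($i\in\N$) yields $\lim_{n}\norm{P_{\brn}(y_{n})}=\lim_{n}\norm{P_{\brn}(x_{n})}=0$. By the definition of the $\ell_{2}$-vector this shows that $(y_{n})_{n}$ has $(c_{i})_{i\in\N}$ as its $\ell_{2}$-vector with the same associated set of branches $\mc{B}$.

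I do not expect any genuine obstacle here: the statement is the exact $\ell_{2}$-analogue of Lemma~\ref{block}, and the only ingredient beyond the triangle inequality is the bound $\norm{P_{\brn}}=1$, which is already available. I would emphasize that, unlike in Lemma~\ref{block}, no support condition on $(y_{n})_{n}$ is needed, precisely because the projections $P_{\brn}$ absorb the role played there by the monotonicity $\norm{x_{|s}}\le\norm{x}$ of segment restrictions.
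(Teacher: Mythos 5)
Your proposal is correct and is essentially the paper's own proof: the paper likewise deduces the lemma as an immediate consequence of the triangle inequality together with the bound $\norm{P_{\brn}(x)}\leq\norm{x}$ for every branch $\brn$, which is exactly the estimate you spell out via the reverse triangle inequality. Your closing remark about why no support condition is needed here (the projections $P_{\brn}$ playing the role that segment monotonicity played in Lemma~\ref{block}) also matches the paper's intent.
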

  \begin{proof}
  The proof is immediate consequence    of the triangle inequality
  and the fact that $\norm{\Pbrn(x)}\leq\norm{x}$  for every  branch $\brn$.
\end{proof}
\begin{notation}
  Let $\e_{n}\searrow 0$ and   $Y$  be a subspace of $\jtg$. We shall say
  that a seminormalized block sequence $\sxn$ is equivalent to a
  sequence  in  $Y$ if there exists a normalized weakly null sequence $\syn$ in $Y$ such
  that  $\norm{y_{n}-x_{n}}<\e_{n}$ for every $n\in\N$.
\end{notation}

\begin{proposition}\label{lastprop}
Let $Y$ be a subspace of $\jtg$ with the property that for every finite
  set of branches $T=\{\tau_{1},\dots,\tau_{d}\}$  there exists a seminormalized
  block   sequence $(x_{n})_{n}$ equivalent to a sequence in $Y$,
  admitting  an  $\ell_{2}$-vector $(c_{i})_{i\in\N}\in S_{\ell_{2}}$ with associated branches $(\brn_{i})_{i\in\N}$,   satisfying the following
\begin{enumerate}
  \item $\lim_{n}\norm{P_{T}x_{n}}=0$,
  \item for every $\e>0$  there exists an initial interval  $I(\e)$ of $\N$
  such that for every initial segment $I$ of  $\N$ containing $I(\e)$ there exists  $n(I)\in\N$
such that  $\norm{x_{n}-P_{\mc{B}_{I}}x_{n}}<\e$ for every
  $n\geq n(I)$.
\end{enumerate}
  Then, there exists a normalized sequence $(y_{n})_{n}\in Y$ such that $\norm{P_{\brn}(y_{n})}\to 0$ for every branch $\brn$.
\end{proposition}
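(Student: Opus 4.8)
The plan is to reproduce, in the setting of $\jtg$, the inductive scheme used in the proof of Proposition~\ref{lemmalp}, replacing the coordinate restrictions $x_{|\sigma}$ by the branch projections $P_\sigma$ and the $\ell_p$-vector machinery by its $\ell_2$-counterpart. First I would build an array of block sequences by iterating the hypothesis. Set $T_0=\emptyset$ and, having produced the finite set of branches $T_{k-1}$, apply the hypothesis to $T=T_{k-1}$ to obtain a seminormalized block sequence $(x_n^k)_n$ equivalent to a normalized weakly null sequence $(y_n^k)_n$ in $Y$, with $\norm{y_n^k-x_n^k}<\e_n$, admitting a normalized $\ell_2$-vector $(c_i^k)_{i\in\N}$ with associated branches $(\sigma_i^k)_i$, and with $\lim_n\norm{P_{T_{k-1}}x_n^k}=0$ together with the concentration property (2). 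Using the definition of the $\ell_2$-vector and Lemma~\ref{stabilization2} I would fix an initial interval $I_k$ (with $\#I_k\nearrow\infty$), an index $n(k)$ and an $l_k$ so that $\norm{(c_i^k)_{i\notin I_k}}_{\ell_2}$ is small, the estimate $(1-\e_k)c_i^k\le\norm{P_{\sigma_i^k}x_n^k}\le(1+\e_k)c_i^k$ holds on $I_k$ for $n\ge n(k)$, and the final segments $\sigma^k_{i,>l_k}$, $i\in I_k$, are incomparable. I then set $T_k=T_{k-1}\cup\mc{B}_{I_k}$ with $\mc{B}_{I_k}=\{\sigma_i^k:i\in I_k\}$. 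The crucial consequence of condition (1) is that the families $\mc{B}_{I_k}$ are pairwise disjoint: if some $\sigma_i^k\in T_{k-1}$ then, by Lemma~\ref{sumprojections}, $\norm{P_{\sigma_i^k}x_n^k}\le\norm{P_{T_{k-1}}x_n^k}\to0$ for $n$ with $\minsupp(x_n^k)$ past the relevant node, contradicting $c_i^k>0$.

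Since the $\mc{B}_{I_k}$ are disjoint finite families of distinct branches with $\#\mc{B}_{I_k}\nearrow\infty$, Lemma~\ref{incomparable2} yields $K\in[\N]$ and $(l_k)_{k\in K}$ so that for $k<k_1<k_2$ in $K$ and $i\in I_k$ the final segments $\sigma^{k_1}_{i,>l_{k_1}}$ and $\sigma^{k_2}_{i,>l_{k_2}}$ are incomparable. For each $k_j\in K$ I would choose $x^{k_j}_{n_{k_j}}$ with $n_{k_j}\ge n(k_j)$ and $\minsupp x^{k_j}_{n_{k_j}}>l_{k_j}$ large enough that the final segments $\sigma^{k_j}_{i,\ge p_j}$, $i\in I_{k_j}$, are incomparable, where $p_j=\minsupp x^{k_j}_{n_{k_j}}$, and set $z_j=P_{\mc{B}_{I_{k_j}}}x^{k_j}_{n_{k_j}}$. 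The heart of the argument is the claim that $\norm{P_\sigma z_j}\to0$ for every branch $\sigma$. As in the stabilization computation of Lemma~\ref{stabilization2}, incomparability of $\sigma^{k_j}_{i,\ge p_j}$ forces $\sigma$ to meet at most one $\sigma^{k_j}_{i(k_j)}$, whence $\norm{P_\sigma z_j}=\norm{P_\sigma P_{\sigma^{k_j}_{i(k_j)}}x^{k_j}_{n_{k_j}}}\le(1+\e_{k_j})c^{k_j}_{i(k_j)}\le 2c^{k_j}_{i(k_j)}$. If $\norm{P_\sigma z_j}>\e$ along an infinite set, the uniform tail estimate for normalized decreasing $\ell_2$-vectors (there is $i(\e)$ with $c_i^k<\e$ for all $i\ge i(\e)$ and all $k$) bounds the indices $i(k_j)\le i(\e/2)$; passing to a subsequence on which $i(k_j)=l$ is constant, $\sigma$ would then meet both $\sigma^{k_{j_1}}_{l,\ge p_{j_1}}$ and $\sigma^{k_{j_2}}_{l,\ge p_{j_2}}$, contradicting the incomparability supplied by Lemma~\ref{incomparable2}.

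Finally I would transfer the diagonal back into $Y$. By condition (2), choosing $I_{k_j}$ and $n_{k_j}$ appropriately makes $\norm{x^{k_j}_{n_{k_j}}-z_j}=\norm{x^{k_j}_{n_{k_j}}-P_{\mc{B}_{I_{k_j}}}x^{k_j}_{n_{k_j}}}\to0$, while $\norm{y^{k_j}_{n_{k_j}}-x^{k_j}_{n_{k_j}}}<\e_{n_{k_j}}\to0$ by equivalence with $Y$; hence $\norm{y^{k_j}_{n_{k_j}}-z_j}\to0$. Since $\norm{P_\sigma}=1$, for every fixed branch $\sigma$ we get $\norm{P_\sigma(y^{k_j}_{n_{k_j}})}\le\norm{P_\sigma z_j}+\norm{y^{k_j}_{n_{k_j}}-z_j}\to0$. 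The sequence $(y^{k_j}_{n_{k_j}})_j$ is normalized and lies in $Y$, so it is the required sequence.

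The step I expect to be the main obstacle is the claim $\norm{P_\sigma z_j}\to0$, and specifically the bookkeeping needed to run it with the projections $P_\sigma$ rather than the coordinate restrictions used in Part~I: one must verify, via Lemma~\ref{sumprojections} and the single-intersection property coming from incomparability, that $P_\sigma z_j$ reduces to $P_\sigma P_{\sigma^{k_j}_{i(k_j)}}x^{k_j}_{n_{k_j}}$ so that the $\ell_2$-vector estimate applies. A secondary point requiring attention is arranging all three error sources (the $\ell_2$-vector tails, the concentration error in (2), and the $Y$-approximation error $\e_n$) to be simultaneously vanishing along the diagonal, which is what makes the final transfer to $Y$ legitimate.
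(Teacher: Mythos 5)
Your proposal is correct and follows essentially the same route as the paper's own proof: the same inductive construction of the array $(x_n^k)_{n}$ with pairwise disjoint families $\mc{B}_{I_k}$, the same appeal to Lemma~\ref{incomparable2}, the same diagonal choice with the uniform tail bound $c_i^k<\e$ for $i\geq i(\e)$ driving the claim that the branch projections of the diagonal sequence vanish, and the same perturbation argument transferring the conclusion back into $Y$. The only cosmetic difference is that you state the key claim for $z_j=P_{\mc{B}_{I_{k_j}}}(x^{k_j}_{n_{k_j}})$ rather than for $x^{k_j}_{n_{k_j}}$ itself, which the paper handles with the same $\e_k$-perturbation via its property (ii).
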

\begin{proof}
  Let $\e_{k}\searrow 0$ and $\sum_{k}\e_{k}<1$.
  From the hypothesis we get  a seminormalized block
  sequence $(x_{n}^{1})_{n\in\N}$   equivalent to a sequence  in $Y$,  admitting an
  $\ell_{2}$-vector $(c_{i}^{1})_{i\in\N}\in S_{\ell_{2}}$. Let $(\brn_{i}^{1})_{i\in\N}$ be the associated set of branches.  Choose an initial interval $I_{1}$ of $\N$  such that
  $\norm{x_{n}^{1}-P_{\mc{B}_{I_{1}}}x_{n}^{1}}<\e_{1}$ for every  $n\geq  n(1)$.
 Using the definition of the $\ell_{2}$-vector we may also assume that  
 \begin{center}
  $(1-\e_{1})c_{i}^{1}\leq\norm{P_{\brn_{i}^{1}}x_{n}^{1}}\leq (1+\e_{1})c_{i}^{1}$
for every $i\in I_{1}$ and $n\geq  n(1)$.
\end{center}
Set $T_{1}=\mc{B}_{I_{1}}$. From the hypothesis we get that there
exists a seminormalized block sequence $(x_{n}^{2})_{n\in\N}$
equivalent to a sequence  in $Y$
satisfying $\lim_{n}\norm{P_{T_{1}}(x_{n}^{2})}=0$  and
admitting  an $\ell_{2}$-vector $(c_{i}^{2})_{i\in\N}\in
S_{\ell_{2}}$. Let $(\brn_{i}^{2})_{i\in\N}$ be the associated set of branches.
Choose an initial interval $I_{2}$ of $\N$, $I_{1}\subsetneq I_{2}$,  such that
  $\norm{x_{n}^{2}-P_{\mc{B}_{I_{2}}}x_{n}^{2}}<\e_{2}$ for every  $n\geq  n(2)$.
  Using the definition of the $\ell_{2}$-vector we may also assume that
\begin{center}  
$(1-\e_{2})c_{i}^{2}\leq\norm{P_{\brn_{i}^{2}}x_{n}^{2}}\leq (1+\e_{2})c_{i}^{2}$
for every $i\in I_{2}$ and $n\geq  n(2)$.
\end{center}
Set  $T_{2}=\cup_{l=1}^{2}\mc{B}_{I_{l}}$.
Continuing we   choose inductively block sequences
$(x_{n}^{k})_{n\in\N}$, $k\in\N$, a strictly increasing sequence
$(I_{k})_{k\in\N}$ of initial intervals of $\N$  and $n(k)\in\N$, such that for every $k$,  $(x_{n}^{k})_{n}$ is
 equivalent to a sequence  in $Y$,  admits an $\ell_{2}$-vector $(c_{i}^{k})_{i\in\N}\in
S_{\ell_{2}}$ such that if $(\brn_{i}^{k})_{i\in\N}$ is the associated set
of branches then
\begin{enumerate}
\item[(i)] $\lim_{n}\norm{P_{T_{k-1}}x_{n}^{k}}=0$, where
  $T_{k-1}=\cup_{l\leq k-1}\mc{B}_{I_{l}}$,
\item[(ii)] $\norm{x_{n}^{k}-P_{\mc{B}_{I_{k}}}x_{n}^{k}}<\e_{k}$ for every $n\geq  n(k)$,
\item[(iii)]  $(1-\e_{k})c_{i}^{k}\leq\norm{P_{\brn_{i}^{k}}x_{n}^{k}}\leq
  (1+\e_{k})c_{i}^{k}$
    for every  $i\in I_{k}$ and $n\geq n(k)$.
    \end{enumerate}
Properties  (i) and (iii)  implies that  the sets 
$\mc{B}_{I_{k}}$ are pairwise disjoint.
 Applying Lemma~\ref{incomparable2} we get $K\in[\N]$  and
$(l_{k})_{k\in K}$ such that
\begin{equation}
\begin{aligned}
 \forall   k<k_{1}<k_{2}\in K \,\,\text{and every}\,\, i\leq\max I_{k}
\\
 \text{the final segments}\,\,\ \brn_{i,>l_{k_{1}}}^{k_{1}},
 \brn_{i,>l_{k_{2}}}^{k_{2}} \text{are incomparable}.
\label{eq:0042}
\end{aligned}
\end{equation}
For every $k_{j}\in K$ choose   $x_{n_{k_{j}}}^{k_{j}}\in
(x_{n}^{k_{j}})_{n\in \N}$, $n_{k_{j}}>\max\{n_{k_{j-1}}, n(k_{j})\}$, such
 that
\begin{enumerate}
\item[(iv)]  $p_{j}=\minsupp x_{n_{k_{j}}}^{k_{j}}>l_{k_{j}}$,
\item[(v)]  the final segments $\brn_{i,\geq p_{j}}^{k_j}$,$i\in I_{k_j}$ are incomparable.
\end{enumerate}
Let us observe, using that the  $\ell_{p}$-vectors are decreasing sequences,
that
\begin{equation}
  \label{eq:010}
\forall \e>0\,\text{
 there exists  $i(\e)$ such that  $c^{k}_{i}<\e$ for every $i\geq
i(\e)$ and every $k$}.  
\end{equation}

\noindent \textit{Claim:}
For every branch $\brn$ it holds that $\lim_{j}\norm{\Pbrn(x_{n_{k_{j}}}^{k_{j}})}=0$.

\noindent \textit{Proof of Claim}.
On the contrary assume that there exist $\e>0$ and $J\in [\N]$ 
such that   $\norm{\Pbrn(x_{n_{k_{j}}}^{k_{j}})}>\e$  for  every $j\in
J$.
Set   $z_{j}=P_{\mc{B}_{I_{k_{j}}}}(x^{k_{j}}_{n_{k_{j}}})$, $j\in\N$. Using
the above property (ii) we may assume that $\norm{P_{\brn}z_{j}}>\sfrac{\e}{2}$
for every $j\in J$.

Let  $j_{0}\in J$  be such that   $[1, i(\e/4)]\subset I_{k_{j_{0}}}$.
Then if  $j>j_{0}$
from (iv) and  (v) we get that there exists  a unique $i(k_{j})\in
I_{k_{j}}$  such that
$P_{\brn}(z_{j})=P_{\brn}(P_{\brn^{k_{j}}_{i(k_{j})}}x_{n_{k_{j}}}^{k_{j}})$. It
follows
\[
\frac{\e}{2}\leq\norm{P_{\brn}z_{j}}=\norm{P_{\brn}(P_{\brn^{k_{j}}_{i(k_{j})}}x_{n_{k_{j}}}^{k_{j}})}\leq (1+\e_{k_{j}})c_{i(k_{j})}^{k_{j}}<2c_{i(k_{j})}^{k_{j}},
\]
using  the above property (iii) and that $\e_{k}<1$ for every $k$.
Since $c_{i}^{k_{j}}<\sfrac{\e}{4}$ for every $i\geq i(\e/4)$,
from \eqref{eq:010}
we get  that  $i(k_{j})< i(\e/4)$. Passing to a subsequence
we may assume that  $i(k_{j})=i< i(\sfrac{\e}{4})$ for every $j$.
The above property implies  that  for $j_{0}<j_{1}<j_{2}$  it holds
that $\brn\cap\brn_{i,>l_{k_{j_{i}}}}^{k_{j_{i}}}\ne\emptyset $ for $i=1,2$,
a contradiction since by \eqref{eq:0042} these final  segments are incomparable.
$\square$.

From the choice of the sequences $(x_{n}^{k})_{n\in\N}$, $k\in\N$, we have
that for every $j$ there exists  $y_{j}\in S_{Y}$  such that
$\norm{x_{n_{k_{j}}}^{k_{j}}-y_{j}}<\e_{k_{j}}$.  Since $\e_{k_{j}}\searrow
0$ it follows that  $(y_{j})_{j\in\N}$   is a normalized sequence in
$Y$ such that  $\lim_{j}\norm{P_{\brn}(y_{j})}=0$ for every branch $\brn$.
\end{proof}
A more careful inspection  of the previous proof shows that  $\ell_{2}$
embeds  in $Y$ once we have  an array of normalized block sequences
with certain properties whose the $\ell_{2}$-vector is not necessarily
normalized.
\begin{proposition}
  Let $(x_{n}^{k})_{n\in\N}, k\in\N$ be an array of normalized block
  sequences having   non-zero $\ell_{2}$-vectors $(c_{i}^{k})_{i\in \N}$
with  associated set of branches $(\brn_{i}^{k})_{i\in\N}$,
an $\e_{n}\searrow 0$  and  a strictly increasing sequence   $(I_{k})_{k\in\N}$ of
initial intervals of $\N$ such that 
  \begin{enumerate}
  \item The sets $\mc{B}_{I_{k}}=\{\brn_{i}^{k}:i\in I_{k}\}$, $k\in\N$, 
 are  disjoint.
    \item For every $k,n$,  it holds that
      \[
        \norm{P_{B_{I_{k}}}(x_{n}^{k})}>\frac{1}{2}\,\,\text{and}\,\, \norm{x_{n}^{k}-P_{B_{I_{k}}}(x_{n}^{k})}<\e_{k}.\]
      \end{enumerate}
  Then, there exists a sequence
  $(x_{n_{j}}^{k_{j}})_{j}\subset\{x_{n}^{k}:n,k\in\N\}$    satisfying
$\lim_{j}\norm{P_{\brn}(x_{n_{j}}^{k_{j}})}=0$ for every branch  $\brn$.
\end{proposition}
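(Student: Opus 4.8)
The plan is to reproduce, almost verbatim, the argument that establishes the Claim inside the proof of Proposition~\ref{lastprop}; the only genuinely new feature is that the $\ell_{2}$-vectors $(c_{i}^{k})_{i}$ are no longer assumed to lie on the unit sphere, only in the unit ball, and I must check that this does not damage the uniform decay estimate that drives the whole argument. Since each $(c_{i}^{k})_{i}$ is decreasing and lies in $B_{\ell_{2}}$, we have $i\,(c_{i}^{k})^{2}\le\sum_{l\le i}(c_{l}^{k})^{2}\le 1$, hence $c_{i}^{k}\le i^{-1/2}$ for all $i,k$. Consequently, for every $\e>0$ there is an $i(\e)\in\N$ with $c_{i}^{k}<\e$ for all $i\ge i(\e)$ and all $k$ — precisely the observation used (in the normalized case) in Proposition~\ref{lastprop}, and membership in the unit ball already suffices for it.

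Since by hypothesis~(1) the finite families $\mc{B}_{I_{k}}$ are pairwise disjoint and $\#\mc{B}_{I_{k}}=\#I_{k}\nearrow+\infty$, I would apply Lemma~\ref{incomparable2} to obtain $K\in[\N]$ and $(l_{k})_{k\in K}$ such that for all $k<k_{1}<k_{2}$ in $K$ and all $i\in I_{k}$ the final segments $\brn_{i,>l_{k_{1}}}^{k_{1}}$ and $\brn_{i,>l_{k_{2}}}^{k_{2}}$ are incomparable.

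Enumerating $K=\{k_{1}<k_{2}<\cdots\}$, I would then choose inductively vectors $x_{n_{k_{j}}}^{k_{j}}$ with $n_{k_{j}}$ so large that, writing $p_{j}=\minsupp x_{n_{k_{j}}}^{k_{j}}$, we have $p_{j}>l_{k_{j}}$, the final segments $\brn_{i,\ge p_{j}}^{k_{j}}$ ($i\in I_{k_{j}}$) are incomparable, and $\norm{P_{\brn_{i}^{k_{j}}}(x_{n_{k_{j}}}^{k_{j}})}\le 2c_{i}^{k_{j}}$ for $i\in I_{k_{j}}$; the second condition is available because the $\brn_{i}^{k_{j}}$, $i\in I_{k_{j}}$, are finitely many distinct branches with eventually incomparable final segments and $\minsupp x_{n}^{k_{j}}\to+\infty$, and the third because $\lim_{n}\norm{P_{\brn_{i}^{k_{j}}}(x_{n}^{k_{j}})}=c_{i}^{k_{j}}$ for each $i$ in the finite set $I_{k_{j}}$. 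Put $z_{j}=P_{\mc{B}_{I_{k_{j}}}}(x_{n_{k_{j}}}^{k_{j}})$. Hypothesis~(2) gives $\norm{x_{n_{k_{j}}}^{k_{j}}-z_{j}}<\e_{k_{j}}\to 0$, and since $\norm{P_{\brn}}=1$ for every branch $\brn$, it is enough to show that $\norm{P_{\brn}(z_{j})}\to 0$ for every $\brn$. Assume not: then there are $\e>0$ and an infinite $J$ with $\norm{P_{\brn}(z_{j})}>\e$ for $j\in J$. As $z_{j}=\sum_{i\in I_{k_{j}}}P_{\brn_{i}^{k_{j}}}(x_{n_{k_{j}}}^{k_{j}})$ is supported on the pairwise incomparable final segments $\brn_{i,\ge p_{j}}^{k_{j}}$, the single branch $\brn$ meets at most one of them, with index $i(k_{j})$ say, whence $P_{\brn}(z_{j})=P_{\brn}(P_{\brn_{i(k_{j})}^{k_{j}}}(x_{n_{k_{j}}}^{k_{j}}))$ and $\e<\norm{P_{\brn_{i(k_{j})}^{k_{j}}}(x_{n_{k_{j}}}^{k_{j}})}\le 2c_{i(k_{j})}^{k_{j}}$. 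By the uniform decay this forces $i(k_{j})<i(\e/2)$ for all $j\in J$, so after passing to an infinite $J'\subseteq J$ we may assume $i(k_{j})=i^{*}$ is constant. For $j_{1}<j_{2}$ in $J'$ the branch $\brn$ then meets both $\brn_{i^{*},\ge p_{j_{1}}}^{k_{j_{1}}}\subseteq\brn_{i^{*},>l_{k_{j_{1}}}}^{k_{j_{1}}}$ and $\brn_{i^{*},\ge p_{j_{2}}}^{k_{j_{2}}}\subseteq\brn_{i^{*},>l_{k_{j_{2}}}}^{k_{j_{2}}}$; since two incomparable final segments cannot both meet a single branch (their minimal nodes would then both belong to $\brn$ and hence be comparable), this contradicts Lemma~\ref{incomparable2}. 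The contradiction proves the Claim, and the sequence $(x_{n_{k_{j}}}^{k_{j}})_{j}$ is as required.

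The step I expect to be the real obstacle is the localization of supports: the incomparability delivered by Lemma~\ref{incomparable2} lives only on the final segments beyond level $l_{k_{j}}$, so one must guarantee that each $P_{\brn_{i}^{k_{j}}}(x_{n_{k_{j}}}^{k_{j}})$ is genuinely supported above $l_{k_{j}}$ and not on a segment straddling that level. This is arranged, exactly as in Proposition~\ref{lastprop}, by the choice $p_{j}>l_{k_{j}}$ together with the truncated projections onto $\brn_{i,>l_{k_{j}}}^{k_{j}}$, whose supports lie in that final segment by construction; once this is secured, the rest is bookkeeping. Finally, the lower bound $\norm{P_{\mc{B}_{I_{k}}}(x_{n}^{k})}>1/2$ in hypothesis~(2) keeps the selected vectors seminormalized, which, in combination with Theorem~\ref{upperS2} and Proposition~\ref{c0}, is what would upgrade the stated conclusion to the embedding of $c_{0}$; it is not otherwise needed for the displayed statement.
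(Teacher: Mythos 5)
Your proposal is correct and takes essentially the same route as the paper: the paper's own proof is exactly a reduction to the Claim in the proof of Proposition~\ref{lastprop} (stabilization of $\norm{P_{\brn_{i}^{k}}(x_{n}^{k})}$ along subsequences in each row, Lemma~\ref{incomparable2} applied to the disjoint families $\mc{B}_{I_{k}}$, then the diagonal choice and the incomparability contradiction), which is what you reproduce. Your added observation that the uniform decay $c_{i}^{k}\le i^{-1/2}$ requires only that the decreasing $\ell_{2}$-vectors lie in $B_{\ell_{2}}$ (guaranteed here by normalization of the rows) correctly fills the one point where the hypotheses differ from Proposition~\ref{lastprop}, and your remark that the lower bound $\norm{P_{\mc{B}_{I_{k}}}(x_{n}^{k})}>1/2$ is not needed for the displayed conclusion is consistent with the paper's argument.
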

\begin{proof}[Sketch of the proof] The proof follows the arguments of
  the proof of Proposition~\ref{lastprop}.
   By the
  assumptions  we have  that  (ii) of the proof of Proposition~\ref{lastprop} holds.
  For every $k$ passing to a subsequence we may assume that
  (iii)  also holds.  By  (1) the sets
   $\mc{B}_{I_{k}}, k\in\N$ are disjoint and thus   we may apply
   Lemma~\ref{incomparable2} to get  $K\in[\N]$ and $(l_{k})_{k\in K}$
   such that \eqref{eq:0042} holds.  Continuing as in the proof  of
   Proposition~\ref{lastprop}
   we get the subsequence $(x^{k_{j}}_{n_{j}})_{j}$
satisfying   $\lim_{j}\norm{P_{\brn}(x_{n_{j}}^{k_{j}})}=0$ for every branch  $\brn$.
\end{proof}

Next, we prove the following proposition which is analogous to
Proposition~\ref{lpvectornorm1}. The proof we  give  is almost
identical to the proof  of Proposition~\ref{lpvectornorm1} and we
present it for sake of completeness.
\begin{proposition}\label{l2vectornorm1}
  Let $T=\{\brnt_{1},\dots\brnt_{d}\}$ be a finite set of branches
and   $Y$ be a subspace of $\jtg$ which contains a normalized weakly null sequence $(y_{n})_{n}$
such that $\lim_{n} \norm{P_{T}y_{n}}=0$,  
and  has  an $\ell_{2}$-vector $(c_{i})_{i\in\N}$ with  norm
$\norm{(c_{i})_{i}}_{\ell_{2}}=a\in (0,1)$.  Let
$(\brn_{i})_{i\in\N}$ be the associated set of branches.
Then there exists a normalized weakly null sequence $(w_{n})_{n}$   in $Y$ such
that
\begin{enumerate}
\item[a)] the sequence  $(w_{n})_{n\in\N}$ 
has  an $\ell_{2}$-vector $(d_{i})_{i\in\N}\in S_{\ell_{2}}$ with the same set of
associated branches as $(y_{n})_{n}$.
\item[b)]
  $\lim_{n} \norm{P_{T}w_{n}}=0$.
\item[c)] For every $\e>0$ there exists  an  initial interval $I(\e)$
  of $\N$
  such that for  every initial segment  $I$ of $\N$
  containing  $I(\e)$ there exists  $n(I)\in\N$
  such that
  $\norm{w_{n}-P_{\mc{B}_{I}}w_{n}}<\e$ for every $n\geq  n(I)$.
  \end{enumerate}
  \end{proposition}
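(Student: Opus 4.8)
The plan is to follow the proof of Proposition~\ref{lpvectornorm1} almost verbatim, replacing the restrictions $x_{|\brn}$ by the projections $\Pbrn$ and the $\ell_2$-averages by $(2,n)$-averages. First I would apply Lemma~\ref{block2} to replace $(y_n)_n$ by a normalized block sequence $(x_n)_n$ with $\norm{x_n-y_n}<\e_n$, the sequence $(\e_n)_n$ chosen square-summable; by that lemma $(x_n)_n$ retains the $\ell_2$-vector $(c_i)_{i\in\N}$ and the associated branches $(\brn_i)_{i\in\N}$. I would also record at the outset that $\brnt_j\neq\brn_i$ for all $i,j$: since $\norm{P_{\brn_i}y_n}\to c_i>0$ while the hypothesis forces $\norm{P_{\brnt_j}y_n}\to0$, no $\brnt_j$ can be an associated branch.

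Next I would feed $(x_n)_n$ into Lemma~\ref{averages1a}, obtaining a subsequence all of whose $(2,n)$-averages of increasing length $z_m=\sum_{k\in F_m}a_kx_{n_k}$ simultaneously enjoy: (b) $(z_m)_m$ has $\ell_2$-vector $(c_i)_{i\in\N}$ with associated branches $(\brn_i)_{i\in\N}$; (c) $\norm{z_m}\to\norm{(c_i)_i}_{\ell_2}=a$; and the concentration estimate of Lemma~\ref{averages1a}(a), which is precisely conclusion (c) of the present statement. Thus, after rescaling, conclusions (a) and (c) are handed to us directly by Lemma~\ref{averages1a}.

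The one genuinely new point is conclusion (b), $\lim_m\norm{P_T z_m}=0$. Here I would invoke the separation $\brnt_j\neq\brn_i$: since the associated branches of $(z_m)_m$ are exactly the $\brn_i$, the definition of the $\ell_2$-vector gives $\lim_m\norm{P_{\brnt_j}(z_m)}=0$ for each $j\le d$. Choosing the averages to have minimal support beyond the point where the final segments $\brnt_{j,>k}$ become incomparable, Lemma~\ref{sumprojections} yields $\norm{P_T z_m}=\big(\sum_{j=1}^d\norm{P_{\brnt_j}(z_m)}^2\big)^{1/2}\to0$. Finally I would set $w_m=z_m/\norm{z_m}$; a direct check of the scaling behaviour of $\ell_2$-vectors shows $(w_m)_m$ has $\ell_2$-vector $(d_i)_{i\in\N}=a^{-1}(c_i)_{i\in\N}\in S_{\ell_2}$ with the same associated branches, and both (b) and the concentration property survive normalization. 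To place the sequence inside $Y$, I would consider the corresponding averages $\tilde z_m=\sum_{k\in F_m}a_ky_{n_k}\in Y$; since $\norm{z_m-\tilde z_m}\le\big(\sum_{k\in F_m}\e_{n_k}^2\big)^{1/2}\to0$, the normalized vectors $\tilde z_m/\norm{\tilde z_m}\in Y$ converge to $w_m$, and, using $\norm{\Pbrn(\cdot)}\le\norm{\cdot}$ for every branch $\brn$, a standard perturbation argument shows they keep the $\ell_2$-vector $(d_i)$, the vanishing of $P_T$, and the concentration estimate. These vectors are weakly null because $(y_n)_n$ is, and form the desired sequence.

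The step I expect to require the most care is conclusion (b): one must ensure that the projections onto the finitely many branches of $T$ really vanish along the $(2,n)$-averages. This rests on the clean separation $\brnt_j\notin\{\brn_i\}$, on the fact that the averages may be taken far out in the tree so that Lemma~\ref{sumprojections} applies, and---should one prefer a hands-on argument in the spirit of Proposition~\ref{lpvectornorm1}---on taking $(x_{n_k})_k$ skipped with respect to each $\brnt_j$ and invoking Lemma~\ref{skipped}(b) together with the isometric $\ell_2$-behaviour of $(x_{l}^{\brnt_{j}})_l$ from Lemma~\ref{upperl2}, which gives $\norm{P_{\brnt_j}(z_m)}=\big(\sum_{k\in F_m}a_k^2\norm{P_{\brnt_j}(x_{n_k})}^2\big)^{1/2}\le\e$ once $\min F_m$ is large.
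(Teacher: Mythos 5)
Your proposal is correct and follows essentially the same route as the paper: reduce to a block sequence via Lemma~\ref{block2}, feed it into Lemma~\ref{averages1a} so that the $(2,n)$-averages inherit the $\ell_{2}$-vector, satisfy the concentration property (c), and have norms tending to $a$, then rescale and perturb back into $Y$. Your primary argument for conclusion (b) --- that $\brnt_{j}\ne\brn_{i}$ for all $i,j$ forces $\norm{P_{\brnt_{j}}(z_{m})}\to 0$ directly from the definition of the $\ell_{2}$-vector of the averages --- is a valid minor shortcut; the paper instead carries out exactly the hands-on computation you offer as a fallback, namely taking the subsequence skipped with respect to the $\brnt_{j}$'s and using Lemma~\ref{supperl2} together with the identity $\norm{P_{\brnt_{j}}(\sum_{k\in F_{m}}a_{k}x_{k})}=\big(\sum_{k\in F_{m}}a_{k}^{2}\norm{P_{\brnt_{j}}(x_{k})}^{2}\big)^{1/2}$.
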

\begin{proof}
From  Lemma~\ref{block2}  we may assume that $(y_{n})_{n}$ is
  equivalent to a block sequence $(x_{n})_{n\in\N}$.
Choose  $l_{0}\in\N$ such that the final segments $\brnt_{i,>l_{0}}$, $i\leq
d$, are incomparable and $d<l_{0}$.
Passing to a subsequence  of  $(x_{n})_{n\in\N}$ if necessary we may
assume
that is skipped block w.r the branches $\tau_{i}, i\leq d$ and
$\minsupp(x_{n})>l_{0}$
for every $n$.
From Lemma~\ref{averages1a} passing to a further subsequence we get
that for every  block
sequence $(z_{n})_{n}$ of $(x_{n})_{n\in\N}$  where 
each $z_{n}$ is  a $(2,n)$-average, $\norm{z_{n}}\to a$ and moreover
 $(z_{n})_{n}$
 has the same $\ell_{2}$-vector and set of branches with $(y_{n})_{n}$.
 Lemma~\ref{averages1a}  also yields that $(z_{n})_{n}$ satisfies
 property c).

We show that $\lim_{n}\norm{P_{T}(z_{n})}=0$.
Let $z_{n}=\sum_{k\in F_{n}}a_{k}x_{k}$, $n\in\N$.
From  Lemma~\ref{supperl2} we get 
 \[
\norm{P_{T}(x_{n})}=\left(\sum_{i=1}^{d}\norm{P_{\brnt_{i}}(x_{n})}^{2}\right)^{1/2}.
\]
Let $\e>0$. Using the above equality and that $P_{T}(x_{n})\to 0$,
choose $n_{0}\in\N$ such that
$\norm{P_{\brnt_{i}}(x_{n})}<\frac{\e}{d}$ for every $i\leq d$ and every
$n\geq n_{0}$. Using that $(x_{n})_{n\in\N}$ is skipped block w.r the
branches  $\tau_{i}, i\leq d$, it follows
\[
  \norm{P_{\brnt_{i}}(\sum_{k\in F_{n}}a_{k}x_{k})}=
  \norm{\sum_{k\in F_{n}}a_{k}P_{\brnt_{i}}(x_{n})}=\left(\sum_{k\in F_{n}}a_{k}^{2}\norm{P_{\brnt_{i}}(x_{n})}^{2}\right)^{1/2}\leq\frac{\e}{d}.
\]
From the triangle inequality we get
\[
  \norm{P_{T}(z_{n})}\leq\sum_{i=1}^{d}
 \norm{P_{\brnt_{i}}(\sum_{k\in F_{n}}a_{k}x_{k})}\leq\e.
\]
Hence we have shown that
$\lim_{n}\norm{P_{T}(z_{n})}=0$.  Set $v_{n}=a^{-1}z_{n}$, $n\in\N$.
Using that  $(x_{n})_{n}$ is equivalent to a sequence of  $Y$ by
standard  perturbations   arguments, passing if
necessary to a subsequence of $(v_{n})_{n}$,  we get a  sequence
$(w_{n})_{n}$  in $Y$  equivalent  to $(v_{n})_{n}$.
From  the above we get that the $(w_{n})_{n}$ satisfies the conclusion.
\end{proof}
We proceed now to the main result of this section.
\begin{theorem}\label{complemented3}
  Let $Y$ be a  subspace of $\jtg$ such
every normalized weakly null sequence has a subsequence equivalent to the unit vector basis of $\ell_{2}$.  Then $Y$ is   a  complemented subspace of $\jtg$ isomorphic to $\ell_{2}$.
\end{theorem}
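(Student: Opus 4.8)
The plan is to follow verbatim the strategy used for Theorem~\ref{complemented2} in Part~I, with the coordinate projection there replaced by the branch projections $P_\brn$ that are available in $\jtg$. First I would invoke the preceding proposition to obtain a finite set of branches $T=\{\brnt_1,\dots,\brnt_d\}$ and a finite codimensional subspace $Y_0$ of $Y$ such that the bounded projection $P_T$ restricted to $Y_0$ is an isomorphism onto its image, which sits inside $Z_T:=P_T(\jtg)$. The whole argument then reduces to identifying $Z_T$ as a Hilbertian space and performing the standard complementation bookkeeping.

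To see that $Z_T$ is isomorphic to $\ell_2$, I would choose $k\in\N$ so that the final segments $\brnt_{i,>k}$, $i\le d$, are incomparable and $d\le\min\brnt_{i,>k}$ for each $i$. Let $W$ be the closed subspace generated by those vectors $x_n^{\brnt_i}$ whose support lies in $\brnt_{i,>k}$. By Lemma~\ref{supperl2} the pieces $Z_{\brnt_i,>k}$ combine inside $W$ as an $\ell_2$-sum $\big(\sum_{i\le d}\oplus Z_{\brnt_i,>k}\big)_{\ell_2}$, and since each $Z_{\brnt_i,>k}$ is isometric to $\ell_2$ by Lemma~\ref{upperl2}, we get $W\simeq\ell_2$. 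Only finitely many vectors $x_n^{\brnt_i}$ have support meeting the initial segments $\brnt_{i,\le k}$, so $Z_T=W+F$ for a finite dimensional $F$; hence $W$ is finite codimensional in $Z_T$ and $Z_T\simeq\ell_2$ is Hilbertian.

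Now $P_T|_{Y_0}$ is an isomorphism of $Y_0$ onto a closed subspace $P_T(Y_0)$ of the Hilbert space $Z_T$, so $Y_0$ is itself Hilbertian and, being infinite dimensional, isomorphic to $\ell_2$. Since $Z_T$ is isomorphic to a Hilbert space, $P_T(Y_0)$ is complemented in $Z_T$; fixing a bounded projection $Q\colon Z_T\to P_T(Y_0)$, the operator $(P_T|_{Y_0})^{-1}\circ Q\circ P_T$ is a bounded projection of $\jtg$ onto $Y_0$, so $Y_0$ is complemented in $\jtg$. Finally, writing $Y=Y_0\oplus G$ with $G$ finite dimensional shows that $Y$ is Hilbertian, and Lemma~\ref{standardlemma}, applied with $Z=Y_0$, yields that $Y$ is complemented in $\jtg$, which completes the proof.

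I expect the only genuine subtlety, beyond what is already packaged in the preceding proposition, to lie in the identification of $Z_T$ as a Hilbertian, complemented subspace. Since the basis of $\jtg$ is not unconditional one cannot simply appeal to a coordinate projection as in Part~I, and the realization of $Z_T$ as an $\ell_2$-sum rests squarely on the incomparability of the final segments together with Lemmas~\ref{supperl2} and~\ref{sumprojections}; the boundedness of $P_T$, which both the preceding proposition and Lemma~\ref{sumprojections} supply, is exactly what makes the final complementation go through.
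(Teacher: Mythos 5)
Your proposal is correct and follows essentially the same route as the paper: the paper's proof invokes the preceding proposition and then declares that "the rest follows the same steps as Theorem~\ref{complemented2}", and those steps are precisely what you carry out, with Lemma~\ref{supperl2} and Lemma~\ref{upperl2} serving as the Part~II analogues of Lemma~\ref{suminbranches} to identify the target space as Hilbertian, followed by the same complementation bookkeeping via Lemma~\ref{standardlemma}.
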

To prove the theorem  we shall need the following proposition.
\begin{proposition}
  Let $Y$ be a  subspace of $\jtg$ such
every normalized weakly null sequence has a subsequence equivalent to the unit vector basis of $\ell_{2}$.
Then there exist a finite set  of 
  branches $T=\{\brnt_{1},\dots,\brnt_{d}\}$
  and a finite codimensional subspace $Y_{0}$ of $Y$
 such that  the bounded  projection  $P_{T}$,
restricted on $Y_{0}$  is an isomorphism.
\end{proposition}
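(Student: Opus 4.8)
The plan is to argue by contradiction, following the blueprint of the corresponding result in Part~I (the proposition preceding Theorem~\ref{complemented2}), but feeding it the machinery of $\ell_2$-vectors developed for $\jtg$. So suppose the conclusion fails: for every finite set of branches $T=\{\brnt_1,\dots,\brnt_d\}$ and every finite codimensional subspace $Y_0$ of $Y$, the restriction $P_T|_{Y_0}$ is not an isomorphism. Fixing such a $T$ and setting $H_k=\cap_{i=1}^k\ker e_i^*$, $Y_k=Y\cap H_k$, the fact that $P_T|_{Y_k}$ is not bounded below lets me pick normalized $x_k\in Y_k$ with $\norm{P_T x_k}<1/k$; in particular $\minsupp x_k\to\infty$ and $\norm{P_T x_k}\to 0$.

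First I would upgrade $(x_k)_k$ to an honest normalized weakly null sequence in $Y$ sharing these two features. Since $\jtg$ contains no copy of $\ell_1$ — which follows from Corollary~\ref{l2orc0}, as an $\ell_1$-basis would violate the $\ell_2/c_0$ dichotomy for block sequences — Rosenthal's theorem provides a weak-Cauchy subsequence $(x_{k_j})$. The consecutive differences $d_j=x_{k_{2j}}-x_{k_{2j-1}}$ are weakly null, and since every coordinate $e_i^*(x_{k_j})$ eventually vanishes while $\norm{x_{k_j}}=1$, the sequence $(x_{k_j})$ has no norm-Cauchy subsequence; passing to a $\delta$-separated subsequence makes $\liminf_j\norm{d_j}>0$. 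Then $v_j=d_j/\norm{d_j}$ is a normalized weakly null sequence in $Y$ with $\minsupp v_j\to\infty$ and $\norm{P_T v_j}\le(\norm{P_T x_{k_{2j}}}+\norm{P_T x_{k_{2j-1}}})/\norm{d_j}\to 0$.

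Next I would classify $(v_j)$ via its $\ell_2$-vector. Perturbing to a block sequence and applying Proposition~\ref{pl2vector}, either a subsequence admits an upper $S^{(2)}$-estimate (equivalently its $\ell_2$-vector is zero, i.e. $\norm{P_\brn v_j}\to 0$ for every branch) or a subsequence accepts a non-zero $\ell_2$-vector. The first alternative is impossible: by Proposition~\ref{c0} that block subsequence would have a further block $2$-equivalent to the $c_0$-basis, which transports back to a normalized weakly null sequence in $Y$ with no $\ell_2$-subsequence, contradicting the hypothesis on $Y$. Hence $(v_j)$ has a subsequence with a non-zero $\ell_2$-vector $(c_i)_{i\in\N}$ of norm $a\in(0,1]$ (recall $(c_i)\in B_{\ell_2}$). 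If $a<1$ I apply Proposition~\ref{l2vectornorm1} to rescale, obtaining a normalized weakly null $(w_n)\subset Y$ with an $\ell_2$-vector in $S_{\ell_2}$ (same associated branches), $\lim_n\norm{P_T w_n}=0$, and the concentration property of item~(c) there; the borderline case $a=1$ is handled directly by Lemma~\ref{averages1a}, which already yields the concentration property and the unchanged normalized $\ell_2$-vector, and, by the same averaging estimate used for $P_T$ in Proposition~\ref{l2vectornorm1}, keeps $\norm{P_T w_n}\to 0$. Either way, after a final perturbation to a block sequence, hypotheses~(1) and~(2) of Proposition~\ref{lastprop} are met for this $T$.

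Since $T$ was arbitrary, Proposition~\ref{lastprop} applies and delivers a normalized sequence $(y_n)\subset Y$ with $\norm{P_\brn y_n}\to 0$ for every branch $\brn$, which is the desired contradiction. Perturbing $(y_n)$ to a normalized block sequence $(\tilde y_n)$ with $\sum_n\norm{y_n-\tilde y_n}<\infty$ preserves $\norm{P_\brn\tilde y_n}\to 0$, so by Proposition~\ref{c0} some block $(z_m)$ of $(\tilde y_n)$ is $2$-equivalent to the $c_0$-basis; the matching combinations $y'_m=\sum_{n\in I_m}\pm y_n\in Y$ satisfy $\norm{y'_m-z_m}\le\sum_{n\ge\min I_m}\norm{y_n-\tilde y_n}\to 0$, hence $(y'_m/\norm{y'_m})$ is a normalized weakly null sequence in $Y$ equivalent to the $c_0$-basis with no $\ell_2$-subsequence, again contradicting the hypothesis. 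The main obstacle I anticipate is the very first reduction — manufacturing a genuinely weakly null sequence in $Y$ (not a merely weak-Cauchy one) while retaining $\norm{P_T\cdot}\to 0$ — and, throughout, controlling the perturbation errors finely enough that moving between sequences in $Y$ and block sequences never disturbs the $\ell_2$-vector, the vanishing of $\norm{P_\brn\cdot}$, or the concentration property that Proposition~\ref{lastprop} consumes.
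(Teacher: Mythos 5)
Your proposal is correct and takes essentially the same route as the paper's proof: assume the conclusion fails, use the subspaces $Y\cap H_k$ to produce for each finite set of branches $T$ a normalized weakly null sequence in $Y$ with $\norm{P_T x_k}\to 0$, rule out the zero $\ell_2$-vector alternative by the hypothesis on $Y$, normalize the $\ell_2$-vector via Proposition~\ref{l2vectornorm1}, and feed the result into Proposition~\ref{lastprop} to reach a contradiction. Your additional steps — Rosenthal's theorem plus differences to secure weak nullness (which the paper simply asserts for its coordinate-wise null sequence), the separate treatment of the borderline case $a=1$, and deriving the final contradiction through Proposition~\ref{c0} rather than through the upper $S^{(2)}$-estimate — are careful patches of points the paper glosses over, not a different method.
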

\begin{proof} Assume that the conclusion does not hold.
Then
for every  finite set of branches $T=\{\brnt_{1},\dots,\brnt_{d}\}$ and
every  finite codimensional subspace $Y_{0}$ of $Y$
the restriction of $P_{T}$ to $Y_{0}$   is not an isomorphism.
For every $k\in\N$ set $\displaystyle H_{k}=\cap_{i=1}^{k}\ker (e_{i}^{*})$
and $Y_{k}=Y\cap H_{k}$. Then inductively   we choose  normalized vectors
$x_{k}\in Y_{k}$  such that $\norm{P_{T}x_{k}}<k^{-1}$.  It follows that
the sequence $(x_{k})_{k}$ is a normalized weakly null sequence  and
$\lim_{k}\norm{P_{T}x_{k}}=0$. Moreover our assumption yield
that there exists a subsequence  of $\sxn$ which admits 
an  $\ell_{2}$ vector $(c_{i})_{i\in\N}$
with  norm
  $  \norm{(c_{i})_{i\in\N}}_{\ell_{2}}=a>0$. 
  From   Proposition~\ref{l2vectornorm1}  we conclude that
  the assumptions
of Proposition~\ref{lastprop} are fulfilled.  Hence  the subspace  $Y$
contains a normalized weakly null sequence that admits upper $S^{(2)}$-estimate yielding  a contradiction.
\end{proof}

  \begin{proof}[Proof of Theorem~\ref{complemented3}]

From the above proposition  we  get  a  finite codimensional subspace  $Y_{0}$ of $Y$
and  a finite set of branches $T=\{\brnt_{1},\dots,\brnt_{d}\}$ such that  the bounded  projection  $P_{T|Y_{0}}$  is an isomorphism.
The rest of the proof follows the same steps as the proof of
Theorem~\ref{complemented2}.
\end{proof}

Combining the above we get   the following.
\begin{corollary}
Let   $Y$ be a  subspace of $\jtg$. Then either $Y$ is Hilbertian or  $c_{0}$ is isomorphic to a subspace of 
$Y$. In particular every reflexive  subspace of $\jtg$ is Hilbertian and  complemented.
\end{corollary}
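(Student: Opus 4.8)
The plan is to obtain this dichotomy by combining the Hilbertian characterization of Theorem~\ref{complemented3} with the block-sequence dichotomy of Corollary~\ref{l2orc0} and a routine perturbation argument, and then to read off the ``in particular'' clause from reflexivity. Given a subspace $Y$ of $\jtg$, I would split into two cases according to whether the hypothesis of Theorem~\ref{complemented3} holds. If every normalized weakly null sequence in $Y$ has a subsequence equivalent to the unit vector basis of $\ell_{2}$, then Theorem~\ref{complemented3} applies directly and gives that $Y$ is Hilbertian and complemented in $\jtg$; this is the first alternative.

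Suppose instead that this hypothesis fails, so that $Y$ contains a normalized weakly null sequence $(y_{n})_{n}$ no subsequence of which is equivalent to the unit vector basis of $\ell_{2}$. By a standard perturbation (gliding hump) argument I would pass to a subsequence that is a small perturbation of a normalized block sequence $(x_{n})_{n}$ of the basis, arranging $\sum_{n}\norm{y_{n}-x_{n}}$ to be small enough that the map $x_{n}\mapsto y_{n}$ extends to an isomorphism between the closed spans and transfers the relevant structural properties. Applying Corollary~\ref{l2orc0} to $(x_{n})_{n}$ yields a subsequence that is either equivalent to the unit vector basis of $\ell_{2}$ or generates a subspace containing $c_{0}$. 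The first option is incompatible with the choice of $(y_{n})_{n}$: through the perturbation isomorphism it would force the corresponding subsequence of $(y_{n})_{n}$ to be equivalent to the $\ell_{2}$-basis, a contradiction. Hence $c_{0}$ embeds in the closed span of a subsequence of $(x_{n})_{n}$, and transporting this copy back through the isomorphism shows that $c_{0}$ is isomorphic to a subspace of $Y\subset\jtg$. This establishes the main dichotomy, since the two cases are exhaustive.

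For the last assertion, if $Y$ is reflexive then it cannot contain an isomorphic copy of $c_{0}$, because $c_{0}$ is not reflexive and every subspace of a reflexive space is reflexive. Thus the second alternative of the dichotomy is ruled out, so the hypothesis of Theorem~\ref{complemented3} must hold for $Y$, and that theorem then yields that $Y$ is Hilbertian and complemented in $\jtg$. The whole proof is a bookkeeping combination of results already established, so the only point demanding genuine care is the perturbation step: one must check that the selected subsequence of $(y_{n})_{n}$ is sufficiently close to $(x_{n})_{n}$ for the induced isomorphism of spans to carry \emph{both} the exclusion of the $\ell_{2}$ alternative and the embedding of $c_{0}$ between the two sequences. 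I expect this transfer to be the main, albeit routine, obstacle.
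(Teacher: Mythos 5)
Your proposal is correct and takes essentially the same route as the paper: both rest on passing to a block sequence by perturbation, applying the dichotomy of Corollary~\ref{l2orc0}, and invoking Theorem~\ref{complemented3}, with your argument simply being the contrapositive organization of the paper's (the paper assumes $c_{0}$ does not embed in $Y$ and verifies the hypothesis of Theorem~\ref{complemented3}, while you assume that hypothesis fails and produce the copy of $c_{0}$). The perturbation step you flag as the main obstacle is indeed routine and is handled the same way in the paper.
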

\begin{proof}
Let $Y$ be a subspace of $\jtg$ such that $c_{0}$ does not embeds  in $Y$
and   $(y_{n})_{n}$ be a normalized  weakly null sequence in $Y$. Passing to a subsequence
we may assume  that  is equivalent to a block basis $(x_{n})_{n}$ .
By Corollary~\ref{l2orc0} $(x_{n})_{n\in\N}$ has a subsequence which is
either equivalent to the unit vector  of $\ell_{2}$ or  $c_{0}$ embeds in the
subspace generated by  the subsequence.
The assumption excludes the  later case. It follows that 
every normalized weakly null sequence in $Y$  has a subsequence equivalent to the unit vector basis of $\ell_{2}$  and hence Theorem~\ref{complemented3}
yields that $Y$ is  Hilbertian and complemented in $\jtg$.
\end{proof}
 \begin{remark}
All the results proved for the space $\jtg$ remains valid if we consider a complete subtree of $\phi_{n}^{\brn}$, $n\in\N$, $\brn\in\brT$ and the corresponding norming set $G^{\prime}$.  In particular if a complete subtree  is well founded then  for  every normalized weakly null sequence  $\sxn$ in $JT_{G^{\prime}}$ and every branch  $\brn$ of the subtree  it holds  $\lim_{n}\norm{P_{\brn}(x_{n})}=0$, hence $\sxn$  has a subsequence that admits upper  $S^{(2)}$-estimate. This yields that the space 
$JT_{G^{\prime}}$   is $c_{0}$ saturated.
 \end{remark}


\begin{thebibliography}{150}
\bibitem
  {AK}
 Albiac, F., Kalton, N. J., {\em Topics in Banach space theory}, Second edition., Graduate Texts in Mathematics, 233. Springer,  2016. 
\bibitem
  {AI}  Amemiya  I. and  Ito T., {\em Weakly null
            sequences in James spaces on trees}, Kodai Math. J. 4
          (1981), 418--425.
          

          
        \bibitem
          {AA} Alspach, D.    Argyros, S.A., {\em Complexity of weakly null sequences.}, Dissertationes Math. (Rozprawy Mat.) 321 (1992).




        \bibitem
          {AAK1} Apatsidis, D., Argyros, S. A., Kanellopoulos, V.
{\em On the subspaces of JF and JT with non-separable dual}.
J. Funct. Anal. 254, No. 3, 632-674 (2008).           




            
\bibitem
  {AAK2}
Apatsidis, D., Argyros, S. A., Kanellopoulos, V.,
{\em Hausdorff measures and functions of bounded quadratic variation}.
Trans. Am. Math. Soc. 363, No. 8, 4225-4262 (2011).



\bibitem
  {AArvT} Argyros, S.A., Arvanitakis, A., Tolias, A., {\em Saturated extensions, the attractors method and hereditarily James tree spaces,}, Methods in Banach space theory, 1-90, London Math. Soc. Lecture Note Ser., 337, Cambridge Univ. Press, Cambridge, 2006.




\bibitem
  {ADK}    Argyros, S. A., Dodos, P., Kanellopoulos, V.{\em  Unconditional families in Banach spaces}, Math. Ann. 341 (2008), no. 1, 15-38



\bibitem
  {AGLM}
Argyros, S.A., Georgiou, A., Lagos,  A. R., Motakis, P.,
{\em Joint spreading models and uniform approximation of bounded operators},
Studia Math. 253, 57-107 (2020).


\bibitem
  {AMP}
    Argyros, S. A., Manoussakis, A., Pelczar-Barwacz, A.,
    {\em On the hereditary proximity to $\ell_1$.}
     J. Funct. Anal. 261 (2011), no. 5, 1145-1203.

   \bibitem
     {AMPe}
 Argyros, S. A., Manoussakis, A., Petrakis, M., {\em Function spaces not containing $\ell_{1}$.} Israel J. Math. 135 (2003), 29–81


\bibitem
  {AMT}   Argyros S.A, Mercourakis S. Tsarpalias
 A., {\em Convex unconditionality and summability of weakly null
    sequences },  Isr. J. Math. 107, 157-193 (1998).
    
    
    

    
   
\bibitem
  {ARa}
   Argyros, S. A., Raikoftsalis, T.,{\em Banach spaces with a unique nontrivial decomposition.},  Proc. Amer. Math. Soc. 136 (2008), no. 10, 3611-3620.
   
   

   
   
 \bibitem
   {AT}
 Argyros, S. A., Tolias, A,. {\em Methods in the theory of hereditarily indecomposable Banach spaces.}, Mem. Amer. Math. Soc. 170 (2004), no. 806, vi+114 pp
 
 
 
 
\bibitem
  {BHO}
 Bellenot, S. F., Haydon, R., Odell, E.,
{\em Quasi-reflexive and tree spaces constructed in the spirit of R. C. James.}
Banach space theory (Iowa City, IA, 1987), 19-43,
Contemp. Math., 85, Amer. Math. Soc., Providence, RI, 1989. 
 


 
 




    
\bibitem
  {brack}
Brackebusch, R. E.,
{\em James space on general trees.}
J. Funct. Anal. 79 (1988), no. 2, 446-475.


\bibitem
  {CFdeR}
Cuellar Carera W.,  de Rancourt  N.,  Ferenczi, V.,
{\em Local Banach-space dichotomies and ergodic spaces.}, preprint, arXiv:2005.06458


\bibitem
  {EW}
Edgar, G., Wheeler, R. F.,
{\em Topological properties of Banach spaces.}
Pacific J. Math. 115 (1984), no. 2, 317-350. 



\bibitem
  {FR0}
Ferenczi V., Rosendal C.,
{\em Ergodic Banach spaces.}
Adv. Math., 195 (1) (2005), 259-282.


\bibitem
  {FR1}
 Ferenczi, V., Rosendal, C.,
 {\em Banach spaces without minimal subspaces.}
 J. Funct. Anal. 257 (2009), no. 1, 149-193.
 
\bibitem
  {FR2}
  Ferenczi, V., Rosendal, C.,
  {\em Banach spaces without minimal subspaces-examples.}
  Ann. Inst. Fourier (Grenoble) 62 (2012), no. 2, 439-475.





\bibitem
  {GhM} Ghoussoub, N., Maurey, B. {\em $G_{\delta}$-embeddings in Hilbert space}. J. Funct. Anal. 61 (1985), no. 1, 72-97.







  
  


  
  
\bibitem
  {G} Gowers, W. T., {\em
    A Banach space not containing $c_{0}$, $\ell_{1}$ or a reflexive subspace.}
  Trans. Amer. Math. Soc. 344 (1994), no. 1, 407-420.
  
  
  
\bibitem
  {G2}
Gowers, W. T.,
{\em A new dichotomy for Banach spaces.}
Geom. Funct. Anal. 6 (1996), no. 6, 1083-1093.


\bibitem
  {G3}
Gowers, W. T.,
{\em  An infinite Ramsey theorem and some Banach-space dichotomies.}, Ann. of Math. (2) 156 (2002), no. 3, 797–833.



\bibitem
  {GMa} Gowers, W. T.; Maurey, B., {\em  The unconditional basic sequence problem}. J. Amer. Math. Soc. 6 (1993), no. 4, 851–874.

\bibitem
  {H}
Hagler, J.,
{\em Nonseparable ``James tree'' analogues of the continuous functions on the Cantor set.}
Studia Math. 61 (1977), no. 1, 41-53. 
  
  
  
	
	
	
	



	
\bibitem
  {J2} James R.C., {\em A separable somewhat reflexive Banach space with nonseparable dual}, Bull. Amer. Math. Soc. 80 (1974), 738-743.







	
\bibitem
  {LS} Lindenstrauss, J., Stegall, C.
{\em Examples of separable spaces which do not contain $\ell_{1}$ and
  whose duals are non-separable, } Studia Math. 54 (1975), 81-103.
  
  
  



  

\bibitem
  {LT}
Lindenstrauss, J.,Tzafriri,  L. {\em Classical Banach spaces I: sequence spaces}, Springer
Verlag (1977).	
	
	
	
	
\bibitem
  {MR}
        Maurey, B., Rosenthal, H. P.
{\em Normalized weakly null sequence with no unconditional subsequence.},
Studia Math. 61 (1977), no. 1, 77-98. 
      
      
\bibitem
  {S}
      Schachermayer, W.,
{\em Some more remarkable properties of the James-tree space.}
Banach space theory (Iowa City, IA, 1987), 465-496,
Contemp. Math., 85, Amer. Math. Soc., Providence, RI, 1989. 
\end{thebibliography}
\end{document}